\newtheorem{maintheorem}{Theorem}
\newtheorem{theorem}{Theorem}[section]
\newtheorem{lemma}[theorem]{Lemma}
\newtheorem{proposition}[theorem]{Proposition}
\newtheorem{observation}[theorem]{Observation}
\newtheorem{definition}[theorem]{Definition}
\newenvironment{proof}{\noindent{\bf Proof.\,\ }}{\hfill\mbox{$\Box$}\smallskip}
\def\XXint#1#2#3{{\setbox0=\hbox{$#1{#2#3}{\int}$ }
\vcenter{\hbox{$#2#3$ }}\kern-.6\wd0}}
\newcommand{\E}{\mathbb{E}}
\renewcommand{\P}{\mathbb{P}}
\newcommand{\A}[1]{\mathcal{A}^{(#1)}}
\newcommand{\cd}{\mathcal{D}}
\newcommand{\ch}{\mathcal{H}}
\newcommand{\ce}{\mathcal{E}}
\newcommand{\cg}{\mathcal{G}}
\newcommand{\cj}{\mathcal{J}}
\newcommand{\cm}{\mathcal{M}}
\newcommand{\Z}{\mathbb{Z}}
\newcommand{\R}{\mathbb{R}}
\newcommand{\cf}{\mathcal{F}}
\newcommand{\cc}{\stackrel {c,c}{\longleftrightarrow}}
\newcommand{\cs}{\stackrel {c,s}{\longleftrightarrow}}
\newcommand{\scc}{\stackrel {s,c}{\longleftrightarrow}}
\newcommand{\ssc}{\stackrel {s,s}{\longleftrightarrow}}
\newcommand{\cso}{\stackrel {c,s,*}{\longleftrightarrow}}
\newcommand{\scco}{\stackrel {s,c,*}{\longleftrightarrow}}
\newcommand{\ssco}{\stackrel {s,s,*}{\longleftrightarrow}}
\title{Scheduling of non-colliding random walks}
\author{R. Basu \thanks{Department of Statistics, University of California, Berkeley.
Email: riddhipratim@stat.berkeley.edu. Supported by UC Berkeley Graduate Fellowship.}
\and
V. Sidoravicius
\thanks{ IMPA, Estrada Dona Castorina 110, Rio de Janeiro, Brasil. Email: vladas@impa.br}
\and
A. Sly
\thanks{Department of Statistics, University of California, Berkeley and MSI, Australian National University.  Email: sly@stat.berkeley.edu
Supported in part by DMS-1208338, DMS-1352013 and Sloan Fellowship.
}}
\begin{document}
\date{}
\maketitle

\begin{abstract}
On the complete graph
${\cal{K}}_M$ with $M \ge3$ vertices  consider two independent discrete  time
random walks $\mathbb{X}$ and $\mathbb{Y}$,
choosing their steps uniformly at random.
A pair of
trajectories   $\mathbb{X} = \{ X_1, X_2, \dots \}$ and
$\mathbb{Y} = \{Y_1, Y_2, \dots \}$ is called {\it{non-colliding}}, if
by delaying
their jump times one can keep both walks at distinct vertices forever.
It was conjectured by P. Winkler that for large enough $M$
the set  of pairs of non-colliding trajectories $\{\mathbb{X},\mathbb{Y} \} $
has positive measure. N. Alon translated this problem to the language
of coordinate percolation, a class of dependent percolation models,
which in most situations is not tractable by methods of Bernoulli percolation. In this representation
Winkler's conjecture is equivalent to the existence of an infinite open cluster for large enough $M$.   In this
paper we establish the conjecture building upon
the renormalization techniques developed in \cite{BS14}.

\end{abstract}

\section{Introduction}

Bernoulli percolation has been a paradigm model for spatial randomness for last half
a century.  The deep and rich understanding that emerged is a celebrated success story
of contemporary probability. In the mean time several natural
questions arising from mathematical physics and theoretical computer science has necessitated the
study of models containing more complicated dependent structures, which are not amenable
to the tools of Bernoulli percolation. Among them we could mention classical gas of interacting Brownian paths \cite{Sm}, loop soups \cite{RW} and random interlacements \cite{C, Sz}. A particular subclass of models that has received attention is a class of "coordinate percolation" models, which were introduced, motivated by problems of statistical physics,  in late eighties by B. T\' oth under the name "corner percolation", later studied in  \cite{pete2008}, and in early nineties in theoretical computer science by P. Winkler, later studied in several its variants in  \cite{CTW:00,Winkler:00,MW08,BW09}. Problems of embedding one random sequence into another can also be cast into this framework (\cite{GLR:10, BS14, Grimmett:09, Gacs:04, Gacs:12, KLSV}), which in turn is intimately related to quasi-isometries of random objects \cite{Peled:10, BS14}.

In this work we focus on one particular model in this class, introduced by Winkler, which in its original formulation relates to clairvoyant scheduling of two independent random walks on a complete graph. More precisely, on the complete graph ${\cal{K}}_M$ with $ M \ge3$ vertices consider two independent discrete time random walks $\mathbb{X}$ and $\mathbb{Y}$ which move by choosing  steps uniformly at random. Two trajectories (realizations)   $\mathbb{X} = \{ X_1, X_2, \dots \}$ and
$\mathbb{Y} = \{Y_1, Y_2, \dots \}$ are called {\it{non-colliding}}, if, knowing all steps of $\mathbb{X}$ and $\mathbb{Y}$, one can keep both walks on distinct vertices forever by delaying their jump-times  appropriately. The question of interest here is whether the set of non-colliding pairs of trajectories have positive probability. For $M=3$ the measure of non-colliding pairs is zero (see Corollary 3.4 \cite{Winkler:00}).
It was conjectured by P. Winkler \cite{CTW:00} that for large enough $M$, in particular it is believed for $M \ge 4$ based on simulations, the
set  of non-colliding trajectories $\{\mathbb{X},\mathbb{Y} \} $
has positive measure. The question became prominent as the {\it{clairvoyant demon problem}}.

N. Alon translated this problem into the language of coordinate percolation. Namely,
let $\mathbb{X}=(X_1,X_2,\ldots)$ and $\mathbb{Y}=(Y_1,Y_2,\ldots)$ be two i.i.d. sequences with
$$
\mathbb{P}(X_i=k)=\mathbb{P}(Y_j=k)=\frac{1}{M} \; \text{for } \; k=1,2,\ldots ,M \text{ and for } i,j=1,2,\ldots .
$$
Define an oriented percolation process on $\Z_+ \times \Z_+$: the vertex  $(i_1,i_2)\in \Z^2_{> 0}$ will be called ``closed'' if $X_{i_1}=Y_{i_2}$. Otherwise it is called ``open''. It is curious to notice that this percolation process (for $M$=2)  was introduced much earlier by Diaconis and Freedman \cite{DF81} in the completely different context of studying visually distinguishable random patterns in connection with Julesez's conjecture. It is easy to observe that a pair of trajectories $\{\mathbb{X},\mathbb{Y}\}$ is non-colliding if and only if there is an open oriented infinite path starting at the vertex $(1,1)$. The issue of settling Winkler's conjecture then translates to proving that for $M$ sufficiently large, there is percolation with positive probability, which is our main result in this paper.  For $\mathbb{X}$ and $\mathbb{Y}$ as above, we say $\mathbb{X}\longleftrightarrow \mathbb{Y}$ if there exists an infinite open oriented  path starting from $(1,1)$.

\begin{maintheorem}
\label{t:main}
For all $M$ sufficiently large, $\mathbb{P}(\mathbb{X} \longleftrightarrow \mathbb{Y})>0$, thus clairvoyant scheduling is possible.
\end{maintheorem}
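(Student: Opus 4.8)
The plan is to prove percolation by a multi-scale renormalization, building on the scheme of \cite{BS14}. Fix a rapidly growing sequence of scales $L_0 \ll L_1 \ll L_2 \ll \cdots$, say $L_{k+1}\approx L_k^{\gamma}$ for a suitable $\gamma>1$, and for each $k$ tile $\mathbb{Z}_+^2$ by axis-parallel boxes of side $L_k$, each such box being an $(L_k/L_{k-1})\times(L_k/L_{k-1})$ array of level-$(k-1)$ boxes. Define recursively in $k$ an event ``$B$ is \emph{good}'' for each level-$k$ box $B$, tailored so that: (i) a good level-$k$ box is \emph{robustly crossable}, meaning an oriented open path entering through a suitable sub-interval of its lower-left boundary can be continued, remaining on open vertices and avoiding the closed set, to exit through a suitable sub-interval of its upper-right boundary, with enough freedom in the entry and exit positions to be concatenated at the next scale; and (ii) outside an exceptional event of small probability, $B$ is good whenever all but a sparse, well-separated collection of its level-$(k-1)$ sub-boxes are good. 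Given such a construction together with a bound $p_k:=\mathbb{P}(B\text{ not good at level }k)\to 0$ fast enough, the theorem follows: with positive probability the level-$k$ box containing $(1,1)$ is good for every $k$, and stitching the crossings together across all scales yields an infinite oriented open path started from $(1,1)$.

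The feature of this model that makes the scheme viable is the rigid geometry of the closed set. Since $(i_1,i_2)$ is closed precisely when $X_{i_1}=Y_{i_2}$, the closed vertices of any fixed colour $c\in\{1,\dots,M\}$ form the product grid $\{i_1:X_{i_1}=c\}\times\{i_2:Y_{i_2}=c\}$, and the whole closed set is the union of these $M$ grids. For $M$ large each colour occupies only about a $1/M$ fraction of the columns, and of the rows, of any long interval, so each grid is sparse, the overall density of closed vertices is $\approx 1/M$, and the path can be obstructed by colour $c$ only where it simultaneously meets a column carrying $X=c$ and a row carrying $Y=c$. Accordingly, at level $0$ we call a box bad only when some colour is over-represented among its $X$-columns or among its $Y$-rows, or when the empirical distributions of the relevant $X_i$ or $Y_j$ deviate substantially from uniform; by elementary large-deviation estimates these events have probability exponentially small in $L_0$, and a direct argument shows that a good level-$0$ box is robustly crossable once $M$ is large.

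The heart of the proof is the inductive step. One declares a level-$k$ box $B$ good when its bad level-$(k-1)$ sub-boxes form a sparse configuration --- say at most one per sub-row and one per sub-column, with controlled gaps --- so that the crossings furnished by the good sub-boxes can be rerouted around the bad ones using the slack that was built into the notion of crossing; checking that this slack is essentially preserved from scale $k-1$ to scale $k$ is a delicate piece of bookkeeping. To estimate $p_k$ one must bound the probability of a dense cluster of bad level-$(k-1)$ sub-boxes, and here the dependency structure intervenes: sub-boxes sharing a sub-column read the same stretch of $\mathbb{X}$, and sub-boxes sharing a sub-row the same stretch of $\mathbb{Y}$, so their badness events are positively correlated and no naive union bound over independent events applies. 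The remedy, exactly as in \cite{BS14}, is to decompose badness into an $\mathbb{X}$-driven part and a $\mathbb{Y}$-driven part: conditionally on the entire sequence $\mathbb{X}$, the $\mathbb{Y}$-driven failures of sub-boxes lying in distinct sub-rows become independent, and symmetrically with $\mathbb{X}$ and $\mathbb{Y}$ interchanged; a two-stage conditioning together with the inductive bound on single-sub-box badness then produces a recursion of the form $p_k \le L_k^{C}\,p_{k-1}^{2}$ (up to the separately controlled probability of a globally atypical stretch of $\mathbb{X}$ or $\mathbb{Y}$), which, iterated from a sufficiently small base value, forces $p_k$ to decay summably with small total.

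The main obstacle --- and where essentially all the work concentrates --- is making this recursion close. The definition of ``good'' must simultaneously be weak enough that the crossing slack does not degrade as one passes from scale $k-1$ to scale $k$ (a geometric and combinatorial constraint on routing around obstacles) and strong enough that the badness probability contracts at each step in spite of the row/column correlations (a probabilistic constraint). These requirements pull against each other, and reconciling them --- which in particular dictates how large $M$ and the ratios $L_k/L_{k-1}$ must be taken --- is the crux, and the point at which the renormalization of \cite{BS14} must be adapted to the two-dimensional oriented routing problem considered here.
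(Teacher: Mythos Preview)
Your plan --- multi-scale renormalization following \cite{BS14}, robust crossings of good blocks, compactness for the infinite path --- matches the paper's overall arc, but two structural points distinguish the paper's implementation, and the second is a real gap. First, the paper renormalizes the one-dimensional sequences rather than the two-dimensional lattice: it partitions $\mathbb{X}=(X_1^{(j)},X_2^{(j)},\ldots)$ and $\mathbb{Y}$ similarly into level-$j$ blocks, and goodness of an $\mathbb{X}$-block $X$ is defined through conditional probabilities such as $\P(X\cc Y\mid X)$ and $\P(X\ssc Y\mid X)$ taken over a \emph{random} $\mathbb{Y}$-block $Y$. This is your ``$\mathbb{X}$-driven versus $\mathbb{Y}$-driven'' decomposition, but built in from the start: since each $X_i^{(j)}$ depends on a disjoint segment of the i.i.d.\ sequence $\mathbb{X}$, the level-$j$ $\mathbb{X}$-blocks are genuinely independent, and no two-stage conditioning over a 2D box notion is needed when counting bad sub-blocks inside a level-$(j{+}1)$ block.

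Second, the inductive hypothesis carried through the scales is not a single badness probability $p_k$ but the full tail bound $\P\bigl(\P(X\cc Y\mid X)\le p\bigr)\le p^{m+2^{-j}}L_j^{-\beta}$, valid for all $p\le \tfrac34+2^{-(j+3)}$. A bad sub-block can have crossing probability anywhere from $0$ to near $\tfrac34$, and a recursion of the schematic form $p_k\le L_k^C p_{k-1}^2$ cannot absorb even a single sub-block with very small $\P(X_i\cc Y\mid X_i)$: such sub-blocks occur with probability polynomial in $L_j^{-1}$, not super-polynomially small, consistent with the polynomial tail of \cite{Gacs:00}. The paper's inductive step therefore splits into five cases according to the number of bad sub-blocks and the value of the product $\prod_i \P(X_{\ell_i}\cc Y\mid X_{\ell_i})$, and in the critical cases uses of order $L_j$ admissible assignments of each bad sub-block to distinct good partners on the $\mathbb{Y}$-side to boost the crossing probability by a factor of $L_j$; this trade-off between severity of bad sub-blocks and multiplicity of trial assignments, rather than a simple squaring, is what makes the recursion close.
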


\medskip

\subsection{Related Works}
This scheduling problem first appeared in the context of distributed computing \cite{CTW:00} where it is shown that two independent random walks on a finite connected non-bipartite graph will collide in a polynomial time even if a scheduler tries to keep them apart, unless the scheduler is clairvoyant. In a recent work \cite{Ang13}, instead of independent random walks, by allowing coupled random walks, it was shown that a large number of random walks can be made to avoid one another forever.  In the context of clairvoyant scheduling of two independent walks, the non-oriented version of the oriented percolation process described above was studied independently in \cite{Winkler:00} and \cite{BBS:00} where they establish that in the non-oriented model there is percolation with positive probability if and only if $M\geq 4$. In \cite{Gacs:00} it was established that, if there is percolation, the chance that the cluster dies out after reaching distance $n$ must decay polynomially in $n$, which showed that, unlike the non-oriented models, this model was fundamentally different from Bernoulli percolation, where such decay is exponential.

In \cite{BS14} a multi-scale structure was developed to tackle random embedding problems which can be recast in co-ordinate percolation framework. As a corollary of a general embedding theorem, it was proved there that an i.i.d. Bernoulli sequence can almost surely be embedded into another in a Lipschitz manner provided that the Lipschitz constant is sufficiently large. It also led to a proof of rough isometry of two one-dimensional Poisson processes as well as a new proof of Winkler's compatible sequence problem. In this work we build upon the methods of \cite{BS14}, using a similar multi-scale structure, but with crucial adaptations. An earlier proof of Theorem 1 appeared in \cite{Gacs:11} with a very difficult multi-scale argument. Our proof is different and we believe gives a clearer inductive structure. We also believe that our proof can be adapted to deal with this problem on several other graphs, as well as in the case where there are multiple random walks.

\bigskip

\subsection{Outline of the proof}\label{s:outline}
Our proof relies on multi-scale analysis. The key idea is to divide the original sequences into blocks of doubly exponentially growing length scales
$L_j=L_0^{\alpha^j}$, for $j\geq 1$, and at each of these levels $j$ we have a definition of a ``good'' block. The multi-scale structure that we construct has a number of parameters, $\alpha,\beta,\delta, m, k_0, R$ and $L_0$
which must satisfy a number of relations described in the next subsection. Single characters in the original sequences $\mathbb{X}$ and $\mathbb{Y}$ constitute the level 0 blocks.

Suppose that we have constructed the blocks up to level $j$ denoting the sequence of blocks of level $j$ as $(X_1^{(j)},X_2^{(j)}\ldots)$.
In Sect.~\ref{s:prelim} we give a construction of $(j+1)$-level blocks out of $j$-level sub-blocks in such way that the blocks are independent and, apart from the first block, identically distributed. Construction of blocks at level 1 has slight difference from the general construction.

At each level we have a definition which distinguishes some of the blocks as good.  This is designed in such a manner that at each level, if we look at  the rectangle in the lattice determined by a good block $X$ and a random block $Y$\!,  then, with high probability,  it will have many  open paths with varying slopes through it. For a precise definition see Definitions~\ref{d:cs} and~\ref{d:ss}.
Having these paths with different slopes  will help achieve improving estimates of the probability of the event of having a path from the bottom left corner to the top right
corner of the lattice rectangle determined by random blocks $X$ and $Y$,  denoted by $[X\cc Y]$, at higher levels.

The proof then involves a series of recursive estimates at each level, given in Sect. \ref{s:recursive}.  We require that at level $j$ the probability of a block being good is at least $1-L_j^{-\delta}$, so that the vast majority of blocks are good.  Furthermore, we obtain tail bounds on $\P(X\cc Y \mid X)$ by showing that for $0<p\leq \frac{3}{4}+2^{-(j+3)}$,
\[
\P(\P(X\cc Y \mid X)\leq p)\leq p^{m+2^{-j}}L_j^{-\beta},
\]
where $\beta$ and $m$ are parameters mentioned at the beginning of this section.
We show the similar bound for $\mathbb{Y}$-blocks as well.
We also ask that  the length of blocks satisfy an exponential tail estimate. The full inductive step is given in Sect. \!\ref{induction}.  Proving this constitutes the main work of the paper.

We use the key quantitative estimate provided by Lemma~\ref{l:totalSizeBound}  which is taken from \cite{BS14} (see Lemma 7.3, \cite{BS14}), which bounds the probability of a block having: {\it{a}}) an excessive length,  {\it{b}}) too many bad sub-blocks,  {\it{c}}) a particularly difficult collection of sub-blocks, where we quantify the difficulty of a collection of bad sub-blocks $\{X_i \}_{i=1}^k$ by the  value of $\prod_{i=1}^k   \P [X_i \cc Y| X]$, where $Y$ is a random block at  the same level. In order to achieve the improvement on the tail bounds of $\P(X\cc Y \mid X)$ at each level, we take advantage of the flexibility in trying a large number of potential positions
to cross the rectangular strips determined by each member of a small collection of bad sub-blocks, obtained by using the recursive estimates on probabilities of existence of paths of varying slopes through rectangles determined by collections of good sub-blocks.

To this effect we also borrow the notion of generalised mappings developed in \cite{BS14} to describe such potential mappings. Our analysis is split  into 5 different cases. To push through the estimate of the probability of having many open paths of varying slopes at a higher level, we make some finer geometric constructions.   
To complete the proof we note that  $X_1^{(j)}$ and $Y_1^{(j)}$ are good for all $j$ with positive probability.  Using the definition of good blocks and a compactness argument we conclude the existence of an infinite open path with positive probability.

\subsection{Parameters}\label{s:parameters}
Our proof involves a collection of parameters $\alpha,\beta,\delta,k_0,m$ and $R$ which must satisfy a system of constraints.  The required constraints are
\[
\alpha>6, \delta>2\alpha \vee 48, \beta>\alpha(\delta+1), m>9\alpha\beta, k_0> 36\alpha\beta, R> 6(m+1).
\]
To fix on a choice we will set
\begin{equation}\label{e:parameters}
\alpha=10, \delta=50, \beta=600, m=60000, k_0=300000, R=400000.
\end{equation}
Given these choices we then take $L_0$ to be a sufficiently large integer.  We did not make a serious attempt to optimize the parameters or constraints, sometimes for the sake of clarity of exposition.

\section{The Multi-scale Structure}\label{s:prelim}

Our strategy for the proof of Theorem~\ref{t:main} is to partition the sequences $\mathbb{X}$ and $\mathbb{Y}$ into blocks at each level $j\geq 1$. For each $j\geq 1$, we write $\mathbb{X}=(X_1^{(j)},X_2^{(j)},\ldots)$ where we call each $X_i^{(j)}$ a level $j$ $\mathbb{X}$-block, similarly we write $\mathbb{Y}=(Y_1^{(j)},Y_2^{(j)},\ldots)$.  Most of the time we would clearly state that something is a level $j$ block and drop the superscript $j$. Each of the $\mathbb{X}$-block (resp. $\mathbb{Y}$-block) at level $(j+1)$ is a concatenation of a number of level $j$ $\mathbb{X}$-blocks,  where the level $0$ blocks are just the elements of the original sequence.

\subsection{Recursive Construction of Blocks}

 Level $1$ blocks are constructed inductively as follows:

Suppose the first $k$ blocks $X_1^{(1)},\ldots , X_k^{(1)}$ at level $1$ have already been constructed and suppose that the rightmost element of $X_k^{(1)}$ is $X_{n_k}^{(0)}$. Then $X_{{n_k}+1}^{(1)}$ consists of the elements $X_{{n_k}+1}^{(0)},X_{{n_k}+2}^{(0)},\ldots ,X_{{n_k}+l}^{(0)}$ where
\begin{equation}
\label{e:levelonex}
l=\min\{t\geq L_1: X_{{n_k}+t}^{(0)}=1~\mbox{mod}~4~\text{and}~ X_{{n_k}+t+1}^{(0)}=0~\mbox{mod}~4\}.
\end{equation}
The same definition holds for $k=0$, assuming $n_0 = -1$.
Recall that $L_1=L_0^{\alpha}$.

Similarly, suppose the first $k$  $\mathbb{Y}$-blocks at level $1$ are $Y_1^{(1)},\ldots , Y_k^{(1)}$ and also suppose that the rightmost element of $Y_k^{(1)}$ is $Y_{n_k}^{(0)}$. Then $Y_{k+1}^{(1)}$ consists of the elements $Y_{{n_k}+1}^{(0)},Y_{{n_k}+2}^{0)},\ldots ,$ $Y_{{n_k}+l}^{(0)}$ where

\begin{equation}
\label{e:leveloney}
l=\min\{t\geq L_1: Y_{{n_k}+t}^{(0)}=3~\mbox{mod}~4~\text{and}~ Y_{{n_k}+t+1}^{(0)}=2~\mbox{mod}~4\}.
\end{equation}

We shall denote the length of an $\mathbb{X}$-block $X$ (resp. a $\mathbb{Y}$-block $Y$) at level $1$ by $L_X=L_1+T_X^{(1)}$ (resp. $L_Y=L_1+T_Y^{(1)}$).
Notice that this construction, along with Assumption 1, ensures that the blocks at level one are independent and identically distributed.

At each level $j\geq 1$, we also have a recursive definition of ``$good$" blocks (see Definition~\ref{d:good}). Let $G_j^{\mathbb{X}}$ and $G_j^{\mathbb{Y}}$ denote the set of good $\mathbb{X}$-blocks and  good $\mathbb{Y}$-blocks at $j$-th level respectively. Now we are ready to describe the recursive construction of the blocks $X_i^{(j)}$ and $Y_i^{(j)}$ for $j\geq 2$.

The construction of blocks at level $j\geq 2$ is similar for both $\mathbb{X}$ and $\mathbb{Y}$ and we only describe the procedure to form the blocks for the sequence $\mathbb{X}$. Let us suppose we have already constructed the blocks of partition  up to level $j$ for some $j\geq 1$ and we have $X=(X_1^{(j)},X_2^{(j)},\ldots)$. Also assume we have defined the ``good" blocks at level $j$, i.e., we know $G_j^{\mathbb{X}}$.
We describe how to partition $\mathbb{X}$ into level $(j+1)$ blocks: $\mathbb{X}=(X_1^{(j+1)},X_2^{(j+1)},\ldots)$.

Suppose the first $k$ blocks $X_1^{(j+1)},\ldots , X_k^{(j+1)}$ at level $(j+1)$ has already been constructed and suppose that the rightmost level $j$-subblock of $X_k^{(j+1)}$ is $X_m^{(j)}$. Then $X_{k+1}^{(j+1)}$ consists of the sub-blocks $X_{m+1}^{(j)},X_{m+2}^{(j)},\ldots ,X_{m+l+L_j^3}^{(j)}$ where $l>L_j^3+L_j^{\alpha-1}$ is selected in the following manner. Let $W_{k+1,j+1}$ be a geometric random variable having $\mbox{Geom}(L_j^{-4})$ distribution and independent of everything else. Then
$$l=\min\{s\geq L_j^3+L_j^{\alpha -1}+ W_{k+1,j+1}: X_{m+s+i}\in G_{j}^{\mathbb{X}} ~\text{for}~1\leq i \leq 2L_j^3 \}.$$
That such an $l$ is finite with probability 1 will follow from our recursive estimates. The case $k=0$ is dealt with as before.

\medskip

\noindent Put simply, our block construction mechanism at level $(j+1)$ is as follows:\\
\emph{Starting from the right boundary of the previous block, we include $L_j^3$ many sub-blocks, then further $L_j^{\alpha-1}$ many sub-blocks, then a  $\mbox{Geom}(L_j^{-4})$ many sub-blocks. Then we wait for the first occurrence of a run of $2L_j^3$ many consecutive good sub-blocks, and end our block at the midpoint of this run.}


We now record two simple but useful properties of the blocks thus constructed in the following observation. Once again a similar statement holds for $\mathbb{Y}$-blocks.

\begin{observation}\label{o:blockStructure}
Let $\mathbb{X}=(X_1^{(j+1)},X_2^{(j+1)},\ldots)=(X_1^{(j)}, X_2^{(j)}, \ldots)$ denote the partition of $\mathbb{X}$ into blocks at levels $(j+1)$ and $j$ respectively. Then the following hold.

\begin{enumerate}
\item Let $X_i^{(j+1)}=(X_{i_1}^{(j)},X_{i_1+1}^{(j)},\ldots X_{i_1+l}^{(j)})$. For $i\geq 1$, $X_{i_1+l+1-k}^{(j)}\in G_j^{\mathbb{X}}$ for each $k$, $1\leq k \leq L_j^3$. Further, if $i>1$, then $X_{i_1+k-1}^{(j)}\in G_j^{\mathbb{X}}$ for each $k$, $1\leq k \leq L_j^3$. That is, all blocks at level $(j+1)$, except possibly the leftmost one, $X_1^{(j+1)}$, are guaranteed to have at least $L_j^3$ ``good" level $j$ sub-blocks at either end. Even $X_1^{(j+1)}$ ends in $L_j^3$ many good sub-blocks.
\item The blocks $X_1^{(j+1)},X_2^{(j+1)}, \ldots $ are independently distributed. In fact, $X_2^{(j+1)}, X_3^{(j+1)}, \ldots$ are independently and identically distributed according to some law, say $\mu_{j+1}^{\mathbb{X}}$. Furthermore, conditional on the event $\{X_i^{(k)}\in G_k^{\mathbb{X}} ~\text{for}~ i=1,2,\ldots, L_k^3,~\text{for all}~k\leq j\}$, the $(j+1)$-th level blocks $X_1^{(j+1)}, X_2^{(j+1)},\ldots$ are independently and identically distributed according to the law $\mu_{j+1}^{\mathbb{X}}$.
\end{enumerate}
\end{observation}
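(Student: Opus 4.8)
The plan is to read off both assertions directly from the explicit block-construction rule given just above, together with the elementary fact that the level-$j$ blocks $X_1^{(j)}, X_2^{(j)}, \ldots$ — and the auxiliary geometric variables $W_{k,j+1}$ — are independent. For part (1), I would first observe that the construction terminates a block $X_i^{(j+1)}$ exactly at the midpoint of a run of $2L_j^3$ consecutive good level-$j$ sub-blocks; hence the last $L_j^3$ sub-blocks of $X_i^{(j+1)}$ lie in the second half of that run and are good, which gives the ``good at the right end'' claim for every $i\ge 1$, including $i=1$. For $i>1$, the left end of $X_i^{(j+1)}$ is the sub-block immediately following the right end of $X_{i-1}^{(j+1)}$; since $X_{i-1}^{(j+1)}$ ended at the midpoint of a run of $2L_j^3$ good sub-blocks, the next $L_j^3$ sub-blocks — namely the first $L_j^3$ sub-blocks of $X_i^{(j+1)}$ — are precisely the first half of that same run and are therefore good. (The leftmost block $X_1^{(j+1)}$ has no such guarantee on its left end because its left boundary is the start of the sequence, not the midpoint of a good run.) This is just unwinding the definition, so I would keep it to a couple of sentences.

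For part (2), the key point is that the rule determining the length $l$ of $X_{k+1}^{(j+1)}$ — equivalently, the set of level-$j$ indices it consumes — depends only on $W_{k+1,j+1}$ and on the goodness pattern of the level-$j$ sub-blocks strictly to the right of the already-constructed prefix $X_1^{(j+1)},\ldots,X_k^{(j+1)}$. Since goodness of $X_m^{(j)}$ is a measurable function of $X_m^{(j)}$ itself, and the $X_m^{(j)}$ are independent with $X_2^{(j)}, X_3^{(j)},\ldots$ i.i.d. (by the inductive hypothesis at level $j$), a renewal/stopping-time argument shows that $X_{k+1}^{(j+1)}$ is independent of $X_1^{(j+1)},\ldots,X_k^{(j+1)}$, and that for $k\ge 1$ its law does not depend on $k$: once we have passed the first block, the subsequent blocks are built out of an i.i.d. stream of level-$j$ sub-blocks using the same rule and fresh independent geometrics, so they are i.i.d. with a common law $\mu_{j+1}^{\mathbb{X}}$. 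The first block $X_1^{(j+1)}$ may differ in law because it starts from $X_1^{(j)}$, whose distribution is not the same as the generic $\mu_j^{\mathbb{X}}$.

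The only mildly delicate point — and the one I would spend the most care on — is the last sentence: conditioning on the event $E_j := \{X_i^{(k)}\in G_k^{\mathbb{X}}\text{ for }i\le L_k^3,\ k\le j\}$ restores the i.i.d.\ property \emph{including} the first block. Here I would argue that conditioning on $E_j$ affects only the joint law of the first $O(\mathrm{poly}(L_j))$ level-$j$ sub-blocks, and in particular that, conditionally on $E_j$, the left end of the level-$j$ stream begins with (at least) $L_j^3$ good sub-blocks, which is exactly the structural feature that the generic block $X_{k+1}^{(j+1)}$ ($k\ge1$) sees at its left boundary; one then checks that the conditional law of the initial segment of the level-$j$ stream matches, on the relevant prefix, the post-renewal law, so that the same renewal argument now applies uniformly from the first block onward and yields the common law $\mu_{j+1}^{\mathbb{X}}$. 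I expect this matching-of-laws bookkeeping to be the main obstacle; everything else is a direct transcription of the construction. I would also note that the validity of the whole statement presupposes that the stopping rule defining $l$ is a.s.\ finite, which the text defers to the recursive estimates, and I would simply invoke that here.
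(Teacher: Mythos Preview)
Your proposal is correct and matches what the paper intends: the paper states this as an observation with no proof, treating both parts as immediate consequences of the block-construction rule, so your unpacking is exactly the right level of detail. One small sharpening worth making explicit when you write it up: the stopping rule for $l$ ``looks ahead'' by $2L_j^3$ sub-blocks, so the boundary of $X_k^{(j+1)}$ a priori depends on the first $L_j^3$ sub-blocks of $X_{k+1}^{(j+1)}$; the reason independence still holds is that, once you condition on the contents of $X_k^{(j+1)}$, the only residual constraint on those $L_j^3$ overlap sub-blocks is that they be good (the ``not stopped before $l$'' conditions are forced by sub-blocks inside $X_k^{(j+1)}$, since the good run at $l$ pushes any required bad block to index $\le m+l$), and conditioning i.i.d.\ blocks on being good yields i.i.d.\ $\mu_{j,G}^{\mathbb{X}}$ blocks, which is precisely the left-boundary law the generic block sees. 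For the final sentence, the cleanest route is inductive rather than the ``matching-of-laws bookkeeping'' you describe: by the level-$j$ instance of this same observation, conditioning on $E_{j-1}$ makes $X_1^{(j)},X_2^{(j)},\ldots$ i.i.d.\ with law $\mu_j^{\mathbb{X}}$; the additional conditioning on $\{X_i^{(j)}\in G_j^{\mathbb{X}},\ i\le L_j^3\}$ then gives exactly the input distribution of Observation~\ref{o:blockRepresentation}, so $X_1^{(j+1)}$ acquires law $\mu_{j+1}^{\mathbb{X}}$ and the renewal argument runs uniformly from the first block.
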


From now on whenever we say ``a (random) $\mathbb{X}$-block at level $j$", we would imply that it has law $\mu_{j}^{\mathbb{X}}$, unless explicitly stated otherwise. Similarly let us denote the corresponding law of ``a (random) $\mathbb{Y}$-block at level $j$" by $\mu_{j}^{\mathbb{Y}}$.

Also, for $j> 0$, let $\mu_{j,G}^{\mathbb{X}}$ denote the conditional law of an $\mathbb{X}$ block at level $j$, given that it is in $G_j^{\mathbb{X}}$. We define $\mu_{j,G}^{\mathbb{Y}}$ similarly.

We observe that we can construct a block with law $\mu_{j+1}^{\mathbb{X}}$ (resp. $\mu_{j+1}^{\mathbb{Y}}$) in the following alternative manner without referring to the the sequence $\mathbb{X}$ (resp. $\mathbb{Y}$):
\begin{observation}\label{o:blockRepresentation}
Let $X_1,X_2,X_3,\ldots$ be a sequence of independent level $j$ $\mathbb{X}$-blocks such that $X_i\sim \mu_{j,G}^{\mathbb{X}}$ for $1\leq i \leq L_j^3$ and $X_i\sim \mu_{j}^{\mathbb{X}}$ for $i> L_j^3$. Now let $W$ be a $Geom(L_j^{-4})$ variable independent of everything else. Define as before
$$l=\min\{i\geq L_j^3+L_j^{\alpha -1}+W: X_{i+k}\in G_j^{\mathbb{X}} ~\text{for}~1\leq k\leq 2L_j^3\}.$$
Then $X=(X_1,X_2,\ldots ,X_{l+L_j^3})$ has law $\mu_{j+1}^{\mathbb{X}}$.
\end{observation}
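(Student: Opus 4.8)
The plan is to identify the recipe in the statement with the conditional construction, given everything lying to its left, of a generic (non-leftmost) level $(j+1)$ block, and then to read off the claim from Observation~\ref{o:blockStructure}(2), which already tells us that every such block has law $\mu_{j+1}^{\mathbb{X}}$. Concretely, fix $k\ge 1$ and let $X_m^{(j)}$ be the rightmost level $j$ sub-block of $X_k^{(j+1)}$. Let $\cf$ be the $\sigma$-algebra generated by the contents of $X_1^{(j)},\dots,X_m^{(j)}$, the offsets $W_{1,j+1},\dots,W_{k,j+1}$, and the goodness indicators of $X_{m+1}^{(j)},\dots,X_{m+L_j^3}^{(j)}$ --- that is, by all the randomness consulted while producing $X_1^{(j+1)},\dots,X_k^{(j+1)}$. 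By the construction rule, $X_{k+1}^{(j+1)}=(X_{m+1}^{(j)},\dots,X_{m+l+L_j^3}^{(j)})$ with $l=\min\{s\ge L_j^3+L_j^{\alpha-1}+W_{k+1,j+1}:X_{m+s+i}^{(j)}\in G_j^{\mathbb{X}}\text{ for }1\le i\le 2L_j^3\}$, where $W_{k+1,j+1}\sim\mathrm{Geom}(L_j^{-4})$ is independent of $\cf$. The whole observation then reduces to the claim that, conditionally on $\cf$, the sequence $(X_{m+1}^{(j)},X_{m+2}^{(j)},\dots)$ is a sequence of independent level $j$ blocks with $X_{m+i}^{(j)}\sim\mu_{j,G}^{\mathbb{X}}$ for $1\le i\le L_j^3$ and $X_{m+i}^{(j)}\sim\mu_j^{\mathbb{X}}$ for $i>L_j^3$, all independent of $W_{k+1,j+1}$: granting this, the random function producing $X_{k+1}^{(j+1)}$ out of $(X_{m+i}^{(j)})_{i\ge1}$ and $W_{k+1,j+1}$ is literally the recipe in the statement (with $X_i\leftrightarrow X_{m+i}^{(j)}$ and $W\leftrightarrow W_{k+1,j+1}$), so the $\cf$-conditional law of $X_{k+1}^{(j+1)}$ equals the law output by the recipe; being free of $\cf$, this law must coincide with the unconditional law of $X_{k+1}^{(j+1)}$, which is $\mu_{j+1}^{\mathbb{X}}$ by Observation~\ref{o:blockStructure}(2).

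To establish the conditional claim I would decompose over the value of $m$. The key structural input is that the construction of $X_1^{(j+1)},\dots,X_k^{(j+1)}$ consults the contents of the level $j$ blocks only up to index $m$ and their goodness only up to index $m+L_j^3$. Indeed, every level $(j+1)$ block contains more than $2L_j^3$ sub-blocks and ends at the midpoint of a run of $2L_j^3$ consecutive good sub-blocks, so the terminating run of $X_k^{(j+1)}$ sits at sub-block indices $m-L_j^3+1,\dots,m+L_j^3$, and the search that produced $X_k^{(j+1)}$ tested goodness at indices at most $m+L_j^3$ while exposing contents only up to index $m$; the (earlier) constructions of $X_1^{(j+1)},\dots,X_{k-1}^{(j+1)}$ reach no index $\ge m$ at all. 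Hence the event $\{m=m_0\}$ is itself measurable with respect to the contents of $X_1^{(j)},\dots,X_{m_0}^{(j)}$, the offsets, and the goodness of $X_{m_0+1}^{(j)},\dots,X_{m_0+L_j^3}^{(j)}$, and on this event conditioning on $\cf$ amounts to conditioning on exactly that data. Since the level $j$ blocks are mutually independent with $X_i^{(j)}\sim\mu_j^{\mathbb{X}}$ for $i\ge2$ (Observation~\ref{o:blockStructure}(2) at the preceding level, or the level $1$ construction itself when $j=1$) and the offsets are independent of them, the blocks $X_{m_0+1}^{(j)},X_{m_0+2}^{(j)},\dots$ are a priori i.i.d.\ $\mu_j^{\mathbb{X}}$ and independent of the contents of $X_1^{(j)},\dots,X_{m_0}^{(j)}$ and of all the offsets; further imposing that $X_{m_0+1}^{(j)},\dots,X_{m_0+L_j^3}^{(j)}$ are good therefore leaves those first $L_j^3$ blocks i.i.d.\ $\mu_{j,G}^{\mathbb{X}}$, the later ones i.i.d.\ $\mu_j^{\mathbb{X}}$, the two groups independent of each other, and everything independent of the fresh $W_{k+1,j+1}$. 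Summing over $m_0$ (each $\{m=m_0\}$ being $\cf$-measurable) yields the conditional claim.

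The step I expect to be the main obstacle --- more a matter of care than of difficulty --- is precisely this bookkeeping: checking that the past exposes nothing about the level $j$ blocks beyond the contents up to index $m$ and the goodness up to index $m+L_j^3$ (in particular, that only the goodness and not the contents of the right half of the terminating run of $X_k^{(j+1)}$ has been seen), and turning the informal ``decompose over $\{m=m_0\}$'' into the correct manipulation of conditional expectations across the random index $m$ (an optional-stopping-type argument for the filtration in which the $n$-th $\sigma$-algebra records the contents of $X_1^{(j)},\dots,X_n^{(j)}$ together with the goodness of $X_{n+1}^{(j)},\dots,X_{n+L_j^3}^{(j)}$). The per-block length bound $>2L_j^3$ and the ``stop at the midpoint of the run'' rule are exactly the design features that make this work. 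One also keeps $k\ge1$ throughout so that $X_{k+1}^{(j+1)}$ genuinely has the stationary law $\mu_{j+1}^{\mathbb{X}}$, and uses $m_0+1\ge2$ so that the level $j$ distributional facts apply to every sub-block in play; the leftmost block $X_1^{(j+1)}$ plays no role.
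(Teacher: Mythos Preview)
Your argument is correct, and the paper itself gives no proof of this observation at all: both Observation~\ref{o:blockStructure} and Observation~\ref{o:blockRepresentation} are stated as self-evident consequences of the block construction and left unjustified. What you have written is precisely the natural verification the authors are implicitly relying on --- identifying the conditional law of $X_{k+1}^{(j+1)}$ given the past with the recipe in the statement, and reading off the unconditional law from the fact that this conditional law is constant --- so there is nothing substantively different to compare. Your careful bookkeeping (contents exposed only up to index $m$, goodness exposed only up to $m+L_j^3$, the ``midpoint of a $2L_j^3$ run'' design ensuring the next block begins with $L_j^3$ sub-blocks that are certified good but otherwise fresh) is exactly the content that makes the observation true, and your filtration/stopping-time formulation is the right way to make it rigorous.
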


Whenever we have a sequence $X_1,X_2,...$ satisfying the condition in the observation above, we shall call $X$  the (random) level $(j+1)$ block
constructed from $X_1,X_2,....$ and we shall denote the corresponding geometric variable by $W_X$ and set $T_X=l-L_j^3-L_j^{\alpha -1}$.

We still need to define good blocks, to complete the structure, we now move towards that direction.

\subsection{Corner to Corner, Corner to Side and Side to Side Mapping probabilities}

Now we make some definitions that we are going to use throughout our proof.
Let $X=(X_{s+1}^{(j)},X_{s+2}^{(j)},\ldots ,X_{s+l_X}^{{(j)}})=(X_{a_1}^{(0)},\ldots ,X_{a_2}^{(0)})$ be a level $(j+1)$ $\mathbb{X}$-block ($j\geq 1$) where $X_i^{(j)}$'s and $X_i^{(0)}$ are the level $j$ sub-blocks and the level 0 sub-blocks constituting it respectively. Similarly let $Y=(Y_{s'+1}^{(j)},Y_{s'+2}^{(j)},...,Y_{s+l_Y}^{(j)})=(Y_{b_1}^{(0)},\ldots ,Y_{b_2}^{(0)})$ is a level $(j+1)$ $\mathbb{Y}$-block. Let us consider the lattice rectangle $[a_1,a_2]\times [b_1,b_2] \cap \Z ^2$, and denote it by $X\times Y$. It follows from (\ref{e:levelonex}) and (\ref{e:leveloney}) that sites at all the four corners of this rectangle are open.

\begin{definition}[Corner to Corner Path]
We say that there is a corner to corner path in $X\times Y$, denoted by $$ X\cc Y,$$ if there is an open oriented path in $X\times Y$ from $(a_1,b_1)$ to $(a_2,b_2)$.
\end{definition}

\noindent A site $(x,b_2)$ and respectively a site $(a_2,y)$, on the top, respectively on the right side of $X\times Y$,  is called "reachable from bottom left site" if there is an open oriented path in $X\times Y$ from $(a_1,b_1)$ to that site.

\medskip

\noindent Further, the intervals $[a_1,a_2]$ and $[b_1,b_2]$ will be partitioned into ``chunks" $\{C_r^{X}\}_{r\geq 1}$ and $\{C_r^{Y}\}_{r\geq 1}$  respectively in the following manner. Let for any $\mathbb{X}$-block $\tilde{X}$ at any level $j\geq 1$, $$\mathcal{I}(\tilde{X})=\{a\in \mathbb{N}: \tilde{X} ~\text{contains the level 0 block}~  X_{a}^{(0)}\}.$$

\noindent Let $X=(X_{s+1}^{(j)},X_{s+2}^{(j)},\ldots ,X_{s+l_X}^{(j)})$, and $n_X := \lfloor l_X/L_j^4 \rfloor$. Similarly we define
$n'_Y := \lfloor l_Y/L_j^4 \rfloor$.

\begin{definition}[Chunks]  The discrete segment $C_k^{X} \subset \mathcal{I}(X)$ defined as
\begin{equation}
C_k^{X} := \begin{cases} \cup_{t=(k-1)L_j^4+1}^{kL_j^4}\mathcal{I}(X_{s+t}^{(j)}), \; & k= 1, \dots, n_X -1; \\
\cup_{t=k_X L_j^4+1}^{l_X}\mathcal{I}(X_{s+t}^{(j)}), \; & k= n_X;
\end{cases}
\end{equation}
is called the $k^{th}$\,chunk of $X$.
\end{definition}
By  $C^X$ and $C^Y$ we denote the set of all chunks $\{C_k^{X}\}_{k=1}^{n_X}$ and $\{C_k^{Y}\}_{k=1}^{n'_Y} $ of $X$
and $Y$ respectively. In what follows the letters $\cal{T},\cal{B},\cal{L},\cal{R}$ will stand for "top", "bottom", "left", and "right"\!, respectively. Define:
 \begin{align}
 C_{\cal{B}}^X=C^X\times \{1\}, \quad & C_{\cal{T}}^X=C^X\times \{n'_Y\}, \notag \\
  C_{\cal{L}}^Y=\{1\}\times C^Y, \quad & C_{\cal{R}}^Y=\{n_X\} \times C^Y. \notag
\end{align}

\begin{definition}[Entry/Exit Chunk, Slope Conditions] A pair
 $(C_k^X,1)\in C_{\cal{B}}^X$, $k\in [L_j, n_X-L_j]$ is called an entry chunk (from the bottom) if it satisfies the slope condition
\begin{equation}
\label{e:slopeentrychunkbottom}
\frac{1-2^{-(j+4)}}{R}\leq \frac{n_Y'-1}{n_X-k}\leq R(1+2^{-(j+4)}).
\end{equation}
Similarly,  $(1,C_k^Y)\in C_{\cal{L}}^Y$, $k\in [L_j, n'_Y-L_j]$, is called an entry chunk (from the left) if  it satisfies the slope condition

\begin{equation}
\label{e:slopeentrychunkleft}
\frac{1-2^{-(j+4)}}{R}\leq \frac{n_Y'-k}{n_X-1}\leq R(1+2^{-(j+4)}).
\end{equation}
The set of all entry chunks is denoted by $\mathcal{E}_{in}(X,Y)\subseteq (C_{\cal{B}}^X\cup C_{\cal{L}}^Y)$. The set of all exit chunks $\mathcal{E}_{out}(X,Y)$ is defined in a similar fashion.

We call $(e_1,e_2)\in (C_B^X\cup C_L^Y) \times (C_T^X\cup C_R^Y)$ is an "entry-exit pair of chunks" if the following conditions are satisfied. Without loss of generality assume $e_1=(C_{k}^X,1)\in C_B^X$ and $e_2=(n'_X,C_{k'}^{Y})\in C_R^Y$. Then  $(e_1,e_2)$ is called an "entry-exit pair" if $k\in [L_j, n_X-L_j]$, $k'\in [L_j,n'_Y-L_j]$ and they satisfy the slope condition

\begin{equation}
\label{e:slopeentryexit}
\frac{1-2^{-(j+4)}}{R}\leq \frac{k'-1}{n_X-k}\leq R(1+2^{-(j+4)}).
\end{equation}
Let us denote the set of all "entry-exit pair of chunks" by $\mathcal{E}(X,Y)$.
\end{definition}

\begin{definition}[Corner to Side and Side to Corner Path]
\label{d:cs}
We say that there is a corner to side path in $X\times Y$, denoted by $$ X\cs Y$$ if for each $(C_k^{X},n_X), (n'_Y,C_{k'}^Y)\in \mathcal{E}_2(X,Y)$

$$\#\{a\in C_k^{X}: (a,b_2)~\text{is reachable from $(a_1,b_1)$ in $X\times Y$ }\}\geq \left(\frac34 +2^{-(j+5)}\right)|C_k^X|,$$


$$\#\{b\in C_k^Y: (a_2,b)~\text{is reachable from $(a_1,b_1)$ in $X\times Y$ }\}\geq \left(\frac34 +2^{-(j+5)}\right)|C_k^Y|.$$
\end{definition}

Side to corner paths in $X\times Y$, denoted $X\scc Y$ is defined in the same way except that in this case we want paths from the bottom or left side of the rectangle $X\times Y$ to its top right corner and use $\mathcal{E}_1(X,Y)$ instead of $\mathcal{E}_2(X,Y)$.


\textbf{Condition S:} Let $(e_1,e_2)\in \mathcal{E}(X,Y)$. Without loss of generality we assume $e_1=(C_{k_1}^{X},1)\in C_B^X$ and $e_2=(n_X, C_{k_2}^{Y})\in C_R^Y$. $(e_1,e_2)$ is said to satisfy condition $S$ if there exists $A\subseteq C_{k_1}^{X}$ with $|A|\geq \left(\frac34 +2^{-(j+5)}\right)|C_{k_1}^X|$ and $B\subseteq C_{k_2}^Y$ with $|B|\geq \left(\frac34 +2^{-(j+5)}\right)|C_{k_2}^Y|$  such that for all $a\in A$ and for all $b\in B$ there exist an open path in $X\times Y$ from $(a,b_1)$ to $(a_2,b)$. Condition $S$ is defined similarly for the other cases.

\begin{definition}[Side to Side Path]
\label{d:ss}
We say that there is a side to side path in $X\times Y$, denoted by $$ X\ssc Y$$ if each $(e_1,e_2)\in \mathcal{E}(X,Y)$ satisfies condition $S$.
\end{definition}

It will be convenient for us to define corner to corner, corner to side, and side to side paths not only in rectangles determined by one $\mathbb{X}$-block and one $\mathbb{Y}$-block. Consider a $j+1$-level $\mathbb{X}$-block $X=(X_1,X_2,\ldots ,X_n)$ and a $j+1$-level $\mathbb{Y}$ block $Y=(Y_1,\ldots, Y_{n'})$ where $X_i,Y_i$ are $j$ level subblocks constituting it.  Let $\tilde{X}$ (resp. $\tilde{Y}$) denote a sequence of consecutive sub-blocks of $X$ (resp. $Y$), e.g., $\tilde{X}=(X_{t_1},X_{t_1+1},\ldots , X_{t_2})$ for $1\leq t_1\leq t_2\leq n$. Call $\tilde{X}$ to be a \emph{segment} of $X$. Let $\tilde{X}=(X_{t_1},X_{t_1+1},\ldots , X_{t_2})$ be a segment of $X$ and let $\tilde{Y}=(Y_{t'_1},Y_{t'_1+1},\ldots , Y_{t'_2})$ be a segment of $Y$. Let $\tilde{X}\times \tilde{Y}$ denote the rectangle in $\Z^2$ determined by $\tilde{X}$ and $\tilde{Y}$. Also let $X_{t_1}=(X_{a_1}^{(0)},\ldots, X_{a_2}^{(0)})$, $X_{t_2}=(X_{a_3}^{(0)},\ldots, X_{a_4}^{(0)})$, $Y_{t'_1}=(Y_{b_1}^{(0)},\ldots, Y_{a_2}^{(0)})$, $Y_{t'_2}=(Y_{b_3}^{(0)},\ldots, Y_{b_4}^{(0)})$.

\begin{enumerate}
\item[$\bullet$] We denote by $\tilde{X}\cc \tilde{Y}$, the event that there exists an open oriented path from the bottom left corner to the top right corner of $\tilde{X}\times \tilde{Y}$.
\item[$\bullet$] Let $\tilde{X}\cso \tilde{Y}$ denote the event that
$$\left\{\#\{b\in [b_3,b_4]: (a_2,b)~\text{is reachable from}~(a_1,b_1)\}\geq (\frac{3}{4}+2^{-(j+7/2)})(b_4-b_3)\right\}~\text{and}$$
$$\left\{\#\{a\in [a_3,a_4]: (a,b_4)~\text{is reachable from}~(a_1,b_1)\}\geq (\frac{3}{4}+2^{-(j+7/2)})(a_4-a_3)\right\}.$$
$\tilde{X}\scco \tilde{Y}$ is defined in a similar manner.
\item[$\bullet$] We set $\tilde{X}\ssco \tilde{Y}$ to be the following event. There exists $A \subseteq [a_1,a_2]$ with $|A|\geq (\frac{3}{4}+2^{-(j+7/2)})(a_2-a_1)$, $A' \subseteq [a_3,a_4]$ with $|A'|\geq (\frac{3}{4}+2^{-(j+7/2)})(a_4-a_3)$, $B \subseteq [b_1,b_2]$ with $|B|\geq (\frac{3}{4}+2^{-(j+7/2)})(b_2-b_1)$ and $B' \subseteq [b_3,b_4]$ with $|B|\geq (\frac{3}{4}+2^{-(j+7/2)})(b_4-b_3)$ such that for all $a\in A, a'\in A'$, $b\in B$, $b'\in B'$ we have that $(a_4,b')$ and $(a',b_4)$) are reachable from $(a,b_1)$ and $(a_1,b)$.
\end{enumerate}

\begin{definition}[Corner to Corner Connection probability]
For $j\geq 1$, let $X$ be an $\mathbb{X}$-block at level $j$ and let $Y$ be a $\mathbb{Y}$-block at level $j$. We define the corner to corner connecting probability of $X$ to be  $S_j^{\mathbb{X}}(X)=\mathbb{P}(X\cc Y|X)$. Similarly we define $S_j^{\mathbb{Y}}(Y)=\mathbb{P}(X\cc Y|Y)$.
\end{definition}

As noted above the law of $Y$ is $\mu_j^{\mathbb{Y}}$ in the definition of $S_j^{\mathbb{X}}$ and the law of $X$ is $\mu_j^{\mathbb{X}}$ in the definition of $S_j^{\mathbb{Y}}$.
%


%
%

\subsection{Good blocks}\label{s:goodblocks}
To complete the description, we need to give the definition of ``good'' blocks at level $j$ for each $j\geq 1$ which we have alluded to above. With the definitions from the preceding section, we are now ready to give the recursive definition of a ``good" block as follows.  As usual we only give the definition for $\mathbb{X}$-blocks, the definition for $\mathbb{Y}$ is similar.\\

Let $X^{(j+1)}=(X_1^{(j)},X_2^{(j)},\ldots, X_n^{(j)})$ be an $\mathbb{X}$ block at level $(j+1)$. Notice that we can form blocks at level $(j+1)$ since we have assumed that we already know $G_j^{\mathbb{X}}$.

\begin{definition}[Good Blocks]
\label{d:good}
We say $X^{(j+1)}$ is a good block at level $(j+1)$ (denoted $X^{(j+1)}\in G_{j+1}^{\mathbb{X}}$) if the following conditions hold.

\begin{enumerate}
\item[(i)] It starts with $L_j^3$ good sub-blocks, i.e., $X_i^{(j)}\in G_j^{\mathbb{X}}$ for $1\leq i \leq L_j^3$. (This is required only for $j>0$, as there are no good blocks at level $0$ this does not apply for the case $j=0$).

\item[(ii)] $\mathbb{P}(X \ssc Y|X)\geq 1-L_{j+1}^{-2\beta}$

\item[(iii)] $\mathbb{P}(X \cs Y|X)\geq 9/10 + 2^{-(j+4)}$ and $\mathbb{P}(X \scc Y|X)\geq 9/10 + 2^{-(j+4)}.$

\item[(iv)] $S_j^{\mathbb{X}}(X)\geq 3/4 + 2^{-(j+4)}$.

\item[(v)] The length of the block satisfies
$n\leq L_{j}^{\alpha-1}+L_j^5$.
\end{enumerate}
\end{definition}

%

\section{Recursive estimates}\label{s:recursive}

Our proof of the theorem depends on a collection of recursive estimates, all of which are proved together by induction. In this section we list these estimates for easy reference. The proof of these estimates are provided in the next few sections. We recall that for all $j>0$ $L_j=L_{j-1}^{\alpha}=L_0^{\alpha ^j}$.

\subsection{Tail Estimate}

\begin{enumerate}
\item[I.]
Let $j\geq 1$. Let $X$ be a $\mathbb{X}$-block at level $j$ and  let $m_j=m+2^{-j}$. Then
\begin{equation}
\label{tailx1}
\mathbb{P}(S_j^{\mathbb{X}}(X)\leq p)\leq p^{m_j}L_j^{-\beta}~~\text{for}~~p\leq \frac 34 + 2^{-(j+3)}.
\end{equation}
Let $Y$ be a $\mathbb{Y}$-block at level $j$. Then
\begin{equation}
\label{taily1}
\mathbb{P}(S_j^{\mathbb{Y}}(Y)\leq p)\leq p^{m_j}L_j^{-\beta}~~\text{for}~~p\leq \frac 34 + 2^{-(j+3)}.
\end{equation}

\end{enumerate}

%
%
\subsection{Length Estimate}
\begin{enumerate}
\item[II.]
For $X$ an $\mathbb{X}$-block at at level $j\geq 0$,
\begin{equation}
\label{lengthx}
\mathbb{E}[\exp (L_{j-1}^{-6}(|X|-(2-2^{-j})L_j))] \leq 1.
\end{equation}

Similarly for $Y$, a $\mathbb{Y}$-block at level $j$, we have
\begin{equation}
\label{lengthy}
\mathbb{E}[\exp (L_{j-1}^{-6}(|Y|-(2-2^{-j})L_j))] \leq 1.
\end{equation}
\end{enumerate}

\subsection{Probability of Good Blocks}
\begin{enumerate}

\item[III.]
Most blocks are ``good".
\begin{equation}
\label{xgood}
\mathbb{P}(X\in G_j^\mathbb{X})\geq 1-L_j^{-\delta}.
\end{equation}
\begin{equation}
\label{ygood}
\mathbb{P}(Y\in G_j^\mathbb{Y})\geq 1-L_j^{-\delta}.
\end{equation}

%
\end{enumerate}

\subsection{Consequences of the Estimates}
For now let us assume that the estimates $I-III$ hold at some level $j$.  Then we have the following consequences (we only state the results for $\mathbb{X}$, but similar results hold for $\mathbb{Y}$ as well).

\begin{lemma}\label{l:consequences}
Let us suppose \eqref{tailx1} and \eqref{xgood} hold at some level $j$. Then for all $X\in G_j^{\mathbb{X}}$ we have the following.
\begin{enumerate}
\item[(i)]
\begin{equation}
\label{e:ccgg}
\P[X\cc Y\mid Y\in G_j^{\mathbb{Y}}, X]\geq \frac{3}{4}+2^{-(j+7/2)}.
\end{equation}

\item[(ii)]
\begin{equation}
\label{e:csgg}
\P[X\cs Y\mid Y\in G_j^{\mathbb{Y}}, X]\geq \frac{9}{10}+2^{-(j+7/2)}, \quad \P[X\scc Y\mid Y\in G_j^{Y}, X]\geq \frac{9}{10}+2^{-(j+7/2)}.
\end{equation}

\item[(iii)]
\begin{equation}
\label{e:ssgg}
\P[X\ssc Y\mid Y\in G_j^{\mathbb{Y}}, X]\geq 1-L_j^{-\beta}.
\end{equation}
\end{enumerate}
\end{lemma}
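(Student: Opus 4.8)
The plan is to read off all three inequalities from the defining properties of a good level-$j$ block (Definition~\ref{d:good}, with the level index specialised to $j$) together with the elementary fact that conditioning on $\{Y\in G_j^{\mathbb{Y}}\}$ perturbs a conditional probability by a negligible amount, because $\P(Y\in G_j^{\mathbb{Y}})\geq 1-L_j^{-\delta}$ by \eqref{ygood} (the $\mathbb{Y}$-counterpart of the hypothesis \eqref{xgood}). Throughout, $X\in G_j^{\mathbb{X}}$ is a fixed good block and $Y$ denotes a random level-$j$ $\mathbb{Y}$-block with law $\mu_j^{\mathbb{Y}}$, independent of $X$; hence $\{Y\in G_j^{\mathbb{Y}}\}$ is independent of $X$ and $\P(Y\in G_j^{\mathbb{Y}}\mid X)=\P(Y\in G_j^{\mathbb{Y}})>0$, so all the conditional probabilities are well-defined.

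The ingredient I would isolate first is the deterministic bound, valid for any event $E$ measurable with respect to $(X,Y)$: since $E^{c}\cap\{Y\in G_j^{\mathbb{Y}}\}\subseteq E^{c}$,
\[
\P\!\left(E^{c}\mid Y\in G_j^{\mathbb{Y}},\,X\right)=\frac{\P\!\left(E^{c}\cap\{Y\in G_j^{\mathbb{Y}}\}\mid X\right)}{\P(Y\in G_j^{\mathbb{Y}})}\ \leq\ \frac{\P(E^{c}\mid X)}{1-L_j^{-\delta}}.
\]
It is important to use this multiplicative (complementary) form rather than the cruder additive bound $\P(E\mid Y\in G_j^{\mathbb{Y}},X)\geq \P(E\mid X)-L_j^{-\delta}$: in part (iii) the target error $L_j^{-\beta}$ is much smaller than $L_j^{-\delta}$ (recall $\beta>\alpha(\delta+1)>\delta$), so only the lossless form survives. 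Now I substitute the three good-block guarantees for the fixed block $X$ coming from Definition~\ref{d:good}: $S_j^{\mathbb{X}}(X)=\P(X\cc Y\mid X)\geq \tfrac34+2^{-(j+3)}$; $\P(X\cs Y\mid X)\geq \tfrac9{10}+2^{-(j+3)}$ and likewise $\P(X\scc Y\mid X)\geq \tfrac9{10}+2^{-(j+3)}$; and $\P(X\ssc Y\mid X)\geq 1-L_j^{-2\beta}$. For (iii), taking $E=\{X\ssc Y\}$ gives $\P(X\not\ssc Y\mid Y\in G_j^{\mathbb{Y}},X)\leq L_j^{-2\beta}/(1-L_j^{-\delta})\leq 2L_j^{-2\beta}\leq L_j^{-\beta}$. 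For (i), taking $E=\{X\cc Y\}$ gives $\P(X\cc Y\mid Y\in G_j^{\mathbb{Y}},X)\geq 1-(1-L_j^{-\delta})^{-1}\bigl(\tfrac14-2^{-(j+3)}\bigr)$, and the claimed bound $\tfrac34+2^{-(j+7/2)}$ reduces to the inequality $2^{-(j+3)}-2^{-(j+7/2)}\geq (1-L_j^{-\delta})^{-1}L_j^{-\delta}\bigl(\tfrac14-2^{-(j+3)}\bigr)$; part (ii) is the same computation with $\tfrac14$ replaced by $\tfrac1{10}$, and the $\scc$ statement is symmetric.

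The proof thus has no genuine obstacle — it is a short conditioning argument — and all that remains is to verify the handful of numerical inequalities. Each has the shape ``a fixed negative power of $2^{j}$ dominates $L_j^{-\delta}$'': the left-hand side $2^{-(j+3)}-2^{-(j+7/2)}=(1-2^{-1/2})\,2^{-(j+3)}$ is of order $2^{-j}$, whereas $L_j^{-\delta}=L_0^{-\delta\alpha^{j}}$ is doubly exponentially small in $j$, so every such inequality holds for all $j\geq 1$ once $L_0$ is chosen large enough. The only points demanding care are the bookkeeping of the exponent shifts between levels $j$ and $j+1$ in Definition~\ref{d:good} (so that a good level-$j$ block indeed carries the $2^{-(j+3)}$ and $1-L_j^{-2\beta}$ bounds used above), and, in part (iii), bounding the complementary event and dividing by $\P(Y\in G_j^{\mathbb{Y}})$ rather than subtracting $\P(Y\notin G_j^{\mathbb{Y}})$.
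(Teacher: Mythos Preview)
Your proposal is correct and follows exactly the approach in the paper: bound the complementary conditional probability by $\P(E^{c}\mid X)/\P(Y\in G_j^{\mathbb{Y}})$ and invoke the good-block guarantees together with \eqref{ygood}. The paper only writes out part (iii) and declares the other two ``similar''; you have supplied the details they omit, including the correct observation that the multiplicative (rather than additive) form of the conditioning bound is essential for (iii) because $\beta>\delta$.
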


\begin{proof}
We only prove \eqref{e:ssgg}, other two are similar. We have
\[
\P[X\not\ssc Y\mid Y\in G_j^{\mathbb{Y}},X]\leq \dfrac{\P[X\not\ssc Y\mid X]}{\P[Y\in G_j^{\mathbb{Y}}]}\leq L_j^{-2\beta} (1-L_j^{-\delta})^{-1}\leq L_j^{-\beta}
\]
which implies \eqref{e:ssgg}.
\end{proof}

\begin{theorem}[Recursive Theorem]
\label{induction}
There exist positive constants $\alpha$, $\beta$, $\delta$, $m$, $k_0$ and $R$ such that for all large enough $L_0$ the following holds.
If the recursive estimates (\ref{tailx1}), (\ref{taily1}), (\ref{lengthx}), (\ref{lengthy}), (\ref{xgood}), (\ref{ygood})  and hold at level $j$ for some $j\geq 1$ then all the estimates hold at level $(j+1)$ as well.
\end{theorem}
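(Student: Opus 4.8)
## Proof Proposal for the Recursive Theorem

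The plan is to prove all six estimates \eqref{tailx1}--\eqref{ygood} at level $j+1$ from their validity at level $j$, treating them together since the path-crossing clauses of Definition~\ref{d:good} couple the good-block estimate to the tail estimate. Throughout I would use Observation~\ref{o:blockRepresentation} to realize a random level-$(j+1)$ block $X$ as a fixed functional of an i.i.d.\ stream of level-$j$ sub-blocks (the first $L_j^3$ of which are conditioned to be good) together with its geometric waiting variable $W_X$, and Lemma~\ref{l:consequences} to replace a random partner $Y$ by a good one at the cost of a negligible $L_j^{-\delta}$ correction. I would also partition $\mathcal{I}(X)$ and $\mathcal{I}(Y)$ into chunks of $L_j^4$ consecutive sub-blocks, so that the entry/exit slope conditions \eqref{e:slopeentrychunkbottom}--\eqref{e:slopeentryexit}, with their slowly tightening $2^{-(j+4)}$ margins, are available. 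The length estimate is the softest step: from the construction $|X|=\sum_{i=1}^{l+L_j^3}|X_i^{(j)}|$ with $l = L_j^3 + L_j^{\alpha-1} + W_X + T_X$, where $W_X\sim\mathrm{Geom}(L_j^{-4})$ and $T_X$ is the further wait for a run of $2L_j^3$ consecutive good sub-blocks; feeding \eqref{lengthx} into a moment-generating-function computation at rate $L_j^{-6}$, together with the exponential tails of $W_X$ and (via \eqref{xgood}) of $T_X$, yields \eqref{lengthx}--\eqref{lengthy} at level $j+1$, the slack between $(2-2^{-j})L_j$ and $(2-2^{-(j+1)})L_{j+1}$ and between the rates $L_{j-1}^{-6}$ and $L_j^{-6}$ absorbing all errors once $L_0$ is large.

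\textbf{Tail estimate --- the heart of the argument.} Fix $p\le \frac34+2^{-(j+4)}$ and let $Y$ be an independent random level-$(j+1)$ $\mathbb{Y}$-block. By Lemma~\ref{l:consequences} it suffices to control $\P\bigl(\P[X\cc Y\mid Y\in G_{j+1}^{\mathbb{Y}},X]\le p\bigr)$. If every level-$j$ sub-block of $X$ and of $Y$ were good, then chaining the recursive estimates \eqref{e:ccgg}--\eqref{e:ssgg} along a staircase of sub-blocks with a prescribed admissible sequence of slopes would produce many crossing paths of distinct slopes, so $X\ssc Y$, $X\cs Y$ and $X\scc Y$ would hold and $\P[X\cc Y\mid X,Y]$ would be essentially $1$. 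The obstruction is the presence of bad level-$j$ sub-blocks of $X$: each determines a vertical strip of $X\times Y$, and since $Y$ is good it carries at least $\tfrac34$ of the reachable points on each chunk boundary, so the generalised-mapping machinery of \cite{BS14} lets us attempt a large number of candidate crossings of each bad strip; conditionally on $X$, the chance that a given bad strip blocks all of them is bounded in terms of $S_j^{\mathbb{X}}$ of that sub-block and combinatorial factors. One then invokes the counting lemma (Lemma~\ref{l:totalSizeBound}), which bounds the probability that a level-$(j+1)$ block has \emph{(a)} excessive length, \emph{(b)} too many bad sub-blocks, or \emph{(c)} a collection of bad sub-blocks $\{X_{i_r}^{(j)}\}$ with $\prod_r S_j^{\mathbb{X}}(X_{i_r}^{(j)})$ too small, and splits the event $\{S_{j+1}^{\mathbb{X}}(X)\le p\}$ into the five cases of Section~\ref{s:outline} according to which defect is responsible. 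In each case the event is charged either to \eqref{xgood} (a rare excess of bad sub-blocks), to \eqref{tailx1} for the bad sub-blocks via the product $\prod_r S_j^{\mathbb{X}}(X_{i_r}^{(j)})$, or to \eqref{lengthx}, and in each case the resulting bound is $\le p^{m_{j+1}}L_{j+1}^{-\beta}$; the constraints $\alpha>6$, $\beta>\alpha(\delta+1)$, $m>9\alpha\beta$, $k_0>36\alpha\beta$, $R>6(m+1)$ are precisely what make the arithmetic close, so the per-block power improves from $L_j^{-\beta}$ to $L_{j+1}^{-\beta}=L_j^{-\alpha\beta}$ while the exponent drops only from $m_j$ to $m_{j+1}$. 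The identical argument with $\mathbb{X}$ and $\mathbb{Y}$ exchanged gives \eqref{taily1}.

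\textbf{Good blocks, and conclusion.} Using the refined crossing estimates established alongside the tail bound, each of conditions (ii)--(iv) of Definition~\ref{d:good} fails at level $j+1$ with probability at most a small multiple of $L_{j+1}^{-\beta}$ (for (iv) this is \eqref{tailx1} at level $j+1$ evaluated just below the threshold, where the $m_{j+1}$-th power of a number $<1$ kills the estimate, and similarly for the corner-to-side and side-to-side versions in (ii)--(iii)); condition (i) fails with probability at most $L_j^3\,L_j^{-\delta}\le L_{j+1}^{-\delta}$ by \eqref{xgood} at level $j$ since $\delta>2\alpha$; and condition (v) fails with probability at most $L_{j+1}^{-\delta}$ by the length step. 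Summing and using $\beta>\delta$ gives \eqref{xgood}--\eqref{ygood} at level $j+1$, completing the induction. (The base case $j=1$ is verified separately from the explicit construction \eqref{e:levelonex}--\eqref{e:leveloney} and $L_0$ large.)

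The step I expect to be by far the hardest is the tail estimate, and within it the geometric bookkeeping of the case analysis: turning the varying-slope crossings guaranteed at level $j$ into enough genuinely usable candidate positions to cross each bad strip, keeping those attempts sufficiently independent to beat the product $\prod_r S_j^{\mathbb{X}}(X_{i_r}^{(j)})$, and organising the five cases so that every route by which a block can fail to have large $S_{j+1}^{\mathbb{X}}$ is charged to one of the three already-controlled defects (\eqref{xgood}, \eqref{tailx1} through the product, or \eqref{lengthx}) without sacrificing the factor-$\alpha$ gain in the power of $L$ needed to run the induction.
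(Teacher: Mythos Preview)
Your proposal is essentially correct and follows the same architecture as the paper: the length estimate via an MGF computation (quoted from \cite{BS14}), the tail estimate via the five-case analysis driven by Lemma~\ref{l:totalSizeBound}, separate corner-to-side and side-to-side propositions for the finer crossing events, and then the good-block probability by a union bound. Two small corrections are worth flagging.

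First, in the tail-estimate paragraph you reduce to controlling $\P\bigl(\P[X\cc Y\mid Y\in G_{j+1}^{\mathbb{Y}},X]\le p\bigr)$, but $G_{j+1}^{\mathbb{Y}}$ is what you are trying to establish, so conditioning on it here is circular as written. The paper sidesteps this by conditioning instead on the level-$j$-measurable event $\A{1}_{Y,j+1}$ (bounded $T_Y$, at most $k_0$ bad sub-blocks, product of their $S_j$ values not too small), which serves the same purpose and whose probability $\ge 1-L_{j+1}^{-3\beta}$ comes directly from Lemma~\ref{l:totalSizeBound}.

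Second, your handling of condition (i) in the good-block step contains an arithmetic error: the claimed inequality $L_j^{3}L_j^{-\delta}\le L_{j+1}^{-\delta}=L_j^{-\alpha\delta}$ would require $3-\delta\le -\alpha\delta$, i.e.\ $\delta(\alpha-1)\le -3$, which is impossible for $\alpha>1$, $\delta>0$. Fortunately no bound is needed: by Observation~\ref{o:blockRepresentation}, a block with law $\mu_{j+1}^{\mathbb{X}}$ is \emph{constructed} with its first $L_j^3$ sub-blocks conditioned to be good, so condition (i) holds deterministically, and the paper's proof of Theorem~\ref{t:goodprobability} accordingly checks only conditions (ii)--(v).
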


We will choose the parameters as in equation~\eqref{e:parameters}.  Before giving a proof of Theorem \ref{induction} we show how using this theorem we can prove the general theorem. To use the recursive theorem we first need to show that the estimates $I$ and $II$ hold at the base level $j=1$.  Because of the obvious symmetry between $\mathbb{X}$ and $\mathbb{Y}$ we need only show that (\ref{tailx1}), (\ref{lengthx}) and (\ref{xgood}) hold for $j=1$ if $M$ is sufficiently large.

\subsection{Proving the Recursive Estimates at Level 1}
Let $X=(X_{1}^{(0)}, X_{(2)}^{(0)}, \ldots , X_{(L_1+
T_X^{(1)})}^{(0)})\sim \mu_{1}^{\mathbb{X}}$ be an $\mathbb{X}$-block at level $1$. Let $Y=(Y_{1}^{(0)}, Y_{(2)}^{(0)}, \ldots , Y_{(L_1+T_Y^{(1)})}^{(0)})\sim \mu_{1}^{\mathbb{Y}}$.

\begin{theorem}\label{t:basecase}
For all sufficiently large $L_0$, if $M$ (depending on $L_0$) is sufficiently large, then
\begin{equation}
\label{tailxbase}
\mathbb{P}(S_j^{\mathbb{X}}(X)\leq p)\leq p^{m+2^{-1}}L_1^{-\beta}~~\text{for}~~p\leq \frac 34 + 2^{-4},
\end{equation}
and
\begin{equation}
\label{xgoodbase}
\mathbb{P}(X\in G_j^\mathbb{X})\geq 1-L_1^{-\delta}.
\end{equation}
\end{theorem}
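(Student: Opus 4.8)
The plan is to reduce both statements to elementary estimates about the level-$1$ block $X=(X_1^{(0)},\dots,X_{L_1+T_X^{(1)}}^{(0)})$, which is just a run of i.i.d.\ uniform characters on $\{1,\dots,M\}$ of length $\geq L_1$ stopped at the first occurrence (after time $L_1$) of the pattern dictated by \eqref{e:levelonex}, and similarly for $Y$ with \eqref{e:leveloney}. The key observation is that when $M$ is large, almost every site of the rectangle $X\times Y$ is open: a fixed site $(i,j)$ is closed with probability $1/M$, and the $X$-block and $Y$-block are short (their lengths have exponential tails around $L_1$, which is the content one must also establish for \eqref{lengthx} at $j=1$, coming from the geometric waiting time for the stopping pattern). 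So the natural route is a first-moment / union bound argument: condition on $X$ (equivalently, on its length $L_X$), and bound the probability that the rectangle $[a_1,a_2]\times[b_1,b_2]$ fails to have the requisite open paths by summing the closed-site probabilities over the $O(L_X L_Y)$ sites.

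First I would establish the length control. Since $T_X^{(1)}$ is dominated by a geometric variable with success probability $\Theta(1/M^{2})$ for the two-character pattern in \eqref{e:levelonex} — actually one should note the stopping rule forces the pattern immediately after $L_1$, so $T_X^{(1)}+1$ is a sum of i.i.d.\ geometric-type waiting times and has an exponential tail $\mathbb{P}(T_X^{(1)}\geq t)\leq Ce^{-ct}$ with $c$ depending on $M$ — for $M$ large relative to $L_0$ one gets $\mathbb{P}(L_X\geq L_1+L_1^{2})$ super-polynomially small in $L_1$, and likewise $\mathbb{P}(n\leq L_1^{\alpha-1}+L_1^{5})$ with overwhelming probability since at level $1$ the ``number of subblocks'' $n$ equals the length $L_X$ (level-$0$ blocks are single characters), which is $\leq L_1+L_1^2 \ll L_1^{\alpha-1}$ as $\alpha>6$. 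This dispatches condition (v) of Definition~\ref{d:good} and \eqref{lengthx} at level $1$.

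Next, on the event that both $L_X$ and $L_Y$ are at most $L_1+L_1^{2}$ (call it $E$), I would bound $\mathbb{P}(X\not\cc Y\mid X)$, $\mathbb{P}(X\not\cs Y\mid X)$, $\mathbb{P}(X\not\scc Y\mid X)$ and $\mathbb{P}(X\not\ssc Y\mid X)$. The clean way: if every site of $X\times Y$ were open there would be open paths of every slope between the corners and the sides, so each failure event is contained in $\{X\times Y$ has a closed site$\}$, which has conditional probability at most $|X\times Y|/M \leq (L_1+L_1^2)^2/M$. Choosing $M$ large enough (depending on $L_0$, hence on $L_1$) makes this $\leq L_1^{-4\beta}$, say; on $E^c$ we use the length tail bound. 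Combining, all of (ii), (iii), (iv) of Definition~\ref{d:good} hold with probability $\geq 1-L_1^{-\delta}$ for $M$ large, giving \eqref{xgoodbase}. For the tail estimate \eqref{tailxbase}: for $p\leq \tfrac34+2^{-4}$ we have $p^{m+2^{-1}}\geq (\tfrac34)^{m+1}$, a positive constant, so it suffices to show $\mathbb{P}(S_1^{\mathbb{X}}(X)\leq p)\leq c(m)\,L_1^{-\beta}$; but $S_1^{\mathbb{X}}(X)=\mathbb{P}(X\cc Y\mid X)$ is $<1$ only if there is some realization of $Y$ (of positive probability, so of length $\leq$ some $N(L_1)$) blocking the corner-to-corner connection, and on the high-probability length event this again forces a closed site, so $\mathbb{P}(S_1^{\mathbb{X}}(X)<1)$ is at most the probability that $X$ itself contains a ``bad'' pattern enabling a block, which one bounds by $O(L_X^2/M) + \mathbb{P}(E^c) \leq L_1^{-\beta}$ for $M$ large. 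Here one has to be slightly careful that the conditioning is on $X$ only and $Y$ ranges over its whole law, but the union bound over the (few) short $Y$'s handles this.

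The main obstacle is the tail estimate \eqref{tailxbase}, because the exponent $m_1=m+2^{-1}$ must be matched \emph{for all} $p\leq\tfrac34+2^{-4}$, not just near $p=1$; the point to get right is that for the base case we do not need the delicate ``improving slopes'' mechanism at all — since $S_1^{\mathbb{X}}(X)$ is with overwhelming probability exactly equal to $1$ (the rectangle is fully open), the left tail $\mathbb{P}(S_1^{\mathbb{X}}(X)\le p)$ is essentially $\mathbb{P}(S_1^{\mathbb{X}}(X)<1)$ for every $p<1$, and the factor $p^{m_1}$ is a harmless constant on the relevant range. The only real work is thus a careful choice of $M=M(L_0)$ making the several union bounds smaller than the prescribed powers $L_1^{-\beta}$, $L_1^{-\delta}$ simultaneously, together with the exponential tail for the geometric stopping time, which is routine.
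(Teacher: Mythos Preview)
Your argument for \eqref{xgoodbase} is essentially what the paper does (condition on both blocks being short, then note that for $M$ large the rectangle is fully open with high probability), modulo one misreading: the stopping rule \eqref{e:levelonex} asks for a pattern in \emph{residue classes mod $4$}, not for two specific characters. The success probability is therefore $(1/4)^2=1/16$, independent of $M$, and $T_X^{(1)}$ has an exponential tail with a universal rate (the paper's Lemma~\ref{l:basecaselength}). This matters: if the rate really were $\Theta(1/M^2)$ as you wrote, your two requirements ``$M$ large enough that $(L_1+L_1^2)^2/M\le L_1^{-4\beta}$'' and ``$M$ small enough that $\P(T_X^{(1)}\ge L_1^2)$ is tiny'' would be incompatible.

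The real gap is in the tail estimate \eqref{tailxbase}. Your key sentence, ``for $p\le \tfrac34+2^{-4}$ we have $p^{m+2^{-1}}\ge (\tfrac34)^{m+1}$, a positive constant,'' has the inequality reversed: $p^{m+1/2}$ is \emph{decreasing} in $p$, so for small $p$ the right-hand side $p^{m+1/2}L_1^{-\beta}$ is arbitrarily small, and it does \emph{not} suffice to show $\P(S_1^{\mathbb{X}}(X)\le p)\le c(m)L_1^{-\beta}$. You must actually show that $S_1^{\mathbb{X}}(X)$ is rarely very small, with the correct polynomial rate in $p$. Moreover, your claim that $S_1^{\mathbb{X}}(X)=1$ with overwhelming probability is false: $S_1^{\mathbb{X}}(X)=\P(X\cc Y\mid X)$ is a probability over $Y$, and since for any fixed $X$ there are realizations of $Y$ that create closed sites blocking the path, one always has $S_1^{\mathbb{X}}(X)<1$. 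What you need instead is a deterministic lower bound $S_1^{\mathbb{X}}(X)\ge f(T_X^{(1)})$ valid for \emph{every} $X$, with $f$ decaying slowly enough (like $c^{T_X^{(1)}/m}$) that combining it with the $M$-independent exponential tail of $T_X^{(1)}$ yields the $p^{m+1/2}$ factor. The paper does exactly this in Lemma~\ref{l:baseccestimates}: for arbitrarily long $X$ it exhibits a $Y$-event (forcing $T_Y^{(1)}\approx T_X^{(1)}/(50m)$ and then making the path avoid closed sites along a near-diagonal strip) of probability at least $\tfrac{1}{20}(7/10)^{T_X^{(1)}/(50m)}$, and inverts this to bound $\P(S_1^{\mathbb{X}}(X)\le p)$ via the tail of $T_X^{(1)}$. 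Your proposal contains no mechanism of this sort, so the case of small $p$ (equivalently, atypically long $X$) is not covered.
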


Theorem \ref{t:basecase} is proved using the following Lemmas. Without loss of generality we shall assume that $M$ is a multiple of $4$.

\begin{lemma}\label{l:basecaselength}
Let $X$ be an $\mathbb{X}$ block at level 1 as above. Then we have for all $l\geq 1$,
\begin{equation}
\label{e:baselengthbound}
\P(T_X^{(1)}\geq l)\leq \left(\frac{15}{16}\right)^{\frac{l-1}{2}}.
\end{equation}
Further we have,
\begin{equation}
\label{e:baselengthbound2}
\mathbb{E}[\exp (L_{0}^{-6}(|X|-\frac{3}{2}L_1))] \leq 1.
\end{equation}

\end{lemma}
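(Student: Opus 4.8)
The plan is to reduce both statements to the observation that, although the first element of a level-$1$ block is forced to be $\equiv 0 \bmod 4$ (inherited from the termination rule of the preceding block), this constraint has no effect on the block's length: the rule \eqref{e:levelonex} only ever inspects positions at distance $\ge L_1 > 1$ from the left end. Concretely, writing the level-$0$ elements of $X$, re-indexed so that $X$ begins at position $1$, as $U_1, U_2, \dots$, we have $|X| = \min\{t \ge L_1 : U_t \equiv 1 \bmod 4 \text{ and } U_{t+1} \equiv 0 \bmod 4\}$ and $T_X^{(1)} = |X| - L_1$, where $U_2, U_3, \dots$ are i.i.d.\ uniform on $\{1, \dots, M\}$ and, since $4 \mid M$, each residue class modulo $4$ has probability exactly $\tfrac14$.

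For \eqref{e:baselengthbound} I would manufacture independence by passing to the disjoint pairs $P_j = \{L_1 + 2j,\, L_1 + 2j + 1\}$, $j \ge 0$. Call $P_j$ \emph{bad} if $U_{L_1 + 2j} \equiv 1 \bmod 4$ and $U_{L_1 + 2j + 1} \equiv 0 \bmod 4$; then the events $\{P_j \text{ bad}\}$ are independent (disjoint index sets) and each has probability $\tfrac1{16}$. On the event $\{T_X^{(1)} \ge l\}$ no termination pattern occurs at any $t \in \{L_1, \dots, L_1 + l - 1\}$, so $P_j$ fails to be bad for every $j$ with $2j \le l - 1$, i.e.\ for at least $(l-1)/2$ values of $j$. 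Hence $\mathbb{P}(T_X^{(1)} \ge l) \le (1 - \tfrac1{16})^{(l-1)/2} = (\tfrac{15}{16})^{(l-1)/2}$.

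The estimate \eqref{e:baselengthbound2} then follows from a routine exponential-moment computation. Since $|X| - \tfrac32 L_1 = T_X^{(1)} - \tfrac12 L_1$, it suffices to show $\mathbb{E}\big[\exp(L_0^{-6} T_X^{(1)})\big] \le \exp(\tfrac12 L_0^{-6} L_1)$. Summation by parts combined with \eqref{e:baselengthbound} yields $\mathbb{E}\big[\exp(L_0^{-6} T_X^{(1)})\big] \le 1 + (e^{L_0^{-6}} - 1)\big(1 - e^{L_0^{-6}}\sqrt{15/16}\,\big)^{-1}$, which tends to $1$ as $L_0 \to \infty$ and is therefore at most $2$ for $L_0$ large; on the other hand $L_0^{-6} L_1 = L_0^{\alpha - 6} \to \infty$ because $\alpha > 6$, so $\exp(\tfrac12 L_0^{-6} L_1) \ge 2$ once $L_0$ is large, which gives the bound.

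The argument is essentially bookkeeping; the one point that needs genuine care is the reduction in the first paragraph — checking that the conditioning carried over from the previous block is irrelevant to the length, and (relatedly) that all level-$1$ blocks share one law so that ``$X \sim \mu_1^{\mathbb{X}}$'' is unambiguous. For the leftmost block $X_1^{(1)}$ the left end carries no inherited conditioning at all, so the same estimates hold a fortiori.
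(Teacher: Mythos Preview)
Your proof is correct. For \eqref{e:baselengthbound} your disjoint-pairs argument is exactly the content of the paper's one-line claim that $T_X^{(1)}$ is stochastically dominated by $2V$ with $V\sim\mathrm{Geom}(1/16)$; you have simply made the coupling explicit. For \eqref{e:baselengthbound2} the paper instead splits the layer-cake integral $\int_0^\infty \P[\exp(L_0^{-6}(|X|-\tfrac32 L_1))\ge y]\,dy$ into $[0,\tfrac12]\cup[\tfrac12,1]\cup[1,\infty)$ and bounds each piece separately from the tail estimate, whereas you compute the moment generating function of $T_X^{(1)}$ directly via summation by parts and compare it against $\exp(\tfrac12 L_0^{\alpha-6})$. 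Both routes are routine consequences of part~(1); yours is a little more direct and makes the role of the hypothesis $\alpha>6$ more transparent, while the paper's decomposition avoids any explicit MGF computation.
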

\begin{proof}
It follows from the construction of blocks at level $1$ that
$T_X^{(1)} \preceq 2V$
where $V$ has a $\mbox{Geom}(1/16)$ distribution, (\ref{e:baselengthbound}) follows immediately from this.
To prove (\ref{e:baselengthbound2}) we notice the following two facts.

\[
\P[\exp(L_0^{-6}(|X|-3/2L_1))\geq \frac{1}{2}]\leq \P[|X|\geq \frac{3}{2}L_1-L_0^{6}\log 2]\leq \P[|X|\geq 5/4L_1\leq (15/16)^{\frac{L_1}{10}}\leq 1/4
\]
for $L_0$ large enough using (\ref{e:baselengthbound}).

Also, for all $x\geq 0$ using  (\ref{e:baselengthbound}),
\[
\P[\frac{|X|-3/2L_1}{L_0^6}\geq x]\leq \left(\frac{15}{16}\right)^{xL_0^6/2+L_1/4} \leq \frac{1}{10}\exp(-3x).
\]

Now it follows from above that

\begin{eqnarray*}
\E[\exp(L_0^{-6}(|X|-3/2L_1))] &=&\int_{0}^{\infty}\P[\exp(L_0^{-6}(|X|-3/2L_1))\geq y]~dy\\
&=& \int_{0}^{\frac12}\P[\exp(L_0^{-6}(|X|-3/2L_1))\geq y]~dy\\
&+&\int_{\frac12}^{1}\P[\exp(L_0^{-6}(|X|-3/2L_1))\geq y]~dy\\
&+& \int_{1}^{\infty}\P[\exp(L_0^{-6}(|X|-3/2L_1))\geq y]~dy\\
&\leq & \frac12 + \frac18  + \frac{1}{10}\int_{0}^{\infty}\P[(L_0^{-6}(|X|-3/2L_1))\geq z]e^z~dz\\
&\leq & \frac12 +\frac18 + \frac{1}{10}\leq 1.
\end{eqnarray*}
This completes the proof.
\end{proof}

We define $\A{1}_{X,1}$ to be the set of level $1$ $\mathbb{X}$-blocks defined by
\[
\A{1}_{X,1} := \left\{X:T_X^{(1)} \leq 100mL_1\right\}.
\]

It follows from Lemma \ref{l:basecaselength} that for $L_0$ sufficiently large
\begin{equation}
\label{e:A_1basesizebound}
\P(X\in \A{1}_{X,1})\geq 1-L_1^{-3\beta}.
\end{equation}

\begin{lemma}\label{l:basesideestimates}
For $M$ sufficiently large, the following inequalities hold for each $X\in \A{1}_{X,1}$.
\begin{enumerate}
\item[(i)]
\begin{equation}\label{e:basecornertocorner}
\P[X\cc Y\mid X]\geq \frac{3}{4}+2^{-4}.
\end{equation}

\item[(ii)]
\begin{equation}\label{e:basecornertoside}
\P[X\cs Y\mid X]\geq \frac{9}{10}+2^{-4}~\text{and}~\P[X\scc Y\mid X]\geq \frac{9}{10}+2^{-4}.
\end{equation}
\item[(iii)]
\begin{equation}\label{e:basesidetoside}
\P[X\ssc Y\mid X]\geq 1-L_1^{-2\beta}.
\end{equation}
\end{enumerate}
\end{lemma}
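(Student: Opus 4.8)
The plan is to exploit the freedom in $M$: since $M$ may be taken far larger than any fixed power of $L_1=L_0^{\alpha}$, the rectangle $X\times Y$ will contain \emph{no closed site at all} with overwhelming probability, and on that event every one of the required connectivity events holds trivially. So the whole proof should reduce to a one‑line first‑moment estimate on the number of closed sites. Throughout, fix $X\in\A{1}_{X,1}$, so that $|X|=L_1+T_X^{(1)}\le(1+100m)L_1$. Since $\mathbb{X}$ and $\mathbb{Y}$ are independent and a level‑$1$ block of either sequence is a deterministic function of that sequence alone, conditioning on $X$ leaves the law of $Y\sim\mu_1^{\mathbb{Y}}$ unchanged, and, given $X$ and $Y$, the configuration in $X\times Y$ is deterministic (the site $(i,i')$ is closed iff $X_i^{(0)}=Y_{i'}^{(0)}$).

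First I would record the deterministic observation: if no site of $X\times Y$ is closed, then all of $X\cc Y$, $X\cs Y$, $X\scc Y$ and $X\ssc Y$ hold. Indeed, with no closed site, for any $a_1\le a\le a_2$ and $b_1\le b\le b_2$ the monotone paths $(a_1,b_1)\to(a,b_1)\to(a,b)$ and $(a_1,b_1)\to(a_1,b)\to(a,b)$ are open; hence every site of the rectangle is reachable from the bottom‑left corner and every site of the bottom and left sides reaches the top‑right corner. Consequently the reachability counts in Definition~\ref{d:cs}, and Condition~$S$ in the definition of $X\ssc Y$, are met by taking the \emph{full} chunks ($A=C^X_{k}$, $B=C^Y_{k}$), so the lower bounds on $|A|,|B|$ are trivially satisfied, and $X\cc Y$ holds as well. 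Thus \eqref{e:basecornertocorner}, \eqref{e:basecornertoside} and \eqref{e:basesidetoside} will all follow once I show, for $M$ large,
\[
\P[\,\exists\ \text{a closed site in}\ X\times Y \mid X\,]\ \le\ L_1^{-2\beta}.
\]

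To estimate this probability, I would represent the random level‑$1$ $\mathbb{Y}$‑block as $Y=(Y_1,\dots,Y_\tau)$ with $Y_1,Y_2,\dots$ i.i.d.\ uniform on $\{1,\dots,M\}$ and $\tau=|Y|$; from the stopping rule \eqref{e:leveloney}, $\tau+1$ is a stopping time for $(\sigma(Y_1,\dots,Y_t))_{t\ge1}$, and from Lemma~\ref{l:basecaselength} (where $T_Y^{(1)}\preceq 2V$, $V\sim\mathrm{Geom}(1/16)$) one has $\E[\tau]=L_1+\E[T_Y^{(1)}]\le L_1+32$. For a fixed value $v$, the process $t\mapsto \#\{s\le t:\,Y_s=v\}-t/M$ is a martingale with increments bounded by $1$, so optional stopping at $\tau+1$ gives $\E[\#\{s\le\tau:\,Y_s=v\}]\le \E[\tau+1]/M\le(L_1+33)/M$. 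Hence, by a union bound over the at most $(1+100m)L_1$ columns of $X\times Y$ (a closed site in column $i$ forces the fixed value $X_i^{(0)}$ to be hit by the $\mathbb{Y}$‑block),
\[
\E[\,\#\{\text{closed sites in}\ X\times Y\}\mid X\,]\ \le\ (1+100m)L_1\cdot\frac{L_1+33}{M}\ \le\ \frac{CL_1^{2}}{M},
\]
with $C$ depending only on $m$. Since $L_1=L_0^{\alpha}$ is fixed, choosing $M$ large enough (depending on $L_0$ and the fixed parameters only) makes this $\le L_1^{-2\beta}$, which is \eqref{e:basesidetoside}; and it gives \eqref{e:basecornertocorner} and \eqref{e:basecornertoside} a fortiori because $1-L_1^{-2\beta}$ exceeds $\tfrac{9}{10}+2^{-4}>\tfrac34+2^{-4}$ once $L_0$ is large.

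The argument is soft, so I do not expect a genuine obstacle; the only points needing care are bookkeeping. First, one must check that ``no closed site'' really implies \emph{all four} connectivity notions, including the quantitative Condition~$S$ — this is the full‑chunks remark above. Second, one must legitimise the Wald/optional‑stopping step; it suffices to note that $\tau+1$ is a genuine stopping time with $\E[\tau]<\infty$, but if one prefers to avoid optional stopping one can split $\{s\le\tau\}$ at time $L_1$ and use the identity $\mathbf{1}[\tau\ge s]=\mathbf{1}[\tau\ge s-1]\cdot\mathbf{1}[\neg(Y_{s-1}\equiv 3,\ Y_s\equiv 2\ \mathrm{mod}\ 4)]$ together with the independence of $Y_s$ from $\sigma(Y_1,\dots,Y_{s-1})$. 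The precise constants in \eqref{e:parameters} enter only through the inequality $L_1^{-2\beta}<\min\{\tfrac14-2^{-4},\ \tfrac1{10}-2^{-4}\}$, which holds for $L_0$ large.
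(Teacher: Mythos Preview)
Your proof is correct and follows essentially the same idea as the paper: show that with high probability the rectangle $X\times Y$ contains no closed site at all, whence all four connectivity events hold trivially, and then make this probability exceed $1-L_1^{-2\beta}$ by taking $M$ large relative to $L_1$. The only technical difference is in how the random length of $Y$ is handled: the paper intersects with the event $\{Y\in\A{1}_{Y,1}\}$ so that $|Y|\le(100m+1)L_1$ deterministically and then uses a direct product bound $(1-O(L_1)/M)^{O(L_1)}$, whereas you keep the length random and control $\E[\#\{s\le\tau:Y_s=v\}]$ via optional stopping; both yield an $O(L_1^2/M)$ bound on the failure probability and the conclusion is identical.
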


\begin{proof}
Let $Y$ be a level $1$ block constructed out of the sequence $Y_1^{(0)},\ldots$. Let $\mathcal{C}(X)$ be the event

\[
\left\{Y_i^{(0)}\neq X_{i'}^{(0)} \forall~i,i', i\in [(10m+1)L_1], i'\in [L_1+T_X^{(1)}]\right\}.
\]

Let $\mathcal{E}$ denote the event

\[
\left\{Y\in \A{1}_{Y,1}\right\}.
\]

Using the definition of the sequence $Y_1^{(0)},\ldots$ and the $\mathbb{Y}$-version of \eqref{e:A_1basesizebound} we get that

\[
\P[\mathcal{C}(X) \cap \mathcal{E}\mid X]\geq \left(1-\frac{4(100m+1)L_1}{M}\right)^{(100m+1)L_1}-L_1^{-3\beta}\geq \max\left\{1-L_1^{-2\beta},\frac{9}{10}+2^{-4}\right\}
\]
for $M$ large enough.

Since $X\ssc Y$, $X\scc Y$, $X\cs Y$, $X\cc Y$ each hold if $\mathcal{C}(X)$ and $\mathcal{E}$ both hold, the lemma follows immediately.
\end{proof}

\begin{lemma}\label{l:baseccestimates}
If $M$ is sufficiently large then
\begin{equation}
\mathbb{P}(\P(X\cc Y\mid X)\leq p)\leq p^{m+\frac12}L_1^{-\beta}~~\text{for}~~p\leq \frac 34 + 2^{-4}.
\end{equation}
\end{lemma}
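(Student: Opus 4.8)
The strategy is to bound the probability that the corner-to-corner crossing probability is small by separately handling two regimes of $p$, exploiting that we already control $\P(X\cc Y\mid X)$ from below on the large set $\A{1}_{X,1}$ (Lemma~\ref{l:basesideestimates}(i)) and that the complement of $\A{1}_{X,1}$ is extremely rare by~\eqref{e:A_1basesizebound}. First I would observe that for $p<\frac34+2^{-4}$, Lemma~\ref{l:basesideestimates}(i) guarantees that whenever $X\in\A{1}_{X,1}$ we have $\P(X\cc Y\mid X)\geq \frac34+2^{-4}>p$. Hence the event $\{\P(X\cc Y\mid X)\leq p\}$ forces $X\notin\A{1}_{X,1}$, so by~\eqref{e:A_1basesizebound},
\[
\P\bigl(\P(X\cc Y\mid X)\leq p\bigr)\leq \P\bigl(X\notin\A{1}_{X,1}\bigr)\leq L_1^{-3\beta}.
\]
So really the only remaining task is to absorb the extra factor $p^{m+1/2}$ that the statement demands, i.e.\ to show $L_1^{-3\beta}\leq p^{m+1/2}L_1^{-\beta}$, equivalently $p^{m+1/2}\geq L_1^{-2\beta}$, for all relevant $p$. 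This fails for very small $p$, so the argument must be refined for small $p$.

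For the refinement, I would split at a threshold like $p_0=L_1^{-4\beta/m}$ (any $p_0$ with $p_0^{m+1/2}\geq L_1^{-2\beta}$ and $p_0$ small works; the precise constant is routine). For $p\geq p_0$ the bound above suffices, since then $p^{m+1/2}L_1^{-\beta}\geq p_0^{m+1/2}L_1^{-\beta}\geq L_1^{-3\beta}\geq \P(\P(X\cc Y\mid X)\leq p)$. For $p<p_0$, I would instead bound $\P(\P(X\cc Y\mid X)\leq p)$ directly by estimating the probability that a \emph{typical} realization of $X$ has crossing probability below $p$: conditioning on the length $L_X=L_1+T_X^{(1)}$, the event $X\cc Y$ happens as soon as none of the $O(mL_1^2)$ relevant level-$0$ symbols of $Y$ along (say) the staircase path hitting the corner collide with the symbols of $X$, an event of probability at least $(1-cL_X/M)^{cL_X}$ for a suitable constant $c$. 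For $M$ large relative to $L_0$ (hence to $L_1$ and to $100mL_1$), on $\{T_X^{(1)}\leq 100mL_1\}$ this lower bound exceeds any fixed $p$, and on $\{T_X^{(1)}> 100mL_1\}$ we pay only $(15/16)^{(T_X^{(1)}-1)/2}$ from Lemma~\ref{l:basecaselength}. Thus
\[
\P\bigl(\P(X\cc Y\mid X)\leq p\bigr)\leq \P\bigl(T_X^{(1)}\geq t(p)\bigr)\leq (15/16)^{(t(p)-1)/2}
\]
where $t(p)$ is chosen so that $(1-cL/M)^{cL}>p$ whenever $T_X^{(1)}<t(p)$; solving, $t(p)$ scales like a constant times $M\sqrt{\,|\log p|/M}$-type bound, but crucially, since $M$ is taken as large as we like after fixing $L_0$, we can make $(15/16)^{(t(p)-1)/2}\leq p^{m+1/2}L_1^{-\beta}$ for all $p<p_0$ as well — the super-exponential decay of the geometric tail in $t(p)$ beats the polynomial-in-$p$ requirement.

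The main obstacle is the bookkeeping in the small-$p$ regime: one must choose the threshold $p_0$ and the function $t(p)$ so that the geometric tail bound $(15/16)^{(t(p)-1)/2}$ dominates $p^{m+1/2}L_1^{-\beta}$ uniformly over $0<p<p_0$, and verify this is consistent with "$M$ sufficiently large depending on $L_0$". This amounts to checking that for $M$ large the crossing probability on $\A{1}_{X,1}$ is in fact extremely close to $1$ (not just $\geq\frac34+2^{-4}$), so that the only way to get crossing probability below a small $p$ is to have an atypically long block, whose probability decays geometrically. I would make the crossing-probability-near-$1$ claim precise exactly as in Lemma~\ref{l:basesideestimates}: on $\A{1}_{X,1}$, $\P(X\cc Y\mid X)\geq (1-4(100m+1)L_1/M)^{(100m+1)L_1}$, which tends to $1$ as $M\to\infty$; combining this with the length tail from Lemma~\ref{l:basecaselength} and a single union over the two regimes of $p$ completes the proof.
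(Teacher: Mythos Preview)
Your large-$p$ argument is fine and matches the paper's: on $\A{1}_{X,1}$ the crossing probability exceeds $\frac34+2^{-4}$, so the event $\{\P(X\cc Y\mid X)\leq p\}$ forces $X\notin\A{1}_{X,1}$, which costs at most $L_1^{-3\beta}$. The gap is entirely in the small-$p$ regime.

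Your lower bound $\P(X\cc Y\mid X)\geq (1-cL_X/M)^{cL_X}$ has exponent quadratic in $L_X$, so it behaves like $\exp(-c'L_X^2/M)$. Inverting gives $t(p)$ of order $\sqrt{M\log(1/p)}$ (as you yourself write), and then the geometric tail of $T_X^{(1)}$ yields only
\[
(15/16)^{(t(p)-1)/2}\ \asymp\ \exp\bigl(-C\sqrt{M\log(1/p)}\bigr).
\]
But $M$ is fixed once and for all (large, but fixed) \emph{before} $p$ ranges over $(0,p_0)$. For $\log(1/p)\gg M$ this quantity is far larger than $p^{m+1/2}=\exp(-(m+\tfrac12)\log(1/p))$: the decay you obtain is sub-exponential in $\log(1/p)$, not super-exponential, so ``taking $M$ large'' cannot rescue the inequality uniformly in $p$. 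The claim that the geometric tail ``beats the polynomial-in-$p$ requirement'' is simply false with this $t(p)$.

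What is missing is a lower bound on $\P(X\cc Y\mid X)$ whose exponent is \emph{linear} in $T_X^{(1)}$ with a rate independent of $M$. The paper gets this by forcing $T_Y^{(1)}\approx T_X^{(1)}/(50m)$ (paying a factor $(3/4)^{T_X^{(1)}/(50m)}$ for the block-length event) and then running a staircase in which each $Y_k$ meets only $O(m)$ columns of $X$, so each $Y_k$ need avoid only $O(m)$ symbols regardless of how large $T_X^{(1)}$ is. This yields $\P(X\cc Y\mid X)\geq c\,(7/10)^{T_X^{(1)}/(50m)}$, hence $t(p)\sim C m\log(1/p)$, and now $(15/16)^{t(p)/2}\leq p^{2m}$, which dominates $p^{m+1/2}L_1^{-\beta}$. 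The crucial idea you are missing is letting $L_Y$ scale with $L_X$ so that the number of $X$-symbols each $Y_k$ must avoid stays bounded; without it, arbitrarily long $X$-blocks (which occur with positive probability for every fixed $M$) defeat your estimate.
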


\begin{proof}
Since $L_1$ is sufficiently large, \eqref{e:basecornertocorner} implies that it suffices to consider the case  $p< \frac{1}{500}$ and $X\notin \A{1}_{X,1}$. We prove that for $p<\frac{1}{500}$

\begin{equation}
\label{e:basecornertocornerB}
\P[\P(X\cc Y\mid X)\leq p, X\notin \A{1}_{X,1}]\leq p^{m+2^{-1}}L_1^{-\beta}.
\end{equation}

Let $\mathcal{E}(X)$ denote the event

\[\{T_Y^{(1)}=\lfloor\frac{1}{50m}T_X^{(1)}\rfloor, Y_{i}^{(0)}\neq 2~\mbox{mod}~4. \forall i\in [L_1+1,L_1+T_Y^{(1)}]\}\]

It follows from definition that

\begin{equation}
\label{e:basecaseebound}
\P[\mathcal{E}(X)\mid X]\geq \left(\frac{1}{4}\right)^2\left(\frac{3}{4}\right)^{\frac{T_X^{(1)}}{50m}}.
\end{equation}

Now let $D_k$ denote the event that
\[D_k=\{Y_k^{(0)}\neq X_{i'}^{(0)} \forall ~i'\in [ 50km , 50(k+2)m \wedge T_Y^{(1)} \}.
\]

Let
\[D=\bigcap_{k=1}^{L_1+T_Y^{(1)}} D_k
\]

It follows that

\[
\P[D_k\mid X,\mathcal{E}(X)]\geq (1-\frac{400m}{M}).
\]

Since $D_k$ are independent conditional on $X$ and $\mathcal{E}(X)$

\[
\P[D\mid X,\mathcal{E}(X)]\geq (1-400m/M)^{L_1+T_X^{(1)}/50m}.
\]

It follows that
\begin{eqnarray*}
\P[X\cc Y \mid X] &\geq & \left(\frac{1}{4}\right)^2\left(\frac{3}{4}\right)^{\frac{T_X^{(1)}}{50m}}\left(1-\frac{200m}{M}\right)^{L_1+T_X^{(1)}/50m}\\
&\geq & \frac{1}{20}\left(\frac{7}{10}\right)^{\frac{T_X^{(1)}}{50m}}
\end{eqnarray*}
for $M$ sufficiently large.

It follows that
\begin{eqnarray*}
\P[\P(X\cc Y\mid X) \leq  p, X\notin \A{1}_{X,1}] &\leq & \P[T_X^{(1)} \geq (50m\dfrac{\log 20p}{\log \frac{7}{10}})\vee 100mL_1]\\
&\leq & \left(\frac{15}{16}\right)^{20m\dfrac{\log 20p}{\log \frac{7}{10}}}\wedge \left(\frac{15}{16}\right)^{40mL_1}\\
&\leq & (20p)^{2m} \wedge \left(\frac{15}{16}\right)^{40mL_1}\leq p^{m+2^{-1}}L_1^{-\beta}
\end{eqnarray*}

since $(15/16)^{10} < 7/10$ and $L_0$ is sufficiently large and $m>100$.
\end{proof}


\begin{proof}[Proof of Theorem \ref{t:basecase}]
We have established (\ref{tailxbase}) in Lemma \ref{l:baseccestimates}. That (\ref{xgoodbase}) holds follows from Lemma \ref{l:basesideestimates} and \eqref{e:A_1basesizebound} noting $\beta >\delta$.
\end{proof}
\\

Now we prove Theorem~\ref{t:main} using Theorem~\ref{induction}.\\

\begin{proof}[of Theorem~\ref{t:main}]
Let $\mathbb{X}=(X_1,X_2,\ldots)$, $\mathbb{Y}=(Y_1,Y_2,\ldots)$ be as in the statement of the theorem. Let for $j\geq 1$, $\mathbb{X}=(X_1^{(j)}, X_2^{(j)},\ldots)$ denote the partition of $\mathbb{X}$ into level $j$ blocks as described above. Similarly let $\mathbb{Y}=(Y_1^{(j)}, Y_2^{(j)},\ldots)$ denote the partition of $\mathbb{Y}$ into level $j$ blocks. Let $\beta, \delta , m, R$ be as in Theorem~\ref{induction}. It follows form Theorem \ref{t:basecase} that for all sufficiently large $L_0$, estimates $I$ and $II$ hold for $j=1$ for all sufficiently large $M$. Hence the  Theorem \ref{induction} implies that if $L_0$ is sufficiently large then  $I$ and $II$  hold for all $j\geq 1$ for $M$ sufficiently large.

Let $\mathcal{T}_j^{\mathbb{X}}=\{X_k^{(j)}\in G_{j}^{\mathbb{X}}, 1\leq k \leq L_j^3\}$ be the event that the first $L_j^3$ blocks at level $j$ are good.  Notice that on the event $\cap_{k=1}^{j-1} \mathcal{T}_k^{\mathbb{X}}$,  $X_1^{(j)}$ has distribution $\mu_j^{\mathbb{X}}$ by Observation~\ref{o:blockStructure} and so $\{X_i^{(j)}\}_{i\geq 1}$ is i.i.d. with distribution $\mu_j^{\mathbb{X}}$. Hence it follows from equation (\ref{xgood}) that $\mathbb{P}(\mathcal{T}_j^{\mathbb{X}}|\cap_{k=1}^{j-1} \mathcal{T}_k^{\mathbb{X}})\geq (1-L_j^{-\delta})^{L_j^3}$. Similarly defining $\mathcal{T}_j^{\mathbb{Y}}=\{Y_k^{(j)}\in G_{j}^{\mathbb{Y}}, 1\leq k \leq L_j^3\}$ we get using (\ref{ygood}) that $\mathbb{P}(\mathcal{T}_j^{\mathbb{Y}}|\cap_{k=0}^{j-1} \mathcal{T}_k^{\mathbb{Y}})\geq (1-L_j^{-\delta})^{L_j^3}$.

Let $\mathcal{A}=\cap _{j\geq 0}(\mathcal{T}_j^{\mathbb{X}}\cap \mathcal{T}_j^{\mathbb{Y}})$. It follows from above that $\mathbb{P}(\mathcal{A})>0$ since $\delta>3$

Let $\mathcal{A}_{j+1}=\cap _{k\leq j}(\mathcal{T}_k^{\mathbb{X}}\cap \mathcal{T}_k^{\mathbb{Y}})$. It follows from \eqref{e:ccgg} and \eqref{xgood} that
\[
P[X_1^{(j+1)}\cc Y_1^{(j+1)}\mid \mathcal{A}_{j+1}]\geq \frac{3}{4}+2^{-(j+9/2)}-2L_{j+1}^{-\delta}\geq \frac{3}{4}.
\]

Let $\mathcal{B}_{j+1}$ denote the event

\[
\mathcal{B}_{j+1}=\left\{\exists ~\text{an open path from}~(0,0)\rightarrow (m,n)~\text{for some}~m,n\geq L_{j+1}  \right\}.
\]

Then $\mathcal{B}_{j+1} \downarrow$ and $\mathcal{B}_{j+1}\supseteq \{X_1^{(j+1)}\cc Y_1^{(j+1)}\}$. It follows that

\[\P[\cap \mathcal{B}_{j+1}]\geq \liminf P[X_1^{(j+1)}\cc Y_1^{(j+1)}]\geq \frac{3}{4}\P[\mathcal{A}]>0.
\]

A standard compactness argument shows that $\cap \mathcal{B}_{j+1}\subseteq \{\mathbb{X}\leftrightarrow \mathbb{Y}\}$ and hence $\P[\mathbb{X}\leftrightarrow \mathbb{Y}]>0$, which completes the proof of the theorem.
\end{proof}

The remainder of the paper is devoted to the proof of the estimates in the induction. Throughout these sections we assume that the estimates $I-III$ hold for some level $j\geq 1$ and then prove the estimates at level $j+1$.  Combined they complete the proof of Theorem~\ref{induction}.

From now on, in every Theorem, Proposition and Lemma we state, we would
implicitly assume the hypothesis that all the recursive estimates hold upto level $j$, the parameters satisfy the constraints described in \S~\ref{s:parameters} and $L_0$ is sufficienctly large.

\section{Geometric Constructions}

We shall join paths across blocks at a lower level two form paths across blocks at a higher level. The general strategy will be as follows. Suppose we want to construct a path across $X\times Y$ where $X$, $Y$ are level $j+1$ blocks. Using the recursive estimates at level $j$ we know we are likely to find many paths across $X_i\times Y$ where $X_i$ is a good sub-block of $X$. So we need to take special care to ensure that we can find open paths  crossing bad-subblocks of $X$ (or $Y$). To show the existence of such paths, we need some geometric constructions, which we shall describe in this section. We start with the following definition.

\begin{definition}[Admissible Assignments]
\label{d:assign}
Let $I_1=[a+1,a+t]\cap \Z$ and $I_2=[b+1,b+t']\cap \Z$ be two intervals of consecutive positive integers. Let $I_1^{*}=[a+L_j^3+1, a+t-L_j^3]\cap \Z$ and $I_2^{*}=[b+L_j^3+1, b+t'-L_j^3]\cap \Z$. Also let $B\subseteq I_1^{*}$ and $B'\subseteq I_2^{*}$ be given. We call $\Upsilon(I_1,I_2,B,B')=(H,H',\tau)$ to be an admissible assignment at level $j$ of $(I_1,I_2)$ w.r.t. $(B,B')$ if the following conditions hold.

\begin{enumerate}
\item[(i)] $B\subseteq H=\{a_1<a_2<\cdots <a_{\ell}\}\subseteq I_1$ and $B'\subseteq H'=\{b_1<b_2<\cdots <b_{\ell}\}\subseteq I_2^{*}$ with $\ell=|B|+|B'|$.
\item[(ii)] $\tau(a_i)=b_i$ and $\tau(B)\cap B'=\emptyset$.
\item[(iii)] Set $a_0=a, a_{\ell+1}=a+t+1$; $b_0=b,b_{\ell+1}=b+t'+1$. Then we have for all $i\geq 0$
$$\frac{1-2^{-(j+7/2)}}{R}\leq \frac{b_{i+1}-b_i-1}{a_{i+1}-a_i-1}\leq R(1+2^{-(j+7/2)}).$$
\end{enumerate}
\end{definition}
The following proposition concerning the existence of admissible assignment follows from the results in Section 6 of \cite{BS14}. We omit the proof.

\begin{proposition}
\label{p:assign}
Assume the set-up in Definition \ref{d:assign}. We have the following.
\begin{enumerate}
\item[(i)] Suppose we have
$$\frac{1-2^{-(j+4)}}{R}\leq \frac{t'}{t}\leq R(1+2^{-(j+4)}).$$ Also suppose $|B|,|B'|\leq 3k_0$. Then there exist $L_j^2$ level $j$ admissible assignments $(H_i,H'_i,\tau_i)$ of $(I_1,I_2)$ w.r.t. $(B,B')$ such that for all $x\in B$, $\tau_{i}(x)=\tau_{1}(x)+i-1$ and for all $y\in B'$, $\tau_i^{-1}(y)=\tau_1^{-1}(y)-i+1$.

\item[(ii)] Suppose
$$\frac{3}{2R}\leq \frac{t'}{t}\leq \frac{2R}{3}$$
and $|B|\leq \frac{t-2L_j^3}{10R_j^{+}}$. Then there exists an admissible assignment $(H,H',\tau)$ at level $j$ of $(I_1,I_2)$ w.r.t. $(B,\emptyset)$.
\end{enumerate}
\end{proposition}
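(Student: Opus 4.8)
The plan is to treat Proposition~\ref{p:assign} as the purely combinatorial statement it is: given the forced sets $B\subseteq I_1^{*}$ and $B'\subseteq I_2^{*}$ on the two sides of the rectangle $I_1\times I_2$, one must interleave them by a single order-preserving bijection $\tau$ so that every consecutive ``step'' of the resulting staircase --- the gaps $a_{i+1}-a_i-1$ and $b_{i+1}-b_i-1$, including the two boundary steps cut out by the sentinels $a_0=a,\ a_{\ell+1}=a+t+1,\ b_0=b,\ b_{\ell+1}=b+t'+1$ --- has slope inside the window $\big[(1-2^{-(j+7/2)})/R,\ R(1+2^{-(j+7/2)})\big]$. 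I would first produce a \emph{single} admissible assignment by an even-spacing construction: place the few forced points, then fill in the required padding points so that essentially every step realizes the ideal ratio $t'/t$, which by hypothesis already lies inside the \emph{narrower} window $\big[(1-2^{-(j+4)})/R,\ R(1+2^{-(j+4)})\big]$. This is the heart of the argument, and it is exactly the kind of construction carried out in Section~6 of \cite{BS14}, which is why the proposition is quoted as a consequence of the results there.

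For part~(i), I would write $B=\{x_1<\dots<x_p\}$, $B'=\{y_1<\dots<y_q\}$ with $p,q\le 3k_0$ and $\ell=p+q$, and enlarge $B$ to $H\subseteq I_1$ and $B'$ to $H'\subseteq I_2^{*}$, each of size $\ell$, by inserting $q$ new points into $I_1$ (to serve as the preimages $\tau^{-1}(y_r)$) and $p$ new points into $I_2^{*}$ (the images $\tau(x_s)$). The key observation is that $I_2^{*}\setminus B'$ splits into at most $q+1$ gaps of total length at least $t'-2L_j^3-q$, so one of them has length at least $(t'-2L_j^3-q)/(q+1)$, which for $L_0$ large exceeds $L_j^2$ by a wide margin (using $t'\gtrsim t/R$ and $t\gg L_j^3$ in all applications); placing each image $\tau(x_s)$ deep inside such a long gap, well away from $B'$ and from one another, leaves room for all of its $L_j^2$ right-translates, and similarly for the preimages of $B'$ inside $I_1$. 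With the padding chosen this way, every step not adjacent to a forced point realizes the ratio $t'/t$ up to rounding, so the deviation of any step ratio from the input window is $O(k_0/t)$, which is far below $2^{-(j+7/2)}-2^{-(j+4)}\gtrsim 2^{-j}$ once $L_0$ is large; hence the base assignment $(H_1,H'_1,\tau_1)$ is admissible. The family $(H_i,H'_i,\tau_i)$, $i=1,\dots,L_j^2$, is then obtained by translating each image $\tau_1(x_s)$ by $+(i-1)$ and each preimage $\tau_1^{-1}(y_r)$ by $-(i-1)$: because all these points sit in long, well-separated gaps, the translates remain in $I_1$ resp.\ $I_2^{*}$, stay disjoint from $B'$, and never cross a forced point, so the relative order is unchanged and, since $L_j^2$ is negligible against every step length, every slope bound is preserved. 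This yields the required $L_j^2$ assignments with the prescribed shift pattern.

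For part~(ii) we have $B'=\emptyset$, so $H'=\tau(B)$ and it suffices to place the images of the points of $B$ monotonically in $I_2^{*}$. I would group $B$ into its maximal runs of consecutive integers and map each run, order-preservingly, onto a run of consecutive integers inside $I_2^{*}$ of the same length --- this is forced, since a step with $a_{i+1}-a_i-1=0$ requires $b_{i+1}-b_i-1=0$ --- and then distribute the leftover $t'-2L_j^3-|B|$ units of $I_2^{*}$ among the inter-run gaps and the two boundary steps so that each gap ratio equals, up to rounding, the number $t'/t\in[3/(2R),2R/3]$. The hypothesis $|B|\le (t-2L_j^3)/(10R_j^{+})$ guarantees that $t'-2L_j^3-|B|$ is positive and a fixed fraction of $t'$, so every gap can be taken long and the rounding errors stay comfortably inside the $2^{-(j+7/2)}$ slack, producing the single admissible assignment demanded.

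The hard part will be the slope bookkeeping at the two boundary steps --- the ones tied to the $L_j^3$-margins that define $I_1^{*}$ and $I_2^{*}$ --- together with the rigidity built into part~(i): a single uniform translation by $i-1$ must keep \emph{all} of $\tau_i(B)$ off $B'$ and admissible for every $i\le L_j^2$ at once. Both are absorbed by the ``long gap'' placement above, and the numerics close precisely because the admissibility window $2^{-(j+7/2)}$ is strictly wider than the input window $2^{-(j+4)}$, and this fixed gap $\gtrsim 2^{-j}$ dominates the $O(k_0/t)$ packing errors whenever $L_0$ is large.
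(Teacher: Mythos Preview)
The paper itself omits the proof entirely, writing only that the proposition ``follows from the results in Section~6 of \cite{BS14}.'' Your sketch is a faithful outline of precisely that construction: place the images $\tau(x)$ of $B$-points and the preimages $\tau^{-1}(y)$ of $B'$-points approximately along the diagonal of slope $t'/t$, leaving $L_j^2$ units of clearance from $B'$ (resp.\ $B$) so that the uniform translations $\tau_i(x)=\tau_1(x)+i-1$, $\tau_i^{-1}(y)=\tau_1^{-1}(y)-i+1$ neither hit forced points nor disturb the relative order, and then absorb all rounding and shift errors into the slack $2^{-(j+7/2)}-2^{-(j+4)}\asymp 2^{-j}$ between the admissibility window and the hypothesis window. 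Your treatment of part~(ii), mapping maximal runs of $B$ to runs of the same length and proportioning the remaining gaps, is exactly right and handles the $0/0$ steps that arise when $B$ contains consecutive integers.

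One point worth tightening if you write this out in full: your ``long gap'' phrasing in part~(i) suggests placing all images of $B$ into a single long interval of $I_2^{*}\setminus B'$, but that would in general violate the order-preserving constraint when $B$-points and $B'$-preimages must interleave. What actually works (and what \cite{BS14} does) is the near-linear placement $\tau_1(x)\approx b+(x-a)t'/t$ for $x\in B$ and $\tau_1^{-1}(y)\approx a+(y-b)t/t'$ for $y\in B'$, nudged by at most $O(L_j^2)$ to avoid collisions and to sit at distance $\ge L_j^2$ from every forced point on the other side; the interleaving is then automatic. Your slope bookkeeping and your identification of the boundary steps as the delicate case are correct.
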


Constructing suitable admissible assignments will let us construct different types of open paths in different rectangles. To demonstrate this we first define the following somewhat abstract set-up.

\subsection{Admissible Connections}
Assume the set-up in Definition \ref{d:assign}. Consider the lattice $A=I_1\times I_2$. Let $\mathcal{B}=\left( B_{i_1,i_2}\right)_{(i_1,i_2)\in A}$ be a collection of finite rectangles where $B_{i_1,i_2}=[n_{i_1}]\times [n'_{i_2}]$. Let $A\otimes \mathcal{B}$ denote the bi-indexed collection
$$\left\{\left((a_1,b_1),(a_2,b_2)\right): (a_1,a_2)\in A, (b_1,b_2)\in B_{a_1,a_2} \right\}.$$

We think of $A\otimes \mathcal{B}$ as a $\sum_{i_1}n_{i_1}\times \sum_{i_2}n'_{i_2}$ rectangle which is further divided into rectangles indexed by $(i_1,i_2)\in A$ in the obvious manner.

\begin{definition}[Route]
\label{d:route}
A {\bf route} $P$ at level $j$ in $A\otimes \mathcal{B}$ is a sequence of points $((v_i,b^{1,v_i}),(v_i,b^{2,v_i}))_{i\in [\ell]}$ in $A\otimes \mathcal{B}$ satisfying the following conditions.
\begin{enumerate}
\item[(i)] $V(P)=\{v_1,v_2,\ldots, v_{\ell}\}$ is an oriented path from $(a+1,b+1)$ to $(a+t,b+t')$ in $A$.
\item[(ii)] Let $v_i=(v_i^1,v_i^2)$. For each $i$, $b^{1,v_i}\in [L_{j-1},n_{v_i^{1}}-L_{j-1}]\times \{1\} \cup \{1\}\times [L_{j-1},n'_{v_i^{2}}-L_{j-1}]$ and $b^{2,v_i}\in [L_{j-1},n_{v_i^{1}}-L_{j-1}]\times \{n'_{v_i^{2}}\} \cup \{n_{v_i^{1}}\}\times [L_{j-1},n'_{v_i^{2}}-L_{j-1}]$ except that $b^{1,v_1}=(1,1)$ and $b^{2,v_{\ell}}=(n_{v_{\ell}^{1}},n'_{v_{ell}^{2}})$ are also allowed.
\item[(iii)] For each $i$ (we drop the superscript $v_i$), let $b^{1}=(b^1_1,b^1_2)$ and $b^2=(b^2_1,b^2_2)$. Then for each $i$, we have $\frac{1-2^{-(j+3)}}{R} \leq \frac{b^2_2-b_2^1}{b_1^2-b_1^1}\leq R(1+2^{-(j+3)}).$
\item[(iv)] $b^{2,v_i}$ and $b^{1,v_{i+1}}$ agree in one co-ordinate.
\end{enumerate}

A route $P$ defined as above is called a route in $A\otimes B$ from $(v_1,b^{1,v_1})$ to $(v_{\ell},b^{2,v_{\ell}})$. We call $P$ a {\bf corner to corner route} if $b^{1,v_1}=(1,1)$ and $b^{2,v_{\ell}}=(n_{v_{\ell}^{1}},n'_{v_{\ell}^{2}})$.
For $k\in I_2$, the $k$-section of the route $P$ is defined to be the set of $k'\in I_1$ such that $(k',k)\in V(P)$.
\end{definition}

Now gluing together these routes  one can construct corner to corner (resp. corner to side or side to side) paths under certain circumstances. We make the following definition to that end.

\begin{definition}[Admissible Connections]
Consider the above set-up. Let $S_{in}=[L_{j-1},n_{a+1}-L_{j-1}]\times \{1\} \cup \{1\}\times [L_{j-1},n'_{b+1}-L_{j-1}]$ and $S_{out}=[L_{j-1},n_{a+t}-L_{j-1}]\times \{n'_{b+t'}\} \cup \{n_{a+t}\}\times [L_{j-1},n'_{b+t}-L_{j-1}]$. Suppose for each $b\in S_{out}$ there exists a level $j$ route $P^b$ in $A\otimes \mathcal{B}$ from $(1,1)$ to $b$. The collection $\mathcal{P}=\{P^b\}$ is called a corner to side admissible connection in $A\otimes \mathcal{B}$. A side to corner admissible connection is defined in a similar manner. Now suppose for each $b\in S_{in}$, $b'\in S_{out}$ there exists a level $j$ route $P^{b,b'}$ in $A\otimes \mathcal{B}$ from $b$ to $b'$. The collection $\mathcal{P}=\{P^{b,b'}\}$ in this case is called a side to side admissible connection in $A\otimes \mathcal{B}$. We also define $V(\mathcal{P})=\cup_{P\in \mathcal{P}} V(P)$.
\end{definition}

The usefulness of having these abstract definitions is demonstrated by the next few lemmata. These follow directly from definition and hence we shall omit the proofs.

Now let $X=(X_1,X_2,....,X_t)$ be an $\mathbb{X}$-blocks at level $j+1$ with $X_i$ being the $j$-level subblicks constituting it. Let $X_i$ consisting of $n_i$ many chunks of $(j-1)$-level subblocks. Similarly let $Y=(Y_1,Y_2,...,Y_{t'})$ be a $\mathbb{Y}$-block at level $j+1$ with $j$-level subblocks $Y_i$ consisting of $n'_i$ many chunks of $(j-1)$ level subblocks. Then we have the following Lemmata. Set $A=[t]\times [t']$. Define $\mathcal{B}=\{B_{i,j}\}$ where $B_{i_1,i_2}=[n'_{i_1}]\times [n'_{i_2}]$.

\begin{figure}[h]
\begin{center}
\includegraphics[height=5cm, width=5cm]{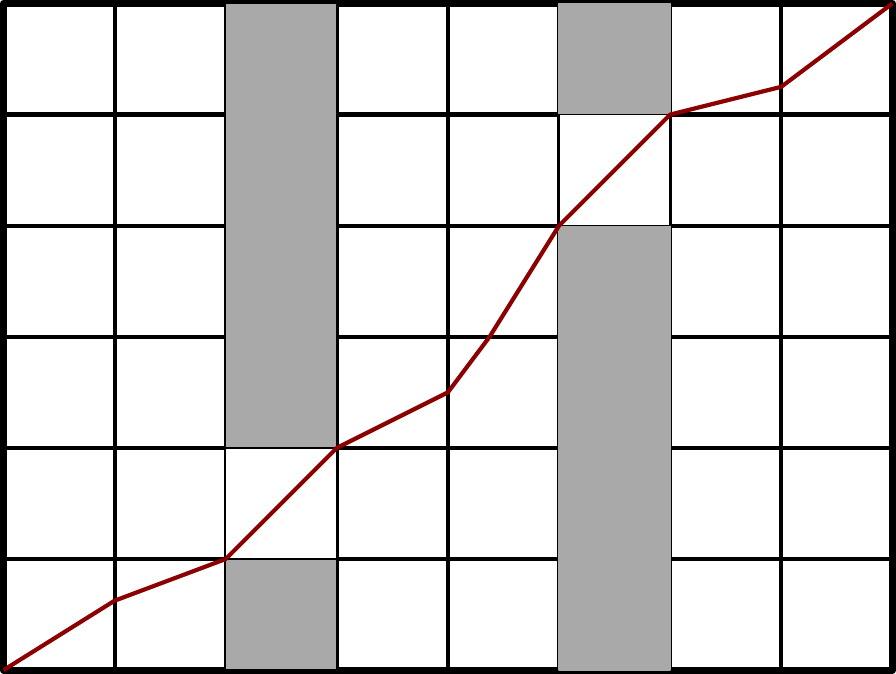}
\end{center}
\caption{Corner to Corner Paths}
\end{figure}

\begin{lemma}
\label{l:ccconnect}
Consider the set-up described above. Let $H=\{a_1<a_2<\cdots <a_{\ell}\}\subseteq [t]$ and $H'=\{b_1<b_2<\cdots <b_{\ell}\}$. Set $\tilde{X}_{(s)}=(X_{a_s+1},\ldots, X_{a_{s+1}-1})$ and $\tilde{Y}_{(s)}=(Y_{b_s+1},\ldots, Y_{b_{s+1}-1})$. Suppose further that for each $s$, $\tilde{X}_{(s)}\cc \tilde{Y_{(s)}}$ and $X_{a_s}\cc Y_{b_s}$. Then we have $X\cc Y$.
\end{lemma}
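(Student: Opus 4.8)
\textbf{Proof proposal for Lemma~\ref{l:ccconnect}.}

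The plan is to exhibit an explicit open oriented path in $X\times Y$ from its bottom-left corner to its top-right corner by concatenating the paths that are guaranteed to exist inside the sub-rectangles. First I would set up notation: let $X_{a_s}$ occupy the level-$0$ columns $[c_{2s-1},c_{2s}]$ and let $\tilde X_{(s)}$ occupy the columns $[c_{2s}+1, c_{2s+1}-1]$ (with the obvious boundary conventions at $s=0$ and $s=\ell$), and similarly for the $Y$-rows using the segments $Y_{b_s}$ and $\tilde Y_{(s)}$. Because $H$ and $H'$ have the same cardinality $\ell$ and are listed in increasing order, pairing $a_s$ with $b_s$ gives a "staircase" decomposition of the full rectangle $X\times Y$ into the rectangles $X_{a_s}\times Y_{b_s}$ and $\tilde X_{(s)}\times\tilde Y_{(s)}$, arranged along the diagonal so that the top-right corner of one is exactly the bottom-left corner of the next.

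The second step is the gluing itself. By hypothesis, for each $s$ the event $X_{a_s}\cc Y_{b_s}$ holds, so there is an open oriented path from the bottom-left corner of $X_{a_s}\times Y_{b_s}$ to its top-right corner; likewise $\tilde X_{(s)}\cc\tilde Y_{(s)}$ supplies an open oriented path across $\tilde X_{(s)}\times\tilde Y_{(s)}$. The crucial observation, which is exactly the content of the remark after the definition of $X\times Y$ (sites at the four corners of a block-determined rectangle are open by \eqref{e:levelonex}--\eqref{e:leveloney}), is that the shared corner sites are open, so that the endpoint of one path and the starting point of the next coincide at an open site and the concatenation is again an open oriented path. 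Since each piece is oriented (monotone in both coordinates) and the pieces are arranged monotonically along the diagonal, the concatenated path is oriented and runs from $(a_1,b_1)$, the bottom-left corner of $X\times Y$ (here I am using that the very first piece starts at the global bottom-left corner, which follows because $a_1\ge 1$, $b_1\ge 1$ and the initial segment $\tilde X_{(0)}=(X_1,\dots,X_{a_1-1})$, $\tilde Y_{(0)}=(Y_1,\dots,Y_{b_1-1})$ is crossed first), to $(a_2,b_2)$, the global top-right corner. This exhibits a corner-to-corner open path, i.e.\ $X\cc Y$.

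There is essentially no obstacle here beyond bookkeeping: the lemma is a purely deterministic concatenation statement, and the only thing one must be careful about is the degenerate cases — when a $\tilde X_{(s)}$ or $\tilde Y_{(s)}$ is empty (i.e.\ $a_{s+1}=a_s+1$), in which case the corresponding rectangle is degenerate and that step of the concatenation is vacuous, and the boundary indices $a_0, a_{\ell+1}$ etc.\ that handle the initial and final stretches of $X$ and $Y$ not covered by any $a_s$ or $b_s$. Once the indexing conventions are pinned down, the proof is immediate from the definitions of $\cc$ and of $X\times Y$, which is why it can be safely omitted as the excerpt indicates; I would include at most one sentence spelling out the diagonal arrangement and the openness of the glue points.
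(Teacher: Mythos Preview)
Your approach is exactly the one the paper has in mind: the lemma is a pure concatenation statement, which is why the paper says it is ``immediate from definition'' and omits the proof. The staircase decomposition into the rectangles $\tilde{X}_{(s)}\times\tilde{Y}_{(s)}$ and $X_{a_s}\times Y_{b_s}$, and the gluing of the corner-to-corner paths across them, is the right picture.

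One small inaccuracy worth correcting: the top-right corner of $X_{a_s}\times Y_{b_s}$ and the bottom-left corner of $\tilde{X}_{(s)}\times\tilde{Y}_{(s)}$ do \emph{not} coincide; they are diagonally adjacent lattice points, say $(c,d)$ and $(c+1,d+1)$. To join the two paths one therefore needs an oriented step through either $(c+1,d)$ or $(c,d+1)$. These intermediate sites are open by the same residue argument you already invoke: by \eqref{e:levelonex}--\eqref{e:leveloney} one has $X_{c}\equiv 1$, $X_{c+1}\equiv 0$, $Y_d\equiv 3$, $Y_{d+1}\equiv 2 \pmod 4$, so all four lattice points in the $2\times 2$ square are open. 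With this one-line fix your proof is complete and matches the paper's intent.
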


The next lemma gives sufficient conditions under which we have $\tilde{X}_{(s)}\cc \tilde{Y}_{(s)}$.

\begin{lemma}
\label{l:ccconnect2}
In the above set-up, let $I_1^s=[a_{s}+1,a_{s+1}-1]$, $I_2^s=[a_{s}+1,a_{s+1}-1]$. Set $A^s=I_1^s\times I_2^s$ and let $\mathcal{B}^s$ be the restriction of $\mathcal{B}$ to $A^s$. Suppose there exists a corner to corner route $P$ in $A^s\otimes B^s$ such that $X_{a_s+1}\cs Y_{b_s+1}$, $X_{a_{s+1}-1}\scc Y_{b_{s+1}-1}$ and for all other $(v_1,v_2)\in V(P)$ $X_{v_1}\ssc Y_{v_2}$. Then $\tilde{X}_{(s)}\cc \tilde{Y}_{(s)}$.
\end{lemma}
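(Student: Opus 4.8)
To prove Lemma~\ref{l:ccconnect2}, the plan is to turn the corner-to-corner route $P$ into an explicit open oriented path in the lattice rectangle $\tilde X_{(s)}\times\tilde Y_{(s)}$ running from its bottom-left corner to its top-right corner. Write $P=((v_i,b^{1,v_i}),(v_i,b^{2,v_i}))_{i\in[\ell]}$, so that $v_1,\dots,v_\ell$ trace a staircase of sub-rectangles from $X_{a_s+1}\times Y_{b_s+1}$ to $X_{a_{s+1}-1}\times Y_{b_{s+1}-1}$, and for each $i$ the pair $b^{1,v_i},b^{2,v_i}$ marks an entry chunk and an exit chunk on the boundary of the sub-rectangle determined by $X_{v_i^1}$ and $Y_{v_i^2}$. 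The first ingredient is a purely combinatorial observation: since consecutive $v_i$ differ by $e_1$ or $e_2$, condition~(iv) in Definition~\ref{d:route} forces the exit chunk $b^{2,v_i}$ of the $i$-th sub-rectangle and the entry chunk $b^{1,v_{i+1}}$ of the $(i+1)$-st to be the same set of lattice sites — a shared segment of a column or of a row — carrying the same chunk index; this is the bridge along which we shall hand reachability from one sub-rectangle to the next. The second ingredient is a bookkeeping check: the slope tolerance in condition~(iii) of a level-$j$ route is exactly the slope tolerance used to define entry chunks, exit chunks and entry–exit pairs for level-$j$ blocks, so every pair $(b^{1,v_i},b^{2,v_i})$ met along $P$ is a bona fide entry–exit pair of $X_{v_i^1}\times Y_{v_i^2}$ (and $b^{2,v_1}$ a bona fide exit chunk, $b^{1,v_\ell}$ a bona fide entry chunk). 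This is precisely what is needed so that the hypotheses $X_{a_s+1}\cs Y_{b_s+1}$, $X_{v_i^1}\ssc Y_{v_i^2}$ for $1<i<\ell$, and $X_{a_{s+1}-1}\scc Y_{b_{s+1}-1}$ can be invoked on the chunks $P$ actually uses.

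The heart of the argument is an induction along $P$ with the invariant: for each $i<\ell$, at least a fraction $\theta_j$ of the lattice sites of the exit chunk $b^{2,v_i}$ are reachable, by an open oriented path inside $\tilde X_{(s)}\times\tilde Y_{(s)}$, from the bottom-left corner, where $\theta_j$ is the common density threshold in the level-$j$ forms of Definitions~\ref{d:cs} and~\ref{d:ss} (in particular $\theta_j>\frac12$). The base case $i=1$ is exactly the content of $X_{a_s+1}\cs Y_{b_s+1}$ for the exit chunk $b^{2,v_1}$, in either the ``exit to the top'' or ``exit to the right'' alternative of Definition~\ref{d:cs}. For the step, the reachable sites of $b^{1,v_{i+1}}$ (which equals $b^{2,v_i}$) have density $>\frac12$ in that chunk; Condition~$S$ for the entry–exit pair $(b^{1,v_{i+1}},b^{2,v_{i+1}})$, coming from $X_{v_{i+1}^1}\ssc Y_{v_{i+1}^2}$, supplies sets $A$ inside the entry chunk and $B$ inside the exit chunk, each of density $\geq\theta_j>\frac12$, such that every site of $A$ is joined inside $X_{v_{i+1}^1}\times Y_{v_{i+1}^2}$ to every site of $B$; the reachable set and $A$ therefore intersect, so all of $B$ is reachable, which propagates the invariant. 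All four orientations (enter from the bottom or the left, exit to the top or the right) are subsumed in Condition~$S$ and Definition~\ref{d:cs}, so no case distinctions arise.

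It remains to close at the last sub-rectangle: the invariant (applied at $i=\ell-1$) gives a density-$>\frac12$ set of sites of $b^{1,v_\ell}=b^{2,v_{\ell-1}}$ reachable from the bottom-left corner, while $X_{a_{s+1}-1}\scc Y_{b_{s+1}-1}$ gives a density-$>\frac12$ set of sites of the same chunk from which the top-right corner of $X_{a_{s+1}-1}\times Y_{b_{s+1}-1}$ — which is the top-right corner of $\tilde X_{(s)}\times\tilde Y_{(s)}$ — is reachable; the two sets intersect, and splicing the corresponding open paths gives $\tilde X_{(s)}\cc\tilde Y_{(s)}$. I expect the only even mildly delicate points to be the two bookkeeping steps of the first paragraph — matching chunk indices through condition~(iv), and confirming that the slope tolerances line up so that $\cs$, $\ssc$ and $\scc$ are legitimately available at every sub-rectangle and at both ends — while the probabilistic content reduces to the one-line pigeonhole ``two subsets of density $>\frac12$ intersect'', which is why the lemma genuinely follows directly from the definitions.
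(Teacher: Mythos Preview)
Your proof is correct and unpacks exactly what the paper means by ``immediate from definition'' (the paper gives no further argument for this lemma). The chain-of-sub-rectangles induction along $V(P)$, carrying forward a reachable set of density at least $\theta_j>\tfrac12$ in each successive exit chunk and invoking the pigeonhole ``two subsets of density $>\tfrac12$ intersect'' at every handover, is the only natural way to extract an open corner-to-corner path from the route, and your two bookkeeping checks --- matching chunk indices through condition~(iv), and confirming that the slope and position windows in the level-$j$ route agree with those in the level-$j$ versions of $\cs$, $\ssc$, $\scc$ --- are exactly what needs verifying.

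One minor imprecision worth flagging: the exit chunk $b^{2,v_i}$ and the entry chunk $b^{1,v_{i+1}}$ are not literally the \emph{same} set of lattice sites --- they lie on adjacent columns (when $v_{i+1}=v_i+e_1$) or adjacent rows (when $v_{i+1}=v_i+e_2$) --- but they are indexed by the same chunk of the shared coordinate block, so the two sets whose intersection you take are indeed subsets of a common index set. The single oriented lattice step across the block boundary is then licensed because the entry site is already open, being the starting point of the open path supplied by Condition~$S$ (or by $\scc$ at the last step). With that understood, your argument goes through verbatim.
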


The above lemmata are immediate from definition. Now we turn to corner to side, side to corner and side to side connections. We have the following lemma.

\begin{lemma}
\label{l:csssconnect}
Consider the set-up as above. Suppose $X$ and $Y$ contain $n_X$ and $n_Y$ many chunks respectively. Further suppose that none of the subblock $X_i$ or $Y_i$ contain more than $3L_j$ level $0$ subblocks.
\begin{enumerate}
\item[(i)] Suppose for every exit chunk in $\mathcal{E}_{out}(X,Y)$ the following holds.
For concreteness consider the chunk $(k,n_{Y})$. Let $T_{k}$ denote the set of all $i$ such that $X_{i}$ is contained in $C_k^{X}$. There exists $T_{k}^{*}\subseteq T_{k}$ with $|T_k^{*}|\geq (1-10k_0L_j^{-1})|T_k|$ such that for all $r\in T_{k}^{*}$ and $\tilde{X}=(X_1,\ldots, X_r)$ we have $\tilde{X}\cso Y$.

Then we have $X\cs Y$.

\item[(ii)] A similar statement holds for $X\scc Y$.

\item[(iii)] Suppose for every pair of entry-exit chunks in $\mathcal{E}(X,Y)$ the following holds.
For concreteness consider the pair of entry-exit chunks $((k_1,1), (n_X,k_2))$. Let $T_{k_1}$ (resp. $T'_{k_2}$) denote the set of all $i$ such that $X_{i}$ (resp. $Y_i$) is contained in $C_{k_1}^{X}$ (resp. $C_{k_2}^{Y}$). There exists $T_{k_1,*}\subseteq T_{k_1}$, $T'_{k_2,*}\subseteq T'_{k_2}$  with $|T_{k_1,*}|\geq (1-10k_0L_j^{-1})|T_{k_1}|$,  $|T'_{k_2,*}|\geq (1-10k_0L_j^{-1})|T'_{k_2}|$ such that for all $r\in T_{k_1,*}$, $r'\in T'_{k_2,*}$ and $\tilde{X}=(X_r,\ldots, X_t)$, $\tilde{Y}=(Y_1,\ldots, Y_{r'})$  we have $\tilde{X}\ssco Y$.

Then we have $X\ssc Y$.
\end{enumerate}
\end{lemma}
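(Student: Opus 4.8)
\medskip
\noindent The plan is to prove all three parts by directly unfolding the definitions of $\cs$, $\scc$, $\ssc$ (Definitions~\ref{d:cs} and~\ref{d:ss}) and of the starred relations $\cso$, $\scco$, $\ssco$, together with one elementary counting estimate. The only point needing care is to check that the slack between the fraction $\tfrac34+2^{-(j+7/2)}$ supplied at the level-$j$ sub-block scale and the fraction $\tfrac34+2^{-(j+5)}$ demanded at the chunk scale is wide enough to absorb the loss coming from passing to the sub-collections $T^{*}\subseteq T$ and from the fluctuation in sub-block lengths.

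\smallskip
\noindent\emph{Parts (i) and (ii).} First I would fix a top exit chunk $(C_k^X,n_Y)\in\mathcal{E}_{out}(X,Y)$ and recall that $C_k^X=\bigcup_{i\in T_k}\mathcal{I}(X_i)$ is the disjoint union of the level-$0$ spans of the consecutive level-$j$ sub-blocks $X_i$, $i\in T_k$. For each $r\in T_k^{*}$ the hypothesis furnishes $\tilde X\cso Y$ with $\tilde X=(X_1,\ldots,X_r)$, whose ``top side'' clause says that at least a $\bigl(\tfrac34+2^{-(j+7/2)}\bigr)$-fraction of the level-$0$ sites $a\in\mathcal{I}(X_r)$ have $(a,b_2)$ reachable from the bottom-left corner $(a_1,b_1)$ of $X\times Y$ inside $\tilde X\times Y$; as $\tilde X\times Y$ is the left sub-rectangle of $X\times Y$ containing $(a_1,b_1)$, this reachability is inherited by $X\times Y$. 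Taking the union over $r\in T_k^{*}$ of these (pairwise disjoint) reachable sets yields a set $A\subseteq C_k^X$ of top-side sites reachable from $(a_1,b_1)$ with $|A|\ge\bigl(\tfrac34+2^{-(j+7/2)}\bigr)\sum_{r\in T_k^{*}}|\mathcal{I}(X_r)|$. Now I would use the bounds $L_j\le|\mathcal{I}(X_i)|\le 3L_j$ --- the upper bound being the hypothesis of the lemma, and the lower bound, that every level-$j$ block contains at least $L_j$ level-$0$ sub-blocks, being immediate from the recursive construction --- together with $|T_k\setminus T_k^{*}|\le 10k_0L_j^{-1}|T_k|$ and $|C_k^X|\ge L_j|T_k|$, to obtain $\sum_{r\in T_k^{*}}|\mathcal{I}(X_r)|\ge(1-30k_0L_j^{-1})|C_k^X|$, whence
\[
|A|\ \ge\ \Bigl(\tfrac34+2^{-(j+7/2)}\Bigr)\bigl(1-30k_0L_j^{-1}\bigr)|C_k^X|\ \ge\ \Bigl(\tfrac34+2^{-(j+5)}\Bigr)|C_k^X|
\]
for $L_0$ large. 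This is exactly the requirement imposed on a top exit chunk in Definition~\ref{d:cs}; a right exit chunk is handled symmetrically through the ``right side'' clause of $\cso$, and part (ii) is the same argument with entry and exit interchanged, using $\scco$ and $\mathcal{E}_{in}$.

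\smallskip
\noindent\emph{Part (iii).} Here I would fix an entry--exit pair, without loss of generality $(e_1,e_2)$ with $e_1=(C_{k_1}^X,1)\in C_B^X$ and $e_2=(n_X,C_{k_2}^Y)\in C_R^Y$, and verify Condition~S. For $r\in T_{k_1,*}$ and $r'\in T'_{k_2,*}$ the hypothesis gives $\tilde X\ssco\tilde Y$ with $\tilde X=(X_r,\ldots,X_t)$ and $\tilde Y=(Y_1,\ldots,Y_{r'})$; reading off the clause ``$(a_4,b')$ reachable from $(a,b_1)$'' of the definition of $\ssco$, and noting that in $\tilde X\times\tilde Y\subseteq X\times Y$ the coordinate $a_4$ indexes the right side of $X\times Y$ and $b_1$ its bottom side, this produces subsets $A_r\subseteq\mathcal{I}(X_r)$ and $B'_{r'}\subseteq\mathcal{I}(Y_{r'})$, each of relative size at least $\tfrac34+2^{-(j+7/2)}$, such that for all $a\in A_r$ and all $b\in B'_{r'}$ there is an open path in $X\times Y$ from $(a,b_1)$ to $(a_2,b)$. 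Put $A=\bigcup_{r\in T_{k_1,*}}A_r\subseteq C_{k_1}^X$ and $B=\bigcup_{r'\in T'_{k_2,*}}B'_{r'}\subseteq C_{k_2}^Y$; disjointness of the sub-block spans means each $a\in A$ lies in a unique $A_r$ and each $b\in B$ in a unique $B'_{r'}$, so the open path exists for every $(a,b)\in A\times B$, while the counting of part (i) gives $|A|\ge(\tfrac34+2^{-(j+5)})|C_{k_1}^X|$ and $|B|\ge(\tfrac34+2^{-(j+5)})|C_{k_2}^Y|$. Thus Condition~S holds for $(e_1,e_2)$; the three remaining entry--exit configurations are treated identically, invoking in turn the other clauses of $\ssco$, and $X\ssc Y$ follows from Definition~\ref{d:ss}.

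\smallskip
\noindent\emph{The main point to check.} Everything above is routine bookkeeping in the definitions --- which is why the paper omits it --- except for the inequality $\bigl(\tfrac34+2^{-(j+7/2)}\bigr)\bigl(1-30k_0L_j^{-1}\bigr)\ge\tfrac34+2^{-(j+5)}$, equivalently $30 k_0 2^{j+7/2}\le(2^{3/2}-1)L_j$, which is needed uniformly in $j\ge1$; since $L_j=L_0^{\alpha^j}$ grows doubly exponentially in $j$ while $2^j$ grows only exponentially, it holds once $L_0$ is taken large. The one thing to keep straight is that each sub-rectangle $\tilde X\times\tilde Y$ used above is contained in $X\times Y$ and contains the boundary points named in the relevant reachability clause, so that an open path in $\tilde X\times\tilde Y$ is automatically an open path in $X\times Y$.
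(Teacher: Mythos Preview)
Your treatment of parts (i) and (ii) is correct and in fact spells out what the paper leaves implicit: the counting that passes from the $\tfrac34+2^{-(j+7/2)}$ fraction on individual sub-blocks to the $\tfrac34+2^{-(j+5)}$ fraction on the chunk, using the uniform length bounds $L_j\le|\mathcal{I}(X_i)|\le 3L_j$, is exactly the content of the paper's remark that ``the condition on the length of sub-blocks is used to ensure that none of the subblocks in $T_{k_1}\setminus T_{k_1,*}$ are extremely long.''

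Part (iii), however, has a genuine gap. When you invoke $\tilde X\ssco\tilde Y$ for the pair $(\tilde X,\tilde Y)$ determined by $(r,r')$, the definition of $\ssco$ provides sets $A\subseteq\mathcal I(X_r)$ and $B'\subseteq\mathcal I(Y_{r'})$ that depend on the \emph{pair} $(r,r')$, not on $r$ and $r'$ separately; you should write them $A_{r,r'}$ and $B'_{r,r'}$. Consequently, after setting $A=\bigcup_r A_r$ and $B=\bigcup_{r'} B'_{r'}$ (for whatever choice of representatives you have in mind), the assertion ``the open path exists for every $(a,b)\in A\times B$'' does not follow: for $a\in A_{r_1,r'_1}$ and $b\in B'_{r_2,r'_2}$ you only know a path from $(a,b_1)$ to $(a_2,\cdot)$ landing in $B'_{r_1,r'_1}$, and a path from some point of $A_{r_2,r'_2}$ to $(a_2,b)$ --- not a path from $(a,b_1)$ to $(a_2,b)$. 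Condition~S demands a \emph{single} product set $A\times B$ with all-to-all connections, and your construction does not deliver this.

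The paper closes this gap with a planarity argument you have omitted: if there are oriented open paths $(x_1,y_1)\to(x_2,y_2)$ and $(x_3,y_1)\to(x_2,y_3)$ with $x_1<x_3<x_2$ and $y_1<y_2<y_3$, then the two paths must intersect, and splicing at the intersection yields open paths $(x_1,y_1)\to(x_2,y_3)$ and $(x_3,y_1)\to(x_2,y_2)$. Iterating this crossing observation lets one upgrade the family of product sets $\{A_{r,r'}\times B'_{r,r'}\}$ to a single product $A\times B$ satisfying Condition~S. You should either incorporate this planarity step explicitly, or else find an alternative way to decouple the $r$ and $r'$ dependence of the witnessing sets.
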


\begin{proof}
Parts $(i)$ and $(ii)$ are straightforward from definitions. Part $(iii)$ follows from definitions by noting the following consequence of planarity. Suppose there are open oriented paths in $\Z^2$ from $v_1=(x_1,y_1)$ to $v_2=(x_2,y_2)$ and also from $v_3=(x_3,y_1)$ to $v_4(x_2,y_3)$ such that $x_1<x_3<x_2$ and $y_1<y_2<y_3$. Then these paths must intersect and hence there are open paths from $v_1$ to $v_4$ and also from $v_2$ to $v_3$. The condition on the length of sub-blocks is used to ensure that none of the subblocks in $T_{k_1}\setminus T_{k_1,*}$ are extremely long.
\end{proof}

\begin{figure}[h]
\begin{center}
\includegraphics[height=5cm, width=11cm]{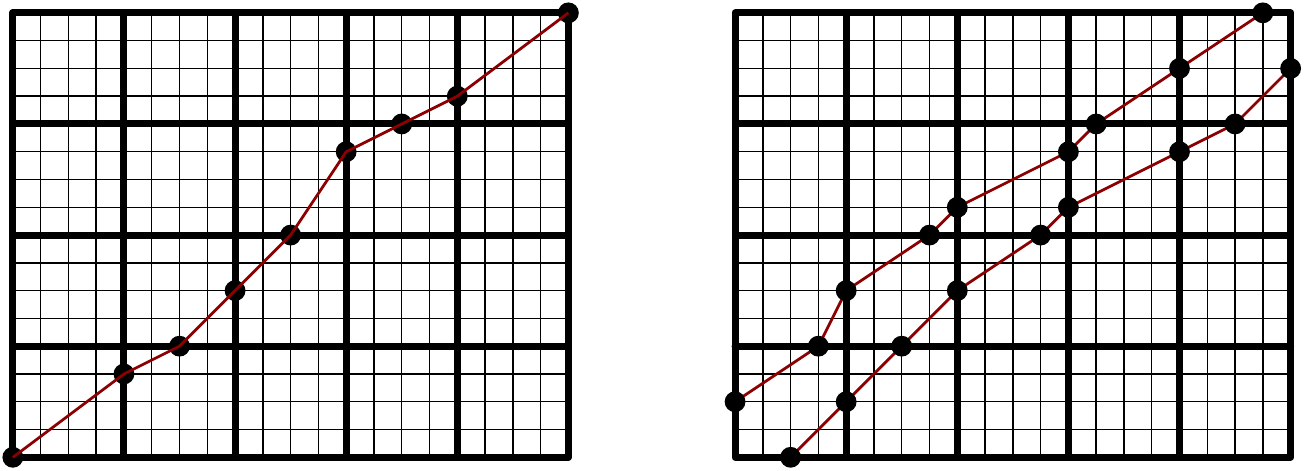}
\end{center}
\caption{Corner to Corner and Side to Side routes}
\end{figure}

The next lemma gives sufficient conditions for $\tilde{X}\cso \tilde{Y}$ and $\tilde{X}\ssco \tilde{Y}$ in the set-up of the above lemma. This lemma also easily follows from definitions.

\begin{lemma}
\label{l:csscconnect}
Assume the set-up of Lemma \ref{l:csssconnect}. Let $\tilde{X}=(X_{t_1},X_{t_1+1},\ldots ,X_{t_2})$ and $\tilde{Y}=(Y_{t'_1},\ldots , Y_{t'_2})$. Let $H=\{a_1<a_2<\cdots <a_{\ell}\}\subseteq [t_1,t_2]$ and $H'=\{b_1<b_2<\cdots <b_{\ell}\}\subseteq [t'_1,t'_2]$. Set $\tilde{X}_{(s)}=(X_{a_s+1},\ldots, X_{a_{s+1}-1})$ and $\tilde{Y}_{(s)}=(Y_{b_s+1},\ldots, Y_{b_{s+1}-1})$ ($a_0,b_0$ etc. are defined in the natural way).
\begin{enumerate}
\item[(i)] Suppose  that for each $s<\ell$, $\tilde{X}_{(s)}\cc \tilde{Y_{(s)}}$ and $\tilde{X}_{(\ell)}\cso \tilde{Y_{(\ell)}}$. Also suppose for each $s$,  $X_{a_s}\cc Y_{b_s}$. Then we have $\tilde{X}\cso \tilde{Y}$.
\item[(ii)] A similar statement holds for $\tilde{X}\scco \tilde{Y}$.
\item[(ii)] Suppose  that for each $s\in [\ell -1]$, $\tilde{X}_{(s)}\cc \tilde{Y_{(s)}}$, $\tilde{X}_{(0)}\scco \tilde{Y_{(0)}}$  $\tilde{X}_{(\ell)}\cso \tilde{Y_{(\ell)}}$. Also suppose for each $s$,  $X_{a_s}\cc Y_{b_s}$. Then we have $\tilde{X}\ssco \tilde{Y}$.
\end{enumerate}
\end{lemma}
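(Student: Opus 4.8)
The plan is to prove all three parts by the same device --- splicing open oriented paths across consecutive sub-rectangles --- so the real content is only bookkeeping of which sub-block sits at each end of each segment, together with the fact that the splice sites are open. For the latter we use what was already observed from \eqref{e:levelonex}--\eqref{e:leveloney}: all four corners of every sub-block rectangle $X_i\times Y_{i'}$ (with $X_i$, $Y_{i'}$ blocks at any level $\ge 1$) are open, because the first and last level-$0$ coordinates of every $\mathbb{X}$- and $\mathbb{Y}$-block lie in fixed residue classes mod $4$ (which propagate to all higher levels since higher blocks are concatenations), and no $\mathbb{X}$-endpoint residue class coincides with the matching $\mathbb{Y}$-endpoint residue class. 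Thus, whenever the top-right corner of one sub-rectangle coincides with the bottom-left corner of the next --- exactly the situation produced by the partition $\tilde X=\tilde X_{(0)}\cup\{X_{a_1}\}\cup\tilde X_{(1)}\cup\cdots\cup\{X_{a_\ell}\}\cup\tilde X_{(\ell)}$ and the matching one for $\tilde Y$ --- an up-right path ending there and an up-right path starting there splice into a single up-right path, and since each constituent path stays in its own sub-rectangle, the spliced path stays in $\tilde X\times\tilde Y$.

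For part (i): the hypotheses give a corner-to-corner open path in each $\tilde X_{(s)}\times\tilde Y_{(s)}$ ($s<\ell$) and in each $X_{a_s}\times Y_{b_s}$ ($1\le s\le\ell$); splicing them in order yields an open path from the bottom-left corner of $\tilde X\times\tilde Y$ to the bottom-left corner of $\tilde X_{(\ell)}\times\tilde Y_{(\ell)}$. Appending the reachability guaranteed by $\tilde X_{(\ell)}\cso\tilde Y_{(\ell)}$, and using that the last sub-block of $\tilde X_{(\ell)}$ (resp.\ $\tilde Y_{(\ell)}$) is precisely $X_{t_2}$ (resp.\ $Y_{t'_2}$), hence has level-$0$ range $[a_3,a_4]$ (resp.\ $[b_3,b_4]$), gives exactly the two density statements defining $\tilde X\cso\tilde Y$. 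Part (ii), for $\tilde X\scco\tilde Y$, is the mirror image of part (i) under reversing the sub-block sequences (equivalently, reflecting the rectangle through its center), which swaps the bottom-left and top-right corners and the roles of entry and exit.

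For part (iii) we anchor the same chain to side data at both ends. From $\tilde X_{(0)}\scco\tilde Y_{(0)}$, whose first sub-blocks are $X_{t_1}$ and $Y_{t'_1}$ of ranges $[a_1,a_2]$ and $[b_1,b_2]$, we obtain sets $A\subseteq[a_1,a_2]$, $B\subseteq[b_1,b_2]$ with $|A|\ge(\frac34+2^{-(j+7/2)})(a_2-a_1)$, $|B|\ge(\frac34+2^{-(j+7/2)})(b_2-b_1)$ such that the top-right corner of $\tilde X_{(0)}\times\tilde Y_{(0)}$ is reachable from $(a,b_1)$ for every $a\in A$ and from $(a_1,b)$ for every $b\in B$. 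Splicing through the $\cc$-paths of the middle pieces carries this reachability to the bottom-left corner of $\tilde X_{(\ell)}\times\tilde Y_{(\ell)}$, and $\tilde X_{(\ell)}\cso\tilde Y_{(\ell)}$ then provides $A'\subseteq[a_3,a_4]$, $B'\subseteq[b_3,b_4]$ of the same density reachable from that corner. Since the $\scco$-guarantee depends only on the entry point chosen and the $\cso$-guarantee only on the exit point chosen, each of the four connections $(a,b_1)\to(a_4,b')$, $(a,b_1)\to(a',b_4)$, $(a_1,b)\to(a_4,b')$, $(a_1,b)\to(a',b_4)$ (over $a\in A$, $b\in B$, $a'\in A'$, $b'\in B'$) is realized by such a spliced open path in $\tilde X\times\tilde Y$, which is exactly $\tilde X\ssco\tilde Y$. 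The only step requiring care --- and it is the same one flagged for Lemma \ref{l:csssconnect} --- is the index bookkeeping ensuring the splice points really are shared corners; no planarity/crossing argument is needed, so the lemma is immediate from the definitions of $\cc$, $\cso$, $\scco$, $\ssco$.
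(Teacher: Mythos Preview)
Your proposal is correct and matches the paper's approach: the paper gives no proof beyond the one-line remark preceding the statement that ``this lemma also easily follows from definitions,'' and your argument is exactly the routine splicing of open paths at shared (open) corners that this remark intends. Your observation that part (iii) here requires no planarity argument---because all entry paths funnel through the single bottom-left corner of $\tilde X_{(\ell)}\times\tilde Y_{(\ell)}$, so the exit sets $A',B'$ from $\cso$ are automatically common to all entry points in $A,B$---is the only point worth making explicit, and you have made it.
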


Now we give sufficient conditions for $\tilde{X}\cso \tilde{Y}$ and $\tilde{X}\scco \tilde{Y}$ in terms of routes.

\begin{lemma}
\label{l:cssssuccess}
In the above set-up, further suppose that none of the level $(j-1)$ sub-blocks of $X_{t_1}$, $X_{t_2}$, $Y_{t'_1}$, $Y_{t'_2}$ contain more than $3L_{j-1}$ level $0$ sub-blocks. Set $I_1^s=[a_{s}+1,a_{s+1}-1]$, $I_2^s=[a_{s}+1,a_{s+1}-1]$. Set $A^s=I_1^s\times I_2^s$ and let $\mathcal{B}^s$ be the restriction of $\mathcal{B}$ to $A^s$. Suppose there exists a corner to side admissible connection $\mathcal{P}$ in $A^s\otimes B^s$ such that $X_{a_s+1}\cs Y_{b_s+1}$ and for all other $(v_1,v_2)\in V(\mathcal{P})$ $X_{v_1}\ssc Y_{v_2}$. Then $\tilde{X}_{(s)}\cso \tilde{Y}_{(s)}$. Similar statements hold for $\tilde{X}_{(s)}\scco \tilde{Y}_{(s)}$ and $\tilde{X}_{(s)}\ssco \tilde{Y}_{(s)}$.
\end{lemma}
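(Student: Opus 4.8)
\textbf{Proof proposal for Lemma \ref{l:cssssuccess}.}

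The plan is to reduce the statement about $\tilde X_{(s)} \cso \tilde Y_{(s)}$ to the combinatorial bookkeeping already set up in Lemma \ref{l:csscconnect}, by extracting the ingredient paths directly from the admissible connection $\mathcal{P}$. The key point is that a corner to side admissible connection $\mathcal{P}$ in $A^s\otimes \mathcal B^s$ is, by definition, a family of routes $P^b$, one for each exit site $b\in S_{out}$, each starting from the corner $(1,1)$ of the compound rectangle; and each route $P^b$ is an oriented path $V(P^b)$ in $A^s$ together with a choice of entry/exit sites in each visited subblock satisfying the slope condition (iii) of Definition \ref{d:route}. First I would fix such a route $P^b$ and walk along it: at the first subblock $X_{a_s+1}\times Y_{b_s+1}$ we are promised $X_{a_s+1}\cs Y_{b_s+1}$, and at every other visited subblock $(v_1,v_2)\in V(P^b)$ we are promised $X_{v_1}\ssc Y_{v_2}$. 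The slope conditions built into the definition of a route (condition (iii), with constant $R(1+2^{-(j+3)})$) are exactly what is needed so that each required crossing falls among the entry–exit pairs for which $X_{v_1}\ssc Y_{v_2}$ (equivalently condition $S$) actually guarantees a crossing: condition $S$ gives us, for every admissible entry–exit pair of chunks, large subsets $A$ of the entry chunk and $B$ of the exit chunk with open paths between all of them, and $X_{a_s+1}\cs Y_{b_s+1}$ gives the analogous fractional reachability statement for the corner-to-side subblock.

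Next I would glue these crossings along $V(P^b)$. Condition (iv) of Definition \ref{d:route} says the exit site $b^{2,v_i}$ and the entry site $b^{1,v_{i+1}}$ of consecutive subblocks agree in one coordinate, i.e.\ they lie on a common line of the lattice; since the reachable/target sets produced by $\ssc$ and $\cs$ each occupy at least a $(\tfrac34+2^{-(j+5)})$-fraction of the relevant chunk, two such fractions on the same line must overlap, so a path exiting one subblock can be continued into the next. Propagating this from the corner $(1,1)$ through the whole route, and using at the last subblock the corner-to-side $\cso$-type output, yields: for a $(\tfrac34 + 2^{-(j+7/2)})$-fraction of sites $b$ on the top (resp.\ right) face of $\tilde X_{(s)}\times \tilde Y_{(s)}$, the site $b$ is reachable from the bottom-left corner. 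That is precisely the two inequalities defining $\tilde X_{(s)}\cso \tilde Y_{(s)}$. The bound $(\tfrac34+2^{-(j+7/2)})$ rather than $(\tfrac34+2^{-(j+5)})$ absorbs the losses incurred both from intersecting fractional sets across the (at most boundedly many relevant) subblocks of a route and from the subblocks in the exceptional sets $T\setminus T_*$; this is where the hypothesis that the extreme level-$(j-1)$ sub-blocks of $X_{t_1},X_{t_2},Y_{t'_1},Y_{t'_2}$ are not excessively long (at most $3L_{j-1}$ level-$0$ sub-blocks) is used, to control the total length of the portions one might fail to cross.

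For the side-to-corner case $\tilde X_{(s)}\scco \tilde Y_{(s)}$ the argument is identical with the roles of entry and exit reversed, using a side to corner admissible connection. For the side-to-side case $\tilde X_{(s)}\ssco \tilde Y_{(s)}$ one runs a route $P^{b,b'}$ for each pair $(b,b')\in S_{in}\times S_{out}$, using $\ssc$ at all visited subblocks (the endpoints included, since now both ends are ``sides''), and the planarity observation recorded in the proof of Lemma \ref{l:csssconnect}—that two monotone paths, one from $(x_1,y_1)$ to $(x_2,y_2)$ and one from $(x_3,y_1)$ to $(x_2,y_3)$ with $x_1<x_3<x_2$, $y_1<y_2<y_3$, must cross—to splice the fractional families coherently, producing the sets $A,A',B,B'$ in the definition of $\ssco$. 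I expect the only real obstacle to be the fractional-set bookkeeping in the gluing step: one must verify that after intersecting a bounded number of $(\tfrac34+\varepsilon)$-dense subsets along a route and discarding the $O(k_0 L_j^{-1})$-fraction of bad subblocks, the surviving fraction is still at least $\tfrac34 + 2^{-(j+7/2)}$, which is a matter of checking that $2^{-(j+5)}$ minus these explicitly bounded losses stays above $2^{-(j+7/2)}$ for $L_0$ large; everything else is a direct unwinding of Definitions \ref{d:route} and the $\cso,\scco,\ssco$ events. Since, as the authors note, all of this follows directly from the definitions, I would present it in that spirit and omit the fully detailed calculation.
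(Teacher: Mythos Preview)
Your proposal is correct and follows the same approach as the paper. The paper's own proof is essentially a one-line remark that the result is ``immediate from definition of admissible connections and the inductive hypotheses (this is where we need the assumption on the lengths of $j-1$ level subblocks)'' together with the observation that planarity is needed for $\tilde X_{(s)}\ssco \tilde Y_{(s)}$; you have simply unpacked what this means, correctly identifying the route-following and fractional-overlap gluing, the role of the length hypothesis in the bookkeeping, and the planarity argument in the side-to-side case.
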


\begin{proof} Proof is immediate from definition of admissible connections and the inductive hypotheses (this is where we need the assumption on the lengths of $j-1$ level subblocks). For $\tilde{X}_{(s)}\ssco \tilde{Y}_{(s)}$, we again need to use planarity as before.
\end{proof}

Now we connect it up with the notion of admissible assignments defined earlier in this section. Consider the set-up in Lemma \ref{l:ccconnect}. Let $B_1\subseteq I_1=[t]$, $B_2\subseteq I_2=[t']$, let $B_1^*\supseteq B_1$ (resp. $B_2^*\supseteq B_2$) be the set containing elements of $B_1$ (resp. $B_2$) and its neighbours. Let $\Upsilon$ be a level $j$ admissible assignment of $(I_1,I_2)$ w.r.t. $(B_1^*,B_2^*)$ with associated $\tau$. Suppose $H=\tau^{-1}(B_2)\cup B_1$ and $H'=B_2^*\cup \tau(B_2)$. We have the following lemmata.

\begin{lemma}
\label{l:ccsuccessstar}
Consider $(\tilde{X}_{(s)},\tilde{Y}_{(s)})$ in the above set-up. There exists a corner to corner route $P$ in $A^s\otimes \mathcal{B}^s$. Further for each $k\in I_2^s$, there exist sets $H_k^{\tau}\subseteq I_1^s$ with $|H_k^{\tau}|\leq L_j$ such that the $k$-section of the route $P$ is contained in $H_k^{\tau}$ for all $k$. In the special case where $t=t'$ and $\tau(i)=i$ for all $i$, one cas take $H_{k}^{\tau}=\{k-1,k,k+1\}$. Further Let $A'\subseteq A^s$ with $|A'|\leq k_0$. Suppose Futher that for all $v=(v_1,v_2)\in A'$ and for $i\in \{s,s+1\}$ we have $||v-(a_i,b_i)||_{\infty} \geq k_0R^310^{j+8}$. Then we can take $V(P)\cap A'=\emptyset$.
\end{lemma}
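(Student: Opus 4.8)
\textbf{Proof strategy for Lemma~\ref{l:ccsuccessstar}.}
The plan is to produce the route $P$ by concatenating, block-by-block along the oriented path $V(\Upsilon)$ determined by $\tau$, the single-block connections that the admissible assignment packages together; the geometric content of Definition~\ref{d:assign} was precisely set up so that the slopes match the requirements (iii) of Definition~\ref{d:route}. First I would set $H=\tau^{-1}(B_2)\cup B_1$ and $H'=B_2^*\cup\tau(B_2)$ as in the statement, and use condition~(iii) of Definition~\ref{d:assign} (the slope bound $\frac{1-2^{-(j+7/2)}}{R}\le \frac{b_{i+1}-b_i-1}{a_{i+1}-a_i-1}\le R(1+2^{-(j+7/2)})$) to build, in each consecutive rectangle cut out by $(a_i,b_i)$ and $(a_{i+1},b_{i+1})$, the straight diagonal ``staircase'' of lattice cells whose increments realize that ratio. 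This staircase is the vertex set $V(P)$ inside that rectangle; stitching them at the shared corners $(a_i,b_i)$ gives an oriented path in $A^s$ from $(a_s+1,b_s+1)$ to $(a_{s+1}-1,b_{s+1}-1)$, and condition~(iv) and the endpoint conventions of Definition~\ref{d:route} are satisfied by taking $b^{1},b^{2}$ at the appropriate side-midpoints, with the corner exceptions at the two ends. The slope condition (iii) of Definition~\ref{d:route} (with the looser constant $2^{-(j+3)}$) then follows from the $2^{-(j+7/2)}$ bound in the assignment, since $2^{-(j+7/2)}<2^{-(j+3)}$.

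Next I would bound the $k$-sections. For a fixed $k\in I_2^s$, the cells of $V(P)$ in row $k$ all lie between consecutive ``turning points'' of the staircase, and the width of each such horizontal run is controlled by the reciprocal slope, which by (iii) of Definition~\ref{d:assign} is at most $R(1+2^{-(j+7/2)})$ per unit; since the total vertical extent between turning points corresponding to a single row is bounded and the number of rows in a single $(a_i,a_{i+1})$-block is what forces the count, a direct computation gives $|H_k^\tau|\le L_j$ once one recalls the block-length bounds (each level-$j$ sub-block contributes boundedly, and the number of sub-blocks between consecutive $a_i$ is at most $L_j^{\alpha-1}+L_j^5$, but the relevant local quantity is the $k$-section which telescopes to something of order $R$ times a bounded factor, comfortably below $L_j$ for $L_0$ large). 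The special case $t=t'$, $\tau(i)=i$ is immediate: the staircase becomes the main diagonal, whose $k$-section is $\{k-1,k,k+1\}$ (the extra neighbours coming from the side-to-side junction cells).

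Finally, for the avoidance of the small ``forbidden'' set $A'$ with $|A'|\le k_0$: here I would invoke Proposition~\ref{p:assign}(i), which guarantees not one but $L_j^2$ admissible assignments that are translates of each other in the $B_1,B_2$-directions, hence whose associated routes are ``parallel'' and pairwise disjoint away from the fixed points $B_1^*,B_2^*$. Since each $v\in A'$ with $\|v-(a_i,b_i)\|_\infty\ge k_0R^310^{j+8}$ can be ``hit'' by at most a bounded number — of order $R$ — of these $L_j^2$ parallel routes (a route occupies a strip of width $O(R)$ around its diagonal at any given coordinate), the union over $v\in A'$ of the bad route-indices has size at most $O(k_0R)$, which is $\ll L_j^2$ for $L_0$ large; choosing any route avoiding all of them gives $V(P)\cap A'=\emptyset$. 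The main obstacle I anticipate is making the pigeonhole in this last step honest: one must verify that the distance hypothesis $\|v-(a_i,b_i)\|_\infty\ge k_0R^310^{j+8}$ is exactly what is needed so that the $L_j^2$ parallel translates genuinely spread out past $v$ (near the fixed endpoints the translates all coincide, so points too close to $(a_i,b_i)$ cannot be avoided), and that the width-$O(R)$ strip estimate interacts correctly with the chunk structure; this is a careful but routine planar bookkeeping argument, and everything else reduces to unwinding the definitions.
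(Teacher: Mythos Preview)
Your construction for the existence of the route and the $k$-section bound is in the spirit of the paper's argument: the paper reduces the lemma to Lemma~\ref{l:admissibletoy}, which builds exactly the diagonal staircase you describe (with carefully chosen entry/exit points $b^{1,v}, b^{2,v}$ inside each cell $B_{v_1,v_2}$) and shows $V(P)$ stays within $L^1$-distance $50$ of the straight diagonal of $A^s$, which gives the $k$-section bound directly. One minor confusion: the lemma concerns a \emph{single} sub-rectangle $A^s$ (for one fixed $s$), so there is no stitching at the corners $(a_i,b_i)$ to be done here --- that stitching happens one level up, in Lemma~\ref{l:ccconnect}.

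The genuine gap is in your treatment of the avoidance of $A'$. You propose to invoke the $L_j^2$ admissible assignments $\tau_h$ of Proposition~\ref{p:assign}(i) and pigeonhole among the resulting ``parallel'' routes. But in the statement of Lemma~\ref{l:ccsuccessstar} the assignment $\tau$ is \emph{fixed} (it is the ambient $\Upsilon$ from the paragraph preceding the lemma), and the rectangle $A^s = I_1^s \times I_2^s$, the sets $H_k^\tau$, and the segments $\tilde{X}_{(s)}, \tilde{Y}_{(s)}$ are all determined by that single $\tau$. Switching to another $\tau_h$ changes $H, H'$ and hence $A^s$ itself, so the route you produce need not live in the required $A^s \otimes \mathcal{B}^s$ at all. (The mechanism you have in mind --- pigeonholing among the $\tau_h$ --- is precisely what Lemma~\ref{l:admissiblemapexistence} does, but that is a separate lemma serving a different purpose in the side-to-side estimate.) The paper's approach, via Lemma~\ref{l:admissibletoy}(iii), instead keeps $A^s$ fixed and generates a large family of candidate routes by replacing the single straight diagonal with many piecewise-linear approximations to it \emph{inside} $A^s$; since each point of $A'$ lies on only boundedly many of these and $|A'| \le k_0$, one of them avoids $A'$ entirely. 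The distance hypothesis $\|v-(a_i,b_i)\|_\infty \ge k_0R^3 10^{j+8}$ is what guarantees room near the corners for the piecewise-linear paths to spread out before they funnel back to the common endpoints. Your pigeonhole intuition is correct, but the family of paths over which you pigeonhole must all live in the \emph{same} $A^s$.
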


\begin{proof}
This lemma is a consequence of Lemma \ref{l:admissibletoy} below.
\end{proof}

\begin{lemma}
\label{l:ccsssstar}
In the above set-up, consider $(\tilde{X}_{(s)}, \tilde{Y}_{(s)})$. Assume for each $i\in [a_{s}+1, a_{s+1}-1]$, $i'\in [b_{s}+1, b_{s+1}-1]$ we have $L_{j-1}^{\alpha-5}\leq n_{i}, n'_{i'}\leq L_{j-1}^{\alpha-5}+L_{j-1}$.
Let $A'\subseteq A^s$ with $|A'|\leq k_0$. Suppose further that for all $v=(v_1,v_2)\in A'$ and for  $i\in {s,s+1}$ we have $||v-(a_i,b_i)||_{\infty} \geq k_0R^310^{j+8}$. Assume also $a_{s+1}-a_{s}, b_{s+1}-b_s \geq 5^{j+6}R$. Then there exists a corner to side (resp. side to corner, side to side) admissible connection $\mathcal{P}$ in $A^{s}\otimes \mathcal{B}^{s}$ such that $V(P)\cap A'=\emptyset$.
\end{lemma}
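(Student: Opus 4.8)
The plan is to reduce the statement to the abstract combinatorial result Lemma \ref{l:admissibletoy} (invoked in the proof of Lemma \ref{l:ccssuccessstar}) in exactly the same way as the corner-to-corner case, then upgrade the single route to a whole admissible connection. First I would set $t = a_{s+1}-a_s-1$ and $t' = b_{s+1}-b_s-1$, so that $(\tilde X_{(s)}, \tilde Y_{(s)})$ lives over the lattice $A^s = I_1^s \times I_2^s$ with $|I_1^s| = t$, $|I_2^s| = t'$. The slope hypotheses propagated through the block construction (the entry/exit chunk slope conditions of \eqref{e:slopeentryexit}, together with the bounds $a_{s+1}-a_s, b_{s+1}-b_s \ge 5^{j+6}R$) guarantee that the ratio $t'/t$ lies in the window $[\tfrac{1-2^{-(j+4)}}{R}, R(1+2^{-(j+4)})]$ required by Proposition \ref{p:assign}(i), while $|A'| \le k_0$ is far below the thresholds there. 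I would then feed $(I_1^s, I_2^s)$ and the empty forbidden-boundary sets into Proposition \ref{p:assign}(i) to obtain $L_j^2$ admissible assignments whose associated maps $\tau_i$ differ by translation; this is the engine that produces many parallel routes.

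Next I would carry out the geometric core. For each target boundary point $b \in S_{out}$ (and, in the side-to-side case, each source point $b' \in S_{in}$), I need a level-$j$ route $P^b$ (resp. $P^{b,b'}$) in $A^s \otimes \mathcal B^s$ realizing Definition \ref{d:route}, and I need all of these routes to avoid $A'$. Here I would apply Lemma \ref{l:admissibletoy}: because the uniform bounds $L_{j-1}^{\alpha-5} \le n_i, n'_{i'} \le L_{j-1}^{\alpha-5}+L_{j-1}$ make all the sub-rectangles $B_{i_1,i_2}$ nearly the same shape, the admissible assignment $\tau_i$ from the previous paragraph pins down, for each section index $k \in I_2^s$, a set $H_k^\tau \subseteq I_1^s$ of size at most $L_j$ containing the $k$-section of the resulting route, exactly as in Lemma \ref{l:ccsuccessstar}. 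The slope condition (iii) in the route definition is where the near-equal dimensions of the $B_{i_1,i_2}$'s are used: consecutive section choices $b^{2,v_i}, b^{1,v_{i+1}}$ can be matched up within the $[\tfrac{1-2^{-(j+3)}}{R}, R(1+2^{-(j+3)})]$ slope budget because each crossing of a single sub-block changes the coordinates by roughly $n_{v_i^1}$ vs. $n'_{v_i^2}$, and these are controlled. The entry condition $X_{a_s+1}\cs Y_{b_s+1}$ (or $X_{a_{s+1}-1}\scc Y_{b_{s+1}-1}$, or both for side-to-side) supplies the $\tfrac34$-density of valid entry/exit positions on the first/last sub-block, which is what lets the route start from an arbitrary point of $S_{in}$ and end at an arbitrary point of $S_{out}$; for all interior $(v_1,v_2)\in V(\mathcal P)$ we only need $X_{v_1}\ssc Y_{v_2}$, which holds by hypothesis.

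The avoidance of $A'$ is the point requiring the hypothesis $\|v-(a_i,b_i)\|_\infty \ge k_0 R^3 10^{j+8}$ for $v\in A'$, $i\in\{s,s+1\}$. Since each $H_k^\tau$ has size $\le L_j$ and the $L_j^2$ assignments are mutually translated, for each of the at most $k_0$ points of $A'$ only a bounded (polynomial in $R, j$, times $k_0$) number of the $L_j^2$ candidate routes can be blocked by that point — the translate parameter $i$ must fall in a window of width $O(k_0 R^3 10^{j+8})$, and the distance hypothesis is precisely what guarantees such a window exists strictly inside $[1,L_j^2]$ and does not touch the forced segments near the corners $(a_s,b_s)$, $(a_{s+1},b_{s+1})$. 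Taking $L_0$ large, $L_j^2$ dominates $k_0 \cdot k_0 R^3 10^{j+8}$, so some admissible assignment yields routes $\{P^b\}$ (or $\{P^{b,b'}\}$) with $V(\mathcal P)\cap A' = \emptyset$; collecting them over all $b\in S_{out}$ (and $b'\in S_{in}$) gives the desired corner-to-side (resp. side-to-corner, side-to-side) admissible connection. I expect the main obstacle to be the bookkeeping in this last step: simultaneously (a) routing to \emph{every} boundary point, (b) keeping each section set $H_k^\tau$ small, (c) respecting all three slope budgets at the right levels $j+3, j+7/2, j+4$, and (d) dodging $A'$ — these are individually routine given Lemma \ref{l:admissibletoy}, but making the quantifiers line up (one fixed assignment $\tau_i$ working for all $b$ simultaneously, via the planarity/intersection argument used in Lemma \ref{l:csscconnect}(iii)) is the delicate part, and is exactly why the lengths of the level-$(j-1)$ sub-blocks of the four extreme sub-blocks must be controlled.
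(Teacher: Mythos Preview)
The paper's proof is a one-line reduction: Lemma~\ref{l:ccsssstar} follows immediately from Lemma~\ref{l:admissibletoy}(ii),(iii), since the hypotheses on $n_i,n'_{i'}$, on $t,t'\ge 5^{j+6}R$, and on the distance of $A'$ from the two corners are exactly those of Lemma~\ref{l:admissibletoy}, while the slope bound $\tfrac{1-2^{-(j+7/2)}}{R}\le t'/t\le R(1+2^{-(j+7/2)})$ is already part of the ambient set-up via Definition~\ref{d:assign}(iii).

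Your proposal does name the right target lemma, but the scaffolding around it is off in three places.

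First, you invoke Proposition~\ref{p:assign}(i) on $(I_1^s,I_2^s)$ with ``empty forbidden-boundary sets''. With $B=B'=\emptyset$ the resulting assignment is vacuous ($H=H'=\emptyset$, $\tau$ the empty map), so this cannot be ``the engine that produces many parallel routes''. Routes in $A^s\otimes\mathcal B^s$ are manufactured directly inside Lemma~\ref{l:admissibletoy}, not via Proposition~\ref{p:assign}.

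Second, you import the events $X_{a_s+1}\cs Y_{b_s+1}$ and $X_{v_1}\ssc Y_{v_2}$ as hypotheses, but they are not hypotheses of the present lemma. Lemma~\ref{l:ccsssstar} is a purely combinatorial statement about the \emph{existence} of an admissible connection in $A^s\otimes\mathcal B^s$; the probabilistic $\cs/\scc/\ssc$ events enter only afterwards, in Lemma~\ref{l:cssssuccess}. You are conflating the two lemmas.

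Third, your avoidance-of-$A'$ argument (translate $L_j^2$ assignments, count blocked ones) is the mechanism of Lemma~\ref{l:admissiblemapexistence}, which operates one level up---selecting an assignment $\Upsilon_{h_0}$ \emph{before} Lemma~\ref{l:ccsssstar} is applied. Inside Lemma~\ref{l:admissibletoy}(iii) itself the avoidance is obtained differently, by replacing the straight line used in part~(i) with a family of piecewise-linear approximations and choosing one whose associated $V(P)$ misses $A'$; the corner-distance hypothesis on $A'$ is what makes such perturbations possible without violating the slope condition near the endpoints.

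In short: strip out Proposition~\ref{p:assign}, the $\cs/\ssc$ hypotheses, and the translated-assignment counting; what remains is simply to check the hypotheses of Lemma~\ref{l:admissibletoy}(ii),(iii) and cite it, which is exactly the paper's proof.
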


\begin{proof}
This lemma also follows from Lemma \ref{l:admissibletoy} below.
\end{proof}

\begin{lemma}
\label{l:admissibletoy}
Let $A\otimes \mathcal{B}$ be as in Definition \ref{d:route}. Assume that
$\frac{1-2^{-(j+7/2)}}{R}\leq \frac{t'}{t} \leq R(1+2^{-(j+7/2)})$, and $ L_{j-1}^{\alpha -5}+L_{j-1} \geq n_i,n'_{i'} \geq L_{j-1}^{\alpha -5}$. Then the following holds.

\begin{enumerate}
\item[(i)]
There exists a corner to corner route $P$ in $A\otimes \mathcal{B}$ where
$V(P)\subseteq R(A)$ where
$$R(A)=\{v=(v_1,v_2)\in A: |v-(a+xt, b+xt')|_1\leq 50~\text{for some}~x\in[0,1]\}.$$

\item[(ii)]
Further, if $t,t'\geq 5^{j+6}R$, then there exists a corner to side (resp. side to corner, side to side) admissible connection $\mathcal{P}$ with $V(\mathcal{P})\subseteq R(A)$.

\item[(iii)]
Let $A'$ be a given subset of $A$ with $|A'|\leq k_0$ such that $A'\bigcap ([k_0R^310^{j+8}]\times [k_0R^310^{j+8}] \cup ([n-k_0R^310^{j+8},n]\times[n'-k_0R^310^{j+8}, n'])=\emptyset$. Then there is a corner to corner route $P$ in $A\otimes \mathcal{B}$ such that $V(P)\cap A'=\emptyset$. Further, if $t,t'\geq 5^{j+6}R$, then there exists a corner to side (resp. side to corner, side to side) admissible connection $\mathcal{P}$ with $V(\mathcal{P})\cap A'=\emptyset$.
\end{enumerate}
\end{lemma}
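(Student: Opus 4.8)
The plan is to reduce the whole statement to a purely geometric fact about the ``interpolation rectangle'' $A$ and then invoke Proposition~\ref{p:assign} to produce the requisite routes. First I would treat part (i). The hypothesis $\frac{1-2^{-(j+7/2)}}{R}\le \frac{t'}{t}\le R(1+2^{-(j+7/2)})$ lets us draw the straight segment from $(a+1,b+1)$ to $(a+t,b+t')$ and take $V(P)$ to be a monotone lattice path shadowing it within $\ell_1$-distance $50$; this is the set $R(A)$. At each vertex $v_i=(v_i^1,v_i^2)$ of this path, the inner rectangle $B_{v_i}=[n_{v_i^1}]\times[n'_{v_i^2}]$ is nearly square, both side lengths lying in $[L_{j-1}^{\alpha-5},L_{j-1}^{\alpha-5}+L_{j-1}]$, so one may choose the entry/exit points $b^{1,v_i},b^{2,v_i}$ on the prescribed sides (interior to the $[L_{j-1},n-L_{j-1}]$ windows) so that the local slope condition (iii) of Definition~\ref{d:route} holds --- since $R\ge 6(m+1)$ is comfortably larger than $1$, the slope $1$ or $1/1$ or close to it always sits inside $[\frac{1-2^{-(j+3)}}{R},R(1+2^{-(j+3)})]$ --- and so that consecutive points share a coordinate as required in (iv). The error terms $2^{-(j+7/2)}$ versus $2^{-(j+3)}$ leave slack precisely because the route's local slopes only need to be within the coarser tolerance. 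This produces the corner-to-corner route.

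For part (ii), when $t,t'\ge 5^{j+6}R$ the rectangle $A$ is long enough that, for \emph{each} boundary point $b\in S_{in}$ (resp. $b'\in S_{out}$), one can reroute the initial (resp. final) portion of the path constructed in (i) to start (resp. end) at $b$ while still respecting all four conditions of a route: the point $b$ lies in one of the admissible windows on the bottom-or-left side of $B_{a+1,b+1}$, and because $t,t'$ are large one has room to adjust the first several lattice steps of $V(P)$ so that the slope conditions remain satisfied. Running this over all $b$ (and all pairs $b,b'$ in the side-to-side case) yields the three types of admissible connection, and since each individual route stays in $R(A)$ by construction, so does $V(\mathcal{P})$. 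The side-to-side case additionally uses the planarity-crossing observation already recorded in the proof of Lemma~\ref{l:csssconnect} to reduce the set of routes one actually needs.

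For part (iii), given the forbidden set $A'$ with $|A'|\le k_0$ avoiding the two corner squares of side $k_0R^310^{j+8}$, I would invoke part (ii) of Proposition~\ref{p:assign}: locally near each bad vertex of $A'$ one has the freedom to shift the monotone path by a bounded amount to sidestep it, and since $A'$ has at most $k_0$ elements, each requiring a detour of size $O(R^310^{j+8})$ and these detours are separated from the endpoints by the hypothesis on the corner squares, all detours can be performed simultaneously without violating the slope conditions; one then re-runs the argument of (i) and (ii) on the perturbed skeleton to get a route, and a full admissible connection when $t,t'\ge 5^{j+6}R$. The main obstacle I anticipate is bookkeeping in part (iii): verifying that the $k_0$ simultaneous local perturbations, together with the endpoint adjustments of part (ii), never force a segment slope outside $[\frac{1-2^{-(j+3)}}{R},R(1+2^{-(j+3)})]$ --- this is where the quantitative constraints $k_0>36\alpha\beta$, $R>6(m+1)$ and the size $k_0R^310^{j+8}$ of the excluded corners are all used, and it is precisely the content of Section~6 of \cite{BS14} that the paper is leaning on, so the cleanest route is to phrase the perturbations in the language of admissible assignments (Definition~\ref{d:assign}) and cite Proposition~\ref{p:assign} rather than redo the combinatorics by hand.
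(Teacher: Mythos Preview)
Your plan for part~(i) contains a genuine gap. You write that since the sub-rectangles $B_{v_i}$ are nearly square and ``slope $1$ always sits inside $[\frac{1-2^{-(j+3)}}{R},R(1+2^{-(j+3)})]$'', one may choose the entry/exit points to satisfy condition~(iii). But this ignores the interaction with condition~(iv): once you fix the staircase $V(P)$ shadowing the segment of slope $s=t'/t$, the exit point of $B_{v_i}$ and the entry point of $B_{v_{i+1}}$ must share a coordinate, and this matching constraint propagates along the path. In a horizontal run of $V(P)$ of length $k$ (which occurs when $s\approx 1/k$), you traverse $k$ sub-rectangles in the same row while gaining only one row's height, so the slope inside each sub-rectangle is forced to be roughly $1/k\approx s$, not $1$. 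If $s$ is near the extremes $1/R$ or $R$, the micro-slopes are nowhere near $1$, and the fact that $1$ lies in the allowed interval is irrelevant. What actually has to be checked is that these forced micro-slopes, after rounding and after clamping into the $[L_{j-1},n-L_{j-1}]$ windows, stay inside the coarser interval $[\frac{1-2^{-(j+3)}}{R},R(1+2^{-(j+3)})]$. The paper does this by explicitly defining the entry/exit points as (clamped) intersections of the global line with sub-rectangle boundaries and then verifying the slope condition through a case analysis; your remark about the $2^{-(j+7/2)}$ versus $2^{-(j+3)}$ slack is the right intuition, but it does not replace that computation.

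Your plan for part~(iii) is also off target. Proposition~\ref{p:assign} concerns admissible \emph{assignments} of $(I_1,I_2)$ with respect to index sets $(B,B')$; it produces maps $\tau$ between one-dimensional index sets, not routes in $A\otimes\mathcal{B}$, and it is used one level up in the argument. The avoidance of $A'$ here is a two-dimensional geometric problem: the paper handles it by replacing the straight segment of part~(i) with a family of piecewise-linear curves joining the corners, each satisfying the slope constraints, and arguing by counting that at least one of them misses all of $A'$. Your idea of local detours around each point of $A'$ is reasonable in spirit, but it cannot be reduced to Proposition~\ref{p:assign}, and you would still need to supply the slope verification for the perturbed path, which is the same missing ingredient as in part~(i).
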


\begin{figure}[h]
\begin{center}
\includegraphics[height=6cm, width=6cm]{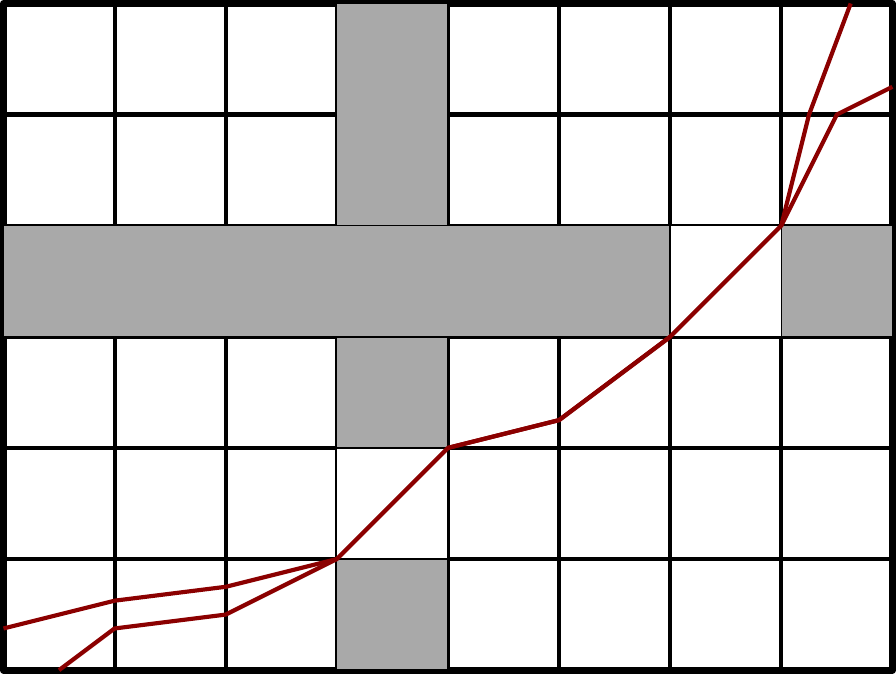}
\end{center}
\caption{Side to Side admissible connections}
\end{figure}

\begin{proof}
Without loss of generality, for this proof we shall assume $a=b=0$. We prove $(i)$ first. Let $y_i=\lfloor it'/t \rfloor +1$ for $i\in [t]$ and let $x_{i}= \lceil it/t'\rceil $ for $i\in [t']$. Define $\tilde{y_i}=(it'/t -y_i+1)$ and $\tilde{x_i}=(it/t'-x_i+1)$.

Define $y^{*}_{i}= \lfloor \tilde{y_i}n'_{y_i}\rfloor +1 $ and $x^{*}_{i}= \lceil \tilde{x_i}n_{x_i} \rceil$. Observe that it follows from the definitions that $y^{*}_{i}\in [n'_{y_i}]$ and $x^{*}_i \in [n_{x_i}]$. Now define $y^{**}_{i}=y^{*}_i$ if $y^{*}_i\in [L_{j-1},n'_{y_i}-L_{j-1}]$. If $y^{*}_i \in [L_{j-1}]$ define $y^{**}_i=L_{j-1}$, if $y^{*}_i \in [n'_{y_i}-L_{j-1},n'_{y_i}]$ define $y^{**}_i=n'_{y_i}-L_{j-1}$. Similarly define $x^{**}_{i}=x^{*}_i$ if $x^{*}_i\in [L_{j-1},n_{x_i}-L_{j-1}]$. If $x^{*}_i\in [L_{j-1}]$ define $x^{**}_i=L_{j-1}$, if $x^{*}_i \in [n_{x_i}-L_{j-1}, n_{x_i}]$ define $x^{**}_i=n_{x_i}-L_{j-1}$. Now for $i\in [t-1], i'\in [t'-1]$ consider points $((i,n_i), (y_i,y^{**}_i))$, $((i+1,1), (y_i,y^{**}_i))$, $((x_{i'},x^{**}_{i'}),(i', n'_{i'}))$, $((x_{i'},x^{**}_{i'}),(i'+1,1))$ alongwith the two corner points. We construct a corner to corner route using these points.

Let us define $V(P)=\{(i,y_i), (x_{i'},i'): i\in [t-1], i'\in [t'-1]\} \cup \{(t,t')\}$. We notice that either $y_1=1$ or $x_1=1$. It is easy to see that the vertices in $V(P)$ defines an oriented path from $(1,1)$ to $(t,t')$ in $A$. Denote the path by $(v^1,v^2,\ldots v^{t+t'-1})$. For $v=v^r, r\in [2,t+t'-2]$, we define points $b^{1,v^r}$ and $b^{2,v^r}$ as follows. Without loss of generality assume $v=v^r=(i,y_i)$. Then either $v^{r-1}=(i-1, y_{i})=(i-1, y_{i-1})$  or $v^{r-1}=(i,y_i-1)=(x_{y_i-1}, y_{i}-1)$. If $v^{r-1}=(i-1,y_{i-1})$, then define $\{(b_1^{1,v},b_2^{1,v}), (b_1^{2,v},b_2^{2,v})\}$ by $b_1^{1,v}=1$, $b_2^{1,v}=y^{**}_{i-1}$, $b_1^{2,v}=n_i$, $b_2^{2,v}=y^{**}_i$. If $v^{t-1}=(x_{y_i-1}, y_{i}-1)$ then define $K^v=\{(b_1^{1,v},b_2^{1,v}), (b_1^{2,v},b_2^{2,v})\}$ by $b_1^{1,v}=x^{**}_{y_i-1}$, $b_2^{1,v}=1$, $b_1^{2,v}=n_i$, $b_2^{2,v}=y^{**}_i$. To prove that this is indeed a route we only need to check the slope condition in Definition \ref{d:route} in both the cases. We do that only for the latter case and the former one can be treated similarly.

Notice that from the definition it follows that the slope between the points (in $\R ^2$) $(\tilde{x}_{y_i-1},0)$ and $(1,\tilde{y_{i}})$ is $\frac{t'}{t}$. We need to show that
$$\frac{1-2^{-(j+3)}}{R}\leq \frac{b_2^2-b_2^1}{b_1^2-b_1^1}= \frac{y^{**}_i-1}{n_i-x^{**}_{y_{i}-1}} \leq R(1+2^{-(j+3)})$$
where once more we have dropped the superscript $v$ for convenience. Now if $x^{*}_{y_i-1}\in [n_i-L_{j-1},n_i]$ and $y^{*}_{i}\in [L_{j-1}]$ then from definition it follows that $\frac{b_2^2-b_2^1}{b_1^2-b_1^1}=1$ and hence the slope condition holds. Next let us suppose $y^{*}_{i}\in [L_{j-1}]$ but
$x^{*}_{y_i-1}\notin [n_i-L_{j-1},n_i]$. Then clearly, $\frac{y^{**}_i-1}{n_i-x^{**}_{y_{i}-1}}\leq 1$. Also notice that in this case $x^{*}_{y_i-1}> L_{j-1}$ and $y_i^{**}-1\geq n'_{y_i}\tilde{y_i}(1-L_{j-1}^{-1})$. It follows that

$$1- \frac{x^{**}_{y_i-1}}{n_i}= 1- \frac{x^{*}_{y_i-1}}{n_i}\leq  1-\tilde{x}_{y_i-1}+\frac{1}{n_i} \leq (1-\tilde{x}_{y_i-1})(1+L_{j-1}^{-1}) .$$
Hence
$$ \frac{y^{**}_i-1}{n_i-x^{**}_{y_{i}-1}}\geq \frac{n'_{y_i}}{n_i}\frac{\tilde{y_i}}{1-\tilde{x}_{y_i-1}}\frac{1-L_{j-1}^{-1}}{1+L_{j-1}^{-1}}\geq \frac{t'}{t}\frac{L_{j-1}^{\alpha-5}(1-L_{j-1}^{-1})}{(L_{j-1}^{\alpha-5}+L_{j-1})(1+L_{j-1}^{-1})}\geq \frac{1-2^{-(j+3)}}{R}$$
for $L_0$ sufficiently large. The case where $y^{*}_{i}\notin [L_{j-1}]$ but
$x^{*}_{y_i-1}\in [n_i-L_{j-1},n_i]$ can be treated similarly.

Next we treat the case where $x^{*}_{y_i-1}\in [L_{j-1}+1, n_i-L_{j-1}-1]$ and $y^{*}_i\in [L_{j-1}+1, n'_{y_i}-L_{j-1}-1]$. Here we have similarly as before
$$(1-L_{j-1}^{-1})(1-\tilde{x}_{y_i-1}) \leq 1- \frac{x^{**}_{y_i-1}}{n_i} \leq (1-\tilde{x}_{y_i-1})(1+L_{j-1}^{-1})$$
and
$$\tilde{y_i}(1+2L_{j-1}^{-1})\geq  \frac{y_i^{**}-1}{n'_{y_i}}\geq \tilde{y_i}(1-2L_{j-1}^{-1}).$$

It follows as before that
\begin{eqnarray*}
\frac{(1+2L{j-1}^{-1})}{1-L_{j-1}^{-1}}\frac{n'_{y_i}}{n_i}\frac{n'}{n} \geq \frac{y^{**}_i-1}{n_i-x^{**}_{y_{i}-1}}\geq \frac{n'_{y_i}}{n_i}\frac{t'}{t}\frac{(1+2L{j-1}^{-1})}{1-L_{j-1}^{-1}}
\end{eqnarray*}
and hence
$$R(1+2^{-(j+3)}) \frac{y^{**}_i-1}{n_i-x^{**}_{y_{i}-1}}\geq \frac{1-2^{-(j+3)}}{R}$$
for $L_0$ sufficiently large.

Other cases can be treated in similar vein and we only provide details in the case where $y^{*}_i\in [n'_{y_i}-L_{j-1},n'_{y_i}]$ and $x^{*}_{y_i-1}\in [L_{j-1}]$. In this case we have that $$\tilde{y_i}(1-\frac{2L_{j-1}}{n'_{y_i}})\leq \frac{y^{**}_i-1}{n'_{y_i}}\leq \tilde{y}_i. $$

We also have that
$$(1-\tilde{x}_{y_i-1})(1-\frac{L_{j-1}}{n_i}) 1-\frac{x^{**}_{y_i-1}}{n_i} \leq 1-\tilde{x}_{y_i-1}.$$
Combining these two relations we get as before that
$$R(1+2^{-(j+3)}) \frac{y^{**}_i-1}{n_i-x^{**}_{y_{i}-1}}\geq \frac{1-2^{-(j+3)}}{R}$$
for $L_0$ sufficiently large.

Thus we have constructed a corner to corner route in $A\otimes \mathcal{B}$. From the definitions it follows easily that for $P$ as above $V(P)\subseteq R(A)$ and hence proof of $(i)$ is complete.

Proof of $(ii)$ is similar. Say, for the side to corner admissible connection, for a given $b\in S_{in}$, in stead of starting with the line $y=(t'/t)x$, we start with the line passing through $(b_1/n_1,0)$ and $(t,t')$, and define $\tilde{x}_i$, $\tilde{y}_i$ to be the intersection of this line with the lines $y=i$ and $x=i$ respectively. Rest of the proof is almost identical, we use the fact $t,t'>5^{j+6}R$ to prove that the slope of this new line is still sufficiently close to $t'/t$.

For part $(iii)$, instead of a straight line we start with a number of piecewise linear functions which approximate $V(P)$. By taking a large number of such choices, it follows that for one of the cases $V(P)$ must be disjoint with the given set $A'$, we omit the details.
\end{proof}

Finally we show that if we try a large number of admissible assignments, at least one of them must obey the hypothesis in Lemma \ref{l:ccsuccessstar} and Lemma \ref{l:ccsssstar} regarding $A'$

\begin{lemma}
\label{l:admissiblemapexistence}
Assume the set-up in Proposition \ref{p:assign}. Let $\Upsilon_h, h\in [L_j^2]$ be the family of admissible assignments of $(I_1,I_2)$ w.r.t. $(B,B')$ described in Proposition \ref{p:assign}$(i)$. Fix any arbitray $\mathcal{T}\subset [L_j^2]$ with $|\mathcal{T}|=R^6k_0^510^{2j+20}$. Then for every $S\subset I_1\times I_2$ with $|S|=k_0$, there exist $h_0\in \mathcal{T}$ such that $$\min_{x\in B_X, y\in B_Y, s\in S}\{|(x,\tau_{h_0}(x))-s|, |(\tau_{h_0}^{-1}(y),y)|-s|\}\geq 2k_0R^310^{j+8}.$$
\end{lemma}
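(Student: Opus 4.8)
The plan is a counting (pigeonhole) argument. Fix a set $S\subseteq I_1\times I_2$ with $|S|=k_0$. For each point $s=(s_1,s_2)\in S$ and each ``source'' point $x\in B$ (or $y\in B'$), I want to bound the number of indices $h\in\mathcal{T}$ for which the point $(x,\tau_h(x))$ lands within $\ell_\infty$-distance $2k_0R^310^{j+8}$ of $s$; call such an $h$ \emph{blocked by the pair $(x,s)$}. The key structural input is the conclusion of Proposition \ref{p:assign}$(i)$: the assignments $\Upsilon_h$ are constructed so that for $x\in B$ one has $\tau_h(x)=\tau_1(x)+h-1$, and for $y\in B'$ one has $\tau_h^{-1}(y)=\tau_1^{-1}(y)-h+1$. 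Thus, as $h$ ranges over $\mathcal{T}$, the point $(x,\tau_h(x))$ moves along the vertical line $\{x\}\times\Z$ and its second coordinate is a strictly monotone (step-one) function of $h$; likewise $(\tau_h^{-1}(y),y)$ moves along the horizontal line $\Z\times\{y\}$ with first coordinate monotone in $h$. Consequently, for a \emph{fixed} pair $(x,s)$, the set of $h$ with $|(x,\tau_h(x))-s|_\infty\le 2k_0R^310^{j+8}$ is an interval of integers of length at most $4k_0R^310^{j+8}+1$ (it is nonempty only if $x$ is within that distance of $s_1$ in the first coordinate, and then the second-coordinate constraint pins $h$ to an interval); the same bound holds for each pair $(y,s)$.

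Now I sum over all relevant pairs. There are $|B|\le 3k_0$ choices of $x$, $|B'|\le 3k_0$ choices of $y$, and $|S|=k_0$ choices of $s$, so the total number of pairs is at most $6k_0\cdot k_0=6k_0^2$. Hence the number of indices $h\in\Z$ that are blocked by \emph{some} pair is at most
\[
6k_0^2\bigl(4k_0R^310^{j+8}+1\bigr)\le 6k_0^2\cdot 5k_0R^310^{j+8}=30\,k_0^3R^310^{j+8}.
\]
Since $|\mathcal{T}|=R^6k_0^510^{2j+20}$, which for all the parameter choices in \eqref{e:parameters} and all $j\ge1$ dwarfs $30\,k_0^3R^310^{j+8}$ (indeed $R^6k_0^510^{2j+20}\ge 30\,k_0^3R^310^{j+8}$ reduces to $R^3k_0^210^{j+12}\ge 30$, which is immediate), there must exist $h_0\in\mathcal{T}$ that is not blocked by any pair. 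For that $h_0$ we have, for every $x\in B$, $y\in B'$ and $s\in S$, both $|(x,\tau_{h_0}(x))-s|_\infty> 2k_0R^310^{j+8}$ and $|(\tau_{h_0}^{-1}(y),y)-s|_\infty> 2k_0R^310^{j+8}$, which is exactly the desired conclusion.

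I expect the only genuine point requiring care to be the ``interval of length $O(k_0R^310^{j+8})$'' claim for a fixed pair $(x,s)$: one must observe that the blocked $h$ form a contiguous range because $h\mapsto\tau_h(x)$ is an injective translation by $h-1$ (and symmetrically for $\tau_h^{-1}(y)$), so the constraint $|\tau_h(x)-s_2|\le 2k_0R^310^{j+8}$ together with $|x-s_1|\le 2k_0R^310^{j+8}$ confines $h$ to a window of the stated width; everything else is the crude pigeonhole count above, for which the stated size of $\mathcal{T}$ was chosen with ample room to spare. (Should one wish, the same argument handles the non-identity permutation case and the identity case $\tau_h(i)=i$ uniformly, since only the monotone dependence of the moving coordinate on $h$ is used.)
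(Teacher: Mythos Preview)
Your proof is correct and follows the same counting/pigeonhole strategy as the paper, but executed more cleanly. The paper first counts the total number of ``forbidden'' lattice points in $I_1\times I_2$ (a two-dimensional area bound of order $k_0^3R^610^{2j+18}$), passes through an intermediate thinning $\mathcal H\subset\mathcal T$ to make images under different $x$'s distinct, and then argues that for each fixed $x$ the number of bad $h\in\mathcal H$ is at most the total number of forbidden points; summing over $x\in B$ and $y\in B'$ gives fewer bad $h$'s than $|\mathcal H|$. You bypass both the area count and the thinning step by working one-dimensionally: since $h\mapsto\tau_h(x)$ is a unit shift, for each fixed pair $(x,s)$ the bad $h$'s form an interval of length at most $4k_0R^310^{j+8}+1$, and summing over the at most $6k_0^2$ pairs immediately gives a bound well below $|\mathcal T|$. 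Your route is shorter and yields a sharper constant; the paper's extra thinning step appears to be unnecessary for this lemma.
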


\begin{proof}
Call $(x,y)\in I_1\times I_2$ \emph{forbidden} if there exist $s\in S$ such that $|(x,y)-s| \leq 2k_0R^310^{j+8}$. For each $s\in S$, let $B_s\subset I_1\times I_2$ denote the set of vertices which are \emph{forbidden} because of $s$, i.e., $B_s=\{(x,y): |(x,y)-s|\leq 2k_0R^310^{j+8}\}$. Clearly $|B_s|\leq 10^{2j+18}k_0^2R^{6}$. So the total number of forbidden vertices is $\leq 10^{2j+18}k_0^3R^{6}$. Since $|B|, |B'|\leq k_0$, there exists $\mathcal{H}\subset \mathcal{T}$ with $|\mathcal{H}|=10^{2j+19}R^6k_0^4$ such that for all $x,x'\in B$, $x\neq x'$, $y,y'\in B'$, $y\neq y'$, $h_1,h_2\in \mathcal{H}$, we have $\tau_{h_1}(x)\neq \tau_{h_2}(x')$ and $\tau_{h_1}^{-1}(y)\neq \tau_{h_2}^{-1}(y')$.
Now for each $x\in B$ (resp. $y\in B'$), $(x,\tau_{h}(x))$ (resp. $(\tau_h^{-1}(y),y)$) can be \emph{forbidden} for at most $10^{2j+18}k_0^3R^{6}$ many different $h\in \mathcal{H}$. Hence,
$$\# \bigcup_{x\in B, y\in B'}\{h\in \mathcal{H}: (x,\tau_h(x)) ~\text{or}~(\tau_h^{-1}(y),y)~\text{is forbidden}\}\leq 2\times 10^{2j+18}R^6k_0^4< |\mathcal{H}|.$$
It follows that there exist $h_0\in \mathcal{H}$ which satisfies the condition in the statement of the lemma.
\end{proof}

\section{Length estimate}\label{s:length}

We shall qoute the following theorem directly from \cite{BS14}.
\begin{theorem}[Theorem 8.1, \cite{BS14}]
\label{lengthestimate}
Let $X$ be an $\mathbb{X}$ block at level $(j+1)$ we have that
\begin{equation}
\label{length2a}
\mathbb{E}[\exp (L_{j}^{-6}(|X|-(2-2^{-(j+1)})L_{j+1}))] \leq 1.
\end{equation}
and hence for $x \geq 0$,
\begin{equation}
\label{length2}
\mathbb{P}(|X|> ((2-2^{-(j+1)})L_{j+1}+xL_{j}^6))\leq e^{-x}.
\end{equation}
\end{theorem}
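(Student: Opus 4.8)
The bound \eqref{length2} is immediate from \eqref{length2a}: applying Markov's inequality to $\exp\!\big(L_j^{-6}(|X|-(2-2^{-(j+1)})L_{j+1})\big)$ and invoking \eqref{length2a} gives $\mathbb{P}\big(|X|>(2-2^{-(j+1)})L_{j+1}+xL_j^6\big)\le e^{-x}$. So the task is \eqref{length2a}, and the plan is a Wald/optional-stopping estimate over the level-$j$ sub-blocks of $X$. Using Observation~\ref{o:blockRepresentation} I would realise $X=(X_1,\dots,X_{l+L_j^3})$ with the $X_i$ independent, $X_i\sim\mu_{j,G}^{\mathbb{X}}$ for $i\le L_j^3$ and $X_i\sim\mu_j^{\mathbb{X}}$ for $i>L_j^3$, an independent $W\sim\mathrm{Geom}(L_j^{-4})$, and $l=\min\{i\ge L_j^3+L_j^{\alpha-1}+W:\ X_{i+1},\dots,X_{i+2L_j^3}\in G_j^{\mathbb{X}}\}$. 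Write $c=(2-2^{-j})L_j$, $\lambda=L_j^{-6}$, and $\Delta=l-(L_j^3+L_j^{\alpha-1}+W)\ge0$, so that $\sigma:=l+2L_j^3=L_j^{\alpha-1}+3L_j^3+W+\Delta$ and $\sigma$ is a stopping time for $\mathcal F_n=\sigma(W,X_1,\dots,X_n)$, since $\{l\le m\}$ depends only on $W$ and $X_1,\dots,X_{m+2L_j^3}$.

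I would prepare two inputs. \emph{(a) One sub-block.} By the level-$j$ hypothesis \eqref{lengthx}, $\mathbb{E}_{\mu_j}[\exp(L_{j-1}^{-6}(|X_i|-c))]\le1$; since $\lambda L_{j-1}^6=L_{j-1}^{-6(\alpha-1)}\le1$, Jensen's inequality applied with $t\mapsto t^{\lambda L_{j-1}^6}$ upgrades this to $\mathbb{E}_{\mu_j}[e^{\lambda(|X_i|-c)}]\le1$, and \eqref{xgood} in the form $\mu_{j,G}^{\mathbb{X}}\le(1-L_j^{-\delta})^{-1}\mu_j^{\mathbb{X}}$ gives $\mathbb{E}_{\mu_{j,G}}[e^{\lambda(|X_i|-c)}]\le(1-L_j^{-\delta})^{-1}$; the same bounds hold with $2\lambda$ in place of $\lambda$ once $L_0$ is large. \emph{(b) The overshoot.} The pattern $(\mathbf{1}[X_i\in G_j])_{i>L_j^3}$ is i.i.d.\ and independent of $W$, whence $W\perp\Delta$; $W$ has the usual geometric moment generating function; and by \eqref{xgood} a fixed window of $2L_j^3$ sub-blocks is all-good with probability $\ge(1-L_j^{-\delta})^{2L_j^3}\ge1-2L_j^{3-\delta}$, so $\mathbb{P}(\Delta>0)\le2L_j^{3-\delta}$, and the renewal domination $\Delta\,\mathbf{1}_{\{\Delta>0\}}\preceq 2L_j^3+\Delta'$ (with $\Delta'$ an independent copy) yields $\mathbb{E}[e^{t\Delta}]\le(1-2L_j^{3-\delta}e^{2tL_j^3})^{-1}$ for small $t>0$. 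Since $\lambda c$ is of order $L_j^{-5}$, far below these thresholds, this gives $\mathbb{E}[e^{2\lambda cW}]\,\mathbb{E}[e^{2\lambda c\Delta}]=1+O(L_j^{-1})$.

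For the main computation, since $|X|=\sum_{i\le l+L_j^3}|X_i|\le\sum_{i\le\sigma}|X_i|=\sum_{i\le\sigma}(|X_i|-c)+c\sigma$, the Cauchy--Schwarz inequality gives
\[
\mathbb{E}\big[e^{\lambda|X|}\big]\le\mathbb{E}\Big[e^{2\lambda\sum_{i\le\sigma}(|X_i|-c)}\Big]^{1/2}\,\mathbb{E}\big[e^{2\lambda c\sigma}\big]^{1/2}.
\]
For the first factor, $M_n=\prod_{i\le n}e^{2\lambda(|X_i|-c)}/\mathbb{E}\big[e^{2\lambda(|X_i|-c)}\big]$ is a nonnegative mean-one martingale, so $\mathbb{E}[M_\sigma]\le1$ by optional stopping (via Fatou); since only the first $L_j^3$ normalising constants exceed $1$ and each is $\le(1-L_j^{-\delta})^{-1}$, the factor is $\le(1-L_j^{-\delta})^{-L_j^3/2}=e^{O(L_j^{3-\delta})}$. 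For the second factor, using $W\perp\Delta$, $\mathbb{E}[e^{2\lambda c\sigma}]=e^{2\lambda c(L_j^{\alpha-1}+3L_j^3)}\,\mathbb{E}[e^{2\lambda cW}]\,\mathbb{E}[e^{2\lambda c\Delta}]$ with $2\lambda c(L_j^{\alpha-1}+3L_j^3)=(2-2^{-j})(2L_j^{\alpha-6}+6L_j^{-2})$ and the product of moment generating functions $=1+O(L_j^{-1})$ by (b). Combining, $\mathbb{E}[e^{\lambda|X|}]\le e^{(2-2^{-j})(L_j^{\alpha-6}+3L_j^{-2})}\big(1+O(L_j^{-1})\big)e^{O(L_j^{3-\delta})}$, and because $(2-2^{-(j+1)})-(2-2^{-j})=2^{-(j+1)}$ while $\alpha>6$ (and $L_0$ large) make $2^{-(j+1)}L_j^{\alpha-6}$ dominate $(2-2^{-j})3L_j^{-2}+O(L_j^{-1})+O(L_j^{3-\delta})$, the right-hand side is at most $e^{(2-2^{-(j+1)})L_j^{\alpha-6}}=e^{\lambda(2-2^{-(j+1)})L_{j+1}}$, which is \eqref{length2a}.

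The main point to watch is that the three overshoot contributions — the $L_j^3$ forced-good initial sub-blocks, the geometric slack $W$, and the wait $\Delta$ for the good run — must all be absorbed into the slack $2^{-(j+1)}L_j^{\alpha-6}$ gained by relaxing $2-2^{-j}$ to $2-2^{-(j+1)}$; this is exactly where $\alpha>6$, the largeness of $\delta$, and the independence $W\perp\Delta$ built into the construction get used. Note also that conditioning on the full goodness pattern would replace the law of each bad sub-block by $\mu_j^{\mathbb{X}}$ reweighted by a factor $L_j^{\delta}$, which is too lossy; the optional-stopping argument avoids this precisely because it never conditions. A minor technicality is that $l$ is read off sub-blocks lying just to the right of $X$, so one stops the martingale at $\sigma=l+2L_j^3$ rather than at the true sub-block count $l+L_j^3$; the resulting over-count of $L_j^3$ sub-blocks only perturbs the deterministic part of $\sigma$ at lower order and is harmless. (This is Theorem~8.1 of \cite{BS14}; the level-$1$ base case is the explicit computation already carried out in Lemma~\ref{l:basecaselength}.)
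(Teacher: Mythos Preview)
Your argument is correct: the optional-stopping bound on the centred product $\prod_{i\le\sigma}e^{2\lambda(|X_i|-c)}$ together with the separate control of $\mathbb{E}[e^{2\lambda c\sigma}]$ via the independence $W\perp\Delta$ and the renewal domination for $\Delta$ is exactly the intended Wald-type computation, and the slack $2^{-(j+1)}L_j^{\alpha-6}$ (with $\alpha>6$) indeed absorbs the $O(L_j^{3-\delta})+O(L_j^{-1})$ error terms. The paper itself gives no details here, simply citing \cite{BS14}; your write-up is a faithful reconstruction of that argument.
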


The proof is exactly the same as in \cite{BS14}.

\section{Corner to Corner estimate}\label{s:tailestimate}
In this section we prove the recursive tail estimate for the corner to corner connection probabilities.

\begin{theorem}\label{t:tail}
Assume that the inductive hypothesis holds up to level $j$. Let $X$ and $Y$ be random  $(j+1)$-level blocks according to $\mu^\mathbb{X}_{j+1}$ and $\mu^\mathbb{Y}_{j+1}$. Then
\begin{align*}
\mathbb{P}\left(\mathbb{P}(X \stackrel {c,c}{\longleftrightarrow}Y|X)\leq p\right)\leq p^{m_{j+1}} L_{j+1}^{-\beta},\quad
\mathbb{P}\left(\mathbb{P}(X \stackrel {c,c}{\longleftrightarrow}Y|Y)\leq p\right)\leq p^{m_{j+1}} L_{j+1}^{-\beta}
\end{align*}
for $p\leq \frac{3}{4}+2^{-(j+4)}$ and $m_{j+1}=m+2^{-(j+1)}$.
\end{theorem}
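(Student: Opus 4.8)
\textbf{Proof strategy for Theorem~\ref{t:tail}.} The plan is to prove the statement by a union bound over the sources of "difficulty" that could make $\mathbb{P}(X\cc Y\mid X)$ small. By the symmetry between $\mathbb{X}$ and $\mathbb{Y}$ we only treat the $\mathbb{X}$-block tail. Since estimate III gives $\mathbb{P}(X\notin G_{j+1}^\mathbb{X})\le L_{j+1}^{-\delta}$ and (iv) in Definition~\ref{d:good} says $S_{j+1}^\mathbb{X}(X)\ge \tfrac34+2^{-(j+4)}$ whenever $X\in G_{j+1}^\mathbb{X}$ --- wait, that is exactly the bound we want at the cutoff, so actually the content is only in the regime $p$ small; in fact for $p$ close to $\tfrac34+2^{-(j+4)}$ the inequality $p^{m_{j+1}}L_{j+1}^{-\beta}\ge$ (something like) $L_{j+1}^{-\beta}$ and this is dominated by the probability that $X$ is not good, which is at most $L_{j+1}^{-\delta}\le L_{j+1}^{-\beta}p^{m_{j+1}}$ provided $\delta$ is large enough relative to $\beta$ and $m_{j+1}\log(4/3)$ --- here one uses $\delta>\beta$... no: $\delta<\beta$ in the parameters. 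So one must be more careful: the regime near the cutoff is handled by Lemma~\ref{l:consequences}(i) together with (iv), which shows that $\{\mathbb{P}(X\cc Y\mid X)\le p\}$ with $p$ not too small already forces $X\notin G_{j+1}^\mathbb{X}$ or the block to have an excessive length, and both have probability at most $L_{j+1}^{-2\beta}$ (from Definition~\ref{d:good}(v), Theorem~\ref{lengthestimate}, and estimate III), which beats $p^{m_{j+1}}L_{j+1}^{-\beta}$ since $p^{m_{j+1}}\ge$ a constant power of $L_{j+1}$. Hence the real work is confined to $p$ smaller than a fixed small constant.

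\textbf{The main estimate.} For small $p$ the plan is to use Observation~\ref{o:blockRepresentation} to build $X$ from an i.i.d.\ sequence of level-$j$ sub-blocks, and to decompose the event $\{\mathbb{P}(X\cc Y\mid X)\le p\}$ according to: (a) $X$ has excessive length (i.e.\ $T_X$ or the number $n$ of sub-blocks is too big), (b) $X$ contains too many bad sub-blocks, or too many sub-blocks with small $S_j^\mathbb{X}$-value, and (c) $X$ has a "difficult" collection of bad sub-blocks, quantified via $\prod_{i} S_j^\mathbb{X}(X_i)$ over the bad $X_i$. This is exactly the partition for which Lemma~\ref{l:totalSizeBound} (quoted from \cite{BS14}) is designed. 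The heart of the argument is to show that \emph{outside} the bad events (a)--(c), one necessarily has $\mathbb{P}(X\cc Y\mid X)>p$. To do this I would fix a typical $X$ with, say, at most $k_0$ bad sub-blocks of moderate difficulty, and exhibit an explicit lower bound on $\mathbb{P}(X\cc Y\mid X)$ by: choosing an entry--exit pair with slope close to $1$; using Proposition~\ref{p:assign}(i) and Lemma~\ref{l:admissiblemapexistence} to produce $\ge L_j^2$ admissible assignments whose support avoids the (few) bad sub-blocks by a large margin; for each assignment, gluing via Lemmas~\ref{l:ccconnect}, \ref{l:ccconnect2}, \ref{l:ccsuccessstar} a corner-to-corner path that uses $X\ssc Y$ / $X\cs Y$ / $X\scc Y$ on the good sub-blocks (which hold with probability $\ge 1-L_j^{-\beta}$, $\ge \tfrac9{10}+2^{-(j+7/2)}$ etc.\ by Lemma~\ref{l:consequences}) and, for each bad sub-block $X_i$ that the path must cross, one of the many slope-varying crossings guaranteed by $S_j^\mathbb{X}(X_i)$ being not-too-small. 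The flexibility of trying $L_j^2$ assignments (together with the varying-slope crossings through each bad block) converts the lower bound $\prod_i S_j^\mathbb{X}(X_i)$ into something like $1-(1-c\prod_i S_j^\mathbb{X}(X_i))^{L_j^2}$ over the good part, so that a single difficult collection with $\prod_i S_j^\mathbb{X}(X_i)$ bounded below still yields $\mathbb{P}(X\cc Y\mid X)$ bigger than any fixed small $p$.

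\textbf{Assembling the bound.} Given the above, the probability of $\{\mathbb{P}(X\cc Y\mid X)\le p\}$ is at most the probability of the union of events (a)--(c), and Lemma~\ref{l:totalSizeBound} provides a bound of the form $p^{?}L_{j+1}^{-?}$ for each. One then has to check the arithmetic: the length tail contributes $e^{-x}$-type decay (Theorem~\ref{lengthestimate}) which is superpolynomial in $L_{j+1}$ hence negligible; the "too many bad sub-blocks" term is controlled by estimate III raised to a large power, giving $L_{j+1}^{-\delta\cdot(\text{something})}$; and the "difficult collection" term is where the factor $p^{m_{j+1}}$ genuinely comes from --- here the inductive tail bounds \eqref{tailx1} for the individual $S_j^\mathbb{X}(X_i)$ enter, each contributing a factor $p_i^{m_j}L_j^{-\beta}$, and one needs $\sum_i(\log_{1/p}\text{-budget})$ to add up so that the product is at most $p^{m_{j+1}}L_{j+1}^{-\beta}$. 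The key numerical point is that $m_{j+1}=m+2^{-(j+1)}$ is \emph{larger} than $m_j=m+2^{-j}$ would require if we only summed naively --- so the improvement in the exponent (from $m_j$ at level $j$ to $m_{j+1}$ at level $j+1$, together with the jump in the power of $L$ from $L_j^{-\beta}$ to $L_{j+1}^{-\beta}=L_j^{-\alpha\beta}$) must be squeezed out of the $L_j^2$-fold (or larger) flexibility in the number of trial positions. I expect the main obstacle to be precisely this bookkeeping: showing that the combinatorial gain from trying many admissible assignments and many slope-varying crossings is quantitatively enough to (i) upgrade $m_j\to m_{j+1}$, (ii) absorb the $L_j^{-\beta}\to L_j^{-\alpha\beta}$ strengthening, and (iii) still dominate the contributions of excessive length and excessive number of bad sub-blocks --- all simultaneously, and uniformly in $j$. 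The geometric inputs (Lemmas~\ref{l:admissibletoy}--\ref{l:admissiblemapexistence}) are what make (i)--(iii) feasible, but matching all the exponents against the parameter constraints of \S\ref{s:parameters} will be the delicate part.
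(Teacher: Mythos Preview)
Your overall architecture---decompose according to length, number of bad sub-blocks, and the ``difficulty'' $\prod_i S_j(X_{\ell_i})$, then feed these into Lemma~\ref{l:totalSizeBound}---is the paper's strategy. But two points in your write-up are genuinely wrong and would block a proof.

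\textbf{Circularity at the cutoff.} Your handling of the regime $p$ near $\tfrac34+2^{-(j+4)}$ appeals to $X\in G_{j+1}^{\mathbb{X}}$ (Definition~\ref{d:good}(iv)) and to estimate~III at level $j+1$. Neither is available: the recursive hypothesis only gives you estimates at level~$j$, and estimate~III at level $j+1$ is proved \emph{after} Theorem~\ref{t:tail} (see Theorem~\ref{t:goodprobability}). The paper avoids this by defining an explicit class $\A{1}_{X,j+1}$ in terms of level-$j$ data only (bounded $T_X$, $K_X\le k_0$, $\prod_i S_j(X_{\ell_i})>L_j^{-1/3}$), proving $\P[X\notin\A{1}_{X,j+1}]\le L_{j+1}^{-3\beta}$ directly from Lemma~\ref{l:totalSizeBound}, and then showing (Lemma~\ref{l:A1Map}) that every $X\in\A{1}_{X,j+1}$ satisfies $\P[X\cc Y\mid X]\ge\tfrac34+2^{-(j+4)}$. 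Since $\beta>\delta$ in the parameters, you cannot simply trade one for the other as you attempt.

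\textbf{The direction of $m_j\to m_{j+1}$.} You write that $m_{j+1}$ is \emph{larger} than $m_j$; in fact $m_{j+1}=m+2^{-(j+1)}<m+2^{-j}=m_j$, so the $p$-exponent is being \emph{relaxed} at each step. The genuine strengthening is in the prefactor $L_j^{-\beta}\to L_{j+1}^{-\beta}=L_j^{-\alpha\beta}$. This is paid for not by an $L_j^2$-fold gain as you suggest, but by a single factor of $L_j$ in the lower bound for $S_{j+1}(X)$ (Lemma~\ref{l:A2Map}): trying $\sim L_j$ disjoint assignments converts $\prod_i S_j(X_{\ell_i})$ into $\min\{\tfrac12,\,cL_j\prod_i S_j(X_{\ell_i})\}$, and since $m>\alpha\beta$ this factor of $L_j$ inside the tail bound produces $L_j^{-m_{j+1}}\le L_{j+1}^{-\beta}$.

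\textbf{The case split you are missing.} The paper runs five explicit cases rather than a single ``typical'' analysis. Only Cases~1--2 (few bad sub-blocks) exploit the $L_j$ independent trial assignments; in Case~2 a careful conditioning argument (events $\cd_h\cup\cg_h$) is needed to decouple the trials. Cases~3--5 (many bad sub-blocks and/or excessive length) use a \emph{single} assignment---there the lower bound on $S_{j+1}(X)$ is just $c(\tfrac34)^{2K_X}\prod_i S_j(X_{\ell_i})$ times a length-dependent factor, and the required smallness comes entirely from the rarity of large $K_X$ or large $T_X$ via Lemma~\ref{l:totalSizeBound}. Your sketch folds all of this into one ``typical $X$'' argument, which does not by itself close the induction.
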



Due to the obvious symmetry between our $X$ and $Y$ bounds and for brevity all our bounds will be stated in terms of $X$ and $S^\mathbb{X}_{j+1}$ but will similarly hold for $Y$ and $S^\mathbb{Y}_{j+1}$. For the rest of this section we drop the superscript $\mathbb{X}$ and denote $S_{j+1}^{\mathbb{X}}$ (resp. $S_j^{\mathbb{X}}$) simply by $S_{j+1}$ (resp. $S_j$).

The block $X$ is constructed from an i.i.d. sequence of $j$-level blocks $X_1,X_2,\ldots$ conditioned on the event $X_i\in G_j^{\mathbb{X}}$ for $1\leq i \leq L_j^3$ as described in Section~\ref{s:prelim}.  The construction also involves a random variable $W_X\sim \mathrm{Geom}(L_j^{-4})$ and let $T_X$ denote the number of extra sub-blocks of $X$, that is the length of $X$ is $L_j^{\alpha-1}+2L_j^3+T_X$.  Let $K_X$ denote the number of bad sub-blocks of $X$. Let us also denote the position of bad subblock of $X$ and their neighbours by $\{\ell_{1}<\ell_{2}<\cdots < \ell_{K'_{X}}\}$, where $K'_{X}$ denotes the number of such blocks. Trivially, $K'_{X}\leq 3K_{X}$. We define $Y,\ldots,W_Y,T_Y$ and $K_Y$ similarly.The proof of Theorem~\ref{t:tail} is divided into 5 cases depending on the number of bad sub-blocks, the total number of sub-blocks of $X$ and how ``bad'' the sub-blocks are.

We note here that the proof of Theorem \ref{t:tail} follows along the same general line of argument as the proof of Theorem 7.1 in \cite{BS14}, with significant adaptations resulting from the specifics of the model and especially the difference in the definition of good blocks. As such this section is similar to Section 7 in \cite{BS14}.

We quote the following key lemma providing a bound for  the probability of blocks having large length, number of bad sub-blocks or small  $\prod_{i=1}^{K_X}S_j(X_{\ell_i})$ from \cite{BS14}

\begin{lemma}\label{l:totalSizeBound}[Lemma 7.3, \cite{BS14}]
For all $t',k',x\geq 0$ we have that
\[
\P\left[T_X \geq t', K_X \geq k', -\log \prod_{i=1}^{K_X}S_j(X_{\ell_i}) > x\right] \leq 2 L_j^{-\delta k' /4}\exp\left(-x m_{j+1} - \frac12 t' L_j^{-4} \right).
\]
\end{lemma}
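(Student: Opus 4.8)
The plan is to run a single exponential-Markov (Chernoff) estimate simultaneously in the three quantities $T_X$, $K_X$ and $-\log\prod_i S_j(X_{\ell_i})$, with one conjugate parameter for each so that the three factors $L_j^{-\delta k'/4}$, $e^{-xm_{j+1}}$ and $e^{-\frac12 t' L_j^{-4}}$ emerge separately. First I would pass to the i.i.d.\ description of $X$ from Observation~\ref{o:blockRepresentation}: $X$ is built from an i.i.d.\ sequence $Z_1,Z_2,\dots$ of level $j$ $\mathbb{X}$-blocks (law $\mu_j^{\mathbb{X}}$) and an independent $W_X\sim\mathrm{Geom}(L_j^{-4})$, with $T_X=W_X+D_X$, where $D_X$ is the waiting time, after position $L_j^{\alpha-1}+W_X$ of the i.i.d.\ list, for a run of $2L_j^3$ consecutive good blocks. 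The conditioning of the first $L_j^3$ sub-blocks to be good plays no role here, since all bad sub-blocks of $X$ and the value of $T_X$ are measurable with respect to the unconditioned i.i.d.\ tail. The sub-blocks that may be bad form three consecutive i.i.d.\ stretches --- a mandatory one of $L_j^{\alpha-1}$ blocks, one of $W_X$ blocks, and the search stretch of $D_X$ blocks --- so write $K_X=K_1+K_2+K_3$.

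Fix $\theta=\tfrac12 L_j^{-4}$, $\lambda=L_j^{\delta/4}$, $\mu=m_{j+1}$. From $\mathbf 1(T_X\ge t')\le e^{\theta(T_X-t')}$, $\mathbf 1(K_X\ge k')\le\lambda^{K_X-k'}$ and $\mathbf 1\big(-\log\prod_i S_j(X_{\ell_i})>x\big)\le e^{\mu x}\prod_i S_j(X_{\ell_i})^{-\mu}$, the lemma reduces to showing
\[
\E\Big[e^{\theta T_X}\lambda^{K_X}\textstyle\prod_i S_j(X_{\ell_i})^{-\mu}\Big]\le 2 .
\]
By independence the left side factorises as $\phi^{L_j^{\alpha-1}}\cdot\E[(\phi e^{\theta})^{W_X}]\cdot\E\big[e^{\theta D_X}\lambda^{K_3}\prod_{\text{bad, search}}S_j^{-\mu}\big]$, where $\phi:=\E\big[\lambda^{\mathbf 1(Z\notin G_j^{\mathbb{X}})}S_j(Z)^{-\mu\mathbf 1(Z\notin G_j^{\mathbb{X}})}\big]$ for a single $Z\sim\mu_j^{\mathbb{X}}$. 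The crux is the single-block bound $\phi=1+O(L_j^{-3\delta/4})$: since $\phi=\P(Z\in G_j^{\mathbb{X}})+\lambda\,\E[S_j(Z)^{-\mu}\mathbf 1(Z\notin G_j^{\mathbb{X}})]$, the first term is $\le 1$ by \eqref{xgood}, while for the second I would integrate the tail estimate \eqref{tailx1}; because $\mu=m_{j+1}<m_j$ its exponent $m_j$ strictly dominates $\mu$, so the inverse moment is finite and $O(L_j^{-\delta})$ once one also invokes $\beta>\delta$, and multiplying by $\lambda=L_j^{\delta/4}$ gives the claim. Then $\delta>2\alpha$ makes $\phi^{L_j^{\alpha-1}}=1+o(1)$; $\E[(\phi e^{\theta})^{W_X}]\le 2$ follows from the explicit geometric generating function once $\theta$ is taken below the critical rate $-\log(1-L_j^{-4})$; and the search factor is $1+o(1)$ because $D_X$ has a uniformly geometric tail --- each length-$2L_j^3$ window of i.i.d.\ blocks is all good with probability $\ge\tfrac12$ by \eqref{xgood} --- while the per-block cost $\phi$ is $1+o(1)$. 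Multiplying and substituting $\theta,\lambda,\mu$ reproduces the right-hand side of the lemma.

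I expect the main obstacle to be the search-stretch factor: $D_X$ is a \emph{stopping time} for the i.i.d.\ block sequence, so the factorisation over ``blocks'' is legitimate only through an optional-stopping argument for the multiplicative martingale $n\mapsto\phi^{-n}\prod_{i\le n}(\text{per-block cost of }Z_i)$ (whose terminal expectation equals $\E[\phi^{D_X}]$, with $\E[e^{\theta D_X}\phi^{D_X}]=1+o(1)$ thanks to the geometric tail), or through an explicit resummation over the value of $D_X$; one must also dispose of the harmless boundary effects caused by the terminating run having fixed length $2L_j^3$. A secondary point, used above, is the finiteness of $\E[S_j(Z)^{-\mu}\mathbf 1(Z\notin G_j^{\mathbb{X}})]$, which is precisely why the exponents $m_j=m+2^{-j}$ are chosen strictly decreasing rather than constant. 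This is Lemma~7.3 of \cite{BS14}, whose proof is the argument just sketched.
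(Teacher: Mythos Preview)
The paper does not give its own proof of this lemma: it is quoted verbatim from \cite{BS14} (Lemma~7.3 there), as the sentence immediately preceding the statement makes explicit. So there is nothing in the present paper to compare your argument against; your sketch is, in outline, the argument that \cite{BS14} carries out. The Chernoff scheme with three conjugate parameters, the reduction to the per-block moment $\phi$ via the i.i.d.\ representation of Observation~\ref{o:blockRepresentation}, and the use of the strict gap $m_j-m_{j+1}=2^{-(j+1)}>0$ to make $\E[S_j(Z)^{-m_{j+1}}]$ finite from \eqref{tailx1} are all exactly the ingredients used there. Your identification of the search-stretch factor as the delicate step is also correct: $D_X$ is a stopping time that looks $2L_j^3$ blocks into the future, so naive optional stopping does not apply, and one instead exploits the renewal structure (for $d>0$, $D_X=d$ forces $Z_d$ to be bad) together with the geometric tail of $D_X$ to control that factor. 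One minor point to watch when you fill in details: to get the constant exactly $2$ rather than $2+o(1)$, the slack in the geometric MGF bound $\E[(\phi e^{\theta})^{W_X}]\le 2-cL_j^{-4}$ must absorb the $1+o(1)$ corrections from the mandatory and search factors, which is why one needs the stronger constraint $3\delta/4>\alpha+3$ (amply satisfied by $\delta=50$, $\alpha=10$) rather than merely $3\delta/4>\alpha-1$.
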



We now proceed with the 5 cases we need to consider.

\subsection{Case 1}
The first case is the scenario where the blocks are of typical length, have few bad sub-blocks whose corner to corner corner to corner connection probabilities are not too small.  This case holds with high probability.

We define the event $\A{1}_{X,j+1}$ to be the set of $(j+1)$ level blocks such that
\[
\A{1}_{X,j+1} := \left\{X:T_X \leq \frac{R L^{\alpha-1}_j}{2}, K_X\leq k_0,  \prod_{i=1}^{K_X}S_j(X_{\ell_i}) > L_j^{-1/3} \right\}.
\]

The following Lemma is an easy corollary of Lemma \ref{l:totalSizeBound} and the choices of parameters, we omit the proof.

\begin{lemma}
\label{l:A1Size}
The probability that $X\in \A{1}_{X,j+1}$ is bounded below by
\[
\P[X\not\in \A{1}_{X,j+1}] \leq L_{j+1}^{-3\beta}.
\]
\end{lemma}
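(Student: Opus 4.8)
The plan is to write $\{X\notin\A{1}_{X,j+1}\}$ as a union of three events matching the three defining constraints of $\A{1}_{X,j+1}$ and bound each one directly from Lemma~\ref{l:totalSizeBound}. Set
\[
E_1=\Bigl\{T_X>\tfrac{R}{2}L_j^{\alpha-1}\Bigr\},\quad E_2=\{K_X>k_0\},\quad E_3=\Bigl\{\textstyle\prod_{i=1}^{K_X}S_j(X_{\ell_i})\leq L_j^{-1/3}\Bigr\}.
\]
By the definition of $\A{1}_{X,j+1}$ we have $\{X\notin\A{1}_{X,j+1}\}=E_1\cup E_2\cup E_3$, so by a union bound it suffices to show each of $\P[E_1],\P[E_2],\P[E_3]$ is at most $\tfrac13 L_{j+1}^{-3\beta}=\tfrac13 L_j^{-3\alpha\beta}$ (recall $L_{j+1}=L_j^{\alpha}$).

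For $E_1$, apply Lemma~\ref{l:totalSizeBound} with $t'=\tfrac{R}{2}L_j^{\alpha-1}$, $k'=0$ and $x=0$ (using that $-\log\prod_{i=1}^{K_X}S_j(X_{\ell_i})\geq 0$ since each factor is $\leq 1$), giving $\P[E_1]\leq 2\exp\bigl(-\tfrac{R}{4}L_j^{\alpha-5}\bigr)$; since $\alpha>6$ the exponent grows polynomially in $L_j$, so this is far below $L_j^{-3\alpha\beta}$ once $L_0$ is large. For $E_2$, apply the lemma with $t'=0$, $k'=k_0+1$, $x=0$ to get $\P[E_2]\leq 2L_j^{-\delta(k_0+1)/4}$; the constraints $\delta>48$ and $k_0>36\alpha\beta$ give $\delta(k_0+1)/4\gg 3\alpha\beta$, which more than suffices. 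For $E_3$, observe that on $E_3$ necessarily $K_X\geq1$ and $-\log\prod_{i=1}^{K_X}S_j(X_{\ell_i})\geq\tfrac13\log L_j$, so Lemma~\ref{l:totalSizeBound} with $t'=0$, $k'=1$, $x=\tfrac13\log L_j$ yields $\P[E_3]\leq 2L_j^{-\delta/4}L_j^{-m_{j+1}/3}$; since $m_{j+1}>m>9\alpha\beta$ we have $m_{j+1}/3>3\alpha\beta$, again with room to spare. Summing the three bounds and absorbing the constant factors into the "$L_0$ large" hypothesis gives $\P[X\notin\A{1}_{X,j+1}]\leq L_{j+1}^{-3\beta}$.

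There is no real obstacle here: the statement is, as claimed, a routine corollary of Lemma~\ref{l:totalSizeBound}. The only thing that requires attention is the bookkeeping — checking that the three exponents produced by the lemma, namely $\tfrac{R}{4}L_j^{\alpha-5}$, $\tfrac{\delta(k_0+1)}{4}$ and $\tfrac{\delta}{4}+\tfrac{m_{j+1}}{3}$, each dominate $3\alpha\beta\log L_j$ (respectively $3\alpha\beta$) with enough slack to absorb the harmless factors of $2$. This is precisely what the parameter constraints $\alpha>6$, $k_0>36\alpha\beta$ and $m>9\alpha\beta$ are chosen to guarantee.
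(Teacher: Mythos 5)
Your proof is correct and is precisely the argument the paper has in mind: the text states that Lemma~\ref{l:A1Size} "is an easy corollary of Lemma~\ref{l:totalSizeBound} and the choices of parameters" and omits the details, which are exactly the union bound over the three defining conditions of $\A{1}_{X,j+1}$ together with the exponent checks you carry out. The only microscopic point worth noting is that Lemma~\ref{l:totalSizeBound} is phrased with a strict inequality $-\log\prod > x$ while $E_3$ gives the non-strict $-\log\prod\geq\tfrac13\log L_j$; this is harmless since the bound is continuous in $x$, but you might say so explicitly.
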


\begin{lemma}\label{l:A1Map}
We have that for all $X\in\A{1}_{X,j+1}$,
\begin{equation}\label{e:A1MapE1}
\P[X \cc Y\mid Y\in \A{1}_{Y,j+1}, X] \geq \frac34 + 2^{-(j+3)},
\end{equation}
\end{lemma}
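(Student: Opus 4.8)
The plan is to construct an open corner-to-corner path in $X\times Y$ whose ``shape'' is prescribed: a corner-to-side crossing through the initial run of good sub-blocks of $X$ and $Y$, then a long side-to-side crossing through the bulk which makes only a small local detour around each of the $O(k_0)$ bad sub-blocks, then a side-to-corner crossing through the final run of good sub-blocks. Only the first and last pieces involve a corner, so they will be the only crossings of non-negligible failure probability; the detours and the bulk are to be made reliable either by the inductive hypotheses (for good sub-blocks) or by the flexibility of trying many admissible assignments (for the bad ones).

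First I would expose the exceptional sub-blocks. Writing $X=(X_1,\dots,X_t)$, since $X\in\A{1}_{X,j+1}$ we have $K_X\le k_0$, so the bad level-$j$ sub-blocks together with their neighbours form a set $B_X\subseteq[t]$ with $|B_X|\le 3k_0$, which by Observation~\ref{o:blockStructure}(1) avoids the first and last $L_j^3$ sub-blocks. Conditioning on $Y\in\A{1}_{Y,j+1}$ and revealing $Y=(Y_1,\dots,Y_{t'})$, the analogue $B_Y$ has $|B_Y|\le 3k_0$, and $\prod_i S_j(X_{\ell_i})>L_j^{-1/3}$, $\prod_i S_j(Y_{\ell'_i})>L_j^{-1/3}$, whence each individual factor exceeds $L_j^{-1/3}$. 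From Theorem~\ref{lengthestimate} and the bound $T_X\le\tfrac R2 L_j^{\alpha-1}$ built into $\A{1}_{X,j+1}$ one gets $L_j^{\alpha-1}\le t\le(1+\tfrac R2)L_j^{\alpha-1}+2L_j^3$ and likewise for $t'$, so (since $R>2$ and $L_0$ is large) $\tfrac{1-2^{-(j+4)}}{R}\le\tfrac{t'}{t}\le R(1+2^{-(j+4)})$. Proposition~\ref{p:assign}(i) then supplies admissible assignments $\Upsilon_h=(H_h,H'_h,\tau_h)$, $h\in[L_j^2]$, of $([t],[t'])$ with respect to $(B_X,B_Y)$, shifted in the $B$-coordinates, and Lemma~\ref{l:admissiblemapexistence} lets me pass to a sub-family $\mathcal H$ with $|\mathcal H|\ge L_j^{3/2}$ all of whose matched pairs avoid the few bad $Y$-sub-blocks. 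For each $h\in\mathcal H$, the connection and route lemmas of Section~4 (Lemmas~\ref{l:ccconnect}, \ref{l:ccconnect2}, \ref{l:ccsuccessstar}, \ref{l:ccsssstar} and~\ref{l:admissibletoy}), suitably combined so that each bad sub-block is routed into the side-to-side bulk via a local detour rather than being isolated by a corner crossing, reduce ``$X\cc Y$'' to the conjunction of: (a) $X_1\cs Y_1$ and $X_t\scc Y_{t'}$; (b) $X_u\ssc Y_w$ for every good pair $(X_u,Y_w)$ met along the routes of $\Upsilon_h$; and (c) $X_v\cc Y_{\tau_h(v)}$ for each $v\in B_X$, together with the symmetric boxes for $B_Y$.

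Then I would estimate the three families. Event (a) is independent of $h$, and all of $X_1,X_t,Y_1,Y_{t'}$ are good (Observation~\ref{o:blockStructure}(1)), so Lemma~\ref{l:consequences}(ii) gives $\P[(a)]\ge(\tfrac9{10}+2^{-(j+7/2)})^2-o(1)$, the $o(1)$ accounting for the conditioning on $Y\in\A{1}_{Y,j+1}$. For (b), each $\ssc$-failure has probability at most $L_j^{-\beta}$ (Lemma~\ref{l:consequences}(iii)); a union bound over the (at most polynomially many in $L_j$) good boxes appearing in the routes of all $h\in\mathcal H$ shows (b) holds simultaneously for every $h\in\mathcal H$ with probability at least $1-L_j^{-\beta/2}$. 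For (c): since $B_X\cup B_Y$ shifts with $h$ and, for $v\in B_X$ and $h\ne h'$, $\tau_h(v)\ne\tau_{h'}(v)$, the boxes used at the bad positions are pairwise distinct across $\mathcal H$, so the events ``(c) for $h$'', $h\in\mathcal H$, are independent; for a fixed $h$ each bad box succeeds with probability at least $L_j^{-1/3}$ and each good-neighbour box with probability at least $\tfrac34+2^{-(j+7/2)}$ (Lemma~\ref{l:consequences}(i)), so $\P[(c)\text{ for }h]\ge L_j^{-2/3}c_0^{-k_0}$ for an absolute constant $c_0$ (using $\prod_i S_j(X_{\ell_i})\prod_i S_j(Y_{\ell'_i})>L_j^{-2/3}$ and independence of distinct sub-blocks). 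Hence $\P[(c)\text{ fails for all }h\in\mathcal H]\le(1-L_j^{-2/3}c_0^{-k_0})^{L_j^{3/2}}\le\exp(-L_j^{5/6}c_0^{-k_0})=o(1)$ for $L_0$ large. A single union bound over the complements of (a), (b) and ``(c) holds for some $h\in\mathcal H$'' then yields
\[
\P\big[X\cc Y\mid Y\in\A{1}_{Y,j+1},X\big]\ \ge\ \Big(\tfrac9{10}+2^{-(j+7/2)}\Big)^2-L_j^{-\beta/2}-o(1),
\]
and by the parameter choices in~\eqref{e:parameters} and the largeness of $L_0$ this is at least $\tfrac34+2^{-(j+3)}$.

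The hard part will be realising the ``shape'' promised at the outset: one must route so that each bad sub-block is swallowed by the side-to-side bulk through a bounded-size detour crossing the single box $(X_{\ell_i},Y_{\tau_h(\ell_i)})$ corner-to-corner, with no corner-to-side or side-to-corner transition occurring anywhere except at the two global corners --- otherwise the $O(k_0)$ extra fixed-probability ($\approx\tfrac9{10}$) crossings would wreck the bound. Carrying this out while keeping all the slope conditions of Definitions~\ref{d:assign}--\ref{d:route} satisfied along every route is exactly what the chunk-level geometry and the fine constructions behind Lemma~\ref{l:admissibletoy} are for, and it is also where Lemma~\ref{l:admissiblemapexistence} is needed, to guarantee that the boxes matched to the bad positions across $\mathcal H$ are themselves good.
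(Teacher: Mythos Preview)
Your overall architecture is right and matches the paper's, but there is a genuine gap in step (c) concerning the bad sub-blocks of $Y$. You are proving a bound that must hold for \emph{every} fixed $X\in\A{1}_{X,j+1}$; the only remaining randomness is in $Y$. For a bad position $\ell'_i\in B_Y$ the box you need to cross is $X_{\tau_h^{-1}(\ell'_i)}\times Y_{\ell'_i}$, where the $X$-coordinate is a specific fixed good sub-block (since $X$ is fully revealed) and $Y_{\ell'_i}$ is the random bad $Y$-sub-block. Two things break. First, $S_j(Y_{\ell'_i})$ is by definition the success probability against a \emph{random} level-$j$ $\mathbb{X}$-block, not against the particular $X_{\tau_h^{-1}(\ell'_i)}$ sitting in your fixed $X$, so the bound $\P[(c)\text{ for }h]\ge L_j^{-2/3}c_0^{-k_0}$ is unjustified on the $B_Y$ side. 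Second, and more seriously, as $h$ ranges over $\mathcal H$ the boxes attached to a given $\ell'_i$ all share the \emph{same} random $Y_{\ell'_i}$ (only the fixed $X$-coordinate changes), so the events ``(c) holds for $h$'' are not independent across $h$ and your product bound $(1-L_j^{-2/3}c_0^{-k_0})^{|\mathcal H|}$ is unfounded. This asymmetry between $B_X$ (fixed bad sub-block paired with many independent random good $Y$-sub-blocks, so retries work) and $B_Y$ (random bad sub-block paired with many \emph{fixed} $X$-sub-blocks, so retries add no fresh randomness) is exactly what the paper sidesteps.

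The paper's fix is much simpler than your symmetric assignment with respect to $(B_X,B_Y)$: it further conditions on $K_Y=0$, i.e.\ on $Y$ having no bad sub-blocks at all, and works with admissible assignments of $(I_1,I_2)$ with respect to $(B^*,\emptyset)$. Under this conditioning every $Y$-sub-block is an i.i.d.\ good block, so the $B_Y$ problem evaporates, the events $\cd_h$ are genuinely independent across $h\in\mathcal H$, and the lower bounds involve only $S_j(X_{\ell_i})$ (which are legitimate since the matched $Y$-block is random). At the very end one multiplies by $\P[K_Y=0\mid Y\in\A{1}_{Y,j+1}]$, which by Lemma~\ref{l:totalSizeBound} is at least $1-2L_j^{-\delta/4}$ and costs essentially nothing. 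Your symmetric scheme \emph{is} what the paper uses later, in the side-to-side estimate (Proposition~\ref{l:SSMap}), but there the conditioning $\mathcal F$ leaves the good sub-blocks of \emph{both} $X$ and $Y$ random, which restores the independence you need; here $X$ is frozen and that symmetry is unavailable.
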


\begin{proof}
Suppose that $X \in\A{1}_{X,j+1} $ with length  $L_j^{\alpha -1}+2L_j^3+T_X$. Let $B_X$ denote the location of bad subblocks of $X$. let $K'_{X}$ be the number of bad sub-blocks and their neighbours and let set of their locations be $B^*=\{\ell_1<\cdots <\ell_{K'_{X}}\}$. Notice that $K'_X\leq 3k_0$. We condition on $Y\in\A{1}_{Y,j+1}$ having no bad subblocks. Denote this conditioning by
\begin{align*}
\mathcal{F}=\{ Y\in \A{1}_{Y,j+1},T_Y,K_Y=0\}.
\end{align*}
Let $I_1=[L_j^{\alpha -1}+2L_j^3+T_X]$ and $I_2=[L_j^{\alpha -1}+2L_j^3+T_Y]$. By Proposition~\ref{p:assign}(i), we can find $L_j^2$ admissible assignments $\Upsilon_{h}$ at level $j$ w.r.t. $(B^*,\emptyset)$, with associated $\tau_h$ for $1\leq h \leq L_j^2$, such that $\tau_h(\ell_i)=\tau_1(\ell_i)+h-1$ and in particular each block $\ell_i$ is mapped to $L_j^2$ distinct sub-blocks.  Hence we get $\mathcal{H}\subset [L_j^2]$ of size $L_j<\lfloor L_j^2/ 9 k_0^2\rfloor$ so that for all $i_1\neq i_2$ and $h_1,h_2\in\mathcal{H}$ we have that $\tau_{h_1}(\ell_{i_1})\neq \tau_{h_2}(\ell_{i_2})$, that is that all the positions bad blocks and their neighbours are mapped to are distinct.

Our construction ensures that all $Y_{\tau_{h}(\ell_{i})}$ are uniformly chosen good $j$-blocks conditional on $\cf$ and since $S_j(X_{\ell_{i}})\geq L_j^{-1/3}$ we have that if $X_{\ell_i}\notin G_j^{\mathbb{X}}$,
\begin{equation}\label{e:A1MapA}
\P[X_{\ell_{i}} \cc Y_{\tau_{h}(\ell_{i})}\mid \cf] \geq S_j(X_{\ell_{i}}) - \P[Y_{\tau_{h}(\ell_{i})}\not\in G_j^{\mathbb{X}}] \geq \frac12 S_j(X_{\ell_{i}}).
\end{equation}
Also if $X_{\ell_i}\in G_j^{\mathbb{X}}$ then from the recursive estimates it follows that
$$\P[X_{\ell_{i}} \cc Y_{\tau_{h}(\ell_{i})}\mid \cf] \geq \frac34;$$
$$\P[X_{\ell_{i}} \cs Y_{\tau_{h}(\ell_{i})}\mid \cf] \geq \frac{9}{10};$$
$$\P[X_{\ell_{i}} \scc  Y_{\tau_{h}(\ell_{i})}\mid \cf] \geq \frac{9}{10}.$$
If $X_{\ell_i}\notin G_j^{\mathbb{X}}$, or, if neither $X_{\ell_i -1}$ nor $X_{\ell_i +1}$ is $\in G_j^{\mathbb{X}}$, let $\cd_{h,i}$ denote the event
$$\cd_{h,i}=\left\{X_{\ell_{i}} \cc Y_{\tau_{h}(\ell_{i})}\right\} .$$
If $X_{\ell_i},X_{\ell_i +1}\in G_j^{\mathbb{X}}$ then let
$\cd_{h,i}$ denote the event
$$\cd_{h,i}=\left\{X_{\ell_{i}} \cs Y_{\tau_{h}(\ell_{i})}\right\} .$$
If $X_{\ell_i},X_{\ell_i -1}\in G_j^{\mathbb{X}}$ then let
$\cd_{h,i}$ denote the event
$$\cd_{h,i}=\left\{X_{\ell_{i}} \scc Y_{\tau_{h}(\ell_{i})}\right\} .$$

Let $\cd_h$ denote the event
\[
\cd_h=\bigcap _{i=1}^{K'_X}\cd_{h,i}.
\]

Further, $\mathcal{S}$ denote the event
\[
\mathcal{S}=\left\{X_{k}\ssc Y_{k'} \forall k\in [L_j^{\alpha -1}+2L_j^3+T_X]\setminus \{\ell_1,\ldots ,\ell_{K_X}\}, \forall k' \in [L_j^{\alpha -1}+2L_j^3+T_Y]  \right\}.
\]

Also let
\[
\mathcal{C}_1=\left\{X_{1}\cs Y_{1}\right\}~\text{and}~\mathcal{C}_2=\left\{X_{L_j^{\alpha -1}+2L_j^3+T_X}\scc Y_{L_j^{\alpha -1}+2L_j^3+T_Y}\right\}.
\]

By Lemma \ref{l:ccconnect}, Lemma \ref{l:ccconnect2} and Lemma~\ref{l:ccsuccessstar} if  $\cup_{h\in \mathcal{H}}\cd_h, \mathcal{S}, \mathcal{C}_1,\mathcal{C}_2$ all hold then $X\cc Y$.  Conditional on $\cf$, for $h\in\ch$, the $\cd_h$, $\mathcal{C}_1$, $\mathcal{C}_2$ are independent and and by~\eqref{e:A1MapA} and the recursive estimates ,
\begin{equation}\label{e:A1MapB}
\P[\cd_h \mid \cf]  \geq 2^{-5k_0}3^{2k_0} L_j^{-1/3}.
\end{equation}
Hence
\begin{equation}\label{e:A1MapC}
\P[\cup_{h\in\ch}\cd_h \mid \cf]  \geq 1-\left(1-2^{-5k_0}3^{2k_0} L_j^{-1/3}\right)^{L_j}\geq  1-  L_{j+1}^{-3\beta}.
\end{equation}

It follows from the recursive estimates that
\begin{equation}\label{e:A1MapC1}
\P[\cup_{h\in\ch}\cd_h , \mathcal{C}_1, \mathcal{C}_2 \mid \cf]\geq \left(\frac{9}{10}\right)^2 \left(1-  L_{j+1}^{-3\beta} \right)
\end{equation}

Also a union bound using the recursive estimates at level $j$ gives
\begin{equation}\label{e:A1MapC2}
\P[\neg \mathcal{S} \mid \cf]\leq (1+\frac{R}{2})^2L_{j}^{2\alpha -2}L_j^{-2\beta}\leq L_j^{-\beta}.
\end{equation}

It follows that

\begin{equation}\label{e:A1MapC3}
\P[X\cc Y \mid \cf]\geq \P[\cup_{h\in\ch}\cd_h, \mathcal{C}_1, \mathcal{C}_2, \mathcal{S}]\geq  \left(\frac{9}{10}\right)^2 \left(1-  L_{j+1}^{-3\beta} \right)-L_j^{-\beta}.
\end{equation}
Hence
\begin{align*}
\P[X \cc Y\mid Y\in \A{1}_{Y,j+1}, X,T_Y] &\geq \P[X  \cc Y \mid \cf]\cdot \P[K_Y=0\mid Y\in \A{1}_{Y,j+1}, T_Y]\\
&\geq \left( \left(\frac{9}{10}\right)^2 \left(1-  L_{j+1}^{-3\beta} \right)-L_j^{-\beta}\right)\cdot \P[K_Y=0\mid Y\in \A{1}_{Y,j+1}, T_Y].
\end{align*}
Removing the conditioning on $T_Y$ we get

\begin{align*}
\P[X \cc Y\mid Y\in \A{1}_{Y,j+1}, X]
&\geq \left( \left(\frac{9}{10}\right)^2 \left(1-  L_{j+1}^{-3\beta} \right)-L_j^{-\beta}\right)\cdot \P[K_Y=0\mid Y\in \A{1}_{Y,j+1}]\\
&\geq \left( \left(\frac{9}{10}\right)^2 \left(1-  L_{j+1}^{-3\beta} \right)-L_j^{-\beta}\right)\cdot \left(1-L_{j+1}^{-3\beta} -2L_{j}^{-\delta /4}\right)\\
&\geq \frac{3}{4}+2^{-(j+1)}
\end{align*}
for large enough $L_0$, where the penultimate inequality follows from  Lemma \ref{l:totalSizeBound} and Lemma \ref{l:A1Size}. This completes the lemma.
\end{proof}

\begin{lemma}\label{l:A1final}
When $\frac12\leq p \leq \frac34 +2^{-(j+4)}$
\[
\P(S_{j+1}(X)\leq p)\leq p^{m_{j+1}} L_{j+1}^{-\beta}
\]
\end{lemma}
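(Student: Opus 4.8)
The plan is to show that the bound $S_{j+1}(X)\le p$ with $p\le \tfrac34+2^{-(j+4)}$ can only happen when $X$ falls outside the typical class $\A{1}_{X,j+1}$, and then to read off the conclusion from the size estimate in Lemma~\ref{l:A1Size}. So the first step is to fix $X\in\A{1}_{X,j+1}$ and bound $S_{j+1}(X)$ from below. By Lemma~\ref{l:A1Map} we have $\P[X\cc Y\mid Y\in\A{1}_{Y,j+1},X]\ge \tfrac34+2^{-(j+3)}$, hence
\[
S_{j+1}(X)=\P(X\cc Y\mid X)\ \ge\ \left(\tfrac34+2^{-(j+3)}\right)\P\!\left[Y\in\A{1}_{Y,j+1}\right]\ \ge\ \left(\tfrac34+2^{-(j+3)}\right)\left(1-L_{j+1}^{-3\beta}\right),
\]
where the last inequality uses the $\mathbb{Y}$-analogue of Lemma~\ref{l:A1Size}. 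Since $L_{j+1}^{-3\beta}=L_0^{-3\beta\alpha^{j+1}}$ is doubly exponentially small in $j$ while the gap $2^{-(j+3)}-2^{-(j+4)}=2^{-(j+4)}$ is only exponentially small, for $L_0$ large the displayed lower bound exceeds $\tfrac34+2^{-(j+4)}\ge p$; concretely it suffices that $L_0^{3\beta\alpha^{j+1}}\ge 2^{j+4}$ for all $j\ge1$, which is immediate. Consequently $\{S_{j+1}(X)\le p\}\subseteq\{X\notin\A{1}_{X,j+1}\}$.

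The second step is just to apply Lemma~\ref{l:A1Size}, which gives
\[
\P\big(S_{j+1}(X)\le p\big)\ \le\ \P\big[X\notin\A{1}_{X,j+1}\big]\ \le\ L_{j+1}^{-3\beta}.
\]
It then remains to check that $L_{j+1}^{-3\beta}\le p^{m_{j+1}}L_{j+1}^{-\beta}$, i.e.\ that $L_{j+1}^{-2\beta}\le p^{m_{j+1}}$. Because $p\ge\tfrac12$ and $m_{j+1}=m+2^{-(j+1)}<m+1$, we have $p^{m_{j+1}}\ge 2^{-(m+1)}$, so it is enough that $L_{j+1}^{2\beta}=L_0^{2\beta\alpha^{j+1}}\ge 2^{m+1}$, which holds for every $j\ge1$ once $L_0$ is taken sufficiently large (the case $j=1$ being the binding one, since the quantity is increasing in $j$).

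There is essentially no serious obstacle here: the real work is hidden in Lemma~\ref{l:A1Map} (the existence of many admissible assignments and the resulting corner-to-corner connection bound), which we are allowed to invoke. The only care needed in this lemma is to make the two ``for $L_0$ large'' steps uniform in $j$, and both reduce to inequalities between a doubly exponential and an exponential (or constant) quantity in $j$, hence hold simultaneously for all $j\ge1$.
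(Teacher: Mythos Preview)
Your proof is correct and follows essentially the same approach as the paper: show that $X\in\A{1}_{X,j+1}$ forces $S_{j+1}(X)>\tfrac34+2^{-(j+4)}$ via Lemma~\ref{l:A1Map} and the $\mathbb{Y}$-version of Lemma~\ref{l:A1Size}, then bound $\P[X\notin\A{1}_{X,j+1}]\le L_{j+1}^{-3\beta}\le 2^{-m_{j+1}}L_{j+1}^{-\beta}\le p^{m_{j+1}}L_{j+1}^{-\beta}$ using $p\ge\tfrac12$. The paper's write-up is terser but the logic is identical.
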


\begin{proof}
By Lemma~\ref{l:A1Size} and~\ref{l:A1Map} we have that
for all $X\in \A{1}_{X,j+1}$
\begin{equation}\label{eq:case1general}
\P[X\cc Y \mid X] \geq \P[Y\in \A{1}_{Y,j+1}]\P[X\cc Y \mid X, Y\in \A{1}_{Y,j+1}]\geq \frac{3}{4}+2^{-(j+4)}.
\end{equation}
Hence if $\frac12\leq p \leq \frac34 +2^{-(j+4)}$
\begin{align*}
\P(\P[X \cc Y\mid X]\leq p) &\leq \P[X \notin \A{1}_{X,j+1}]\\
&\leq L_{j+1}^{-3\beta} \leq 2^{-m_{j+1}}L_{j+1}^{-\beta}\leq p^{m_{j+1}}L_{j+1}^{-\beta}.
\end{align*}

\end{proof}

\subsection{Case 2}

The next case involves blocks which are not too long and do not contain too many bad sub-blocks but whose bad sub-blocks may be very bad in the since that corner to corner connection probabilities of those might be really small.
We define the class of blocks $\A{2}_{X,j+1}$  as
\[
\A{2}_{X,j+1} := \left\{X:T_X \leq \frac{R L^{\alpha-1}_j}{2}, K_X\leq k_0,  \prod_{i=1}^{K_X}S_j(X_{\ell_i}) \leq L_j^{-1/3} \right\}.
\]

\begin{lemma}\label{l:A2Map}
For $X\in \A{2}_{X,j+1}$,
\[
S_{j+1}(X) \geq \min\left\{\frac12, \frac{1}{10}\left(\frac{3}{4}\right)^{2k_0} L_j \prod_{i=1}^{K_X}S_j(X_{\ell_i}) \right\}
\]
\end{lemma}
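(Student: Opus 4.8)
The goal is to bound from below $S_{j+1}(X)=\P(X\cc Y\mid X)$ by exhibiting, with probability at least the stated right-hand side, an open corner-to-corner path in $X\times Y$, following the scheme of Lemma~\ref{l:A1Map} but now squeezing a factor of order $L_j$ out of the many admissible assignments available. Write $B_X$ for the positions of the bad sub-blocks of $X$, of which there are $K_X\le k_0$, and $B^{*}\supseteq B_X$ for $B_X$ together with all its neighbours, so $|B^{*}|\le 3k_0$; since $X\in\A{2}_{X,j+1}$ we have $T_X\le\tfrac{R}{2}L_j^{\alpha-1}$, so $X$ and a typical $Y$ lie in the aspect‑ratio window of Proposition~\ref{p:assign}(i). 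The crucial first step is to fix a favourable event $\cf$ for $Y$ on which: (a) $|Y|$ is of typical order; (b) all sub-blocks of $Y$ are good except at a reserved set $P$ of $O(k_0L_j)$ ``trial'' positions which will host the crossings of the bad sub-blocks of $X$; and — the essential difference from Lemma~\ref{l:A1Map} — $\cf$ must be arranged so that, conditionally on $\cf$, the $Y$-sub-blocks at the trial positions retain (essentially) the law $\mu_j^{\mathbb{Y}}$, so that a fixed bad sub-block $X_{\ell}$ crosses $Y_p$ with conditional probability at least $S_j(X_\ell)$. One cannot simply condition these sub-blocks to be good, as in Lemma~\ref{l:A1Map}, because in Case~2 an individual $S_j(X_\ell)$ may lie far below $L_j^{-\delta}=\P[Y\notin G_j^{\mathbb{Y}}]$, so conditioning $Y$ good there could destroy the connection. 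A concrete way to secure (a)–(c): by Observation~\ref{o:blockRepresentation} the first $\sim L_j^{\alpha-1}$ sub-blocks of $Y$ are always included in the block whatever their content, so placing $P$ among them and then conditioning on the typicality of $|Y|$ and the goodness of all non-trial sub-blocks leaves the trial sub-blocks i.i.d.\ $\mu_j^{\mathbb{Y}}$; since $\delta>\alpha-1$, Theorem~\ref{lengthestimate} together with \eqref{ygood} gives $\P[\cf]\ge c_1>0$ (indeed $\ge 1-o(1)$).

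Next, apply Proposition~\ref{p:assign}(i) with $(I_1,I_2)$ the sub-block index sets of $X$ and $Y$ and $(B,B')=(B^{*},\emptyset)$ to obtain $L_j^2$ admissible assignments $\Upsilon_h=(H_h,H'_h,\tau_h)$, $h\in[L_j^2]$, with $\tau_h(\ell)=\tau_1(\ell)+h-1$ for $\ell\in B^{*}$; passing to $\mathcal H\subseteq[L_j^2]$ with $|\mathcal H|\ge L_j$ we may take the images $\tau_h(B^{*})$, $h\in\mathcal H$, pairwise disjoint and set $P=\bigcup_{h\in\mathcal H}\tau_h(B^{*})$. For $h\in\mathcal H$ let $\cd_h$ be the event that every $X_{\ell}$, $\ell\in B^{*}$, makes the connection to $Y_{\tau_h(\ell)}$ prescribed by Lemmas~\ref{l:ccconnect}, \ref{l:ccconnect2} and~\ref{l:ccsuccessstar} ($\cc$ when $X_\ell$ is bad, $\cs$ or $\scc$ when it is a good neighbour of a bad block), let $\mathcal S$ be the event that the remaining, good sub-blocks of $X$ are $\ssc$-connected to the relevant good sub-blocks of $Y$, and let $\mathcal C_1,\mathcal C_2$ be the two corner end-conditions, exactly as in the proof of Lemma~\ref{l:A1Map}; using the route-avoidance freedom in Lemmas~\ref{l:ccsuccessstar} and~\ref{l:admissiblemapexistence}, these can be set up so that on $\cf\cap\mathcal S\cap\mathcal C_1\cap\mathcal C_2\cap\bigcup_{h\in\mathcal H}\cd_h$ one has $X\cc Y$ by Lemmas~\ref{l:ccconnect}--\ref{l:ccsuccessstar}. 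Conditionally on $\cf$ the sub-blocks $\{Y_p:p\in P\}$ are i.i.d.\ $\mu_j^{\mathbb{Y}}$ and independent of the other sub-blocks of $Y$, so the $\cd_h$, $h\in\mathcal H$, are conditionally independent (they involve disjoint blocks among the $Y_p$) and independent of $\mathcal S,\mathcal C_1,\mathcal C_2$; and since for $\ell$ bad $\P[X_\ell\cc Y_{\tau_h(\ell)}\mid\cf]\ge S_j(X_\ell)$, while for $\ell$ a good neighbour Definition~\ref{d:good}(iii) gives $\P[X_\ell\cs Y_{\tau_h(\ell)}\mid\cf]\ge\tfrac9{10}$ and likewise for $\scc$, one gets
\[
\P[\cd_h\mid\cf]\ \ge\ \Big(\tfrac34\Big)^{2k_0}\prod_{i=1}^{K_X}S_j(X_{\ell_i}).
\]

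Finally, by the conditional independence of the $\cd_h$,
\[
\P\Big[\bigcup_{h\in\mathcal H}\cd_h\ \Big|\ \cf\Big]\ \ge\ 1-\Big(1-\big(\tfrac34\big)^{2k_0}\!\prod_{i=1}^{K_X}S_j(X_{\ell_i})\Big)^{L_j}\ \ge\ \tfrac12\min\Big\{1,\ L_j\big(\tfrac34\big)^{2k_0}\!\prod_{i=1}^{K_X}S_j(X_{\ell_i})\Big\},
\]
using $1-(1-q)^{N}\ge\tfrac12\min\{1,Nq\}$. Combining this with $\P[\mathcal C_1\mid\cf],\P[\mathcal C_2\mid\cf]\ge\tfrac9{10}$, with $\P[\neg\mathcal S\mid\cf]=o(1)$ (a union bound over $\lesssim L_j^{2\alpha}$ good pairs, each failing with probability $\le L_j^{-2\beta}$, using $\beta>\alpha$), with the independence of $\bigcup\cd_h$ from $\mathcal S\cap\mathcal C_1\cap\mathcal C_2$ given $\cf$, and with $\P[\cf]\ge c_1$, yields
\[
S_{j+1}(X)\ \ge\ \min\Big\{\tfrac12,\ \tfrac1{10}\big(\tfrac34\big)^{2k_0}L_j\prod_{i=1}^{K_X}S_j(X_{\ell_i})\Big\}
\]
for $L_0$ large, the two regimes of the minimum corresponding to whether $L_j\prod_i S_j(X_{\ell_i})$ is of constant order or not. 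The genuinely delicate point — and the main obstacle — is the construction of $\cf$ in the first step: one must simultaneously reserve the trial positions so their $Y$-sub-blocks keep the law $\mu_j^{\mathbb{Y}}$ (so that the true $S_j(X_{\ell_i})$, and not some conditioned surrogate, enters the bound), keep the rest of $Y$ good so that $\mathcal S$ is typical, and preserve the conditional independence across the $\ge L_j$ assignments; in the regime where $\prod_i S_j(X_{\ell_i})$ is smaller than every fixed power of $L_j$ this forces a careful, route-by-route control of the error terms (so that no ``bad'' probability of fixed polynomial size is subtracted), via the placement of $P$ relative to the stopping rule of Observation~\ref{o:blockRepresentation} and the route-avoidance statements of Lemmas~\ref{l:ccsuccessstar} and~\ref{l:admissiblemapexistence}.
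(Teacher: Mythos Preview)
Your overall architecture is right, and you correctly put your finger on the crux: one must extract the factor $L_j$ from the $L_j$ independent assignments without ever subtracting an error of fixed polynomial size in $L_j^{-1}$ (since $\prod_i S_j(X_{\ell_i})$ can be arbitrarily small). But the resolution you sketch does not close this gap.

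Concretely: once some assignment $h\in\mathcal H$ succeeds (i.e.\ $\cd_h$ holds), the corner-to-corner route built from $\Upsilon_h$ still has to traverse \emph{every} $Y$-column, including the $\sim 3k_0L_j$ trial columns $\tau_{h'}(B^*)$ with $h'\ne h$. At those columns you need $X_{k'}\ssc Y_{\tau_{h'}(\ell_i)}$ for the relevant good $X_{k'}$; but under your $\cf$ these $Y$-sub-blocks are unconditioned $\mu_j^{\mathbb Y}$, and you have deliberately kept them out of $\mathcal S$ to preserve independence from the $\cd_h$. The route-avoidance Lemmas~\ref{l:ccsuccessstar} and~\ref{l:admissiblemapexistence} let you dodge at most $k_0$ prescribed lattice \emph{cells}, not $O(k_0L_j)$ entire $Y$-\emph{columns}; and if instead you fold the required $\ssc$ at the other trial columns into $\mathcal S$ (or subtract the event ``some trial $Y$-block is bad''), you are back to subtracting a term of size $\asymp L_j^{1-\delta}$, which can dwarf $L_j\prod_i S_j(X_{\ell_i})$.

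The paper's proof avoids this by a different device. It does not try to make $\mathcal S$ independent of the trial columns; instead it introduces, for each $h$, the ``safety'' event $\cg_h=\{Y_{\tau_h(\ell_i)}\in G_j^{\mathbb Y}\text{ and }X_k\ssc Y_{\tau_h(\ell_i)}\ \forall i,\ k\in G_X\}$, works on the high-probability event $\bigcap_h(\cd_h\cup\cg_h)$, and then subtracts the \emph{double-failure} event $\cm=\{\exists\,h_1\ne h_2:\ \cd_{h_1}\setminus\cg_{h_1},\ \cd_{h_2}\setminus\cg_{h_2}\}$. On $\bigl(\bigcup_h\cd_h\bigr)\cap\bigcap_h(\cd_h\cup\cg_h)\cap\neg\cm\cap\cj$ there is a unique $h$ with $\cd_h$ possibly lacking $\cg_h$, and for all other $h'$ the event $\cg_{h'}$ supplies precisely the missing $\ssc$ at their trial columns, so the route goes through. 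The point is that $\P[\cd_h\setminus\cg_h\mid\cf]\le\min\bigl((3/4)^{2k_0}\prod_i S_j(X_{\ell_i}),\,L_j^{-\delta}\bigr)$, whence $\P[\cm\mid\cf]\le L_j^2\cdot L_j^{-\delta}\cdot(3/4)^{2k_0}\prod_i S_j(X_{\ell_i})$, which is negligible \emph{relative to} the main term $L_j(3/4)^{2k_0}\prod_i S_j(X_{\ell_i})$ rather than merely small in absolute terms. This is the missing idea in your argument.
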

\begin{proof}
Suppose that $X \in\A{2}_{X,j+1}$.  Let $\ce$ denote the event
\[
\ce=\{W_Y\leq  L^{\alpha-1}_j, T_Y=W_Y \}.
\]
Then by definition of $W_Y$, $\P[W_Y\leq L^{\alpha-1}_j] \geq 1 - (1-L_j^{-4})^{L_j^{\alpha-1}}\geq 9/10$ while by the definition of the block boundaries the event $T_Y=W_Y$ is equivalent to their being no bad sub-blocks amongst $Y_{L_j^3+L_j^{\alpha-1}+W_Y+1},\ldots,Y_{L_j^3+L_j^{\alpha-1}+W_Y+2L_j^3}$, that is that we don't need to extend the block because of bad sub-blocks. Hence $\P[T_Y=W_Y] \geq (1-L_j^{-\delta})^{2L_j^3} \geq 9/10$.  Combining these we have that
\begin{equation}\label{e:A2MapCEbound}
\P[\ce]\geq 8/10.
\end{equation}
By our block construction procedure, on the event $T_Y=W_Y$ we have that the blocks $Y_{L_j^3+1},\ldots,Y_{L_j^3+L_j^{\alpha-1}+T_Y}$ are uniform $j$-level blocks.

Define $I_1,I_2, B_X$ and $B^*$ as in the proof of Lemma~\ref{l:A1Map}. Also set $[L_{j}^{\alpha-1}+2L_{j}^3+T_X] \cap B_X=G_X$. Using Proposition~\ref{p:assign} again we can find $L_j^2$ level $j$ admissible assignments $\Upsilon_h$ of $(I_1,I_2)$ w.r.t. $(B^*,\emptyset)$  for $1\leq h \leq L_j^2$ with associated $\tau_h$. As in Lemma~\ref{l:A1Map} we can construct a subset $\mathcal{H}\subset [L_j^2]$ with $|\mathcal{H}|= L_j<\lfloor L_j^2/ 9k_0^2\rfloor$ so that for all $i_1\neq i_2$ and $h_1,h_2\in\mathcal{H}$ we have that $\tau_{h_1}(\ell_{i_1})\neq \tau_{h_2}(\ell_{i_2})$, that is that all the positions bad blocks are assigned to are distinct.  We will estimate the probability that one of these assignments work.

In trying out these $L_j$ different assignments there is a subtle conditioning issue since conditioned on an assignment not working (e.g., the event $X_{\ell_i}\cc Y_{\tau_{h}(\ell_i)}$ failing) the distribution of $Y_{\tau_{h}(l_i)}$ might change. As such we condition on an  event $\cd_h \cup \cg_h $ which holds with high probability.

If $X_{\ell_i}\notin G_j^{\mathbb{X}}$, or, if neither $X_{\ell_i -1}$ nor $X_{\ell_i +1}$ is $\in G_j^{\mathbb{X}}$, let $\cd_{h,i}$ denote the event
$$\cd_{h,i}=\left\{X_{\ell_{i}} \cc Y_{\tau_{h}(\ell_{i})}\right\} .$$
If $X_{\ell_i},X_{\ell_i +1}\in G_j^{\mathbb{X}}$ then let
$\cd_{h,i}$ denote the event
$$\cd_{h,i}=\left\{Y_{\tau_{h}(\ell_{i})}\in G_j^{\mathbb{Y}} X_{\ell_{i}} \cs Y_{\tau_{h}(\ell_{i})}~\text{and}~X_{k} \ssc Y_{\tau_{h}(\ell_i)} \forall k\in G_X \right\} .$$
If $X_{\ell_i},X_{\ell_i -1}\in G_j^{\mathbb{X}}$ then let
$\cd_{h,i}$ denote the event
$$\cd_{h,i}=\left\{Y_{\tau_{h}(\ell_{i})}\in G_j^{\mathbb{Y}}, X_{\ell_{i}} \scc Y_{\tau_{h}(\ell_{i})} ~\text{and}~X_{k} \ssc Y_{\tau_{h}(\ell_i)} \forall k\in G_X \right\} .$$

Let $\cd_h$ denote the event
\[
\cd_h=\bigcap _{i=1}^{K'_X}\cd_{h,i}.
\]

Further, let
\[
\cg_h=\left\{Y_{\tau_h(\ell_{i})}\in G_j^{\mathbb{Y}}~\text{and}~ Y_{\tau_h(\ell_{i})} \ssc X_k \hbox{ for } 1\leq i \leq K'_X, k\in G_X\right\}.
\]
Then it follows from the recursive estimates and since $\beta>\alpha+\delta+1$ that
\[
\P[\cd_h \cup \cg_h \mid X,\ce] \geq \P[ \cg_h \mid X,\ce] \geq 1-10k_0L_j^{-\delta}.
\]
and since they are conditionally independent given $X$ and $\ce$,
\begin{equation}\label{e:A2MapA}
\P[\cap_{h\in\ch}(\cd_h \cup \cg_h) \mid X,\ce] \geq  (1-10k_0L_{j}^{-\delta})^{L_j}\geq 9/10.
\end{equation}
Now
\[
\P[\cd_h\mid X,\ce,( \cd_h \cup \cg_h)] \geq \P[\cd_h\mid X,\ce] \geq  \left(\frac{3}{4}\right)^{2k_0}\prod_{i=1}^{K_X} S_j(X_{\ell_i})
\]
and hence
\begin{align}\label{e:A2MapB}
\P[\cup_{h\in\ch} \cd_h \mid X,\ce, \cap_{h\in\ch}(\cd_h \cup \cg_h)] &\geq 1-\left(1- \left(\frac{3}{4}\right)^{2k_0}\prod_{i=1}^{K_X} S_j(X_{\ell_i})\right)^{L_j}\nonumber\\
&\geq \frac{9}{10}  \wedge \frac14 \left(\frac{3}{4}\right)^{2k_0} L_j \prod_{i=1}^{K_X} S_j(X_{\ell_i})
\end{align}
since $1-e^{-x}\geq x/4\wedge 9/10$ for $x\geq 0$.
Furthermore, if
\[
\cm=\left\{\exists h_1\neq h_2 \in\ch: \cd_{h_1}\setminus \cg_{h_1}, \cd_{h_2}\setminus \cg_{h_2} \right\},
\]
then
\begin{align}\label{e:A2MapC}
\P[\cm \mid X,\ce, \cap_{h\in\ch}(\cd_h \cup \cg_h)]
&\leq {L_j \choose 2} \P[\cd_{h}\setminus \cg_{h}\mid X,\ce, \cap_{h\in\ch}(\cd_h \cup \cg_h)]^2\nonumber\\
&\leq {L_j \choose 2} \left(2\left(\frac{3}{4}\right)^{2k_0}\prod_{i=1}^{K_X} S_j(X_{\ell_i}) \wedge 2L_j^{-\delta}  \right)^2\nonumber\\
&\leq L_j^{-(\delta-2)}\left(\frac{3}{4}\right)^{2k_0} \prod_{i=1}^{K_X} S_j(X_{\ell_i}).
\end{align}
Let $\cj_I=\cj_{1}$ and $\cj_{F}=\cj_{L_j^{\alpha-1}+2L_j^3+T_Y}$ denote the events
\[
\cj_I=\left\{X_{1} \cs Y_{1} \hbox{ and } X_k\ssc Y_{1} \hbox{ for all } k\in G_X\right\};
\]
\[
\cj_{F}=\left\{X_{L_j^{\alpha-1}+2L_j^3+T_Y} \scc Y_{L_j^{\alpha-1}+2L_j^3+T_Y} \hbox{ and } X_k\ssc Y_{L_j^{\alpha-1}+2L_j^3+T_Y} \forall k\in G_X\right\}.
\]
For $k\in \{2,\ldots L_j^{\alpha -1}+2L_j^3+T_Y-1,\}\setminus \cup_{h\in\ch, 1\leq i \leq K'_X}\{\tau_h(\ell_i)\}$, let $\cj_k$ denote the event
\[
\cj_k=\left\{Y_{k}\in G_j^{\mathbb{Y}}, X_{k'}\ssc Y_{k} \hbox{ for all } k'\in G_X\right\}.
\]

Finally let
\[
\cj=\bigcap _{k\in [L_j^{\alpha -1}+2L_j^3+T_Y]\setminus \cup_{h\in\ch, 1\leq i \leq K'_X}\{\tau_h(\ell_i)\}}\cj_k.
\]
Then it follows from the recursive estimates and the fact that $\cj_k$ are conditionally independent that
\begin{equation}\label{e:A2MapD}
\P[\cj \mid X,\ce] \geq \left(\frac{9}{10}\right)^2\left(1- RL_j^{\alpha -1-\beta}\right)^{2L_j^{\alpha-1}} \geq 3/4.
\end{equation}

If $\cj,\cup_{h\in\ch} \cd_h$ and $\cap_{h\in\ch}(\cd_h \cup \cg_h)$ all hold and $\cm$ does not hold then we can find at least one $h\in\ch$ such that $\cd_h$ holds and $\cg_{h'}$ holds for all $h'\in\ch\setminus\{h\}$. Then by Lemma~\ref{l:ccsuccessstar} as before we have that $X\cc Y$. Hence by~\eqref{e:A2MapA}, \eqref{e:A2MapB}, \eqref{e:A2MapC}, and~\eqref{e:A2MapD} and the fact that $\cj$ is conditionally independent of the other events that
\begin{align*}
\P[X\cc Y\mid X,\ce]
&\geq \P[\cup_{h\in\ch} \cd_h, \cap_{h\in\ch}(\cd_h \cup \cg_h), \cj, \ \neg \cm \mid X,\ce] \nonumber\\
&= \P[\cj \mid X,\ce]\P[\cup_{h\in\ch} \cd_h, \ \neg \cm \mid X,\ce,\cap_{h\in\ch}(\cd_h \cup \cg_h)]
\P[\cap_{h\in\ch}(\cd_h \cup \cg_h)\mid X,\ce]\\
&\geq \frac{27}{40}\left[ \frac{9}{10}  \wedge \frac14 \left(\frac{3}{4}\right)^{2k_0} L_j \prod_{i=1}^{K_X} S_j(X_{\ell_i}) - L_j^{-(\delta-2)} \prod_{i=1}^{K_X} S_j(X_{\ell_i}) \right]\\
&\geq \frac{3}{5} \wedge \frac15 L_j \left(\frac{3}{4}\right)^{2k_0}\prod_{i=1}^{K_X} S_j(X_{\ell_i}).
\end{align*}
Combining with~\eqref{e:A2MapCEbound} we have that
\begin{align*}
\P[X\hookrightarrow Y\mid X]
&\geq \frac{1}{2} \wedge \frac1{10}\left(\frac{3}{4}\right)^{2k_0} L_j \prod_{i=1}^{K_X} S_j(X_{\ell_i}),
\end{align*}
which completes the proof.
\end{proof}

\begin{lemma}\label{l:A2Bound}
When $0<p< \frac12$,
\[
\mathbb{P}(X\in \A{2}_{X,j+1}, S_{j+1}(X)\leq p)\leq \frac15 p^{m_{j+1}} L_{j+1}^{-\beta}
\]
\end{lemma}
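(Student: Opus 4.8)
The plan is to combine the lower bound on $S_{j+1}(X)$ provided by Lemma~\ref{l:A2Map} with the tail estimate of Lemma~\ref{l:totalSizeBound}. First I would note that on $\A{2}_{X,j+1}$, because $p<\tfrac12$, the bound $S_{j+1}(X)\le p$ excludes the value $\tfrac12$ from the minimum in Lemma~\ref{l:A2Map} and therefore forces
\[
\tfrac1{10}\left(\tfrac34\right)^{2k_0}L_j\prod_{i=1}^{K_X}S_j(X_{\ell_i})\le p .
\]
Setting $x_0:=\log L_j+\log(1/p)-\log 10-2k_0\log(4/3)$, this is exactly the statement that $-\log\prod_{i=1}^{K_X}S_j(X_{\ell_i})\ge x_0$. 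Since $\log(1/p)>\log 2>0$ and $\log L_j$ dominates the $j$-independent constant $\log 10+2k_0\log(4/3)$, we have $x_0>0$ for $L_0$ large enough.

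Next I would apply Lemma~\ref{l:totalSizeBound} with $t'=k'=0$, using that the event in question is contained in $\{-\log\prod_{i=1}^{K_X}S_j(X_{\ell_i})\ge x_0\}$ (approximating $x_0$ from below if one wants to respect the strict inequality there). This gives
\[
\P\!\left(X\in\A{2}_{X,j+1},\,S_{j+1}(X)\le p\right)\le 2e^{-x_0 m_{j+1}}=2\left(10\left(\tfrac43\right)^{2k_0}\right)^{m_{j+1}}L_j^{-m_{j+1}}\,p^{\,m_{j+1}} .
\]

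Finally, using $L_{j+1}=L_j^{\alpha}$, it remains to verify $2\left(10(4/3)^{2k_0}\right)^{m_{j+1}}L_j^{-m_{j+1}}\le\tfrac15 L_j^{-\alpha\beta}$, i.e.\ $10\left(10(4/3)^{2k_0}\right)^{m_{j+1}}\le L_j^{\,m_{j+1}-\alpha\beta}$. The constraint $m>9\alpha\beta$ yields $m_{j+1}-\alpha\beta\ge m-\alpha\beta>0$ uniformly in $j$, while $m_{j+1}\le m+1$, so the left side is bounded by the fixed constant $10\left(10(4/3)^{2k_0}\right)^{m+1}$ and the right side is at least $L_0^{\,m-\alpha\beta}$; the inequality then holds once $L_0$ is chosen large enough. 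I do not anticipate a genuine obstacle here: the argument is essentially a one-line substitution into Lemma~\ref{l:totalSizeBound}, and the only care needed is the positivity of $x_0$ and the fact that the constant being absorbed is independent of $j$, both guaranteed by $L_j\ge L_0$ and $m-\alpha\beta>0$.
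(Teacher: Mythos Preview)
Your proposal is correct and follows essentially the same approach as the paper's proof: both use Lemma~\ref{l:A2Map} to reduce to a tail bound on $\prod_i S_j(X_{\ell_i})$, then invoke Lemma~\ref{l:totalSizeBound} with $t'=k'=0$, and finally absorb the $j$-independent constant using $m_{j+1}>m>\alpha\beta$ and $L_{j+1}=L_j^{\alpha}$. Your write-up is simply more explicit about the positivity of $x_0$ and the uniformity in $j$ of the final constant comparison.
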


\begin{proof}
We have that
\begin{align}
\mathbb{P}(X\in \A{2}_{X,j+1}, S_{j+1}(X)\leq p) &\leq \P\left[\frac1{10} \left(\frac{3}{4}\right)^{2k_0}L_j \prod_{i=1}^{K_X}S_j(X_{\ell_i}) \leq p\right]\nonumber\\
&\leq  2\left(\frac{10 p}{L_j}\left(\frac{4}{3}\right)^{2k_0}\right)^{m_{j+1}} \leq \frac15 p^{m_{j+1}} L_{j+1}^{-\beta}
\end{align}
where the first inequality holds  by Lemma~\ref{l:A2Map}, the second by Lemma~\ref{l:totalSizeBound} and the third holds for large enough $L_0$  since $m_{j+1}>m>\alpha\beta$.
\end{proof}

\subsection{Case 3}

The third case allows for a greater number of bad sub-blocks.
The class of blocks $\A{3}_{X,j+1}$ is defined as
\[
\A{3}_{X,j+1} := \left\{X:T_X \leq \frac{R L^{\alpha-1}_j}{2}, k_0\leq K_X\leq \frac{L_j^{\alpha-1}+T_X}{10 R_j^+} \right\}.
\]

\begin{lemma}\label{l:A3Map}
For $X\in \A{3}_{X,j+1}$,
\[
S_{j+1}(X) \geq \frac12 \left(\frac{3}{4}\right)^{2K_X} \prod_{i=1}^{K_X}S_j(X_{\ell_i})
\]
\end{lemma}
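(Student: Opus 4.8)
The plan is to follow the Case~2 argument (Lemma~\ref{l:A2Map}) almost verbatim; the only structural change is that a block in $\A{3}_{X,j+1}$ may contain many more than $k_0$ bad sub-blocks, so we can no longer try $L_j$ candidate positions for each of them. Instead we invoke the \emph{single} admissible assignment supplied by Proposition~\ref{p:assign}(ii). This is what forces the exponent $2k_0$ of Case~2 to become $2K_X$ and costs us the factor $L_j$. Concretely, fix $X\in\A{3}_{X,j+1}$ of length $|X|=L_j^{\alpha-1}+2L_j^3+T_X$, write $\{\ell_1<\dots<\ell_{K_X}\}$ for the positions of its bad sub-blocks (all of which lie in $[L_j^3+1,|X|-L_j^3]$ by Observation~\ref{o:blockStructure}), and condition on the event $\ce=\{W_Y\le L_j^{\alpha-1},\ T_Y=W_Y\}$ already used in Lemma~\ref{l:A2Map}. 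Then $\P[\ce]\ge 8/10$, on $\ce$ the sub-blocks $Y_{L_j^3+1},\dots,Y_{L_j^3+L_j^{\alpha-1}+T_Y}$ are i.i.d.\ uniform $j$-level blocks and $|Y|\le 2L_j^{\alpha-1}+2L_j^3$; combined with $T_X\le RL_j^{\alpha-1}/2$ this forces $|Y|/|X|\in[3/(2R),2R/3]$ on $\ce$. Since $X\in\A{3}_{X,j+1}$ gives $K_X\le (L_j^{\alpha-1}+T_X)/(10R_j^+)=(|X|-2L_j^3)/(10R_j^+)$, Proposition~\ref{p:assign}(ii) applies with $I_1=[|X|]$, $I_2=[|Y|]$, $B=\{\ell_1,\dots,\ell_{K_X}\}$ and $B'=\emptyset$: there is a level-$j$ admissible assignment $(H,H',\tau)$ of $(I_1,I_2)$ with $B\subseteq H$.

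Given $(H,H',\tau)$ we build a corner-to-corner path in $X\times Y$ just as in Case~2: by Lemmas~\ref{l:ccconnect}, \ref{l:ccconnect2} and~\ref{l:ccsuccessstar} it suffices that (i) each special rectangle $X_{\ell_i}\times Y_{\tau(\ell_i)}$ be crossed corner-to-corner (a maximal run of consecutive bad blocks is dealt with by chaining such crossings of the individual rectangles, the corresponding images being consecutive under $\tau$); (ii) for each ``clean'' segment $\tilde X_{(s)}\times\tilde Y_{(s)}$ strictly between consecutive elements of $H$ there is a route through $A^s\otimes\mathcal B^s$ along which the first sub-block is crossed corner-to-side, the last side-to-corner, and every intermediate sub-block side-to-side --- such a route is produced by Lemma~\ref{l:ccsuccessstar}, whose slope and sub-block-length hypotheses hold because of the slope condition built into the admissible assignment and the length bound~(v) of Definition~\ref{d:good}; and (iii) $X_1\cs Y_1$ and $X_{|X|}\scc Y_{|Y|}$ hold. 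Since $B\subseteq H$ contains \emph{all} bad sub-blocks of $X$, every sub-block appearing in a clean segment (in particular the corner-to-side and side-to-corner ones) is good, so the recursive estimates apply to it.

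The events in (i) involve the distinct and, under $\ce$, uniformly distributed sub-blocks $Y_{\tau(\ell_1)},\dots,Y_{\tau(\ell_{K_X})}$, hence conditionally on $X$ and $\ce$ they are mutually independent with $\P[X_{\ell_i}\cc Y_{\tau(\ell_i)}\mid X,\ce]=S_j(X_{\ell_i})$, and they are independent of all events in (ii)--(iii), which involve $Y$-sub-blocks disjoint from $\{Y_{\tau(\ell_i)}\}$. There are at most $2K_X$ good sub-blocks requiring a corner-to-side or side-to-corner crossing (the ones flanking the bad blocks); by the recursive estimates, together with Lemma~\ref{l:consequences} to absorb the conditioning that the relevant $Y$-sub-block be good, each such crossing has conditional probability $\ge 9/10\ge 3/4$. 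The remaining side-to-side crossings over good sub-blocks, together with the two end crossings in (iii), form an event $\cj$ with $\P[\cj\mid X,\ce]\ge 3/4$, exactly as in~\eqref{e:A2MapD}: there are $O(L_j^{\alpha-1})$ such $\ssc$ events, each failing with probability $O(L_j^{-\beta})$ with $\beta>\alpha(\delta+1)$, and demanding the $O(L_j^{\alpha-1})$ relevant $Y$-sub-blocks be good costs only $O(L_j^{\alpha-1-\delta})=o(1)$. Combining these,
\[
\P[X\cc Y\mid X,\ce]\ \ge\ \frac34\Big(\frac34\Big)^{2K_X}\prod_{i=1}^{K_X}S_j(X_{\ell_i}),
\]
and therefore, removing the conditioning on $\ce$,
\[
S_{j+1}(X)=\P[X\cc Y\mid X]\ \ge\ \P[\ce]\cdot\frac34\Big(\frac34\Big)^{2K_X}\prod_{i=1}^{K_X}S_j(X_{\ell_i})\ \ge\ \frac12\Big(\frac34\Big)^{2K_X}\prod_{i=1}^{K_X}S_j(X_{\ell_i}),
\]
since $\P[\ce]\cdot\tfrac34\ge\tfrac{8}{10}\cdot\tfrac34>\tfrac12$.

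The only genuine work is the geometric step: confirming that the single admissible assignment of Proposition~\ref{p:assign}(ii) together with the route constructions of the preceding section really yields a corner-to-corner path --- in particular handling maximal runs of consecutive bad sub-blocks and verifying that the slope and sub-block-length hypotheses of Lemma~\ref{l:ccsuccessstar} are met uniformly along the construction --- and checking that conditioning on $\ce$ (and on the auxiliary $Y$-sub-blocks being good) leaves intact both the uniform law of the $Y_{\tau(\ell_i)}$ and the independence used in the bookkeeping. Everything else parallels Case~2 and is routine.
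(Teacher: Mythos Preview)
Your proposal is correct and follows essentially the same approach as the paper: condition on the same event $\ce$, use a \emph{single} admissible assignment furnished by Proposition~\ref{p:assign}(ii), decompose into a $\cd$-type event at the bad sub-blocks (contributing $\prod S_j(X_{\ell_i})$ and the $(3/4)^{2K_X}$ from the flanking corner-to-side/side-to-corner crossings) and a $\cj$-type event over the remaining positions (contributing the factor $3/4$), and remove the conditioning. The only cosmetic difference is that the paper builds the assignment with respect to $B^*$ (bad sub-blocks \emph{and} their neighbours) and folds the cs/sc events for the neighbours into its $\cd_i$, whereas you build it with respect to $B$ alone and account for those flanking crossings as part of the clean-segment route; the bookkeeping and the final bound are identical.
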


\begin{proof}
For this proof we only need to consider a single admissible assignment $\Upsilon$. Suppose that $X \in\A{3}_{X,j+1}$.  Again let $\ce$ denote the event
\[
\ce=\{W_Y\leq  L^{\alpha-1}_j, T_Y=W_Y \}.
\]
Similarly to~\eqref{e:A2MapCEbound} we have that,
\begin{equation}\label{e:A3MapCEbound}
\P[\ce]\geq 8/10.
\end{equation}
As before we have, on the event $T_Y=W_Y$, the blocks $Y_{L_j^3+1},\ldots,Y_{L_j^3+L_j^{\alpha-1}+T_Y}$ are uniform $j$-blocks since the block division did not evaluate whether they are good or bad.

Set $I_1,I_2,B_X,G_X$ and $B^{*}$ as in the proof of Lemma \ref{l:A2Map}.
By Proposition~\ref{p:assign} we can find a level $j$ admissible assignment $\Upsilon$ of $(I_1,I_2)$ w.r.t. $(B^*,\phi)$ with associated $\tau$ so that for all $i$, $L_j^3+1\leq \tau_h(\ell_i) \leq L_j^3+L_j^{\alpha-1}+T_Y$.  We estimate the probability that this assignment works.

If $X_{\ell_i}\notin G_j^{\mathbb{X}}$, or, if neither $X_{\ell_i -1}$ nor $X_{\ell_i +1}$ is $\in G_j^{\mathbb{X}}$, let $\cd_{i}$ denote the event
$$\cd_{i}=\left\{X_{\ell_{i}} \cc Y_{\tau(\ell_{i})}\right\} .$$
If $X_{\ell_i},X_{\ell_i +1}\in G_j^{\mathbb{X}}$ then let
$\cd_{i}$ denote the event
$$\cd_{i}=\left\{Y_{\tau(\ell_{i})}\in G_j^{\mathbb{Y}}, X_{\ell_{i}} \cs Y_{\tau(\ell_{i})}~\text{and}~X_{k}\ssc Y_{\tau(\ell_{i})} \forall k\in G_X\right\}.$$
If $X_{\ell_i},X_{\ell_i -1}\in G_j^{\mathbb{X}}$ then let
$\cd_{i}$ denote the event
$$\cd_{i}=\left\{Y_{\tau(\ell_{i})}\in G_j^{\mathbb{Y}}, X_{\ell_{i}} \scc Y_{\tau(\ell_{i})}~\text{and}~X_{k}\ssc \} Y_{\tau(\ell_{i})} \forall k\in G_X\right\} .$$

Let $\cd$ denote the event
\[
\cd=\bigcap _{i=1}^{K'_X}\cd_{i}.
\]

By definition and the recursive estimates,
\begin{align}\label{e:A3MapA}
\P[\cd\mid X,\ce]  \geq \left(\frac{3}{4}\right)^{2K_X}\prod_{i=1}^{K_X} S_j(X_{\ell_i})
\end{align}

Let $\cj_I=\cj_{1}$ and $\cj_{F}=\cj_{L_j^{\alpha-1}+2L_j^3+T_Y}$ denote the events
\[
\cj_I=\left\{X_{1} \cs Y_{1} \hbox{ and } X_k\ssc Y_{1} \hbox{ for all } k\in G_X\right\};
\]
\[
\cj_{F}=\left\{X_{L_j^{\alpha-1}+2L_j^3+T_Y} \scc Y_{L_j^{\alpha-1}+2L_j^3+T_Y} \hbox{ and } X_k\ssc Y_{L_j^{\alpha-1}+2L_j^3+T_Y} \forall k\in G_X\right\}.
\]
For $k\in \{2,\ldots L_j^{\alpha -1}+2L_j^3+T_Y-1,\}\setminus \cup_{1\leq i \leq K'_X}\{\tau(\ell_i)\}$, let $\cj_k$ denote the event
\[
\cj_k=\left\{Y_k \in G_j^{\mathbb{Y}}, X_{k'}\ssc Y_{k} \hbox{ for all } k'\in G_X\right\}.
\]

Finally let
\[
\cj=\bigcap _{k\in [L_j^{\alpha -1}+2L_j^3+T_Y]\setminus \cup_{1\leq i \leq K'_X}\{\tau(\ell_i)\}}\cj_k.
\]

From the recursive estimates
\begin{equation}\label{e:A3MapB}
\P[\cj \mid X,\ce] \geq \frac{3}{4}.
\end{equation}

If $\cd$ and $\cj$ hold then by Lemma~\ref{l:ccsuccessstar} we have that $X\cc Y$. Hence by~\eqref{e:A3MapA} and \eqref{e:A3MapB} and the fact that $\cd$ and $\cj$ are conditionally independent we have that,
\begin{align*}
\P[X\cc Y\mid X,\ce]
&\geq \P[\cd, \cj \mid X,\ce] \nonumber\\
&= \P[\cd \mid X,\ce] \P[\cj \mid X,\ce] \nonumber\\
&\geq \frac{3}{4} \left(\frac{3}{4}\right)^{2K_X}\prod_{i=1}^{K_X} S_j(X_{\ell_i}).
\end{align*}
Combining with~\eqref{e:A3MapCEbound} we have that
\begin{align*}
\P[X\cc Y\mid X]
&\geq \frac{1}{2} \left(\frac{3}{4}\right)^{2K_X}\prod_{i=1}^{K_X} S_j(X_{\ell_i}),
\end{align*}
which completes the proof.
\end{proof}

\begin{lemma}\label{l:A3Bound}
When $0<p\leq \frac12$,
\[
\mathbb{P}(X\in \A{3}_{X,j+1}, S_{j+1}(X)\leq p)\leq \frac15 p^{m_{j+1}} L_{j+1}^{-\beta}
\]
\end{lemma}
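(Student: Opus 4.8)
The plan is to follow the template of the Case~2 estimate, Lemma~\ref{l:A2Bound}: feed the deterministic lower bound on $S_{j+1}(X)$ from Lemma~\ref{l:A3Map} into the quantitative tail bound of Lemma~\ref{l:totalSizeBound}. The one genuinely new feature compared with Case~2 is that on $\A{3}_{X,j+1}$ the number $K_X$ of bad sub-blocks is no longer bounded by $k_0$, so the factor $(3/4)^{2K_X}$ appearing in Lemma~\ref{l:A3Map} truly varies with $K_X$ and has to be tracked through the estimate.

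First I would record that on the event $\{X\in\A{3}_{X,j+1},\ S_{j+1}(X)\le p\}$ we have $K_X\ge k_0$ by the definition of $\A{3}_{X,j+1}$, and, by Lemma~\ref{l:A3Map},
\[
\tfrac12\big(\tfrac34\big)^{2K_X}\prod_{i=1}^{K_X}S_j(X_{\ell_i})\ \le\ S_{j+1}(X)\ \le\ p,
\qquad\text{so}\qquad
-\log\prod_{i=1}^{K_X}S_j(X_{\ell_i})\ \ge\ \log\tfrac1{2p}-2K_X\log\tfrac43 .
\]
Since $\{K_X=k\}_{k\ge k_0}$ partitions $\{X\in\A{3}_{X,j+1}\}$, I would split the probability in question as $\sum_{k\ge k_0}\P[X\in\A{3}_{X,j+1},\,S_{j+1}(X)\le p,\,K_X=k]$ and bound the $k$-th term, using the displayed inequality and Lemma~\ref{l:totalSizeBound} taken with $t'=0$, $k'=k$ and $x$ equal to the positive part of $\log\tfrac1{2p}-2k\log\tfrac43$ (the passage from the strict inequality $>x$ in Lemma~\ref{l:totalSizeBound} to $\ge x$ being a routine limiting argument, using that $-\log\prod_i S_j(X_{\ell_i})\ge0$ always), by
\[
\P\left[K_X\ge k,\ -\log\prod_{i=1}^{K_X}S_j(X_{\ell_i})\ \ge\ \log\tfrac1{2p}-2k\log\tfrac43\right]\ \le\ 2\,L_j^{-\delta k/4}\exp\Big(-\big(\log\tfrac1{2p}-2k\log\tfrac43\big)^{+}m_{j+1}\Big).
\]
The only elementary point is that the last exponential is at most $(2p)^{m_{j+1}}(4/3)^{2km_{j+1}}$: indeed the latter equals $\exp\big(-(\log\tfrac1{2p}-2k\log\tfrac43)\,m_{j+1}\big)$, which is $\ge 1$ precisely when the positive part is trivial.

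Plugging this in turns the whole bound into a single geometric series:
\[
\P[X\in\A{3}_{X,j+1},\,S_{j+1}(X)\le p]\ \le\ 2(2p)^{m_{j+1}}\sum_{k\ge k_0}\Big(L_j^{-\delta/4}(4/3)^{2m_{j+1}}\Big)^{k}.
\]
For $L_0$ large the ratio $L_j^{-\delta/4}(4/3)^{2m_{j+1}}$ is at most $\tfrac12$, uniformly in $j$ (since $m_{j+1}\le m+1$ and $\delta>0$), so the sum is at most $2\big(L_j^{-\delta/4}(4/3)^{2m_{j+1}}\big)^{k_0}$, which gives the bound $4\cdot2^{m_{j+1}}(4/3)^{2m_{j+1}k_0}\,p^{m_{j+1}}L_j^{-\delta k_0/4}$. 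I would finish by using the parameter constraints $\delta>48$ and $k_0>36\alpha\beta$, which force $\delta k_0/4>\alpha\beta$, so that for $L_0$ large enough the (fixed) constant $4\cdot2^{m_{j+1}}(4/3)^{2m_{j+1}k_0}$ is absorbed, yielding $4\cdot2^{m_{j+1}}(4/3)^{2m_{j+1}k_0}L_j^{-\delta k_0/4}\le\tfrac15 L_j^{-\alpha\beta}=\tfrac15 L_{j+1}^{-\beta}$, i.e.\ the claim.

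The crux --- and essentially the only step needing care --- is the $K_X$ book-keeping. The ``difficulty threshold'' $\log\tfrac1{2p}-2k\log\tfrac43$ decreases in $k$ and eventually becomes negative, so for many bad sub-blocks the constraint $S_{j+1}(X)\le p$ carries almost no information beyond $K_X\ge k$; one therefore has to split on the exact value of $K_X$ (rather than merely using $K_X\ge k_0$ as in Case~2), and verify that the gain $L_j^{-\delta k/4}$ from having $k$ bad sub-blocks always dominates the loss $(4/3)^{2km_{j+1}}$ this creates --- equivalently, that the geometric ratio $L_j^{-\delta/4}(4/3)^{2m_{j+1}}$ is $<1$ for $L_0$ large. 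Once that is in place the rest is the same routine comparison of exponents as in Case~2.
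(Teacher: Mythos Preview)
Your proof is correct and follows essentially the same approach as the paper: both use Lemma~\ref{l:A3Map} to reduce to a tail bound on $\prod_i S_j(X_{\ell_i})$, split according to the value $K_X=k\ge k_0$, apply Lemma~\ref{l:totalSizeBound}, and sum the resulting geometric series, finishing with the parameter constraint $\delta k_0>4\alpha\beta$. You are more explicit than the paper about the positive-part issue when $2p(4/3)^{2k}>1$ and about the strict-vs-nonstrict inequality in Lemma~\ref{l:totalSizeBound}, but these are cosmetic differences.
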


\begin{proof}
We have that
\begin{align}
\mathbb{P}(X\in \A{3}_{X,j+1}, S_{j+1}(X)\leq p) &\leq \P\left[K_X>k_0, \frac12 \left(\frac{3}{4}\right)^{2K_X}\prod_{i=1}^{K_X}S_j(X_{\ell_i}) \leq p\right]\nonumber\\
&\leq \sum_{k=k_0}^{\infty}\P\left[K_X=k, \prod_{i=1}^{K_X}S_j(X_{\ell_i}) \leq 2p\left(\frac{4}{3}\right)^{2k}\right]\nonumber\\
&\leq  2\sum_{k=k_0}^{\infty}\left(2p\left(\frac{4}{3}\right)^{2k}\right)^{m_{j+1}} L_j^{-\delta k/4}\leq \frac15 p^{m_{j+1}} L_{j+1}^{-\beta}
\end{align}
where the first inequality holds  by Lemma~\ref{l:A3Map}, the third follows from Lemma \ref{l:totalSizeBound} and the last one holds for large enough $L_0$  since $\delta k_0 >4\alpha\beta$.
\end{proof}

\subsection{Case 4}

In Case 4 we allow blocks of long length but not too many bad sub-blocks.
The class of blocks $\A{4}_{X,j+1}$ is defined as
\[
\A{4}_{X,j+1} := \left\{X:T_X > \frac{R L^{\alpha-1}_j}{2}, K_X\leq \frac{L_j^{\alpha-1}+T_X}{10 R_j^+} \right\}.
\]

\begin{lemma}\label{l:A4Map}
For $X\in \A{4}_{X,j+1}$,
\[
S_{j+1}(X) \geq  \left(\frac{3}{4}\right)^{2K_X}\prod_{i=1}^{K_X}S_j(X_{\ell_i}) \exp(-3T_X L_j^{-4} /R )
\]
\end{lemma}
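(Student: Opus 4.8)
The plan is to imitate the proof of Case~3 (Lemma~\ref{l:A3Map}), with one new ingredient forced by the fact that $X$ is long: we must first condition the random $\mathbb{Y}$-block $Y$ to be long enough that the rectangle $X\times Y$ has aspect ratio in the range where Proposition~\ref{p:assign}(ii) produces an admissible assignment, and the cost of this conditioning is exactly the source of the factor $\exp(-3T_XL_j^{-4}/R)$. Write $t=L_j^{\alpha-1}+2L_j^3+T_X$, and for a candidate $\mathbb{Y}$-block put $t'=L_j^{\alpha-1}+2L_j^3+T_Y$. Since $X\in\A{4}_{X,j+1}$ we have $T_X>\tfrac{R}{2}L_j^{\alpha-1}$, so $t\le(1+\tfrac{3}{R})T_X$ for $L_0$ large. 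Set $w_0=\max\{0,\lceil \tfrac{3}{2R}t-(L_j^{\alpha-1}+2L_j^3)\rceil\}$ and let $\ce$ be the event $\{W_Y\in[w_0,w_0+\lceil L_j^4\rceil]\}\cap\{T_Y=W_Y\}$. On $\ce$ one has $\tfrac3{2R}\le t'/t\le\tfrac{2R}{3}$ for $L_0$ large, and, exactly as in Cases~2--3, the sub-blocks $Y_{L_j^3+1},\dots,Y_{L_j^3+L_j^{\alpha-1}+T_Y}$ are i.i.d.\ $\mu_j^{\mathbb{Y}}$-blocks. Using $\P[W_Y\ge w_0]\ge(1-L_j^{-4})^{w_0}$, the fact that the window of length $L_j^4$ captures a $\ge(1-e^{-1})$ fraction of the residual geometric mass, and that $T_Y=W_Y$ given the rest has conditional probability $\ge(1-L_j^{-\delta})^{2L_j^3}\ge\tfrac9{10}$, one gets
\[
\P[\ce]\ \ge\ \tfrac12\,(1-L_j^{-4})^{w_0}\ \ge\ \tfrac12\,e^{-(1+o(1))w_0L_j^{-4}}\ \ge\ \tfrac12\,e^{-\frac{3}{2R}(1+o(1))T_XL_j^{-4}},
\]
and since $T_XL_j^{-4}\ge\tfrac{R}{2}L_j^{\alpha-5}\to\infty$, for $L_0$ large this is $\ge e^{-3T_XL_j^{-4}/R}$ with an exponential factor to spare.

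Next, conditionally on $\ce$, I would build the corner-to-corner path as in Case~3 from a single level-$j$ admissible assignment $\Upsilon=(H,H',\tau)$ of $([t],[t'])$ with respect to $(B_X,\emptyset)$, where $B_X=\{\ell_1<\dots<\ell_{K_X}\}$ lists the bad sub-blocks of $X$; this exists by Proposition~\ref{p:assign}(ii) because the slope of $X\times Y$ lies in $[\tfrac3{2R},\tfrac{2R}{3}]$ on $\ce$ and $K_X\le\frac{L_j^{\alpha-1}+T_X}{10R_j^+}=\frac{t-2L_j^3}{10R_j^+}$, and $\tau$ may be taken with range inside $[L_j^3+1,L_j^3+L_j^{\alpha-1}+T_Y]$. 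For each bad $\ell_i$ route the path corner-to-corner through $X_{\ell_i}\times Y_{\tau(\ell_i)}$; since $Y_{\tau(\ell_i)}\sim\mu_j^{\mathbb{Y}}$ on $\ce$, this has conditional probability $S_j(X_{\ell_i})$. Between consecutive bad sub-blocks the intervening sub-blocks of $X$ are all good, and I would cross the segment rectangle $\tilde X_{(s)}\times\tilde Y_{(s)}$ using Lemmas~\ref{l:ccconnect},~\ref{l:ccconnect2} and~\ref{l:ccsuccessstar}: this needs $\cs$ at the left endpoint block and $\scc$ at the right endpoint block of the segment (each a good level-$j$ block, probability $\ge\tfrac9{10}$ by the inductive estimates), and $X_{v_1}\ssc Y_{v_2}$ at the interior route vertices, where by Lemma~\ref{l:ccsuccessstar} the route meets each $\mathbb{Y}$-column $v_2$ in at most $L_j$ rows and each such $X_{v_1}$ is good, so the probability there is $\ge1-2L_j^{-2\beta}$. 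The $\mathbb{Y}$-blocks appearing in all of these events are pairwise distinct, hence conditionally independent given $X$ and $\ce$, so with $t'\le(1+o(1))T_X$ one obtains
\[
\P[X\cc Y\mid X,\ce]\ \ge\ \Big(\prod_{i=1}^{K_X}S_j(X_{\ell_i})\Big)\Big(\tfrac9{10}\Big)^{2K_X+2}\big(1-2L_j^{-2\beta}\big)^{L_jt'}.
\]

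Combining the two bounds: $\big(\tfrac9{10}\big)^{2K_X+2}\ge\tfrac{81}{100}\big(\tfrac34\big)^{2K_X}$ and $\big(1-2L_j^{-2\beta}\big)^{L_jt'}\ge\exp\!\big(-cL_j^{5-2\beta}\cdot T_XL_j^{-4}\big)$ with $5-2\beta<0$, so this last factor is $e^{-o(1)\cdot T_XL_j^{-4}}$; hence
\[
S_{j+1}(X)\ \ge\ \P[\ce]\,\P[X\cc Y\mid X,\ce]\ \ge\ \Big(\prod_{i=1}^{K_X}S_j(X_{\ell_i})\Big)\Big(\tfrac34\Big)^{2K_X}\cdot\tfrac{81}{200}\,e^{-(\frac{3}{2R}+o(1))T_XL_j^{-4}},
\]
and since $\tfrac{3}{2R}<\tfrac{3}{R}$ and $T_XL_j^{-4}\to\infty$ as $L_0\to\infty$, the trailing factor is $\ge e^{-3T_XL_j^{-4}/R}$ for $L_0$ large, which is the claim.

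The main difficulty, and the point where the argument genuinely diverges from Case~3, is that $X$ and $Y$ now have an unbounded number (of order $T_X$) of sub-blocks, so the crude union bounds over sub-blocks used there (bounding $\P[\neg\ssc]$, $\P[Y_k\notin G_j^{\mathbb{Y}}]$, etc., summed over all blocks) are useless. One must instead arrange the failure events so that they are conditionally independent over the $\mathbb{Y}$-blocks and multiply, turning each contribution into a product $(1-L_j^{-c})^{O(T_X)}=e^{-O(L_j^{-(c-4)})\,T_XL_j^{-4}}$ with $c>4$, which is absorbed into the slack of the target bound. The one cost that does \emph{not} shrink this way is making $Y$ long enough; keeping its rate down to $3/R$ per unit of $T_X$ is exactly why one uses the flattest admissible slope $\tfrac3{2R}$ permitted by Proposition~\ref{p:assign}(ii), rather than, say, length-matching at slope~$1$, which would cost $e^{-(1+o(1))T_XL_j^{-4}}$ and be far too expensive.
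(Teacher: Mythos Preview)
Your proposal is correct and follows essentially the same strategy as the paper: condition $Y$ to have length comparable to $X$ (at geometric cost $\approx e^{-cT_XL_j^{-4}/R}$), then use a single admissible assignment from Proposition~\ref{p:assign}(ii) and the route machinery exactly as in Case~3. The paper's version differs only cosmetically: it takes $\ce(X)=\{W_Y=\lfloor 2T_X/R\rfloor,\,T_Y=W_Y\}$ (a single value rather than your window, incurring a harmless extra $L_j^{-4}$ prefactor), uses $B^*$ (bad blocks plus neighbours) instead of your $B_X$ in the assignment, and packages the per-column events as $\cj_k=\{Y_k\in G_j^{\mathbb{Y}},\,X_{k'}\ssc Y_k\ \forall k'\in H_k^{\tau}\}$.

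One small point you gloss over: the route construction in Lemma~\ref{l:ccsuccessstar} (via Lemma~\ref{l:admissibletoy}) needs the chunk-count bounds $L_{j-1}^{\alpha-5}\le n_i,n'_{i'}\le L_{j-1}^{\alpha-5}+L_{j-1}$ on \emph{all} sub-blocks along the route, which is guaranteed for good sub-blocks but not for arbitrary $Y_k$. You therefore need to include $Y_k\in G_j^{\mathbb{Y}}$ for the interior columns as part of your success event, at additional cost $(1-L_j^{-\delta})^{t'}$. Since $\delta>4$, this is $e^{-o(1)\cdot T_XL_j^{-4}}$ and is absorbed into your slack exactly as the $\ssc$-failure cost is; the paper does this explicitly in its $\cj_k$.
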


\begin{proof}
In this proof we  allow the length of $Y$ to grow at a slower rate than that of $X$.  Suppose that $X \in\A{4}_{X,j+1}$ and let $\ce(X)$ denote the event
\[
\ce(X)=\{W_Y = \lfloor 2 T_X / R\rfloor, T_Y=W_Y \}.
\]
Then by definition $\P[W_Y = \lfloor 2 T_X / R\rfloor] =L_j^{-4} (1-L_j^{-4})^{\lfloor 2 T_X / R\rfloor}$.  Similarly to Lemma~\ref{l:A2Map}, $\P[T_Y=W_Y\mid W_Y] \geq (1-L_j^{-\delta})^{2L_j^3} \geq 9/10$.  Combining these we have that
\begin{equation}\label{e:A4MapCEbound}
\P[\ce(X)]\geq \frac9{10} L_j^{-4} (1-L_j^{-4})^{\lfloor 2 T_X / R\rfloor}.
\end{equation}

Set $I_1,I_2, B_X,B^*$ as before. By Proposition~\ref{p:assign} we can find an admissible assignment at level $j$, $\Upsilon$ of $(I_1,I_2)$ w.r.t. $(B^*,\emptyset)$  with associated $\tau$ so that for all $i$, $L_j^3+1\leq \tau(\ell_i) \leq L_j^3+L_j^{\alpha-1}+T_Y$.  We again estimate the probability that this assignment works.

We need to modify the definition of $\cd$ and $\cj$ in this case since the length of $X$ could be arbitrarily large. For $k\in [L_j^{\alpha-1}+2L_{j}^3+T_Y]\setminus \tau(B_X)$, let $H^{\tau}_k\subseteq [L_j^{\alpha-1}+2L_{j}^3+T_Y]\setminus B_X$ be the sets given by Lemma \ref{l:ccsuccessstar} such that $|H^{\tau}_k|\leq L_j$ and there exists a $\tau$-compatible admissible route with $k$-sections contained in $H^{\tau}_{k}$ for all $k$. We define $\cd$ and $\cj$ in this case as follows.

If $X_{\ell_i}\notin G_j^{\mathbb{X}}$, or, if neither $X_{\ell_i -1}$ nor $X_{\ell_i +1}$ is $\in G_j^{\mathbb{X}}$, let $\cd_{i}$ denote the event
$$\cd_{i}=\left\{X_{\ell_{i}} \cc Y_{\tau(\ell_{i})}\right\} .$$
If $X_{\ell_i},X_{\ell_i +1}\in G_j^{\mathbb{X}}$ then let
$\cd_{i}$ denote the event
$$\cd_{i}=\left\{Y_{\tau(\ell_{i})}\in G_j^{\mathbb{Y}}, X_{\ell_{i}} \cs Y_{\tau(\ell_{i})}~\text{and}~X_{k}\ssc Y_{\tau(\ell_{i})} \forall k\in H_{\tau(l_i)}^{\tau}\right\}.$$
If $X_{\ell_i},X_{\ell_i -1}\in G_j^{\mathbb{X}}$ then let
$\cd_{i}$ denote the event
$$\cd_{i}=\left\{Y_{\tau(\ell_{i})}\in G_j^{\mathbb{Y}}, X_{\ell_{i}} \scc Y_{\tau(\ell_{i})}~\text{and}~X_{k}\ssc \} Y_{\tau(\ell_{i})} \forall k\in H_{\tau(l_i)}^{\tau}\right\} .$$

Let $\cd$ denote the event
\[
\cd=\bigcap _{i=1}^{K'_X}\cd_{i}.
\]

Let $\cj_I=\cj_{1}$ and $\cj_{F}=\cj_{L_j^{\alpha-1}+2L_j^3+T_Y}$ denote the events
\[
\cj_I=\left\{X_{1} \cs Y_{1} \hbox{ and } X_k\ssc Y_{1} \hbox{ for all } k\in H_{1}^{\tau}\right\};
\]
\[
\cj_{F}=\left\{X_{L_j^{\alpha-1}+2L_j^3+T_Y} \scc Y_{L_j^{\alpha-1}+2L_j^3+T_Y} \hbox{ and } X_k\ssc Y_{L_j^{\alpha-1}+2L_j^3+T_Y} \forall k\in H_{L_j^{\alpha-1}+2L_j^3+T_Y}^{\tau}\right\}.
\]
For $k\in \{2,\ldots L_j^{\alpha -1}+2L_j^3+T_Y-1,\}\setminus \cup_{1\leq i \leq K'_X}\{\tau(\ell_i)\}$, let $\cj_k$ denote the event
\[
\cj_k=\left\{Y_k \in G_j^{\mathbb{Y}}, X_{k'}\ssc Y_{k} \hbox{ for all } k'\in H_k^{\tau}\right\}.
\]

Finally let
\[
\cj=\bigcap _{k\in [L_j^{\alpha -1}+2L_j^3+T_Y]\setminus \cup_{1\leq i \leq K'_X}\{\tau(\ell_i)\}}\cj_k.
\]

If $\cd$ and $\cj$ hold then by Lemma~\ref{l:ccsuccessstar} we have that $X\cc Y$. It is easy to see that, in this case~\eqref{e:A3MapA} holds. Also we have for large enough $L_0$,
\begin{equation}\label{e:A4MapB}
\P[\cj \mid X,\ce(X)] \geq \frac{3}{4}\left(1- 2L_j^{-\delta}\right)^{L_j^{\alpha-1} + \lfloor 2 T_X / R\rfloor + 2L_j^3} \geq \frac14\exp\left(- 2L_j^{-\delta}(L_j^{\alpha-1} + \lfloor 2 T_X / R\rfloor + 2L_j^3)\right).
\end{equation}

Hence by~\eqref{e:A3MapA} and \eqref{e:A4MapB} and the fact that $\cd$ and $\cj$ are conditionally independent we have that,
\begin{align*}
\P[X\cc Y\mid X,\ce]
&\geq \P[\cd \mid X,\ce] \P[\cj \mid X,\ce] \nonumber\\
&\geq \frac14\exp\left(- L_j^{-\delta}(L_j^{\alpha-1} + \lfloor 2 T_X / R\rfloor + 2L_j^3)\right) \left(\frac{3}{4}\right)^{2K_X}\prod_{i=1}^{K_X} S(X_{\ell_i}).
\end{align*}
Combining with~\eqref{e:A4MapCEbound} we have that
\begin{align*}
\P[X\cc Y\mid X]
&\geq  \exp(-3T_X L_j^{-4} /R ) \left(\frac{3}{4}\right)^{2K_X} \prod_{i=1}^{K_X} S(X_{\ell_i}),
\end{align*}
since $T_X L_j^{-4} = \Omega(L_j^{\alpha-6})$ and $\delta >5$ which completes the proof.
\end{proof}

\begin{lemma}\label{l:A4Bound}
When $0<p\leq \frac12$,
\[
\mathbb{P}(X\in \A{4}_{X,j+1}, S_{j+1}(X)\leq p)\leq \frac15 p^{m_{j+1}} L_{j+1}^{-\beta}
\]
\end{lemma}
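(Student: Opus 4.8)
The plan is to imitate the proofs of Lemmas~\ref{l:A2Bound} and~\ref{l:A3Bound}: feed the pointwise lower bound of Lemma~\ref{l:A4Map} into the tail estimate of Lemma~\ref{l:totalSizeBound}. The one genuinely new feature here is that $T_X$ may be large, so rather than discarding the length parameter we must sum over all of its values.

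Fix $0<p\le\frac12$. On the event $\{X\in\A{4}_{X,j+1},\ T_X=t,\ K_X=k,\ S_{j+1}(X)\le p\}$, Lemma~\ref{l:A4Map} gives $\big(\tfrac34\big)^{2k}\prod_{i=1}^{k}S_j(X_{\ell_i})\exp(-3tL_j^{-4}/R)\le p$, that is,
\[
-\log\prod_{i=1}^{k}S_j(X_{\ell_i})\ \ge\ x_{k,t}:=\log\tfrac1p-2k\log\tfrac43-\tfrac{3t}{R}L_j^{-4}.
\]
Summing over $k\ge0$ and over the integers $t>\tfrac R2 L_j^{\alpha-1}$ allowed in $\A{4}_{X,j+1}$, and applying Lemma~\ref{l:totalSizeBound} with $t'=t$, $k'=k$, $x=(x_{k,t})_+$ (in the degenerate range $x_{k,t}\le0$ one simply discards the product constraint and uses the lemma with $x=0$), one obtains
\[
\P\big(X\in\A{4}_{X,j+1},\,S_{j+1}(X)\le p\big)\ \le\ \sum_{k\ge0}\ \sum_{t>\frac R2 L_j^{\alpha-1}}2L_j^{-\delta k/4}\exp\!\big(-(x_{k,t})_+m_{j+1}-\tfrac12 tL_j^{-4}\big).
\]

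Next, using $(x_{k,t})_+\ge x_{k,t}$ and substituting the definition of $x_{k,t}$, the right-hand side factorizes as
\[
2p^{m_{j+1}}\Big(\sum_{k\ge0}\big(L_j^{-\delta/4}(\tfrac43)^{2m_{j+1}}\big)^{k}\Big)\Big(\sum_{t>\frac R2 L_j^{\alpha-1}}\exp\!\big(-(\tfrac12-\tfrac{3m_{j+1}}{R})\,tL_j^{-4}\big)\Big).
\]
For $L_0$ large the first factor is a convergent geometric series, at most $2$, since $(\tfrac43)^{2m_{j+1}}$ is a fixed constant while $L_j^{\delta/4}\to\infty$. For the second factor the constraint $R>6(m+1)>6m_{j+1}$ makes $\tfrac12-\tfrac{3m_{j+1}}{R}>0$, so it is a geometric series whose leading term is $\exp\!\big(-\Omega(L_j^{\alpha-5})\big)$; since $\alpha-5>0$ this decays faster than any power of $L_{j+1}=L_j^{\alpha}$, so for $L_0$ large it is at most $\tfrac1{20}L_{j+1}^{-\beta}$. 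Multiplying the three factors gives $\P(\cdots)\le 2\cdot 2\cdot\tfrac1{20}\,p^{m_{j+1}}L_{j+1}^{-\beta}=\tfrac15 p^{m_{j+1}}L_{j+1}^{-\beta}$, which is the claim.

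The only delicate point — and the step I expect to be the crux — is the competition, in the exponent of the $t$-sum, between the length penalty $\exp(-3T_XL_j^{-4}/R)$ lost in Lemma~\ref{l:A4Map} and the decay rate $\tfrac12 L_j^{-4}$ in $T_X$ supplied by Lemma~\ref{l:totalSizeBound}: the former must be strictly dominated by the latter for the sum over lengths to converge, which is exactly why the parameter constraint $R>6(m+1)$ is imposed. Everything else is the routine geometric-series bookkeeping already performed for Cases~2 and~3.
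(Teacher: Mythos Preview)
Your proposal is correct and follows essentially the same approach as the paper: feed Lemma~\ref{l:A4Map} into Lemma~\ref{l:totalSizeBound}, sum over $t$ and $k$, and factor into two geometric series, the $t$-series converging precisely because $R>6(m+1)$ forces $\tfrac12-\tfrac{3m_{j+1}}{R}>0$. Your treatment is in fact slightly cleaner: you correctly sum over all $k\ge 0$ (the paper's displayed sum starts at $k_0$, which appears to be a slip carried over from Case~3, since $\A{4}_{X,j+1}$ imposes no lower bound on $K_X$), and you handle the degenerate case $x_{k,t}<0$ explicitly via $(x_{k,t})_+$.
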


\begin{proof}
We have that
\begin{align}
\mathbb{P}(X\in \A{4}_{X,j+1}, S_{j+1}(X)\leq p) &\leq \sum_{t=\frac{R L^{\alpha-1}_j}{2}+1}^\infty \sum_{k=k_0}^{\infty} \P\left[T_X=t, K_X=k,   \left(\frac{3}{4}\right)^{2k}\prod_{i=1}^{K_X}S_j(X_{\ell_i}) \exp(-3 t L_j^{-4} /R ) \leq p\right]\nonumber\\
&\leq \sum_{t=\frac{R L^{\alpha-1}_j}{2}+1}^\infty \sum_{k=k_0}^{\infty} 2\left(p  \left(\frac{4}{3}\right)^{2k}\exp(3 t L_j^{-4} /R )\right)^{m_{j+1}}L_j^{-\delta k/4} \exp\left(- \frac12 t L_j^{-4}\right)\nonumber\\
&\leq \frac15 p^{m_{j+1}} L_{j+1}^{-\beta}
\end{align}
where the first inequality holds  by Lemma~\ref{l:A4Map}, the second by Lemma~\ref{l:totalSizeBound} and the third holds for large enough $L_0$  since $3m_{j+1}/R<\frac12$ and so for large enough $L_0$, $(4/3)^{2(m+1)}L_j^{-\delta/4} \leq 1/2$ and
\[
\sum_{t=R L^{\alpha-1}_j/2+1}^\infty \exp\left(- t L_j^{-4} \left(\frac12-\frac{3m_{j+1}}{R} \right)\right) < \frac1{10} L_{j+1}^{-\beta}.
\]
\end{proof}

\subsection{Case 5}
It remains to deal with the case involving blocks with a large density of bad sub-blocks. Define the class of blocks $\A{5}_{X,j+1}$ is as
\[
\A{5}_{X,j+1} := \left\{X: K_X > \frac{L_j^{\alpha-1}+T_X}{10 R_j^+} \right\}.
\]

\begin{lemma}\label{l:A5Map}
For $X\in \A{5}_{X,j+1}$,
\[
S_{j+1}(X) \geq \exp(-2 T_X L_j^{-4})\left(\frac{3}{4}\right)^{2K_X} \prod_{i=1}^{K_X} S_j(X_{\ell_i})
\]
\end{lemma}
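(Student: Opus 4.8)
The plan is to mimic the strategy of Case~3 (Lemma~\ref{l:A3Map}) and Case~4 (Lemma~\ref{l:A4Map}), but now with the roles adjusted so that the length of $Y$ is forced to grow at a rate comparable to $T_X$ (as in Case~4), while there is no longer any upper bound on the number of bad sub-blocks $K_X$ beyond the trivial $K_X \leq L_j^{\alpha-1}+T_X$. Because $K_X$ is large, we can no longer rely on Proposition~\ref{p:assign}(ii) (which required $|B| \leq (t-2L_j^3)/(10R_j^+)$); instead, in the regime $K_X > (L_j^{\alpha-1}+T_X)/(10R_j^+)$ the relevant point is simply that the exponential penalty $\exp(-2T_X L_j^{-4})$ we are allowed to lose is already enormous, so we have a lot of room. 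First I would let $\ce(X)$ denote the event $\{W_Y = \lfloor 2T_X/R\rfloor,\ T_Y = W_Y\}$ exactly as in Lemma~\ref{l:A4Map}, so that $\P[\ce(X)] \geq \tfrac{9}{10} L_j^{-4}(1-L_j^{-4})^{\lfloor 2T_X/R\rfloor}$, and on this event $Y_{L_j^3+1},\dots,Y_{L_j^3+L_j^{\alpha-1}+T_Y}$ are uniform $j$-level blocks. Here $T_Y = \lfloor 2T_X/R\rfloor$, so $Y$ is comparatively short.

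Next I would produce an admissible assignment. Since we may now have to choose $\tau$ to be (close to) the identity map $\tau(i)=i$ — because $Y$ is short and $X$ may have bad sub-blocks densely packed — I would invoke the appropriate part of Proposition~\ref{p:assign} (or directly Lemma~\ref{l:admissibletoy} with $t'/t$ close to $1$, which is the case since $T_Y \approx 2T_X/R$ and the $L_j^{\alpha-1}$ leading terms match up to the slope window) to get a single admissible assignment $\Upsilon$ of $(I_1,I_2)$ with associated $\tau$, together with the sets $H_k^\tau$ of size $\leq L_j$ guaranteed by Lemma~\ref{l:ccsuccessstar}; in the identity case one takes $H_k^\tau = \{k-1,k,k+1\}$. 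Then, exactly as in Case~4, I would define for each bad sub-block (or neighbour) index $\ell_i$ the event $\cd_i$ (corner-to-corner, corner-to-side, or side-to-corner according to the good/bad status of $X_{\ell_i\pm1}$, together with the side-to-side events $X_k \ssc Y_{\tau(\ell_i)}$ for $k \in H_{\tau(\ell_i)}^\tau$), set $\cd = \bigcap_{i=1}^{K'_X}\cd_i$, and define the events $\cj_I,\cj_F,\cj_k$ and $\cj = \bigcap_k \cj_k$ ranging over $k \in [L_j^{\alpha-1}+2L_j^3+T_Y]\setminus \tau(B_X)$, asking that $Y_k$ be good and $X_{k'}\ssc Y_k$ for $k' \in H_k^\tau$.

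The estimate then proceeds in the familiar way. From the recursive estimates and conditional independence of the $\cd_i$'s given $X$ and $\ce(X)$,
\[
\P[\cd \mid X,\ce(X)] \geq \left(\tfrac34\right)^{2K_X}\prod_{i=1}^{K_X} S_j(X_{\ell_i}),
\]
while a union bound using estimate~\eqref{e:ssgg} (with $\beta > \alpha + \delta + 1$) and estimate~\eqref{xgood} over the at most $L_j^{\alpha-1}+2L_j^3+T_Y$ indices $k$, each involving at most $L_j$ side-to-side events, gives
\[
\P[\cj \mid X,\ce(X)] \geq \tfrac12 \wedge \left(1 - 2L_j^{-\delta}\right)^{L_j(L_j^{\alpha-1}+2L_j^3+T_Y)} \geq \tfrac14\exp\left(-2L_j^{1-\delta}(L_j^{\alpha-1}+2L_j^3+T_Y)\right).
\]
Since $\cd$ and $\cj$ are conditionally independent given $X,\ce(X)$, and since (by Lemma~\ref{l:ccsuccessstar}) their joint occurrence forces $X\cc Y$, multiplying these with the bound on $\P[\ce(X)]$ and absorbing all the subexponential-in-$L_j$ losses into the allowed factor $\exp(-2T_XL_j^{-4})$ — which is legitimate because in Case~5 we have $K_X > (L_j^{\alpha-1}+T_X)/(10R_j^+)$, so in particular $T_X$ and $K_X$ are comparable up to $L_j^{\alpha-1}$, and the constraints $\delta > 48$, $R > 6(m+1)$ give ample slack — yields
\[
S_{j+1}(X) = \P[X\cc Y \mid X] \geq \exp(-2T_XL_j^{-4})\left(\tfrac34\right)^{2K_X}\prod_{i=1}^{K_X} S_j(X_{\ell_i}),
\]
as claimed.

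The main obstacle I anticipate is the bookkeeping of the exponential losses: one must check that the product of $\P[\ce(X)] \sim L_j^{-4}(1-L_j^{-4})^{\lfloor 2T_X/R\rfloor}$, the factor $\tfrac14\exp(-2L_j^{1-\delta}(L_j^{\alpha-1}+2L_j^3+T_Y))$ from $\cj$, and any further polynomial-in-$L_j$ corrections, is all dominated by $\exp(-2T_XL_j^{-4})$ once $K_X > (L_j^{\alpha-1}+T_X)/(10R_j^+)$; the dangerous term is $(1-L_j^{-4})^{\lfloor 2T_X/R\rfloor} \approx \exp(-2T_XL_j^{-4}/R)$, and one needs $2/R < 2$ (clear) together with enough room to swallow the $L_j^{1-\delta}\cdot T_Y \lesssim L_j^{1-\delta}T_X$ term, which holds since $\delta > 5$. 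The secondary technical point is ensuring Proposition~\ref{p:assign} / Lemma~\ref{l:admissibletoy} genuinely applies with the chosen $T_Y$, i.e.\ that $t'/t = (L_j^{\alpha-1}+T_Y)/(L_j^{\alpha-1}+T_X)$ lies in the slope window $[(1-2^{-(j+7/2)})/R,\ R(1+2^{-(j+7/2)})]$, which it does since $T_Y = \lfloor 2T_X/R\rfloor$ makes the ratio close to $1$ when $T_X \gg L_j^{\alpha-1}$ and equal to essentially $1$ when $T_X$ is small.
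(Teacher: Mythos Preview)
Your plan has a genuine gap at the point where you construct the admissible assignment. You copy Case~4 and take $\ce(X)=\{W_Y=\lfloor 2T_X/R\rfloor,\ T_Y=W_Y\}$, so $|I_2|=L_j^{\alpha-1}+2L_j^3+\lfloor 2T_X/R\rfloor$. But in Case~5 the defining feature is that $K_X>(L_j^{\alpha-1}+T_X)/(10R_j^+)$, and there is \emph{no} upper bound on $K_X$ beyond the trivial one; in particular it can happen that almost every sub-block of $X$ is bad, so $|B^*|=K'_X$ is of order $L_j^{\alpha-1}+T_X$. With your choice of $T_Y$, when $T_X$ is large one has $|I_2^*|\approx L_j^{\alpha-1}+2T_X/R\ll |B^*|$, so there is not even an injection of $B^*$ into $I_2^*$, let alone an admissible assignment. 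Neither part of Proposition~\ref{p:assign} applies here (part~(i) needs $|B|\leq 3k_0$, part~(ii) needs $|B|\leq (t-2L_j^3)/(10R_j^+)$, and both fail by definition of $\A{5}$), and Lemma~\ref{l:admissibletoy} produces routes, not assignments. Your remark that ``$t'/t$ is close to $1$'' is also off: for large $T_X$ the ratio is close to $2/R$, not $1$, so the identity map is simply unavailable with your $\ce(X)$.

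The paper's remedy is exactly to change $\ce(X)$: take $W_Y=T_X$ (not $\lfloor 2T_X/R\rfloor$), so that $|I_1|=|I_2|$ and the identity $\tau(i)=i$ is trivially a level~$j$ admissible assignment regardless of how many bad sub-blocks $X$ has. This is the ``special case $t=t'$'' in Lemma~\ref{l:ccsuccessstar}, which gives $H_k^\tau=\{k-1,k,k+1\}$. The price is that $\P[\ce(X)]\geq \tfrac{9}{10}L_j^{-4}(1-L_j^{-4})^{T_X}\approx \tfrac{9}{10}L_j^{-4}\exp(-T_XL_j^{-4})$, and the $\cj$ bound becomes $\tfrac14\exp(-2L_j^{-\delta}(L_j^{\alpha-1}+T_X+2L_j^3))$; both of these, together with the polynomial factors, are absorbed by the target $\exp(-2T_XL_j^{-4})$ (using that in Case~5 one has $K_X$, hence also $L_j^{\alpha-1}+T_X$, bounded below so that the leftover constants can be swallowed). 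Once you make this single change to $\ce(X)$ and use the identity assignment, the rest of your outline (define $\cd$, $\cj$ as in Case~4, use conditional independence and Lemma~\ref{l:ccsuccessstar}) goes through verbatim.
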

\begin{proof}
The proof is a minor modification of the proof of Lemma~\ref{l:A4Map}.  We take $\ce(X)$ to denote the event
\[
\ce(X)=\{W_Y = T_X, T_Y=W_Y \}.
\]
and get a bound of
\begin{equation}\label{e:A5MapCEbound}
\P[\ce(X)]\geq \frac{9}{10} L_j^{-4} (1-L_j^{-4})^{T_X}.
\end{equation}
We consider the admissible assignment $\Upsilon$ given by $\tau(i)=i$ for $i\in B^*$. It follows from Lemma \ref{l:ccsuccessstar} that in this case we can define $H_k^{\tau}={k-1,k,k+1}$. We define $\cd$ and $\cj$ as before. The new bound for $\cj$ becomes
\begin{equation}\label{e:A5MapB}
\P[\cj \mid X,\ce(X)] \geq \frac34 \left(1- 2L_j^{-\delta}\right)^{L_j^{\alpha-1} + T_X + 2L_j^3} \geq \frac14\exp\left(-2L_j^{-\delta}(L_j^{\alpha-1} + T_X + 2L_j^3)\right).
\end{equation}
We get the result proceeding as in the proof of Lemma~\ref{l:A4Map}.
\end{proof}

\begin{lemma}\label{l:A5Bound}
When $0<p\leq \frac12$,
\[
\mathbb{P}(X\in \A{5}_{X,j+1}, S_{j+1}(X)\leq p)\leq \frac15 p^{m_{j+1}} L_{j+1}^{-\beta}
\]
\end{lemma}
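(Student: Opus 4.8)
The plan is to follow exactly the template of Cases 3 and 4 (Lemmas~\ref{l:A3Bound} and~\ref{l:A4Bound}): feed the deterministic lower bound of Lemma~\ref{l:A5Map} into the tail bound of Lemma~\ref{l:totalSizeBound}, and then sum over the full range of lengths $T_X=t$ and bad-subblock counts $K_X=k$ that are compatible with $\A{5}_{X,j+1}$. First I would decompose
\[
\mathbb{P}(X\in \A{5}_{X,j+1},\, S_{j+1}(X)\leq p)\ \leq\ \sum_{t\geq 0}\ \sum_{k> (L_j^{\alpha-1}+t)/(10R_j^+)}\ \mathbb{P}\big[T_X=t,\, K_X=k,\, S_{j+1}(X)\leq p\big],
\]
where the restriction on $k$ is the defining inequality of $\A{5}_{X,j+1}$. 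By Lemma~\ref{l:A5Map}, on $\{T_X=t,K_X=k\}$ the event $\{S_{j+1}(X)\leq p\}$ forces
\[
-\log\prod_{i=1}^{K_X}S_j(X_{\ell_i})\ \geq\ x^*:=\log(1/p)-2tL_j^{-4}-2k\log(4/3).
\]
Applying Lemma~\ref{l:totalSizeBound} with $t'=t$, $k'=k$, and $x=\max\{0,x^*\}$, and using $e^{-\max\{0,x^*\}m_{j+1}}\leq e^{-x^*m_{j+1}}$ (which in particular handles the vacuous case $x^*\leq 0$), one obtains
\[
\mathbb{P}\big[T_X=t,\, K_X=k,\, S_{j+1}(X)\leq p\big]\ \leq\ 2\,p^{m_{j+1}}\,\rho^{\,k}\,\exp\!\big((2m_{j+1}-\tfrac12)\,t\,L_j^{-4}\big),\qquad \rho:=L_j^{-\delta/4}(4/3)^{2m_{j+1}},
\]
since $e^{-x^*m_{j+1}}=p^{m_{j+1}}e^{2m_{j+1}tL_j^{-4}}(4/3)^{2km_{j+1}}$ and $L_j^{-\delta k/4}(4/3)^{2km_{j+1}}=\rho^k$.

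Next I would carry out the two geometric sums. Because $m_{j+1}$ is a bounded parameter and $L_j$ is large, $\rho<1/2$, so summing over $k> (L_j^{\alpha-1}+t)/(10R_j^+)$ contributes at most $2\rho^{(L_j^{\alpha-1}+t)/(10R_j^+)}=2\,\rho^{L_j^{\alpha-1}/(10R_j^+)}\rho^{t/(10R_j^+)}$. The remaining sum $\sum_{t\geq 0}\exp((2m_{j+1}-\tfrac12)tL_j^{-4})\,\rho^{t/(10R_j^+)}$ is geometric with ratio $\exp((2m_{j+1}-\tfrac12)L_j^{-4})\rho^{1/(10R_j^+)}$; since $R_j^+$ is bounded while $\log(1/\rho)$ is of order $\delta\log L_j$ and $L_j^{-4}\to 0$, this ratio is $\leq 1/2$ for $L_0$ large, so the sum is at most $2$. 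Collecting the factors leaves
\[
\mathbb{P}(X\in \A{5}_{X,j+1},\, S_{j+1}(X)\leq p)\ \leq\ 8\,p^{m_{j+1}}\,\rho^{\,L_j^{\alpha-1}/(10R_j^+)}.
\]
Finally, $\rho^{L_j^{\alpha-1}/(10R_j^+)}$ decays super-polynomially in $L_j$ as $L_0\to\infty$ (the exponent $L_j^{\alpha-1}/(10R_j^+)$ tends to infinity and $\log(1/\rho)\geq 1$), so for $L_0$ large it is at most $\tfrac1{40}L_j^{-\alpha\beta}=\tfrac1{40}L_{j+1}^{-\beta}$, which gives the claimed bound $\tfrac15 p^{m_{j+1}}L_{j+1}^{-\beta}$.

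The only delicate point is the bookkeeping in the $t$-sum: each extra unit of length inflates the estimate by a factor $\exp((2m_{j+1}-\tfrac12)L_j^{-4})$ coming from the $e^{2m_{j+1}tL_j^{-4}}$ term, but the defining property of $\A{5}_{X,j+1}$ forces roughly $t/(10R_j^+)$ extra bad sub-blocks, each contributing a decay factor $\rho<1/2$; one must verify that $\tfrac1{10R_j^+}\log(1/\rho)$ dominates $(2m_{j+1}-\tfrac12)L_j^{-4}$, which it does comfortably once $L_0$ is large because the former grows like $\log L_j$ while the latter vanishes. Everything else is routine summation of geometric series, in complete analogy with Cases 3 and 4.
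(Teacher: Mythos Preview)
Your proposal is correct and follows essentially the same approach as the paper: decompose over $T_X=t$ and $K_X=k$ with $k>(L_j^{\alpha-1}+t)/(10R_j^+)$, convert $\{S_{j+1}(X)\le p\}$ via Lemma~\ref{l:A5Map} into a lower bound on $-\log\prod S_j(X_{\ell_i})$, feed this into Lemma~\ref{l:totalSizeBound}, and then sum the two resulting geometric series. The paper makes exactly the same observation you flag as ``delicate'' --- that the decay $\rho^{t/(10R_j^+)}$ beats the growth $\exp(2m_{j+1}tL_j^{-4})$ --- and uses $\rho^{L_j^{\alpha-1}/(10R_j^+)}$ to absorb $L_{j+1}^{-\beta}$; the only cosmetic difference is that the paper drops the helpful factor $\exp(-tL_j^{-4}/2)$ from Lemma~\ref{l:totalSizeBound} whereas you retain it.
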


\begin{proof}
First note that since $\alpha>4$, $$L_j^{-\frac{\delta}{50 R_j^+}}=L_0^{-\frac{\delta \alpha^j}{50 R_j^+}}\to 0$$ as $j\to\infty$.  Hence for large enough $L_0$,
\begin{equation}\label{e:A5BoundA}
\sum_{t=0}^\infty\left(\exp(2 m_{j+1} L_j^{-4}) L_j^{-\frac{\delta}{50 R_j^+}} \right)^t < 2.
\end{equation}
We have that
\begin{eqnarray}
\mathbb{P}(X\in \A{5}_{X,j+1}, S_{j+1}(X)\leq p)
&\leq & \sum_{t=0}^\infty\sum_{k=\frac{L_j^{\alpha-1}+t}{10 R_j^+}}^{\infty} \P\left[T_X=t, K_X=k, \left(\frac{3}{4}\right)^{2k}\prod_{i=1}^{K_X}S_j(X_{\ell_i}) \exp(-2 t L_j^{-4}) \leq p\right]\nonumber\\
&\leq & p^{m_{j+1}} \sum_{t=0}^\infty \sum_{k=\frac{L_j^{\alpha-1}+t}{10 R_j^+}}^{\infty} 2\left( \exp(2 m_{j+1} t L_j^{-4})\right)\left( \left(\frac{16}{9}\right)^{m_j+1}L_j^{-\frac{\delta}{4}}\right)^{k}\nonumber\\
&\leq & p^{m_{j+1}} \sum_{t=0}^\infty 4 \left( \exp(2 m_{j+1} t L_j^{-4})\right)L_j^{-\frac{L_j^{\alpha-1}+t}{50 R_j^+}}\nonumber\\
&\leq & \frac15 p^{m_{j+1}} L_{j+1}^{-\beta}
\end{eqnarray}
where the first inequality holds be by Lemma~\ref{l:A5Map}, the second by Lemma~\ref{l:totalSizeBound} and the third follows since $L_0$ is sufficiently large and the last one by~\eqref{e:A5BoundA} and the fact that
\[
L_j^{-\frac{\delta L_j^{\alpha-1}}{50 R_j^+}} \leq \frac1{40} L_{j+1}^{-\beta},
\]
for large enough $L_0$.
\end{proof}

\subsection{Proof of Theorem~\ref{t:tail}}
Putting together all the five cases we now prove Theorem \ref{t:tail}.

\begin{proof}[Proof of Theorem~\ref{t:tail}]
The case of $\frac12\leq p \leq 1-L_{j+1}^{-1}$ is established in Lemma~\ref{l:A1final}.  By Lemma~\ref{l:A1Map} we have that $S_{j+1}(X) \geq \frac12$ for all $X\in \A{1}_{X,j+1}$.  Hence we need only consider $0<p<\frac12$ and cases 2 to 5.  By Lemmas~\ref{l:A2Bound}, \ref{l:A3Bound}, \ref{l:A4Bound} and~\ref{l:A5Bound} then
\begin{align*}
\mathbb{P}(S_{j+1}(X)\leq p) \leq \sum_{l=2}^5 \mathbb{P}(X\in \A{l}_{X,j+1}, S_{j+1}(X)\leq p)\leq  p^{m_{j+1}} L_{j+1}^{-\beta}.
\end{align*}
The bound for $S_{j+1}^{\mathbb{Y}}$ follows similarly.
\end{proof}

\section{Side to Corner and Corner to Side estimates}\label{s:scestimate}
The aim of this section is to show that for a large class of $\mathbb{X}$- blocks (resp. $\mathbb{Y}$-blocks), $\P(X\cs Y\mid X)$ and $\P(X\scc Y\mid X)$ (resp. $\P(X\cs Y\mid Y)$ and $\P(X\scc Y\mid Y)$) is large. We shall state and prove the result only for $\mathbb{X}$-blocks.

Here we need to consider a different class of blocks where the blocks have few bad sub-blocks whose corner to corner connection probabilities are not too small, where the excess number of subblocks is of smaller order than the typical length and none of the subblocks, and their chunks contain too many level $0$ blocks.  This case holds with high probability. Let $X$ be a level $(j+1)$ $X$-block constructed out of the independent sequence of $j$ level blocks $X_1,X_2,\ldots $ where the first $L_j^3$ ones are conditioned to be good.

For $i=1,2,\ldots , L_j^{\alpha-1}+2L_j^{3}+T_X$, let $\mathcal{G}_i$ denote the event that all level $j-1$ subblocks contained in $X_i$ contains at most $3L_{j-1}$ level $0$ blocks, and $X_i$ contains at most $3L_j$ level $0$ blocks. Let $\mathcal{G}_{X}$ denote the event that for all good blocks $X_i$ contained in $X$, $\mathcal{G}_{i}$ holds. We define $\A{*}_{X,j+1}$ to be the set of $(j+1)$ level blocks such that
\[
\A{*}_{X,j+1} := \left\{X:T_X \leq L_j^{5}-2L_j^3, K_X\leq k_0,  \prod_{i=1}^{K_X}S_j(X_{\ell_i}) > L_j^{-1/3}, \mathcal{G}_{X}\right\}.
\]

It follows from Theorem \ref{lengthestimate} that $\P[\mathcal{G}_{X}^{c}]$ is exponentially small in $L_{j-1}$ and hence we shall be able to safely ignore this conditioning while calculating probability estimates since $L_0$ is sufficiently large.
%
%
%
%
%

%
%
%

Similarly to Lemma \ref{l:A1Size} it can be proved that
\begin{equation}
\label{e:ASSize}
\P[X \in \A{*}_{X,j+1}]\geq 1-L_{j+1}^{-3\beta}.
\end{equation}

We have the following proposition.

\begin{proposition}\label{l:SCMap}
We have that for all $X\in\A{*}_{X,j+1}$,
\begin{equation}\label{e:SCMapE1}
\P[X \cs Y\mid Y\in \A{*}_{Y,j+1}, X] \geq \frac{9}{10} + 2^{-(j+15/4)},~\P[X \scc Y\mid Y\in \A{1}_{Y,j+1}, X] \geq \frac{9}{10} + 2^{-(j+15/4)}.
\end{equation}
\end{proposition}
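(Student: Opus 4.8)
The plan is to mimic the structure of the proof of Lemma~\ref{l:A1Map} (Case~1 of the corner-to-corner estimate), replacing the corner-to-corner target event by the corner-to-side event $X\cs Y$ (resp.\ $X\scc Y$) and invoking Lemma~\ref{l:csssconnect} together with Lemma~\ref{l:ccsssstar} in place of Lemma~\ref{l:ccconnect}, Lemma~\ref{l:ccconnect2} and Lemma~\ref{l:ccsuccessstar}. First I would fix $X\in\A{*}_{X,j+1}$ and condition on the event $\mathcal{F}=\{Y\in\A{*}_{Y,j+1},\ K_Y=0\}$; by~\eqref{e:ASSize} and Lemma~\ref{l:totalSizeBound} this event has probability at least $1-L_{j+1}^{-3\beta}-2L_j^{-\delta/4}$ conditional on $X$, and on $\mathcal{F}$ the relevant $j$-level subblocks of $Y$ are i.i.d.\ uniform $\mathbb{Y}$-blocks, so all recursive estimates apply to them. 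Note that membership of $X$ in $\A{*}_{X,j+1}$ guarantees, via the event $\mathcal{G}_X$, that all good subblocks of $X$ (and their level-$(j-1)$ subblocks) are short enough for the length hypotheses in Lemmas~\ref{l:csssconnect}, \ref{l:csscconnect} and \ref{l:cssssuccess} to hold; the short length of the (at most $k_0$) bad subblocks is not needed there since the corresponding fraction $10k_0L_j^{-1}$ of discarded subblocks absorbs them.

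Next I would produce, via Proposition~\ref{p:assign}(i), a family of $L_j^2$ level-$j$ admissible assignments $\Upsilon_h$ of the relevant intervals w.r.t.\ $(B^*,\emptyset)$ where $B^*$ is the set of locations of bad subblocks of $X$ together with their neighbours, $|B^*|\le 3k_0$; passing to a subfamily $\mathcal H$ of size $L_j$ as in Lemma~\ref{l:A1Map} I ensure the images $\tau_h(\ell_i)$ are all distinct. For a fixed exit chunk $(k,n_Y)$ (and an index $r$ in the corresponding $T_k^*$), I would use Lemma~\ref{l:ccsssstar} to obtain, for each $h\in\mathcal H$, a corner-to-side admissible connection $\mathcal P$ in the appropriate $A^s\otimes\mathcal B^s$ avoiding a small exceptional set $A'$ (the bad-block positions that are too close to block corners), and then define, exactly as in Lemma~\ref{l:A1Map}, events $\cd_{h,i}$ ($\cc$, $\cs$ or $\scc$ connection across $X_{\ell_i}\times Y_{\tau_h(\ell_i)}$ depending on whether $X_{\ell_i}$ and its neighbours are good), $\cd_h=\cap_i\cd_{h,i}$, a ``side-to-side everywhere else'' event $\mathcal S$, and the boundary events $\mathcal C_1,\mathcal C_2$. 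The key inputs are~\eqref{e:ccgg}--\eqref{e:ssgg} (consequences of the recursive estimates for conditioning on good $Y$-blocks) and the fact $S_j(X_{\ell_i})\ge L_j^{-1/3}$ on $\prod_i S_j(X_{\ell_i})>L_j^{-1/3}$ with $K_X\le k_0$, giving $\P[\cd_h\mid\mathcal F]\ge c(k_0)L_j^{-1/3}$ and hence $\P[\cup_{h\in\mathcal H}\cd_h\mid\mathcal F]\ge 1-L_{j+1}^{-3\beta}$. By Lemma~\ref{l:csssconnect}(i) and Lemma~\ref{l:csscconnect}, on $\cup_h\cd_h\cap\mathcal S\cap\mathcal C_1$ we get $\tilde X\cso Y$ for all $r$ in a large fraction of each exit chunk, which is exactly what $X\cs Y$ requires; the $\scc$ statement is symmetric.

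Finally I would union-bound the failure probabilities: $\P[\neg\mathcal S\mid\mathcal F]\le (1+\tfrac R2)^2 L_j^{2\alpha-2}L_j^{-2\beta}\le L_j^{-\beta}$, $\P[\cup_h\cd_h \text{ fails}]\le L_{j+1}^{-3\beta}$, $\P[\mathcal C_1 \text{ or }\mathcal C_2 \text{ fails}]\le 2(\tfrac1{10}-2^{-(j+7/2)})$, and the conditioning cost $\P[\neg\mathcal F\mid X]\le L_{j+1}^{-3\beta}+2L_j^{-\delta/4}$, plus a further factor for the finitely many (polynomially in $L_j$) entry-exit chunk choices over which the definition of $X\cs Y$ quantifies, each handled identically and absorbed by the $L_{j+1}^{-3\beta}$ slack. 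Summing, the probability that $X\cs Y$ fails, given $Y\in\A{*}_{Y,j+1}$ and $X$, is at most $\tfrac1{10}-2^{-(j+15/4)}$ for $L_0$ large, which gives the claimed bound $\tfrac9{10}+2^{-(j+15/4)}$. The main obstacle I anticipate is bookkeeping the slope conditions: one must check that the admissible assignments and admissible connections produced by Proposition~\ref{p:assign} and Lemma~\ref{l:admissibletoy} respect the slightly tighter slope tolerances $2^{-(j+7/2)}$ and $2^{-(j+4)}$ appearing in the definitions of entry/exit chunks and of $\cs$, so that the glued route genuinely witnesses a corner-to-side path of admissible slope reaching the prescribed exit chunk — this is where the geometric lemmas of Section~\ref{s:length}'s preceding section do the real work, and where the constraint $R>6(m+1)$ (and $\alpha>6$) is used to keep the induced slopes within tolerance.
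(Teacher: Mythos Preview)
Your approach is essentially the same as the paper's: condition on $\mathcal F=\{Y\in\A{*}_{Y,j+1},\,K_Y=0\}$, fix an exit chunk and an index $t$ in it, build the family of admissible assignments via Proposition~\ref{p:assign}(i), define the events $\cd_h$ and $\mathcal S$, and invoke Lemma~\ref{l:csscconnect} together with Lemma~\ref{l:ccsssstar} to deduce $\tilde X\cso Y$ on $\{X_1\cs Y_1\}\cap\mathcal S\cap\bigl(\cup_h\cd_h\bigr)$. The paper packages the per-$t$ argument as a separate Lemma~\ref{l:scmap1} and then union-bounds over all $t$ and all exit chunks, exactly as you outline.

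There is, however, one genuine slip in your final union bound. You write $\P[\mathcal C_1\ \text{or}\ \mathcal C_2\ \text{fails}]\le 2\bigl(\tfrac{1}{10}-2^{-(j+7/2)}\bigr)$ and include this in the failure probability for $X\cs Y$. But you yourself noted earlier that on $\cup_h\cd_h\cap\mathcal S\cap\mathcal C_1$ one already obtains $\tilde X\cso Y$; no terminal event $\mathcal C_2$ is needed for the corner-to-side direction, because the route ends on a \emph{side}, not at a corner. With the factor $2$ the main term becomes $\tfrac{2}{10}-2^{-(j+5/2)}$, which only yields a lower bound around $\tfrac{8}{10}$ and misses the target $\tfrac{9}{10}+2^{-(j+15/4)}$. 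The correct accounting, as in the paper, uses only the single boundary event $\{X_1\cs Y_1\}$ (with conditional probability $\ge \tfrac{9}{10}+2^{-(j+7/2)}$ by~\eqref{e:csgg}), giving the dominant failure contribution $\tfrac{1}{10}-2^{-(j+7/2)}$; after absorbing the $L_j^{-\beta}$, $L_{j+1}^{-3\beta}$, $L_j^{-\delta/4}$ terms and the polynomial-in-$L_j$ union bound over exit chunks, this is at most $\tfrac{1}{10}-2^{-(j+15/4)}$ for $L_0$ large. Symmetrically, only $\mathcal C_2=\{X_{\text{last}}\scc Y_{\text{last}}\}$ is needed for the $\scc$ bound. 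Drop the extraneous boundary event in each direction and your argument goes through.
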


We shall only prove the corner to side estimate, the other one follows by symmetry. Suppose that $X \in\A{*}_{X,j+1} $ with length  $L_j^{\alpha -1}+2L_j^3+T_X$, define $B_X$, $B^{*}$,$K'_{X}$, $T_Y$ and $K_Y$ as in the proof of Lemma \ref{l:A1Map}.
We condition on $Y\in\A{*}_{Y,j+1}$ having no bad subblocks. Denote this conditioning by
\begin{align*}
\mathcal{F}=\{ Y\in \A{*}_{Y,j+1},T_Y,K_Y=0\}.
\end{align*}

Let $n_X$ and $n_{Y}$ denote the number of chunks in $X$ and $Y$ respectively. We first prove the following lemma.
\begin{lemma}
\label{l:scmap1}
Consider an exit chunk $(k,n_{Y})$ (resp. $(n_X,k)$) in $\mathcal{E}_{out}(X,Y)$. Fix $t\in [L_{j}^{\alpha-1}+2L_{j}^3+T_X]$ contained in $C_k^{X}$ such that $[t,t-L_{j}^3]\cap B_X=\emptyset$ (resp. fix $t'\in [L_{j}^{\alpha-1}+2L_{j}^3+T_Y]$ contained in $C_k^{Y}$). Consider $\tilde{X}=(X_1,\ldots, X_t)$ (or $\tilde{Y}=(Y_1,\ldots, Y_{t'})$). Then there exists an event $\mathcal{S}_t$ with $\P[S_t\mid \cf]\geq 1-L_{j}^{-\alpha}$  and on $S_t$, $\cf$ and $\{X_1\cs Y_1\}$ we have
$\tilde{X}\cso Y$ (resp. $\mathcal{S}_{t'}$ with $\P[S_{t'}\mid \cf]\geq 1-L_{j}^{-\alpha}$  and on $S_{t'}$, $\cf$ and $\{X_1\cs Y_1\}$ we have $X\cso \tilde{Y}$).
\end{lemma}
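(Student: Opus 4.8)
The plan is to adapt the proof of Lemma~\ref{l:A1Map}, replacing the target ``corner to corner path in $X\times Y$'' by ``corner to side path in $\tilde X\times Y$''. I treat the case of an exit chunk $(k,n_Y)$ on the top; the case $(n_X,k)$ on the right follows by exchanging the roles of $X$ and $Y$. Write $B_{\tilde X}=B_X\cap[1,t]$ for the bad sub-blocks of $\tilde X$. Since $X$ begins with $L_j^3$ good sub-blocks and $[t-L_j^3,t]\cap B_X=\emptyset$, every element of $B_{\tilde X}$ lies in $(L_j^3,t-L_j^3)$, and $X_t$ together with its $L_j^3$ predecessors is good; thus the segment of $\tilde X$ to the right of its last bad block is all good and has at least $L_j^3$ sub-blocks. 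Let $B^*$ be $B_{\tilde X}$ together with the neighbours of its elements, so $|B^*|\le 3k_0$ on $\A{*}_{X,j+1}$. Put $I_1=[1,t]$, $I_2=[L_j^{\alpha-1}+2L_j^3+T_Y]$; using that $(k,n_Y)$ is an exit chunk and that sub-block lengths are controlled on $\mathcal{G}_X\cap\mathcal{G}_Y$, the ratio $|I_2|/|I_1|$ satisfies the slope hypothesis of Proposition~\ref{p:assign}(i), so we obtain $L_j^2$ level-$j$ admissible assignments $\Upsilon_h$ of $(I_1,I_2)$ with respect to $(B^*,\emptyset)$, with $\tau_h(\ell)=\tau_1(\ell)+h-1$ for $\ell\in B^*$. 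As in Lemma~\ref{l:A1Map} we pass to $\mathcal{H}\subset[L_j^2]$ with $|\mathcal{H}|=L_j$ so that all positions $\tau_h(\ell)$, $h\in\mathcal{H}$, $\ell\in B^*$, are distinct.

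Condition on $\cf=\{Y\in\A{*}_{Y,j+1},\,T_Y,\,K_Y=0\}$, under which every sub-block of $Y$ is a uniform good $j$-block and the blocks $Y_{\tau_h(\ell_i)}$ are mutually independent. For $h\in\mathcal{H}$ and $\ell_i\in B^*$ define $\cd_{h,i}$ exactly as in Lemma~\ref{l:A1Map} --- namely $\{X_{\ell_i}\cc Y_{\tau_h(\ell_i)}\}$ if $X_{\ell_i}$ is bad or has no good neighbour, $\{X_{\ell_i}\cs Y_{\tau_h(\ell_i)}\}$ if $X_{\ell_i},X_{\ell_i+1}\in G_j^{\mathbb{X}}$, and $\{X_{\ell_i}\scc Y_{\tau_h(\ell_i)}\}$ if $X_{\ell_i},X_{\ell_i-1}\in G_j^{\mathbb{X}}$ --- and let $\cd_h=\bigcap_i\cd_{h,i}$. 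Let $\mathcal{S}$ be the event that $X_{k}\ssc Y_{k'}$ for every good sub-block $X_k$ of $\tilde X$ and every sub-block $Y_{k'}$ of $Y$, and set $\mathcal{S}_t=\big(\bigcup_{h\in\mathcal{H}}\cd_h\big)\cap\mathcal{S}$. The geometric point is that on $\mathcal{S}_t$, given $\cf$ and $\{X_1\cs Y_1\}$, one fixes $h\in\mathcal{H}$ with $\cd_h$ holding and decomposes $\tilde X\times Y$ at the bad sub-blocks of $\tilde X$ via $\tau_h$: Lemmas~\ref{l:ccsuccessstar} and~\ref{l:ccconnect2} produce corner to corner crossings $\tilde X_{(s)}\cc\tilde Y_{(s)}$ of the interior segments, Lemmas~\ref{l:ccsssstar} and~\ref{l:cssssuccess} produce a corner to side admissible connection in the final segment (whose last sub-blocks are the good blocks $X_t$ and the last sub-block of $Y$, so that the length and chunk-count hypotheses of those lemmas hold by $\mathcal{G}_X,\mathcal{G}_Y$), and Lemma~\ref{l:csscconnect}(i) glues these together with the $\cd_{h,i}$-crossings of the bad blocks to yield $\tilde X\cso Y$. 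The essential structural observation (already present in Lemma~\ref{l:A1Map}) is that, once the neighbours of the bad blocks are placed in $B^*$, every $\cs$ or $\scc$ transition is pinned to a sub-block of $B^*$ (supplied by $\cd_h$) or to $X_1$ (supplied by the hypothesis $\{X_1\cs Y_1\}$), and all remaining good sub-blocks are crossed only side-to-side; so no fixed constant-probability event must be forced.

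It remains to check $\P[\mathcal{S}_t\mid\cf]\ge 1-L_j^{-\alpha}$. Since $\prod_i S_j(X_{\ell_i})>L_j^{-1/3}$ on $\A{*}_{X,j+1}$ and $|B^*|\le 3k_0$, the recursive estimates give $\P[\cd_h\mid\cf]\ge c_{k_0}L_j^{-1/3}$ for a constant $c_{k_0}>0$, whence $\P[\bigcup_{h\in\mathcal{H}}\cd_h\mid\cf]\ge 1-(1-c_{k_0}L_j^{-1/3})^{L_j}\ge 1-L_{j+1}^{-3\beta}$; and a union bound over the at most $(2L_j^{\alpha-1})^2$ relevant pairs, via Lemma~\ref{l:consequences}(iii), gives $\P[\neg\mathcal{S}\mid\cf]\le 4L_j^{2\alpha-2-\beta}$. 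Since $\beta>\alpha(\delta+1)$, both quantities are $o(L_j^{-\alpha})$, so $\P[\mathcal{S}_t\mid\cf]\ge 1-L_j^{-\alpha}$ for $L_0$ large, as claimed. I expect the genuinely fiddly parts to be: (i) verifying that the slope hypothesis of Proposition~\ref{p:assign}(i) for the truncated pair $(\tilde X,Y)$ really does follow from the exit-chunk slope condition together with the length control on $\mathcal{G}_X\cap\mathcal{G}_Y$, where the slack between the successive tolerances $2^{-(j+4)}$, $2^{-(j+7/2)}$ and $2^{-(j+3)}$ must absorb the $O(L_j^{-1})$-type errors coming from passing between chunk counts and level-$0$ lengths; and (ii) the bookkeeping of the segment decomposition, namely checking that with the neighbours of bad blocks inside $B^*$ the interior segments are all-good and meet the hypotheses of Lemmas~\ref{l:ccsuccessstar} and~\ref{l:cssssuccess}, and that the final segment genuinely terminates at $X_t$ and at the last sub-block of $Y$ so that $\tilde X\cso Y$ is the conclusion delivered by Lemma~\ref{l:csscconnect}(i).
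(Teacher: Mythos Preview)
Your proof is correct and follows essentially the same approach as the paper: you define $\mathcal{S}_t=\big(\bigcup_{h\in\mathcal{H}}\cd_h\big)\cap\mathcal{S}$ with the same events $\cd_{h,i}$ and $\mathcal{S}$, obtain the same probability bounds, and invoke the same geometric lemmas (the paper cites Lemmas~\ref{l:csscconnect} and~\ref{l:ccsssstar}, while you spell out the intermediate steps via \ref{l:ccsuccessstar}, \ref{l:ccconnect2}, \ref{l:cssssuccess} as well). Your identification of the fiddly points (i) and (ii) is accurate and in fact more explicit than the paper, which simply asserts that the slope condition for Proposition~\ref{p:assign}(i) follows from the exit-chunk condition together with $[t-L_j^3,t]\cap B_X=\emptyset$.
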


%
\begin{proof}
We shall only prove the first case, the other case follows by symmetry.
Set $I_1=[t]$, $I_2=[L_j^{\alpha-1}+2L_j^{3}+T_Y]$. Also define $B_{\tilde{X}}$ and $B^*$ as in the proof of Lemma \ref{l:A1Map}. The slope condition in the definition of $\mathcal{E}_{out}(X,Y)$, and the fact that $B_X$ is disjoint with $[t-L_j^3,t]$ implies that by Proposition~\ref{p:assign} we can find $L_j^2$ admissible generalized mappings $\Upsilon_h$ of $(I_1,I_2)$ with respect to $(B^*,\emptyset)$  with associated $\tau_h$ for $1\leq h \leq L_j^2$ as in the proof of Lemma \ref{l:A1Map}.
As in there, we construct a subset $\mathcal{H}\subset [L_j^2]$ with $|\mathcal{H}|= L_j<\lfloor L_j^2/ 3 k_0\rfloor$ so that for all $i_1\neq i_2$ and $h_1,h_2\in\mathcal{H}$ we have that $\tau_{h_1}(\ell_{i_1})\neq \tau_{h_2}(\ell_{i_2})$.


For $h\in \mathcal{H},i\in B^*$, define the events $\cd_{h,i}$ similarly as in the proof of Lemma \ref{l:A1Map}. Set
%

$$\cd_h=\bigcap _{i=1}^{K'_X}\cd^k_{h,i} ~\text{and}~
\cd=\bigcup _{h\in \ch}\cd^k_{h}.$$

Further, $\mathcal{S}$ denote the event
\[
\mathcal{S}=\left\{X_{k}\ssc Y_{k'} \forall k\in [t]\setminus \{\ell_1,\ldots ,\ell_{K'_X}\}, \forall k' \in [L_j^{\alpha -1}+2L_j^3+T_Y]  \right\}.
\]


Same arguments as in the proof of yields
\begin{equation}\label{e:ASMapC}
\P[\cd \mid \cf] \geq  1-  L_{j+1}^{-3\beta}
\end{equation}
and
\begin{equation}\label{e:ASMapC2}
\P[\neg \mathcal{S} \mid \cf]\leq 4L_{j}^{2\alpha -2}L_j^{-2\beta}\leq L_j^{-\beta}.
\end{equation}

Now it follows from Lemma \ref{l:csscconnect} and Lemma \ref{l:ccsssstar}, that on $\{X_1\cs Y_1\}$, $\mathcal{S}, \cd$ and $\cf$, we have $\tilde{X}\cso Y$. The proof of the Lemma is completed by setting $\mathcal{S}_t=\mathcal{S}\cap \cd$.
\end{proof}
%

Now we are ready to prove Proposition \ref{l:SCMap}.

\begin{proof}[Proof of Proposition \ref{l:SCMap}]
Fix an exit chunk $(k,n_Y)$ or $(n_X,k')$ in $\mathcal{E}_{out}(X,Y)$. In the former case set $T_k$ to be the set of all blocks $X_t$ contained in $C_k^{X}$  such that $[t,t-L_j^3]\cap B_X=\emptyset$, in the later case set $T'_{k'}$ to be the set of all blocks $Y_{t'}$ conttained in $C_{k'}^{Y}$. Notice that the number of blocks contained in $T_k$ is at least $(1-2k_0L_j^{-1})$ fraction of the total number of blocks contained in $C_k^{X}$. For $t\in T_k$ (resp. $t'\in T'_{k'}$), let $S_t$ (resp. $S_{t'}$) be the event given by Lemma \ref{l:scmap1}
Hence it follows from Lemma \ref{l:csssconnect}(i), that on
$\{X_1 \cs Y_1\}\bigcap  \cap_{k,T_k} \mathcal{S}_{t} \bigcap \cap_{k', T_{k'}} \mathcal{S}_{t'}$, we have $X\cs Y$. Taking a union bound and using Lemma \ref{l:scmap1} and also using the recursive lower bound on $\P[X_1\cs Y_1]$ yields,

$$\P[X\cs Y\mid \cf, X]\geq \frac{9}{10}+ 2^{-(j+31/8)}.$$
The proof can now be completed by removing the conditioning on $T_Y$ and proceeding as in Lemma \ref{l:A1Map}.
\end{proof}

\section{Side to Side Estimate}\label{s:ssestimate}
In this section we estimate the probability of having a side to side path in $X\times Y$. We work in the set up of previous section. We have the following theorem.

\begin{proposition}\label{l:SSMap}
We have that
\begin{equation}\label{e:SSMapE2}
\P[X \ssc Y\mid X \in\A{*}_{X,j+1} , Y\in \A{*}_{Y,j+1}] \geq 1-  L_{j+1}^{-3\beta}.
\end{equation}
\end{proposition}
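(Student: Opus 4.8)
\section*{Proof proposal for Proposition~\ref{l:SSMap}}

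The plan is to follow the skeleton of the proofs of Lemma~\ref{l:A1Map} and Proposition~\ref{l:SCMap}, but with two structural changes forced by the much stronger target $1-L_{j+1}^{-3\beta}$. First, by Definition~\ref{d:ss} it is enough to verify Condition~$S$ for every $(e_1,e_2)\in\mathcal E(X,Y)$, and by Lemma~\ref{l:csssconnect}(iii) this follows once, for each entry--exit pair of chunks $((C_{k_1}^X,1),(n_X,C_{k_2}^Y))$ and a $(1-10k_0L_j^{-1})$-fraction of admissible $(r,r')$ in the two chunks, one has $\tilde X_{(r)}\ssco\tilde Y_{(r')}$ with $\tilde X_{(r)}=(X_r,\dots,X_t)$, $\tilde Y_{(r')}=(Y_1,\dots,Y_{r'})$. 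Second, and crucially, one cannot here condition the bad sub-blocks of $Y$ away as in Lemma~\ref{l:scmap1}: the event $\{K_Y=0\}$ only has conditional probability $\approx 1-L_j^{-\delta/4}$, which is far too large. So I would condition only on $\cf=\{Y\in\A{*}_{Y,j+1}\}$ and on $X\in\A{*}_{X,j+1}$, and route simultaneously around the $\le k_0$ bad sub-blocks of $X$ and the $\le k_0$ bad sub-blocks of $Y$, using that $\prod_i S_j(X_{\ell_i})>L_j^{-1/3}$ and $\prod_i S_j(Y_{m_i})>L_j^{-1/3}$.

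The heart of the argument is an analogue of Lemma~\ref{l:scmap1} with a \emph{super-polynomial} (not merely $L_j^{-\alpha}$) lower bound: for a fixed entry--exit pair and a fixed admissible $(r,r')$ I would produce an event $\mathcal S_{r,r'}$, measurable w.r.t.\ $Y$, with $\P[\mathcal S_{r,r'}\mid\cf,X]\ge 1-\exp(-cL_j^{1/3})$ on which $\tilde X_{(r)}\ssco\tilde Y_{(r')}$. To build it, apply Proposition~\ref{p:assign}(i) to the intervals $[r,t]$ and $[1,r']$ with $B=B^*_X$, $B'=B^*_Y$ (bad sub-blocks and neighbours, each of size $\le 3k_0$), getting $L_j^2$ admissible assignments $\Upsilon_h$ with $\tau_h(x)=\tau_1(x)+h-1$ on $B^*_X$ and $\tau_h^{-1}(y)=\tau_1^{-1}(y)-h+1$ on $B^*_Y$; then use Lemma~\ref{l:admissiblemapexistence} to pass to a subfamily $\mathcal H$ with $|\mathcal H|\ge L_j$ whose assignments map every bad sub-block of one sequence only onto \emph{good} sub-blocks of the other, have pairwise distinct such images, and (via Lemmas~\ref{l:ccsssstar} and~\ref{l:cssssuccess}) admit a side-to-side admissible connection missing a prescribed $k_0$-element set. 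For $h\in\mathcal H$ let $\cd_h$ be the event that along the route furnished by $\Upsilon_h$ every bad sub-block is connected to its (good) image in the appropriate mode ($\cc$ internally, $\cs$/$\scc$ at an end), every good sub-block $X_{v_1}$ on the route satisfies $X_{v_1}\ssc Y_{v_2}$, and the four end conditions ($\cs$/$\scc$ for $X_r,X_t,Y_1,Y_{r'}$) hold. Exactly as in Case~2, the recursive estimates give $\P[\cd_h\mid\cf,X]\ge \tfrac12 c^{k_0}L_j^{-2/3}$ (the $L_j^{-2/3}$ from the two bad-block products, the constant factor from finitely many $\tfrac34$/$\tfrac9{10}$ factors and from $(1-L_j^{-2\beta})^{O(L_j^{\alpha-1})}\ge\tfrac12$ absorbing the good sub-blocks on the route). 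Introducing the collision event $\cm=\{\exists h_1\ne h_2\in\mathcal H \text{ both working only through a bad image collision}\}$, controlled by $\P[\cm\mid\cf,X]\le L_j^{-(\delta-2)}\prod_i S_j(X_{\ell_i})\prod_i S_j(Y_{m_i})$ as in~\eqref{e:A2MapC}, and using that distinct $h$ exercise disjoint bad-block/partner pairs so the $\cd_h$ are conditionally independent given $X,\cf$ modulo $\cm$, one gets $\P[\cup_{h\in\mathcal H}\cd_h\setminus\cm\mid\cf,X]\ge 1-(1-\tfrac12 c^{k_0}L_j^{-2/3})^{L_j}-L_j^{-(\delta-2)}\ge 1-\exp(-cL_j^{1/3})$. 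On $\cup_h\cd_h\setminus\cm$, Lemmas~\ref{l:csscconnect}(iii) and~\ref{l:cssssuccess} assemble $\tilde X_{(r)}\ssco\tilde Y_{(r')}$, so $\mathcal S_{r,r'}:=\cup_h\cd_h\setminus\cm$ works.

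To finish, I would take a union bound over the $O(L_j^{2\alpha-10})$ entry--exit pairs in $\mathcal E(X,Y)$ and, within each chunk, over the $O(L_j^{8})$ relevant $(r,r')$; each $\neg\mathcal S_{r,r'}$ contributes at most $\exp(-cL_j^{1/3})$, which dominates any fixed power of $L_j$, so the total is $\le L_{j+1}^{-3\beta}$ for $L_0$ large. Off this event $X\ssc Y$ holds by Lemma~\ref{l:csssconnect}(iii), giving $\P[X\ssc Y\mid\cf,X]\ge 1-L_{j+1}^{-3\beta}$, and integrating against the laws of $X,Y$ restricted to $\A{*}_{X,j+1}$, $\A{*}_{Y,j+1}$ (and removing any auxiliary conditioning on the length $T_Y$, exactly as at the end of the proof of Lemma~\ref{l:A1Map}) yields the claimed bound. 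The main obstacle is precisely the point flagged above: since the target is this close to $1$, the $L_j^{-\alpha}$-type bound of Lemma~\ref{l:scmap1} is useless, so the whole estimate must be arranged so that the failure probability of each individual $(r,r')$ is \emph{super-polynomially} small in $L_j$ --- which is what forces keeping the bad sub-blocks of both sequences in play, exploiting the $L_j$ genuinely (conditionally) independent tries supplied by the shifted admissible assignments, and controlling the collision event $\cm$ among them.
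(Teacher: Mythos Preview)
There is a genuine gap in your argument, and it sits exactly where you claim conditional independence of the events $\cd_h$.  As you have defined $\cd_h$, it contains, for every good vertex $(v_1,v_2)$ on the route associated to $\Upsilon_h$, the requirement $X_{v_1}\ssc Y_{v_2}$, and it also contains the four end conditions at $X_r,Y_1,X_t,Y_{r'}$.  The routes for different $h\in\mathcal H$ overlap in the bulk of their good--good segment (only the $O(k_0)$ sites near the bad sub-blocks move with $h$), and the end conditions are literally identical for all $h$.  So the $\cd_h$ are very far from conditionally independent, and the step $\P[\cup_{h\in\mathcal H}\cd_h\mid\cf,X]\ge 1-(1-\tfrac12 c^{k_0}L_j^{-2/3})^{L_j}$ is unjustified.

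You cannot repair this by pulling the good--good requirements out of $\cd_h$ into a single global event $\mathcal S=\{X_k\ssc Y_{k'}\ \forall\ (k,k')\in G_X\times G_Y\}$ as in Lemma~\ref{l:A1Map}: a union bound over the $O(L_j^{2\alpha-2})$ pairs only yields $\P[\neg\mathcal S\mid\cf]\le L_j^{2\alpha-2-2\beta}$, which with the paper's parameters is enormous compared to $L_{j+1}^{-3\beta}=L_j^{-3\alpha\beta}$.  This is precisely the obstacle the paper has to work around.  Its solution is structurally different from yours: it does \emph{not} ask all good--good pairs to satisfy $\ssc$, but sets $B_{X,Y}=\{(k,k')\in G_X\times G_Y:X_k\not\ssc Y_{k'}\}$ and proves (Lemma~\ref{l:ssfinite}) via a Chernoff bound on the conditionally independent row/column indicators $V_{k'}^Y,V_k^X$ that $\P[|B_{X,Y}|>k_0\mid\cf]\le\tfrac13 L_{j+1}^{-3\beta}$.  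Then, for each fixed $(t,t')$, it lets $\cd_h$ contain only the bad-sub-block conditions (so the $\cd_h$ really are conditionally independent), and shows by a Binomial tail bound that at least $R^6k_0^510^{2j+20}$ of the $\cd_h$ hold except with probability $L_{j+1}^{-4\beta}$; Lemma~\ref{l:admissiblemapexistence} then guarantees that among these successful $h$ one can find an assignment whose associated route avoids the $\le k_0$ exceptional pairs in $B_{X,Y}$, which together with $A_{t,t'}$ yields $\tilde X\ssco\tilde Y$ via Lemma~\ref{l:ccsssstar}.  The union bound over entry--exit pairs and over $(t,t')$ is then taken against the $L_{j+1}^{-4\beta}$ bound, while the $|B_{X,Y}|\le k_0$ event is charged once at the $L_{j+1}^{-3\beta}$ level.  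The missing idea in your proposal is exactly this treatment of the good--good failures as a small random obstacle set to be avoided, rather than an event to be intersected into each try.
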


Suppose that $X \in\A{*}_{X,j+1} , Y\in \A{*}_{Y,j+1}$. Let $T_X$, $T_{Y}$, $B_X$, $B_Y$, $G_X$, $G_Y$ as before. Let $B_1^*=\{\ell_1<\cdots< \ell_{K'_X}\}$ and $B_2^*=\{\ell_1'<\cdots< \ell_{K'_Y}'\}$ denote the locations of bad blocks and their neighbours in $X$ and $Y$ respectively.   Let us condition on the block lengths $T_X,T_Y$, $B_1^*,B_2^*$  and the bad-sub-blocks and their neighbours themselves.  Denote this conditioning by
\begin{align*}
\mathcal{F}=\{X \in\A{1}_{X,j+1} , Y\in \A{1}_{Y,j+1},T_X,T_Y,K'_X,K'_Y,\ell_1,\ldots,\ell_{K'_X}, \ell_1',\ldots,\ell_{K'_Y}',\\
X_{\ell_1},\ldots,X_{\ell_{K'_X}}, Y_{\ell_1'},\ldots,Y_{\ell_{K'_Y}'}\}.
\end{align*}


%

Let
$$B_{X,Y}=\{(k,k')\in G_{X}\times G_Y: X_{k}\not\ssc Y_{k'}\}$$
and $N_{X,Y}=|B_{X,Y}|$. Let $\mathcal{S}$ deonte the event $\{N_{X,Y}\leq k_0\}$. We first prove the following lemma.

\begin{lemma}
\label{l:ssmap1}
Let $n_X$ and $n_Y$ denote the number of chunks in $X$ and $Y$ respectively. Fix an entry exit pair of chunks. For concreteness, take $((k,1), (n_{X},k'))\in \mathcal{E}(X,Y)$. Fix $t\in [L_j^{\alpha-1}+2L_j^3+T_X]$ and $t'\in [L_j^{\alpha-1}+2L_j^3+T_Y]$ such that $X_t$ is contained in $C_k^{X}$, $Y_{t'}$ contained in $C_{k'}^Y$ also such that $[t,t+L_j^3]\cap B_X=\emptyset$. Also let $A_{t,t'}$ denote the event that $[t,t+L_j^3]\times [1,L_j^3] \cup [L_{j}^{\alpha-1}+T_{X}+L_j^3,L_{j}^{\alpha-1}+T_{X}+2L_j^3]\times [t'-L_j^3,t']$ is disjoint with $B_{X,Y}$. Set $\tilde{X}=(X_t,X_{t+1},\ldots, X_{L_{j}^{\alpha-1}+T_{X}+2L_j^3})$ and $\tilde{Y}=(Y_{1},Y_2,\ldots, Y_{t'})$, call such a pair $(\tilde{X},\tilde{Y})$ to be a \emph{proper section} of $(X,Y)$. Then there exists an event $S_{t,t'}$ with $\P[S_{t,t'}\mid \cf]\geq 1-L_{j+1}^{-4\beta}$ and such that on $S\cap S_{t,t'}\cap A_{t,t'}$, we have $\tilde{X}\ssco \tilde{Y}$.
\end{lemma}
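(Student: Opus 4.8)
The plan is to mimic the proof of Lemma~\ref{l:scmap1} (itself modelled on Lemma~\ref{l:A1Map}), upgrading its corner-to-side conclusion to the side-to-side statement $\tilde X\ssco\tilde Y$ by routing all connections around the extra collection of ``bad pairs'' $B_{X,Y}$. First I would reindex, writing $\tilde X=(X_t,\ldots,X_{L_j^{\alpha-1}+T_X+2L_j^3})$ and $\tilde Y=(Y_1,\ldots,Y_{t'})$; let $I_1,I_2$ be the corresponding sub-block index intervals and let $B_1^*$ (resp.\ $B_2^*$) be the positions in $\tilde X$ (resp.\ $\tilde Y$) of bad sub-blocks and their neighbours, so $|B_1^*|,|B_2^*|\le 3k_0$ since $X\in\A{*}_{X,j+1}$ and $Y\in\A{*}_{Y,j+1}$, while $[t,t+L_j^3]$ is disjoint from $B_1^*$ by hypothesis and $[1,L_j^3]$ is disjoint from $B_2^*$ by Observation~\ref{o:blockStructure}. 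The slope condition attached to the given entry--exit pair of chunks, together with the chunk size $L_j^4$ and the bound $T_X\le L_j^5-2L_j^3$ of $\A{*}$, implies (the $1/L_j$ discrepancy being absorbed using $k,k'\in[L_j,\cdot-L_j]$) the hypothesis of Proposition~\ref{p:assign}(i); so I get $L_j^2$ level-$j$ admissible assignments $\Upsilon_h$ of $(I_1,I_2)$ with respect to $(B_1^*,B_2^*)$ with associated shift-compatible $\tau_h$, and I pass to a subfamily $\ch_0\subseteq[L_j^2]$ of size $L_j$ on which all images $\tau_h(\ell_i)$ and all preimages $\tau_h^{-1}(\ell'_{i'})$ are pairwise distinct, exactly as in Lemma~\ref{l:A1Map}.

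For $h\in\ch_0$ let $\cd_h$ be the event that every bad sub-block of $\tilde X$ connects (via $\cc$, or via $\cs$/$\scc$ if it has a good neighbour on the appropriate side, with the case split of Lemma~\ref{l:A2Map}) to its $\tau_h$-image and every bad sub-block of $\tilde Y$ connects to its $\tau_h^{-1}$-preimage; as in Lemma~\ref{l:A1Map} this depends only on the sub-blocks sitting at those images/preimages, so conditionally on $\cf$ the $\{\cd_h\}_{h\in\ch_0}$ are independent, and the recursive estimates together with $\prod_i S_j(X_{\ell_i})>L_j^{-1/3}$ (and its $Y$-analogue) give $\P[\cd_h\mid\cf]\ge c(k_0)L_j^{-1/3}$. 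I would then define $S_{t,t'}$ to be the event that at least $\tfrac12 c(k_0)L_j^{2/3}$ of the $h\in\ch_0$ satisfy $\cd_h$; by a Chernoff bound (using the independence) $\P[S_{t,t'}^c\mid\cf]\le\exp(-\Omega(L_j^{2/3}))\le L_{j+1}^{-4\beta}$ for $L_0$ large. As in Lemma~\ref{l:A1Map}, the ``all relevant good sub-blocks connect side to side'' information is deliberately kept out of $\cd_h$ and supplied instead through $B_{X,Y}$; the mild conditioning subtleties this creates are handled as in Lemma~\ref{l:A2Map}.

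To finish I would show $S\cap S_{t,t'}\cap A_{t,t'}$ forces $\tilde X\ssco\tilde Y$. On $S$ we have $|B_{X,Y}|\le k_0$. Call $h\in\ch_0$ \emph{spoiled} if some anchor $(\ell_i,\tau_h(\ell_i))$ or $(\tau_h^{-1}(\ell'_{i'}),\ell'_{i'})$ lies within $\ell_\infty$-distance $2k_0R^310^{j+8}$ of $B_{X,Y}$; by the counting in the proof of Lemma~\ref{l:admissiblemapexistence} and the shift-compatibility, at most $O(10^{j}\,\mathrm{poly}(k_0,R))$ of the $h\in\ch_0$ are spoiled, which is $\ll\tfrac12 c(k_0)L_j^{2/3}$ for $L_0$ large, so on $S\cap S_{t,t'}$ some $h$ is simultaneously unspoiled and has $\cd_h$ (the successful set, of size $\ge\tfrac12 c(k_0)L_j^{2/3}$, cannot sit inside the tiny spoiled set). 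For this $h$, Lemmas~\ref{l:ccsuccessstar} and~\ref{l:ccsssstar} applied segment by segment with forbidden set $A'=B_{X,Y}$ (admissible because $|B_{X,Y}|\le k_0$, the segments are long enough, and, $h$ being unspoiled, the points of $B_{X,Y}$ are far from every segment endpoint) produce corner-to-corner routes on the interior segments and a side-to-corner resp.\ corner-to-side admissible connection on the first resp.\ last segment, all avoiding $B_{X,Y}$ so that every good sub-block used is $\ssc$ to its route-partner; $A_{t,t'}$ supplies the same $\ssc$-connections inside the two corner rectangles $[t,t+L_j^3]\times[1,L_j^3]$ and $[L_j^{\alpha-1}+T_X+L_j^3,L_j^{\alpha-1}+T_X+2L_j^3]\times[t'-L_j^3,t']$; and $\cd_h$ supplies the connections across the bad sub-blocks. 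Feeding this into Lemma~\ref{l:csscconnect}(iii) together with Lemma~\ref{l:cssssuccess} (and planarity for the $\ssco$ piece) yields $\tilde X\ssco\tilde Y$. The main obstacle is precisely the one these last steps are arranged to dodge: $B_{X,Y}$ is a function of \emph{all} the good sub-blocks, so one cannot expose $B_{X,Y}$ first and then pick a fresh unspoiled trial; instead $S_{t,t'}$ is defined without reference to $B_{X,Y}$ and only guarantees an $\Omega(L_j^{2/3})$ \emph{surplus} of working trials, which survives deleting the (correlated but tiny) spoiled set --- and it is only here that the doubly-exponential tolerance $L_{j+1}^{-4\beta}$ is really needed.
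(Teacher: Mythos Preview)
Your approach is essentially the paper's own: build $L_j^2$ shift-compatible admissible assignments of $(I_1,I_2)$ with respect to $(B_1^*,B_2^*)$, thin to a family $\mathcal H$ with pairwise-distinct images, define $\cd_h$ purely in terms of the bad sub-blocks' connections to their $\tau_h$-images/$\tau_h^{-1}$-preimages, set $S_{t,t'}$ to be the event that many $\cd_h$ hold, and then on $\mathcal S\cap S_{t,t'}\cap A_{t,t'}$ count spoiled $h$'s against successful $h$'s to extract one assignment whose anchors all sit far from $B_{X,Y}$, feeding this into Lemma~\ref{l:ccsssstar}. The paper packages the last counting step as a direct invocation of Lemma~\ref{l:admissiblemapexistence} (taking $\mathcal T$ to be the successful $h$'s and $S=B_{X,Y}$), and accordingly sets the success threshold to the explicit constant $R^6k_0^510^{2j+20}$ rather than a power of $L_j$; your version redoes that counting by hand, which is fine.

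One numerical slip: since $\cf$ fixes the bad sub-blocks on \emph{both} sides and both products satisfy $\prod S_j(\cdot)>L_j^{-1/3}$, you get $\P[\cd_h\mid\cf]\ge c(k_0)L_j^{-2/3}$, not $L_j^{-1/3}$. With $|\mathcal H_0|=L_j$ this yields an expected $\asymp L_j^{1/3}$ successes, so your threshold should be $\tfrac12 c(k_0)L_j^{1/3}$ rather than $L_j^{2/3}$; the Chernoff bound and the comparison with the $O(10^{j}\,\mathrm{poly}(k_0,R))$ spoiled trials still go through since $L_j^{1/3}=L_0^{\alpha^j/3}$ dominates any singly-exponential quantity in $j$ for $L_0$ large. (The paper sidesteps this by taking $|\mathcal H|=10k_0L_j$ and only asking for the fixed threshold $R^6k_0^510^{2j+20}$.)
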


\begin{proof}
Set $I_1=[t,L_{j}^{\alpha-1}+T_{X}+2L_j^3]\cap \Z$, $I_2=[1,t']\cap \Z$.
By Proposition~\ref{p:assign} we can find $L_j^2$ admissible assignments mappings $\Upsilon_h$ with associated $\tau_h$ of $(I_1,I_2)$ w.r.t. $(B_1^*\cap I_1, B_2^*\cap I_2)$ such that we have $\tau_h(\ell_i)=\tau_1(\ell_i)+h-1$ and $\tau^{-1}_h(\ell_i')=\tau_1^{-1}(\ell_i')-h+1$.
As before we can construct a subset $\mathcal{H}\subset [L_j^2]$ with $|\mathcal{H}|= 10k_0L_j <\lfloor L_j^2/ 36k_0^2\rfloor$ so that for all $i_1\neq i_2$ and $h_1,h_2\in\mathcal{H}$ we have that $\tau_{h_1}(\ell_{i_1})\neq \tau_{h_2}(\ell_{i_2})$ and $\tau_{h_1}^{-1}(\ell_{i_1}')\neq \tau_{h_2}^{-1}(\ell_{i_2}')$, that is that all the positions bad blocks and their neighbours are assigned to are distinct.

Hence we have for all $h\in \mathcal{H}$
\begin{equation}\label{e:SSMapA}
\P[X_{\ell_{i}} \cc Y_{\tau_{h}(\ell_{i})}\mid \cf] \geq \frac12 S_j(X_{\ell_{i}});
\end{equation}
\begin{equation}\label{e:SSMapB}
\P[X_{\tau_{{h}}^{-1}(\ell_{i}')} \cc Y_{\ell_{i}'}\mid \cf] \geq \frac12 S_j(Y_{\ell_{i}'}).
\end{equation}

If $X_{\ell_i}\notin G_j^{\mathbb{X}}$, or, if neither $X_{\ell_i -1}$ nor $X_{\ell_i +1}$ is $\in G_j^{\mathbb{X}}$, let $\cd_{h,i,X}$ denote the event
$$\cd_{h,i,X}=\left\{X_{\ell_{i}} \cc Y_{\tau^{k,k'}_{h}(\ell_{i})}\right\} .$$
If $X_{\ell_i},X_{\ell_i +1}\in G_j^{\mathbb{X}}$ then let
$\cd_{h,i,X}$ denote the event
$$\cd_{h,i,X}=\left\{X_{\ell_{i}} \cs Y_{\tau^{k,k'}_{h}(\ell_{i})}\right\} .$$
If $X_{\ell_i},X_{\ell_i -1}\in G_j^{\mathbb{X}}$ then let
$\cd_{h,i,X}$ denote the event
$$\cd_{h,i,X}=\left\{X_{\ell_{i}} \scc Y_{\tau^{k,k'}_{h}(\ell_{i})}\right\} .$$

Let $\cd_{h,X}$ denote the event
\[
\cd_{h,X}=\bigcap _{i=1}^{K'_X}\cd_{h,i,X}
\]

Let us define the event $\cd_{h,Y}$ similarly and let

\[
\cd_h= \cd_{h,X} \cap \cd_{h,Y}
\]

Finally, let
\[
\cd=\left\{\sum_{h\in \ch} \mathbf{1}_{\cd_h} \geq R^6k_0^510^{2j+20}\right\}.
\]

Conditional on $\cf$, for $h\in\ch$, the $\cd_h$  are independent and by~\eqref{e:SSMapA}, ~\eqref{e:SSMapB} and the recursive estimates ,
\begin{equation}\label{e:SSMapC}
\P[\cd_h \mid \cf]  \geq 2^{-10k_0}3^{4k_0} L_j^{-2/3}.
\end{equation}
Hence using a large deviation estimate for binomial tail probabilities we get,
\begin{equation}\label{e:SSMapD}
\P[\cd \mid \cf]  \geq \P[\mbox{Bin}(10k_0L_j,2^{-10k_0}3^{4k_0} L_j^{-2/3} )\geq R^6k_0^510^{2j+20}\}]\geq  1-  L_{j+1}^{-4\beta}
\end{equation}
for $L_0$ sufficiently large. Now it follows from Lemma \ref{l:admissiblemapexistence} and Lemma \ref{l:ccsssstar} that if $\cd$, $\mathcal{S}$, and $A_{t,t'}$ all holds than $\tilde{X}\ssco \tilde{Y}$. This completes the proof of the lemma.
\end{proof}

Before proving Proposition \ref{l:SSMap}, we need the following lemma bounding the probability of $\mathcal{S}$.
%

\begin{lemma}\label{l:ssfinite}
We have
\begin{equation}\label{e:ssfinite}
\P[\neg S\mid \cf]\leq \frac{1}{3}L_{j+1}^{-3\beta}.
\end{equation}
\end{lemma}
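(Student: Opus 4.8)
I would build the whole estimate around viewing $B_{X,Y}$ as the edge set of a bipartite graph and exploiting a degree/matching dichotomy. First, the set-up. Write $\neg\mathcal S=\{N_{X,Y}>k_0\}$ and let $H$ be the bipartite graph on $G_X\sqcup G_Y$ with edge set $B_{X,Y}=\{(k,k')\in G_X\times G_Y:X_k\not\ssc Y_{k'}\}$. Conditional on $\cf$, by Observation~\ref{o:blockRepresentation} (and since, as already noted, the extra conditioning on $\mathcal G_X\cap\mathcal G_Y$ changes every probability below by at most a bounded factor), the good sub-blocks $\{X_k\}_{k\in G_X}$ and $\{Y_{k'}\}_{k'\in G_Y}$ are mutually independent, each $X_k$ either distributed as $\mu_{j,G}^{\mathbb X}$ or, for the at most $K'_X\le 3k_0$ indices $k\in B_1^*$, equal to a fixed good level-$j$ block, and symmetrically for the $Y_{k'}$; moreover $X\in\A{*}_{X,j+1}$, $Y\in\A{*}_{Y,j+1}$ give $|G_X|,|G_Y|\le L_j^{\alpha-1}+L_j^5\le 2L_j^{\alpha-1}=:n$. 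The basic input is Lemma~\ref{l:consequences}(iii) and its $\mathbb Y$-analogue: for any fixed good level-$j$ block $x$ one has $\P[x\not\ssc Y_{k'}\mid\cf]\le 2L_j^{-\beta}$, and for any fixed good $y$, $\P[X_k\not\ssc y\mid\cf]\le 2L_j^{-\beta}$ (the $2$ absorbing the $\mathcal G$-conditioning). There are at most $4k_0^2$ pairs in $B_{X,Y}$ both of whose coordinates are fixed by $\cf$; since each such pair is a failure with unconditional probability $\le L_j^{-\beta}$, deleting the event that some one of them fails removes a set of $\cf$'s of probability $O(k_0^2L_j^{-\beta})$, which is negligible against $L_{j+1}^{-3\beta}$, so these pairs can be ignored and it suffices to bound $\P[|E(H)|>k_0\mid\cf]$.

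Next the dichotomy. Fix $s=\Theta(\sqrt{k_0})$, concretely $s=\lceil\sqrt{k_0+1}/3\rceil$, so that $2s^2\le k_0+1$ for $k_0$ large. A bipartite graph with more than $k_0$ edges either has a vertex of degree $\ge s$, or (a maximal matching $M$ meets every edge, so $k_0<|E(H)|\le 2s|M|$ when the maximum degree is $<s$) contains a matching of size $\ge(k_0+1)/(2s)\ge s$. Hence $\{N_{X,Y}>k_0\}$ lies in the union of: (A) some $k\in G_X$ has $\ge s$ neighbours in $H$; (B) some $k'\in G_Y$ has $\ge s$ neighbours in $H$; (C) $H$ contains a matching of size $s$. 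For (A): given $X_k$, the indicators $\mathbf 1[X_k\not\ssc Y_{k'}]$, $k'\in G_Y$, are independent with means $\le 2L_j^{-\beta}$, so $\P[\deg_H(k)\ge s\mid\cf]\le\binom ns(2L_j^{-\beta})^s\le(2nL_j^{-\beta})^s$, and a union bound over the $\le n$ choices of $k$ gives $\P[(A)\mid\cf]\le n(2nL_j^{-\beta})^s$; event (B) is identical. For (C): a matching of size $s$ is specified by its $s$ endpoints on each side together with a bijection, at most $n^{2s}$ in all, and the $s$ failure events along a fixed matching involve disjoint (hence independent) coordinates with means $\le 2L_j^{-\beta}$, so $\P[(C)\mid\cf]\le n^{2s}(2L_j^{-\beta})^s=(2n^2L_j^{-\beta})^s$.

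Finally the arithmetic. With $n=2L_j^{\alpha-1}$ each of the three bounds is at most $C(\alpha,\beta,k_0)\,L_j^{\alpha-1}\bigl(8L_j^{2\alpha-2-\beta}\bigr)^{s}$. The parameter constraints do the rest: $\beta>\alpha(\delta+1)$ with $\delta>48$ gives $\beta\ge 49\alpha$ and hence $\beta-2\alpha+2\ge\tfrac12\beta$, while $k_0>36\alpha\beta$ gives $s\ge\tfrac13\sqrt{k_0}\ge 2\sqrt{\alpha\beta}\ge 6\alpha$; therefore $s(\beta-2\alpha+2)\ge 3\alpha\beta$ — in fact with ratio at least $2$, so there is an extra negative power of $L_j$ of order $\alpha\beta$ to spare — whence $L_j^{\alpha-1}\bigl(8L_j^{2\alpha-2-\beta}\bigr)^{s}\le L_j^{-3\alpha\beta}=L_{j+1}^{-3\beta}$ once $L_0$ is large enough (the fixed constants $C$ and $8^s$ are swamped by the surplus power). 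Summing the three contributions plus the negligible discarded-$\cf$ term yields $\P[\neg\mathcal S\mid\cf]\le\tfrac13L_{j+1}^{-3\beta}$. The one genuine obstacle is the dependence among the failure indicators: $N_{X,Y}$ cannot simply be dominated by a binomial, because a single ``globally hard'' good sub-block could in principle fail against many partners. The dichotomy is exactly the device that circumvents this — in each extremal configuration (a star centred at an $X$-vertex, a star centred at a $Y$-vertex, or a matching) the relevant failure events become independent once the single shared coordinate is conditioned out — and matching the exponent $\Theta(\sqrt{k_0})$ against $3\alpha\beta$ is precisely where the constraint $k_0>36\alpha\beta$ is used.
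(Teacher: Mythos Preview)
Your star/matching dichotomy is a valid alternative to the paper's argument, and the arithmetic closing the exponent is right. But the paper's route is considerably simpler, and you should see why: rather than decomposing edge configurations, the paper counts \emph{active vertices}. For $k'\in G_Y$ set $V_{k'}^Y=\mathbf 1[\exists k\in G_X:X_k\not\ssc Y_{k'}]$ and similarly $V_k^X$; since every edge of $H$ joins an active $X$-vertex to an active $Y$-vertex, $N_{X,Y}\le(\sum_k V_k^X)(\sum_{k'}V_{k'}^Y)$, so $N_{X,Y}>k_0$ forces one of the sums to exceed $k_0^{1/2}$. Now condition on all of $X$: the indicators $V_{k'}^Y$ become independent across $k'$ (they depend on disjoint $Y_{k'}$), each with mean $\le 2L_j^{\alpha-1-\beta}$ by a union bound over $k$, and a single Chernoff bound finishes. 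Your worry that ``$N_{X,Y}$ cannot simply be dominated by a binomial'' is well-founded, but the paper sidesteps it by never trying to---it dominates the \emph{vertex} count instead, which is a genuine sum of conditionally independent Bernoullis. Your dichotomy achieves the same $\sqrt{k_0}$ exponent, but with an extra case split you don't need.

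There is also a small but real slip in your treatment of the pairs $(k,k')\in(G_X\cap B_1^*)\times(G_Y\cap B_2^*)$ that are fixed by $\cf$. You assert that the set of $\cf$'s on which some such pair fails has probability $O(k_0^2L_j^{-\beta})$, ``negligible against $L_{j+1}^{-3\beta}$''. That comparison is backwards: $L_{j+1}^{-3\beta}=L_j^{-3\alpha\beta}$ is far smaller than $L_j^{-\beta}$, so you cannot simply discard those $\cf$'s. Since your bound for case~(A) uses that each indicator has mean $\le 2L_j^{-\beta}$, and a deterministically-failing fixed pair has mean $1$, the gap is genuine. The fix is easy---run your dichotomy on the subgraph over $(G_X\setminus B_1^*)\times(G_Y\setminus B_2^*)$ and separately bound edges touching $B_1^*\cup B_2^*$ (at most $4k_0$ fixed vertices, each with binomially-many random failures), or just observe that the paper's vertex-count argument needs no such excision. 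The paper's own write-up is also somewhat casual on this point, so the issue is cosmetic rather than structural.
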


\begin{proof}
Let for $k'\in G_Y$,
\[
V_{k'}^Y=I\left[\left\{\#\left\{k\in G_{X}: X_{k}\not\ssc Y_{k'}\right\}\geq 1 \right\}\right].
\]

It follows from taking a union bound and using the recursive estimates that

\[\P[V_{k'}^Y=1 \mid \cf, X]\leq 2L_j^{\alpha-1-\beta}.
\]

Since $V_{k'}^Y$ are conditionally independent given $X$ and $\cf$, a stochastic domination argument yields
\[
\P[\sum_{k'}V_{k'}^Y \geq k_0^{1/2} \mid X, \cf]\leq P[\mbox{Bin}(2L_j^{\alpha -1}, 2L_j^{\alpha -1-\beta})\geq k_0^{1/2}].
\]

Using Chernoff bound we get

\begin{eqnarray*}
\P[\sum_{k'}V_{k'}^Y \geq k_0^{1/2} \mid \cf , X]&\leq & \exp \left(4L_{j}^{2\alpha -2 -\beta}(\frac{1}{4}k_0^{1/2}L_{j}^{-2\alpha +2 +\beta}-1- \frac{1}{4}k_0^{1/2}L_{j}^{-2\alpha +2 +\beta}\log (\frac{1}{4}k_0^{1/2}L_{j}^{-2\alpha +2 +\beta})\right)\\
&\leq & \exp \left(4L_{j}^{2\alpha -2 -\beta}( -\frac{1}{8}k_0^{1/2}L_{j}^{-2\alpha +2 +\beta}\log (\frac{1}{4}k_0^{1/2}L_{j}^{-2\alpha +2 +\beta})\right)\\
&\leq & \left(\frac{1}{4}k_0^{1/2}L_{j}^{-2\alpha +2 +\beta}\right)^{k_0^{1/2}/2} \leq \frac{1}{6}L_{j+1}^{-3\beta}
\end{eqnarray*}
for $L_0$ large enough since $k_0^{1/2}(\beta +2 -2\alpha)> 6\alpha \beta$.

Removing the conditioning on $X$ we get,

\[
\P[\sum_{k'}V_{k}^Y \geq k_0^{1/2} \mid \cf ]\leq \frac{1}{6}L_{j+1}^{-3\beta}.
\]

Defining $V_k^X$'s similarly we get
\[
\P[\sum_{k}V_{k}^X \geq k_0^{1/2}\mid \cf]\leq \frac{1}{6}L_{j+1}^{-3\beta}.
\]

Since on $\cf$,

\[
\neg \mathcal{S}\subseteq \{\sum_{k}V_{k}^X \geq k_0^{1/2}\}\cup \{\sum_{k}V_{k}^X \geq k_0^{1/2}\},
\]
the lemma follows.
\end{proof}

Now we are ready to prove Proposition \ref{l:SSMap}.

\begin{proof}[Proof of Proposition \ref{l:SSMap}]
Consider the set-up of Lemma \ref{l:ssmap1}. Let $T_{k}$ (resp. $T'_{k'}$) denote the set of indices $t$ (resp. $t'$) such that $X_t$ is contained in $C_{k}^{X}$ (resp. $Y_{t'}$ is contained in $C_{k'}^{Y}$). It is easy to see that there exists $T_{k,*}\subset T_{k}$ (resp. $T'_{k',*}\subset T'_{k'}$) with $|T_{k,*}|\geq (1-10k_0L_j^{-1})|T_{k}|$ (resp. $|T'_{k',*}|\geq (1-10k_0L_j^{-1})|T'_{k'}|$) such that for all $t\in T_{k,*}$ and for all $t'\in T'_{k',*}$, $\tilde{X}$ and $\tilde{Y}$ defined as in Lemma \ref{l:ssmap1} satisfies that $(\tilde{X},\tilde{Y})$ is a \emph{proper section} of $(X,Y)$ and $A_{t,t'}$ holds.

It follows now by taking a union bound over all $t\in T_{k}$, $t'\in T'_{k'}$, and all pairs of entry exit chunks in $\mathcal{E}(X,Y)$ and using Lemma \ref{l:csssconnect} that
%
\begin{equation}\label{e:SSMapE}
\P[X  \ssc Y \mid \cf] \geq 1-\frac{1}{3}L_{j+1}^{-3\beta}-4L_j^{2\alpha}L_{j+1}^{-3\beta}\geq 1-L_{j+1}^{-3\beta}
\end{equation}
for $L_0$ sufficiently large since $\beta >2\alpha$.
Now removing the conditioning we get~\eqref{e:SSMapE2}.
\end{proof}

\section{Good Blocks}\label{s:good}
Now we are ready to prove that a block is good with high probability.

\begin{theorem}
\label{t:goodprobability}
Let $X$ be a $\mathbb{X}$-block at level $(j+1)$. Then $\mathbb{P}(X\in G_{j+1}^{\mathbb{X}})\geq 1-L_{j+1}^{-\delta}$. Similarly for $\mathbb{Y}$-block $Y$ at level $(j+1)$, $\mathbb{P}(Y\in G_{j+1}^{\mathbb{Y}})\geq 1-L_{j+1}^{-\delta}$.
\end{theorem}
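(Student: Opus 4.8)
The plan is to show that for a random level-$(j+1)$ $\mathbb{X}$-block $X\sim\mu_{j+1}^{\mathbb{X}}$ each of the five conditions defining $G_{j+1}^{\mathbb{X}}$ in Definition~\ref{d:good} can fail with probability at most $\tfrac15 L_{j+1}^{-\delta}$, and then to conclude by a union bound; the constraint $\beta>\delta$ and the largeness of $L_0$ absorb all error terms, and the $\mathbb{Y}$-statement is identical by the symmetry between the two coordinates. Condition~(i), that $X$ begins with $L_j^3$ good level-$j$ sub-blocks, holds almost surely under $\mu_{j+1}^{\mathbb{X}}$, since in the construction of Observation~\ref{o:blockRepresentation} the first $L_j^3$ sub-blocks are drawn from $\mu_{j,G}^{\mathbb{X}}$. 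Condition~(v) is the event $T_X\le L_j^5-2L_j^3$, because $X$ has exactly $L_j^{\alpha-1}+2L_j^3+T_X$ sub-blocks; its complement has probability at most $2\exp\bigl(-\tfrac12(L_j-2L_j^{-1})\bigr)$ by Lemma~\ref{l:totalSizeBound} with $k'=x=0$ (the contribution of blocks with $K_X=0$ being just the geometric tail of $W_X\sim\mathrm{Geom}(L_j^{-4})$), which is far below $L_{j+1}^{-\delta}$ for $L_0$ large. Condition~(iv), the lower bound on the corner-to-corner probability $S_{j+1}^{\mathbb{X}}(X)$, is immediate from the tail estimate of Theorem~\ref{t:tail}: taking $p$ equal to the threshold appearing in~(iv), which is at most $\tfrac34+2^{-(j+4)}$, we get $\P(S_{j+1}^{\mathbb{X}}(X)\le p)\le p^{m_{j+1}}L_{j+1}^{-\beta}\le L_{j+1}^{-\beta}\le\tfrac15 L_{j+1}^{-\delta}$.

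Condition~(iii) follows from Proposition~\ref{l:SCMap} combined with~\eqref{e:ASSize}. For every $X\in\A{*}_{X,j+1}$ we have $\P[X\cs Y\mid Y\in\A{*}_{Y,j+1},X]\ge\tfrac{9}{10}+2^{-(j+15/4)}$, and since $X$ and $Y$ are independent, $\P[Y\in\A{*}_{Y,j+1}\mid X]=\P[Y\in\A{*}_{Y,j+1}]\ge 1-L_{j+1}^{-3\beta}$ by~\eqref{e:ASSize}. Hence, for $X\in\A{*}_{X,j+1}$,
\[
\P[X\cs Y\mid X]\ \ge\ \bigl(1-L_{j+1}^{-3\beta}\bigr)\Bigl(\tfrac{9}{10}+2^{-(j+15/4)}\Bigr)\ \ge\ \tfrac{9}{10}+2^{-(j+4)},
\]
the last step holding because $2^{-(j+15/4)}-2^{-(j+4)}=2^{-(j+4)}\bigl(2^{1/4}-1\bigr)$ dominates $L_{j+1}^{-3\beta}$ uniformly in $j\ge1$ once $L_0$ is large (the latter decays doubly exponentially in $j$). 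The same estimate with $\scc$ in place of $\cs$ gives the side-to-corner half of~(iii). Thus~(iii) can fail only when $X\notin\A{*}_{X,j+1}$, an event of probability at most $L_{j+1}^{-3\beta}\le\tfrac15 L_{j+1}^{-\delta}$ by~\eqref{e:ASSize}.

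The only step needing a little care is condition~(ii), since Proposition~\ref{l:SSMap} controls $\P[X\ssc Y]$ only when \emph{both} blocks lie in $\A{*}$, whereas~(ii) asks for a tail bound on $q(X):=\P(X\ssc Y\mid X)$. A first-moment estimate bridges this: using Proposition~\ref{l:SSMap}, \eqref{e:ASSize}, and independence of $X$ and $Y$,
\[
\E\bigl[(1-q(X))\,\mathbf{1}_{\{X\in\A{*}_{X,j+1}\}}\bigr]=\P\bigl[X\not\ssc Y,\ X\in\A{*}_{X,j+1}\bigr]\le 2L_{j+1}^{-3\beta},
\]
the factor $2$ absorbing the event $\{Y\notin\A{*}_{Y,j+1}\}$. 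By Markov's inequality, $\P[X\in\A{*}_{X,j+1},\ 1-q(X)\ge L_{j+1}^{-2\beta}]\le 2L_{j+1}^{-\beta}$, and adding $\P[X\notin\A{*}_{X,j+1}]\le L_{j+1}^{-3\beta}$ gives $\P[q(X)<1-L_{j+1}^{-2\beta}]\le 3L_{j+1}^{-\beta}\le\tfrac15 L_{j+1}^{-\delta}$. Summing the five failure probabilities yields $\P(X\notin G_{j+1}^{\mathbb{X}})\le L_{j+1}^{-\delta}$. I expect this Markov-type passage for condition~(ii) --- converting the two-sided $\A{*}$-conditioning of Section~\ref{s:ssestimate} into a one-sided tail bound --- to be the only non-routine step; conditions~(i), (iii), (iv) and~(v) are direct consequences of results already established.
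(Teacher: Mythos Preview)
Your proof is correct and follows essentially the same approach as the paper: a union bound over the five defining conditions, using Lemma~\ref{l:totalSizeBound} for the length, Proposition~\ref{l:SCMap} together with~\eqref{e:ASSize} for the corner-to-side and side-to-corner conditions, and the same Markov argument via Proposition~\ref{l:SSMap} for the side-to-side condition. The only cosmetic difference is that for condition~(iv) the paper cites Lemmas~\ref{l:A1Size} and~\ref{l:A1Map} directly rather than the packaged Theorem~\ref{t:tail}, and the paper does not bother to single out condition~(i) since it holds automatically under $\mu_{j+1}^{\mathbb{X}}$.
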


\begin{proof}
To avoid repetition, we only prove the theorem for $\mathbb{X}$-blocks.
Let $X$ be a $\mathbb{X}$-block at level $(j+1)$ with length $L_j^{\alpha -1}$

Let the events $A_i, i=1,\ldots 5$ be defined as follows.
\[
A_1=\left\{T_X \leq L_{j}^{5}-2L_{j}^3\right\}.
\]

\[
A_2=\left\{\P[X\cc Y\mid X]\geq \frac{3}{4}+2^{-(j+4)} \right\}.
\]

\[
A_3=\left\{\P[X\cs Y\mid X]\geq \frac{9}{10}+2^{-(j+4)} \right\}.
\]

\[
A_4=\left\{\P[X\scc Y\mid X]\geq \frac{9}{10}+2^{-(j+4)} \right\}.
\]

\[
A_5=\left\{ \P[X\ssc Y\mid X]\geq 1-L_j^{2\beta} \right\}.
\]

From Lemma \ref{l:totalSizeBound} it follows that
$$\P[A_1^c]\leq L_{j+1}^{-3\beta}.$$
From Lemma \ref{l:A1Size} and \ref{l:A1Map} it follows that
$$\P[A_2^c]\leq  L_{j+1}^{-3\beta}.$$

From  \eqref{e:ASSize} and Proposition \ref{l:SCMap}
it follows that
$$\P[A_3^c]\leq  L_{j+1}^{-3\beta},~ \P[A_4^c]\leq  L_{j+1}^{-3\beta}.$$

Using Markov's inequality, it follows from Proposition \ref{l:SSMap}
\begin{eqnarray*}
\P[A_5^c] &=& \P[\P[X\not\ssc Y\mid X]\geq L_{j+1}^{-2\beta}]\\
&\leq & \P[X\not\ssc Y]L_{j+1}^{2\beta}\\
&\leq &\left(\P[X\not\ssc Y, X \in\A{*}_{X,j+1} , Y\in \A{*}_{Y,j+1} ]+\P[X\notin \A{*}_{X,j+1}]+\P[Y\notin \A{*}_{Y,j+1}]\right)L_{j+1}^{2\beta}
\leq  3L_{j+1}^{-\beta}.
\end{eqnarray*}

Putting all these together we get
\[
\P[X\in G_{j+1}^{\mathbb{X}}]\geq \P[\cap_{i=1}^5 A_i]\geq 1-L_{j+1}^{-\delta}
\]
for $L_0$ large enough since $\beta >\delta$.
\end{proof}

%
%
%

\medskip

\textbf{Acknowledgements.} The authors would like to thank Peter Winkler for many useful discussions.

\bibliography{scheduling}
\bibliographystyle{plain}


\end{document}